    \let\subsubsection\subparagraph
    \title  {Tait colorings, and an instanton homology for webs and foams}
    \author {P. B. Kronheimer and T. S. Mrowka%
      \thanks{%
        The work of the first author was supported by the National
        Science Foundation through NSF grants DMS-0904589 and
        DMS-1405652. The work of the second author was supported by
        NSF grants DMS-0805841 and DMS-1406348.}}
    \address {Harvard University, Cambridge MA 02138 \\
              Massachusetts Institute of Technology, Cambridge MA 02139}
\begin{document}

\maketitle

\begin{abstract}
   We use $\SO(3)$ gauge theory to define a functor from a category of
   unoriented webs and foams to the category of finite-dimensional
   vector spaces over the field of two elements.  We prove a
   non-vanishing theorem for this $\SO(3)$ instanton homology of
   webs, using Gabai's sutured manifold theory. It is hoped that the
   non-vanishing theorem may support a program to provide a new proof
   of the four-color theorem.
\end{abstract}

\clearpage

{\small \tableofcontents}

\clearpage

\section{Introduction}

\subsection{Statement of results}

By a \emph{web in $\R^{3}$} we shall mean an embedded trivalent graph.
More specifically, a web will be a  compact subset
$K \subset\R^{3}$ with a finite set $V$ of distinguished points, the
\emph{vertices}, such that $K\sminus V$ is a smooth
1-dimensional submanifold of $\R^{3}$, and such that each vertex has a
neighborhood in which $K$ is diffeomorphic to three distinct, coplanar rays in
$\R^{3}$. We shall refer to the components of $K\sminus V$ as the
\emph{edges}. Note that our definition allows an edge to be a
(possibly knotted) circle. The vertex set may be empty.

An edge $e$ of $K$ is an \emph{embedded bridge} if there is a smoothly
embedded $2$-sphere in $\R^{3}$ which meets $e$ transversely in a single
point and is otherwise disjoint from $K$. We use the term ``embedded
bridge'' to distinguish this from the more general notion of a
\emph{bridge}, which is an edge whose removal increases the number of
connected components of $K$.  

A \emph{Tait coloring} of $K$ is a
function from the edges of $K$ to a $3$-element set of ``colors''
$\{\,1,2,3\,\}$ such that edges of three different colors are incident at
each vertex. Tait colorings which differ only by a permutation of the
colors will still be regarded as distinct; so if $K$ is a single
circle, for example, then it has three Tait colorings.

In this paper, we will show how to associate a finite-dimensional
$\F$-vector space $\Jsharp(K)$ to any web $K$ in $\R^{3}$, using a
variant of the instanton homology for knots from \cite{Floer-Durham}
and \cite{KM-unknot}, but with gauge group $\SO(3)$ replacing the
group $\SU(2)$ as it appeared in \cite{KM-unknot}. (Here $\F$ is the
field of $2$ elements.) We will establish the following non-vanishing
property for $\Jsharp$.

\begin{theorem}\label{thm:non-vanishing}
    For a web $K\subset\R^{3}$, 
    the vector space $\Jsharp(K)$ is zero if and only if $K$ has an embedded bridge.
\end{theorem}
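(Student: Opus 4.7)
The proof splits naturally into two directions, of very unequal difficulty.

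\medskip

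\textbf{The easy direction (embedded bridge $\Rightarrow$ $\Jsharp(K) = 0$).} Suppose $K$ has an embedded bridge $e$ and let $S$ be the corresponding $2$-sphere, meeting $K$ transversely in a single point $p \in e$. The plan is to use a connected-sum / splitting principle: $S$ decomposes $\R^{3}$ into two balls, each meeting $K$ in a web that hits the separating sphere in exactly one endpoint. I would expect to deduce vanishing from an excision theorem that writes $\Jsharp(K)$ in terms of $\Jsharp$ of the two halves joined by an interface term, together with the observation that the ``half'' invariant for a web meeting an essential $S^{2}$ in a single point carries a $\Z/2$ obstruction (odd intersection with $S$), incompatible with the $\SO(3)$ bundle data. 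This kind of parity obstruction in $\SO(3)$ gauge theory is a standard source of vanishing and should give the result directly.

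\medskip

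\textbf{The hard direction (no embedded bridge $\Rightarrow$ $\Jsharp(K) \neq 0$).} This is where Gabai's sutured manifold theory enters. The strategy proceeds in four steps. First, encode the pair $(\R^{3}, K)$ as a sutured manifold: take the exterior $M$ of $K$ in $S^{3}$ and equip $\partial M$ with sutures coming from the meridional structure around the edges, with trivalent vertices translating into prescribed triple-intersection patterns. Second, translate the hypothesis: the non-existence of an embedded bridge in $K$ corresponds exactly to a tautness condition on $(M, \text{sutures})$, namely that no embedded $2$-sphere meets the sutures in a single meridional arc. Third, invoke Gabai's theorem to produce a sutured manifold hierarchy for $(M,\text{sutures})$, i.e.\ a finite sequence of decompositions along taut surfaces terminating at a product sutured manifold. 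Fourth, establish and apply a surface-decomposition formula for $\Jsharp$ asserting that if $\Jsharp$ of the decomposed manifold is nonzero then so is $\Jsharp$ of the original. Combined with non-vanishing for the base case of the hierarchy (a product sutured manifold, corresponding to an unknotted planar model web such as the theta graph, for which $\Jsharp$ should be computable directly and seen to be nonzero), this propagates non-vanishing all the way up the hierarchy to $\Jsharp(K)$.

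\medskip

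\textbf{Main obstacle.} The technical heart of the argument is the surface-decomposition theorem for $\Jsharp$ in the presence of trivalent vertices. These vertices introduce orbifold singularities in the $\SO(3)$ bundles underlying $\Jsharp$, and the decomposing surfaces produced by Gabai's machinery must be isotoped to meet this singular locus in a controlled way --- essentially transversely, and with prescribed local models near each puncture. One then needs to show that the effect of such a decomposition on the instanton chain complex preserves non-vanishing of homology, presumably via a spectral sequence or exact triangle whose terms can be identified with the $\Jsharp$'s of the pieces. Managing the singular contributions and proving the decomposition theorem in a form compatible with Gabai's hierarchy is, I expect, where the bulk of the real work lies; the reduction scheme above is then essentially a translation of Gabai's inductive proof of non-triviality for taut foliations into the instanton setting.
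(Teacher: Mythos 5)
Your proposal has the right high-level ingredients for the hard direction (Gabai, sutured manifolds, tautness) but differs from the paper's route in ways that matter, and identifies as the ``main obstacle'' precisely the technical mountain that the paper's argument is engineered to \emph{avoid}.

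\textbf{Easy direction.} The paper's proof is simpler than what you sketch. If $e$ is an embedded bridge then the meridian $m_e$ is null-homotopic in $\R^{3}\sminus K$, so no homomorphism $\rho:\pi_{1}(\R^{3}\sminus K)\to\SO(3)$ can send $m_e$ to an order-$2$ element; hence $\Rep^{\sharp}(K)=\emptyset$, the chain complex for a small perturbation is zero, and $\Jsharp(K)=0$. No splitting principle, connected-sum analysis, or excision is required. Your parity-obstruction heuristic is in the right spirit but is considerably more machinery than needed, and you do not actually carry it out.

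\textbf{Hard direction.} The paper does \emph{not} prove a sutured-hierarchy decomposition theorem for $\Jsharp$ of webs --- the thing you flag as the bulk of the real work. Instead, it first passes from $\Jsharp$ to $\Isharp$ by a covering-space argument on the space of marked connections (Lemma~\ref{lem:covering} and the mod-$2$ Gysin sequence: non-vanishing of the double cover's homology implies non-vanishing below). Then, crucially, it uses the excision principle for spheres with three orbifold points (Theorem~\ref{thm:excision-street} and Proposition~\ref{prop:excise-vertices}) to \emph{remove all the trivalent vertices of $K$}: pairing up vertices and regluing yields a genuine \emph{link} $K^{+}$ in $\#^{n}(S^{1}\times S^{2})$, with no web structure left. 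After adding meridians and a further round of excision, one lands in the already-established non-vanishing theorem for sutured instanton homology $\SHI$ from \cite{KM-sutures}, whose proof is the one that goes through Gabai's hierarchy. The only genuinely new topological input is the elementary Lemma~\ref{lem:taut-to-taut}, which converts the hypotheses ``$K$ non-split, no embedded bridge'' into tautness of the sutured link complement $(M,\gamma)$. Your scheme, by contrast, proposes to work with a sutured manifold whose boundary still sees the vertex structure (spheres with triple-intersection patterns), and to prove a decomposition theorem for $\Jsharp$ directly in the presence of orbifold seams. That would indeed be harder, and is the step you would need but have not supplied. The lesson is that the vertex-excision move, not a new decomposition theorem, is the key idea; the Gabai-theoretic heavy lifting is inherited from the vertexless $\SHI$ theory rather than re-proved.

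One further omission: you do not address the split case. The paper handles it via the K\"unneth property $\Jsharp(K_{1}\sqcup K_{2})\cong\Jsharp(K_{1})\otimes\Jsharp(K_{2})$ (Corollary~\ref{cor:tensor-product}), which reduces the theorem to non-split webs; without this your hierarchy base case doesn't quite close the loop.
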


Based on the evidence of small examples, and some general properties
of $\Jsharp$, we also make the following conjecture:

\begin{conjecture}\label{thm:Tait-count}
    If the web $K$ lies in the plane, so $K\subset
    \R^{2}\subset \R^{3}$, then the dimension of $\Jsharp(K)$ is equal to
    the number of Tait colorings of $K$.
\end{conjecture}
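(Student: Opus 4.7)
The plan is to establish the conjecture by matching $\dim_{\F}\Jsharp(K)$ with the Tait-coloring count $\tau(K)$ on a few simple base cases and then propagating the equality along local moves under which both sides transform in the same way. Both quantities should be multiplicative under disjoint union — the instanton side via a cylindrical foam cobordism argument giving $\Jsharp(K_{1}\sqcup K_{2})\cong\Jsharp(K_{1})\otimes_{\F}\Jsharp(K_{2})$, and the combinatorial side because a Tait coloring of the union is a pair of Tait colorings — so it suffices to work with connected planar webs. The first base cases to verify are the empty web and the unknotted circle, where we expect $\dim\Jsharp=1$ and $\dim\Jsharp=3$ respectively, and the planar theta graph, where both sides should equal $6$.

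Next I would set up a reduction procedure for an arbitrary planar web in terms of two elementary planar moves: the bigon reduction (collapsing a face bounded by two edges to a single edge) and the triangle-to-Y move. Both moves give standard recursion relations for the number of Tait colorings of a bridgeless planar cubic graph, expressing $\tau(K)$ as a sum of the counts for strictly simpler webs. I would realize each move as a foam cobordism and, using the functoriality of $\Jsharp$ on the category of webs and foams established in this paper, extract a corresponding exact triangle in $\F$-vector spaces. Combined with the base cases, a dimension count along this recursion should then reproduce $\tau(K)$ from $\dim_{\F}\Jsharp(K)$.

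The principal obstacle is to promote each exact triangle into a dimension equality rather than merely an inequality: in a three-term exact sequence of finite-dimensional $\F$-vector spaces the dimensions of the outer terms determine the middle one only up to twice the dimension of the image of the connecting map, so one must show this connecting map vanishes at each step of the recursion. The essential input here is Theorem~\ref{thm:non-vanishing}: throughout the reduction the intermediate webs remain planar and bridgeless, so the dimensions stay strictly positive, and this positivity has to be leveraged — perhaps by exhibiting, for each Tait coloring of $K$, a distinguished geometric cycle in the relevant instanton chain complex on which the connecting map provably acts trivially. Making this last step work is the crux of the conjecture and is presumably the ingredient that would also underlie the program toward a new proof of the four-color theorem mentioned in the abstract.
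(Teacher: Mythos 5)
You are attempting to prove something that the paper explicitly presents as an \emph{open conjecture}; the authors do not prove it, and they state precisely why the strategy you outline cannot be carried to completion. Your overall plan — multiplicativity for disjoint unions, base cases (empty, unknot, theta), and local reduction via the bigon, triangle and square moves — does agree with what the paper actually proves in section~\ref{sec:simple}, but that section ends with the observation that this proves the conjecture only for webs reducible to unlinks or bridged webs by $n$-gon moves with $n\le 4$, and that a minimal counterexample to the conjecture can have no $n$-gons for $n\le 4$. That is the \emph{limit} of what the argument gives, not a proof of the conjecture.

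The concrete gap in your proposal is the pentagon. Euler's formula forces every bridgeless planar cubic graph to contain a face with at most $5$ sides, but the paper's local relations only handle $0$-gons (circles, via the tensor-product rule and Proposition~\ref{prop:unknot-u-map}), $1$-gons (bridges, giving zero), bigons (Proposition on rank doubling), triangles (Proposition~\ref{prop:triangle-relation}, an isomorphism), and squares (Proposition~\ref{prop:square-relation}, a direct sum). There is no pentagon relation, and your recursion therefore gets stuck on exactly the configurations that make the four-color theorem hard. Indeed the paper's framing makes clear that if a pentagon relation of the right form were available, one would essentially have reproved the four-color theorem, so one should not expect to supply this step by the soft arguments you describe.

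A secondary, more technical issue: the paper's bigon, triangle and square relations are not exact triangles with a connecting map that needs to vanish. The bigon relation is the equality $\dim\Jsharp(K)=2\dim\Jsharp(K')$, the triangle move is an isomorphism $\Jsharp(K)\cong\Jsharp(K')$, and the square relation is a direct-sum decomposition $\Jsharp(K)\cong\Jsharp(K')\oplus\Jsharp(K'')$. These are proved by exhibiting explicit foam morphisms and checking composites via the excision principle and closed-foam evaluations, not by controlling a boundary map in a long exact sequence. (An exact triangle for $\Jsharp$ is mentioned in the proof of Lemma~\ref{lem:upper-bounds}, but it is deferred to a later paper and is used there only for the upper bound on $\dim\Jsharp(L_4)$, not for the conjecture.) Your final paragraph, where you propose to leverage Theorem~\ref{thm:non-vanishing} and distinguished cycles to kill a connecting map, does not attach to any structure actually present in the relations, and does not address the pentagon obstruction; this is where the proposal has no real content.
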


The conjecture is true for planar webs which are bipartite. Slightly
more generally, we know that a minimal counterexample cannot contain
any squares, triangles, bigons or circles.  If the conjecture is true
in general, then, together with the preceding theorem, it would
establish that every bridgeless, planar trivalent graph admits a Tait
coloring.  The existence of Tait colorings for bridgeless trivalent
graphs in the plane is equivalent to the four-color theorem (see
\cite{Tait}), so the above conjecture would provide an alternative
proof of the theorem of Appel and Haken
\cite{Appel-Haken-book}: every planar graph admits a four-coloring.

\subsection{Functoriality}

The terminology of graphs as `webs' goes back to Kuperberg
\cite{Kuperberg-spiders} and was used by Khovanov in
\cite{Khovanov-sl3}, where Khovanov's $\sl_{3}$ homology, $F(K)$, was
defined for \emph{oriented, bipartite} webs $K$.  The results of
\cite{Khovanov-sl3} have motivated a lot of the constructions in this
paper, but our webs are more general, being unoriented and without the
bipartite restriction.  At least for planar webs, one can repeat
Khovanov's combinatorial definitions without the bipartite condition,
but only by passing to $\F$ coefficients. (We will take this up
briefly in section~\ref{sec:jflat}.) In this way one can sketch 
a possible combinatorial counterpart to $\Jsharp(K)$, though it
is not clear to the authors how to calculate it, or even that this
combinatorial version is finite-dimensional in general.

A key property of
$\Jsharp(K)$ is its functoriality for certain singular cobordisms
between webs. Following \cite{Khovanov-sl3}, these singular cobordisms
will be called \emph{foams}, though the foams that are appropriate
here are more general than those of \cite{Khovanov-sl3}, in line with
our more general webs (see also \cite{Vaz-et-al}).
A (closed) \emph{foam} $\Sigma$ in $\R^{4}$ will be a
compact $2$-dimensional subcomplex decorated with ``dots''. The
subcomplex is required to have one of the following models at each
point $x\in \Sigma$:
\begin{enumerate}
\item a smoothly embedded $2$-manifold in a neighborhood of $x$; 
\item a neighborhood  modeled on $\R \times K_{3}$ where
$K_{3}\subset \R^{3}$ is the union of three distinct rays meeting at
the origin; or
\item a cone in $\R^{3}\subset \R^{4}$ whose vertex is $x$ and whose base is the union of $4$
    distinct points in $S^{2}$ joined pairwise by $6$ non-intersecting great-circle
    arcs, each of length at most $\pi$.
\end{enumerate}

Points of the third type will be called \emph{tetrahedral points}, because
a neighborhood has the combinatorics of a cone on the $1$-skeleton of a
tetrahedron. (Singular points of this type appear in the foams of
\cite{Vaz-et-al}.) The points of the second type form a union of arcs called
the \emph{seams}. The seams and tetrahedral points together form an
embedded graph in $\R^{4}$ whose vertices have valence $4$. The
complement of the tetrahedral points and seams is a smoothly embedded
2-manifold whose components are the \emph{facets} of the foam. Each
facet is decorated with a number of \emph{dots}:  a possibly empty
collection of marked points.
No orientability is required of
the facets of $\Sigma$. 

Given webs $K$ and $K'$ in $\R^{3}$, we
can also consider \emph{foams with boundary},
$\Sigma\subset [a,b]\times \R^{3}$, modeled on $[a,a+\epsilon) \times
K$ at one end and $(b-\epsilon, b] \times K'$ at the other. The seams
and tetrahedral points comprise a graph with interior vertices of
valence $4$ and vertices of valence $1$ at the boundary.
are either circles or arcs whose endpoints lie on the boundary. We will
refer to such a $\Sigma$ as a (foam)-cobordism from $K$ to $K'$. A cobordism
gives rise to linear map,
\[
          \Jsharp(\Sigma) : \Jsharp(K) \to \Jsharp(K').
\]
Cobordisms can be composed in the obvious way, and composite
cobordisms give rise to composite maps, so we can regard $\Jsharp$ as
a functor from a suitable category of webs and foams to the
category of vector spaces over $\F$.

\subsection{Remarks about the proof of Theorem~\ref{thm:non-vanishing}}

 Theorem~\ref{thm:non-vanishing} belongs to the same family as
other non-vanishing theorems for instanton homology proved by the
authors in \cite{KM-sutures} and \cite{KM-unknot}. As with the earlier
results, the proof rests on the Gabai's existence theorem for sutured
manifold hierarchies, for taut sutured manifolds \cite{Gabai}.

Given a web $K\subset \R^{3}$, let us remove a small open ball around each
of the vertices, to obtain a manifold with boundary, each boundary
component being  sphere with $3$ marked points. Now identify these
boundary components in pairs (the number of vertices in a web 
is always even) to obtain a link $K^{+}$ in the connected sum of
$\R^{3}$ with a number of copies of $S^{1}\times S^{2}$. Add a point
at infinity to replace $\R^{3}$ by $S^{3}$. Let $M$ be the complement
of a tubular neighborhood of $K^{+}$. Form a sutured manifold
$(M,\gamma)$ by putting two meridional sutures on each of the torus
boundary components of $M$. By a sequence of applications of an
excision principle, we will show that the non-vanishing of
$\Jsharp(K)$ is implied by the non-vanishing of the \emph{sutured
  instanton homology}, $\SHI(M,\gamma)$, as defined in
\cite{KM-sutures}. Furthermore, $\SHI(M,\gamma)$ will be non-zero
provided that $(M,\gamma)$ is a \emph{taut} sutured manifold in the
sense of \cite{Gabai}.  Finally, by an elementary topological
argument, we show that the tautness of $(M,\gamma)$ is equivalent to
the conditions that $K\subset \R^{3}$ is not split (i.e. admits no
separating embedded $2$-sphere) and has no embedded
bridge. This establishes Theorem~\ref{thm:non-vanishing} for the case
of non-split webs, which is sufficient because of a multiplicative
property of $\Jsharp$ for split unions.

\subsection{Outline of the paper}

Section~\ref{sec:Preliminaries} deals with the gauge theory that is
used in the definition of $\Jsharp$. Section~\ref{sec:J} defines the functor
$\Jsharp$ and establishes its basic properties. We define $\Jsharp$ more generally
than in this introduction, considering webs in
arbitrary $3$-manifolds. In section~\ref{sec:relations}, we establish
properties of $\Jsharp$ that are sufficient to calculate $\Jsharp(K)$
at least when $K$ is a bipartite planar graph, as well as some other
simple cases. The arguments in section~\ref{sec:relations} depend on
applications of an excision principle which is discussed in section~\ref{sec:excision}.
The proof of Theorem~\ref{thm:non-vanishing} is given 
in
section~\ref{sec:proof-nonvanish}. Some further questions are
discussed in section~\ref{sec:further}.

\subparagraph{Acknowledgement.}  The authors are very grateful for the
support of the Radcliffe Institute for Advanced Study, which provided
them with the opportunity to pursue this project together as Fellows
of the Institute during the academic year 2013--2014.

\section{Preliminaries}
\label{sec:Preliminaries}

\subsection{Orbifolds}

We will consider $3$- and $4$-dimensional orbifolds with the following
local models. In dimension $3$, we require that every singular point of
the orbifold has a neighborhood which is modeled either on $\R \times
(\R^{2} / \{\pm 1\})$ or on $\R^{3} / V_{4}$, where $V_{4}$ is a
standard Klein $4$-group in $\SO(3)$. In the first of these cases, the
singular set is locally a smooth $1$-manifold, and in the second case
the singular set is three half-lines meeting at a single
vertex.  In
dimension $4$, we allow local models which are either products of one of the
above two $3$-dimensional models with $\R$, or one additional case, the
quotient of $\R^{4}$ by the group $V_{8}\subset \SO(4)$ consisting of
all diagonal matrices with entries $\pm 1$ and determinant $1$.

Every point in such an orbifold
has a neighborhood $U$ which is the codomain of an orbifold chart,
\[
         \phi : \tilde U \to U.
\]
The map $\phi$ is a quotient map for an action of a group $H$ which is
either trivial, $\F$,
$V_{4}$ or $V_{8}$. If $x$ is a point in $U$ and $\tilde x$ a preimage point, we
write $H_{x}$ for the stabilizer of $\tilde x$.

We introduce a term for this restricted class of orbifolds:

\begin{definition}
    A \emph{bifold} will mean a $3$- or $4$-dimensional orbifold whose
    local models belong to one of the particular types described above.
\end{definition}

 Our bifolds will be equipped
with an orbifold Riemannian metric, which we can regard as a smooth
metric $\check g$ on the complement of the singular set with the
property that the pull-back of $\check g$ via any orbifold chart
$\phi$ extends to a smooth metric on the domain $\tilde U$ of $\phi$.
In the $3$-dimensional case, the singular set is an embedded
web and the total space of the bifold is topologically a manifold. The
Riemannian metric will have cone-angle $\pi$ along each edge of the
graph. In the $4$-dimensional case, the singular set is an embedded
$2$-dimensional complex $\Sigma$ with the same combinatorial structure
as a foam, and cone-angle $\pi$ along the facets of $\Sigma$.

\subsection{Bifolds from embedded webs and foams}

Given a smooth Riemannian $3$-manifold $Y$ containing a web
$K$, smoothly embedded in the sense described in the introduction, 
we can construct a Riemannian
bifold $\check Y$ with a copy of $K$ as its singular set by
modifying the Riemannian metric in a neighborhood of $K$ to introduce
the required cone angles. To do
this, we first adjust  $K$ so that, at each vertex the incident edges
arrive with tangent directions which are coplanar and at equal angles
$2\pi /3$. This can be done in a standard way, because the space of
triples of distinct points in $S^{2}$ deformation-retracts to the
subspace of planar equilateral triangles. Once this is done, the
neighborhoods of the vertices are standard and we can use the smooth
Riemannian metric $g$ on $Y$ to introduce coordinates in which to
modify the Riemannian metric, making it isometric to $\R^{3}/V_{4}$ in
a neighborhood of the singular set near the vertex. Along each edge,
we continue the modification, using exponential normal coordinates to
modify the metric and introduce a cone-angle $\pi$. 

In dimension $4$, we can similarly start with a smooth Riemannian
$4$-manifold $(X, g)$ and an embedded foam $\Sigma$, and modify
the metric near $\Sigma$ to produce a bifold $(\check X, \check
g)$. Again, the first step is to modify $\Sigma$ near the tetrahedral
points and seams so
that, at each point $x$ on a seam, the tangent planes of the three incident branches
of $\Sigma$  lie in a single $3$-dimensional subspace of $T_{x}\R^{4}$
and are separated by angles $2\pi/3$. The structure of $\Sigma$ is
then locally standard, and we can use normal coordinates as before.

 We will often pass freely
from a pair $(Y,K)$ or $(X,\Sigma)$, consisting of a Riemannian $3$- or $4$-manifold
with an embedded web or foam, to the corresponding
Riemannian bifold $\check Y$ or $\check X$.

\subsection{Orbifold connections}
\label{sec:connections}

Let $\check Y$ be a closed, connected $3$-dimensional bifold, and $K\subset \check Y$
the singular part (which we may regard as a web). A $C^{\infty}$ orbifold $\SO(3)$-connection over
$\check Y$ means an oriented $\R^{3}$-vector bundle $E$ over $\check Y
\sminus K$
with an $\SO(3)$ connection $A$ having the property that the pull-back
of $(E,A)$ via any orbifold chart $\phi: \tilde U \to U$ extends to a
smooth pair $(\tilde E, \tilde A)$ on $\tilde U$. (It may be that the
bundle $E$ cannot be extended to all of $\check Y$ as a topological
vector bundle.)

If $U$ is the codomain of an orbifold chart around $x$ and $\tilde x$
is a preimage in $\tilde U$, then the stabilizer $H_{x}$ acts on the
fiber $\tilde E_{\tilde x}$. We will require that this action is
\emph{non-trivial} at all points where $H_{x}$ has order $2$, i.e.~at
all points on the edges of the web $K$. 
We introduce a name for orbifold
connections of this type:

\begin{definition}
    A \emph{bifold connection} on a bifold $\check Y$ will mean an
    $\SO(3)$ connection for which all the order-$2$ stabilizer groups
    $H_{x}$ act non-trivially on the corresponding fibers $\tilde
    E_{\tilde x}$.
\end{definition}

This determines the action of $H_{x}$ on $\tilde
E_{\tilde x} \cong \R^{3}$ also at the vertices of $K$, up to conjugacy: we  have a standard
action of $V_{4}$ on the fiber.
 If $\gamma$ is a
standard meridional loop of diameter $\epsilon$ about an edge of $K$,
then the holonomy of $A$ will approach an element of order $2$ in
$\SO(3)$ as $\epsilon$ goes to zero. 

Given an orbifold connection $(E,A)$, we can use the connection $A$
and the Levi-Civita connection of the Riemannian metric $\check g$ to
define Sobolov norms on the spaces of sections of $E$ and its
associated vector bundles. We fix a sufficiently large Sobolev
exponent $l$ ($l\ge 3$ suffices) and we consider orbifold
$\SO(3)$-connections $A$ of class $L^{2}_{l}$. By this we mean that
there exists a $C^{\infty}$ bifold connection $(E, A_{0})$ so that
$A$ can be written as $A_{0} + a$, where $a$ belongs to the Sobolev
space
\[
          L^{2}_{l, A_{0}}(\check Y\sminus K, \Lambda^{1} Y \otimes E).
\]
Under these circumstances, the Sobolev norms $L^{2}_{j,A}$ and
$L^{2}_{j,A_{0}}$ are equivalent for $j\le l+1$.

If we have two $\SO(3)$ bifold connections of class $L^{2}_{l}$, say $(E,A)$
and $(E', A')$, then an isomorphism between them is a bundle map
$\tau : E\to E'$ over $\check Y \sminus K$ of class $L^{2}_{l+1}$
such that $\tau^{*}(A') = A$. The group $\Gamma_{E,A}$ of
automorphisms of $(E, A)$ can be identified as usual with the group of
parallel sections of the associated bundle with fiber $\SO(3)$. It is
isomorphic to either the trivial group, the group of order $2$, 
the group $V_{4}$, the group
$O(2)$ or all of $\SO(3)$. If $K$ has at least one vertex, then
$\Gamma_{E,A}$ is no larger than $V_{4}$.

We write $\bonf_{l}(\check Y)$ for the space of all isomorphism classes of
bifold connections of class $L^{2}_{l}$. By the usual application of
a Coulomb slice, the neighborhood of an isomorphism class 
$[E,A]$ in $\bonf_{l}(\check Y)$ can be given the structure $S/\Gamma_{E,A}$,
where $S$ is a neighborhood of $0$ in a Banach space and the group
acts linearly. In particular, if $K$ has at least one vertex, then
$\bonf_{l}(\check Y)$ is a Banach orbifold.

All the content of this subsection carries over without essential
change to the case of a $4$-dimensional bifold $\check X$, where we also
have a space $\bonf_{l}(\check X)$ of isomorphism classes of 
$\SO(3)$ bifold connections of class $L^{2}_{l}$. We still require
that the order-$2$ stabilizers $H_{x}$ act non-trivially on the fibers,
and this condition determines the action of the order-$8$ stabilizers
$V_{8}$ at the tetrahedral points, up to conjugacy.

In the
$4$-dimensional case (and sometimes also in the $3$-dimensional case,
if $Y\setminus K$ has non trivial homology) the space
$\bonf_{l}(\check X)$ or $\bonf_{l}(\check Y)$ will 
have than one component, because the $\SO(3)$ bifold connections
belong to different topological types.

\subsection{Marked connections} 

Because $\SO(3)$ bifold connections may have non-trivial automorphism groups,
we introduce marked connections. By \emph{marking data} $\mu$ on a
$3$-dimensional bifold $\check Y$ we mean a pair $(U_{\mu}, E_{\mu})$, where:
\begin{itemize}
\item  $U_{\mu}\subset \check Y$ is any subset; and
\item  $E_{\mu} \to U_{\mu}\sminus K$ is an $\SO(3)$ bundle (where
    $K$ denotes the singular set).
\end{itemize}
 An $\SO(3)$ connection
\emph{marked by $\mu$} will mean a triple
$(E,A,\sigma)$, where 
\begin{itemize}
\item $(E,A)$ is a bifold $\SO(3)$ connection as
before; and
\item $\sigma : E_{\mu} \to E|_{U_{\mu}\sminus K}$ is an
isomorphism of $\SO(3)$ bundles. 
\end{itemize}
An \emph{isomorphism} between marked connections $(E,A,\sigma)$ and $(E',
A', \sigma')$ is a bundle map
$\tau:E' \to E$ such that
\begin{itemize}
\item $\tau$ respects the connections, so $\tau^{*}(A) = A'$; and
\item the automorphism 
\[
    \sigma^{-1}\tau\sigma' : E_{\mu} \to E_{\mu}
\]
lifts to the ``determinant-$1$ gauge group''. That is, when we regard
$\sigma^{-1}\tau\sigma'$ as a section of the bundle $\SO(3)_{E_{\mu}}$
with fiber $\SO(3)$ (associated to $E_{\mu}$ by the adjoint action),
we require that this section lift to a section of the bundle
$\SU(2)_{E_{\mu}}$ associated to the adjoint action of $\SO(3)$ on $\SU(2)=\Spin(3)$.
\end{itemize}

We will say that the marking data $\mu$ is \emph{strong} if the
automorphism group of every $\mu$-marked bifold connection on
$\check Y$ is trivial.

\begin{lemma}\label{lem:marking}
    The marking data $\mu$ on a $3$-dimensional bifold $\check Y$ 
    is strong if either of the following hold.
    \begin{enumerate}
    \item \label{it:vetex-strong} The set $U_{\mu}$ contains a neighborhood of a vertex of
        $K$.
    \item\label{it:Hopf-strong} The set $U_{\mu}$ contains a $3$-ball $B$ which meets $K$ in a
        Hopf link $H$ contained in the interior of $B$, and
        $w_{2}(E_{\mu})|_{B}$ is Poincar\'e dual to an arc joining the
        two components of $H$.
    \end{enumerate}
\end{lemma}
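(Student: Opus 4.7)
I would prove both parts by the same two-step scheme, using the earlier observation that automorphisms $\tau$ of a bifold connection are parallel sections of $\SO(3)_E$, combined with the group-theoretic fact that the preimage of $V_4\subset\SO(3)$ in $\SU(2)$ is the quaternion group $Q_8$ whose $\SU(2)$-centralizer is only $\{\pm 1\}$. First I pin down the local value of $\tau$ inside $U_\mu$ to lie in $V_4$; then I use the centralizer calculation to obstruct the lift of $\sigma^{-1}\tau\sigma$ to $\SU(2)_{E_\mu}$. Since a lift on all of $U_\mu\setminus K$ restricts to a lift on any smaller open set, it suffices to exhibit the obstruction on a small ball inside $U_\mu$.

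\textbf{Case~(\ref{it:vetex-strong}).} Take a small ball $B\subset U_\mu$ around the vertex $v$ and work in the orbifold chart $\phi:\tilde B\to B$, where $V_4$ acts on the Euclidean ball $\tilde B$ with fixed set a tripod $\tilde K$. The pullback $\tilde\tau$ is a $V_4$-equivariant parallel section of $\SO(3)_{\tilde E}$ on $\tilde B\setminus\tilde K$ which extends smoothly across $\tilde K$ to all of $\tilde B$. At the origin, $V_4$-equivariance together with the standard $V_4$-action on the fiber forces $\tilde\tau(v)$ into the centralizer of $V_4$ in $\SO(3)$, which is $V_4$ itself. If $\tau\neq 1$ then $\tilde\tau(v)\in V_4\setminus\{1\}$, so the pushforward $s=\sigma^{-1}\tau\sigma$ is a correspondingly non-trivial parallel section of $\SO(3)_{E_\mu}$ on $B\setminus K$. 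The bundle $E_\mu$ on this ball-minus-tripod is flat with holonomy the full $V_4$, so any parallel section of $\SU(2)_{E_\mu}$ must centralize the $Q_8$ lift; such a section can only be $\pm 1$, which projects to the identity in $\SO(3)$. Hence $s$ does not lift.

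\textbf{Case~(\ref{it:Hopf-strong}).} Let $h_1,h_2\in\SO(3)$ be the order-two bifold holonomies around the two components $H_1,H_2$ of the Hopf link, and let $\tilde h_1,\tilde h_2\in\SU(2)$ be lifts. A direct quaternion computation gives $[\tilde h_1,\tilde h_2]=+1$ when $h_1=h_2$ and $[\tilde h_1,\tilde h_2]=-1$ when $h_1\neq h_2$. This commutator is the obstruction to lifting $E_\mu$, restricted to the tubular-boundary torus $T_i=\partial N(H_i)$, to $\SU(2)$, and so equals the $\F$-valued pairing $\langle w_2(E_\mu|_B),[T_i]\rangle$. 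The hypothesis that $w_2(E_\mu|_B)$ is Poincar\'e dual to an arc joining $H_1$ to $H_2$ gives $\langle w_2,[T_i]\rangle=1$ for each $i$, so $h_1\neq h_2$ and $\langle h_1,h_2\rangle=V_4\subset\SO(3)$. The argument now closes exactly as in Case~(\ref{it:vetex-strong}): a non-trivial parallel $\tau$ must take values in $V_4$, and the $Q_8$-centralizer calculation obstructs any lift of $\sigma^{-1}\tau\sigma$.

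\textbf{Main obstacle.} The centralizer calculations in $\SU(2)$ are entirely elementary. The subtler point, and the one I expect to take the most care, is the topological translation in Case~(\ref{it:Hopf-strong}): verifying that the Poincar\'e-Lefschetz dual description of $w_2(E_\mu|_B)$ really matches the commutator obstruction on each torus $T_i$, and checking consistency with the homological relation $[T_1]+[T_2]=[\partial B]$ in $H_2(B\setminus H;\F)$ so that the pairing correctly identifies the two boundary tori with the two components of the Hopf link.
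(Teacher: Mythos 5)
Your overall strategy (automorphisms are parallel sections, so they lie in the $\SU(2)$-commutant of the holonomy group, and two distinct involutions already force that commutant to be central) is exactly the mechanism in the paper's proof, and the group theory is fine. The two cases, however, need separate assessment.

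\textbf{Case~(\ref{it:vetex-strong}).} This is essentially the paper's argument, though more roundabout. The detour through the orbifold chart to show $\tilde\tau(v)\in V_4$ is not really needed: any parallel section of $\SO(3)_{E_\mu}$ automatically takes values in the centralizer of the (closure of the) holonomy group, so one can just argue about the commutant directly, as the paper does. More importantly, your claim that ``$E_\mu$ on this ball-minus-tripod is flat with holonomy the full $V_4$'' is not correct: a bifold connection is in no way required to be flat near a vertex. What is true, and what your argument actually uses, is that the closure of the holonomy group of $A_\mu$ contains $V_4$, because the holonomy around a shrinking meridian of each edge limits to an order-$2$ element and the three such elements at a vertex generate $V_4$. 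With that correction your argument is fine.

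\textbf{Case~(\ref{it:Hopf-strong}).} Here there is a genuine gap. Your ``direct quaternion computation'' asserting $[\tilde h_1,\tilde h_2]=\pm 1$ according as $h_1=h_2$ or $h_1\neq h_2$ is simply false for non-commuting involutions: if $\tilde h_1=i$ and $\tilde h_2=\cos\theta\,i+\sin\theta\,j$, then $[\tilde h_1,\tilde h_2]=\cos 2\theta-\sin 2\theta\,k$, which is $\pm 1$ only for $\theta=0$ or $\pi/2$. Your identification of the commutator with $\langle w_2(E_\mu|_B),[T_i]\rangle$ is only valid when the restriction of $A_\mu$ to the torus $T_i$ is flat and the two generators of $\pi_1(T_i)$ are sent by a genuine holonomy \emph{representation} to $h_1$ and $h_2$ respectively; none of this holds for a general marked bifold connection. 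Since strength of the marking must rule out automorphisms of \emph{every} marked connection, not only the flat ones, the argument does not establish the lemma. The paper's argument avoids this by working along a \emph{single} component of the Hopf link: for $q$ moving around (say) $H_1$, the limiting meridian holonomy $h_q$ describes a loop in the conjugacy class $\RP^2$ of involutions, and the class of that loop in $\pi_1(\RP^2)\cong\Z/2$ is precisely $\langle w_2(E_\mu|_{T_1}),[T_1]\rangle$. When this is nonzero the loop is not null-homotopic, so it is certainly not constant, and one obtains two distinct involutions $h_q,h_{q'}$ in the closure of the holonomy group with no flatness or commutativity assumptions. That is the step your proposal is missing; the concern you flag in your final paragraph is pointing at exactly this difficulty.
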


\begin{proof}
    From a $\mu$-marked bundle $(E,A,\tau)$, we obtain by pull-back a
    connection $A_{\mu}$ in the bundle $E_{\mu}$. The automorphisms of
    $(E,A,\tau)$ are a subgroup of the group of $A_{\mu}$-parallel
    sections of the associated bundle $\SU(2)_{\mu}$ with fiber
    $\SU(2)$.

     Pick a point $y$ in $U_{\mu}\sminus K$ and consider the holonomy
     group of $A_{\mu}$ at this point, as a subgroup of $\SO(3)$ (the
     automorphisms of the fiber of $E_{\mu}$ at $p$. In case
     \ref{it:vetex-strong}, the closure of the holonomy group contains
     $V_{4}$. We can find a subset of $U_{\mu}$ with trivial second
     homology over which the closure of the holonomy group still
     contains $V_{4}$. Over this subset, we can lift $E_{\mu}$ to an
     $\SU(2)$ bundle and lift $A_{\mu}$ to an $\SU(2)$ connection
     $\hat A_{\mu}$. The
     holonomy group of $\hat A_{\mu}$ contains lifts of the generators
     of $V_{4}$, which implies that its commutant in $\SU(2)$ is
     trivial.

     In case \ref{it:Hopf-strong}, pick one component of the Hopf link
     and a point $q$ on this component. Using loops based at $p$
     running around small meridional loops near $q$, we see that
     the closure of the holonomy group contains an element $h_{q}$ of
     order $2$ in $\SO(3)$. As $q$ varies, this element of order $2$
     cannot be constant, for otherwise $w_{2}$ would be zero. So the
     holonomy group contains two distinct involutions $h_{q}$ and
     $h_{q'}$. We can now lift to $\SU(2)$ as in the previous case to
     see that the commutant in $\SU(2)$ is trivial.
\end{proof}

We write $\bonf_{l}(\check Y; \mu)$ for the space of isomorphism
classes of $\mu$-marked bifold connections of class $L^{2}_{l}$ on
the bifold $\check Y$. If the marking data is strong, then
$\bonf_{l}(\check Y; \mu)$ is a Banach manifold modeled locally on
the Coulomb slices. There is a map that forgets the marking,
\[
      \bonf_{l}(\check Y; \mu) \to \bonf_{l}(\check Y).
\]
Its image consists of a union of some
connected components of $\bonf_{l}(\check Y)$, namely the components
comprised of isomorphism classes of connections $(E,A)$ for which the
restriction of $E$ to $U_{\mu}\sminus K$ is isomorphic to $E_{\mu}$.

Slightly more generally, we can consider the case that we have two
different marking data, $\mu$ and $\mu'$ with $U_{\mu} \subset
U_{\mu'}$ and $E_{\mu} = E_{\mu'}|_{U_{\mu}\sminus K}$. In this
case, there is a forgetful map
\begin{equation}\label{eq:covering}
 r : \bonf_{l}(\check Y; \mu') \to \bonf_{l}(\check Y; \mu).
\end{equation}

\begin{lemma}\label{lem:covering}
    The image of the map \eqref{eq:covering} consists of a union of
    connected 
    components of $\bonf_{l}(\check Y; \mu)$. Over these components, the map
    $r$ is a covering map with covering group an elementary
    abelian $2$-group, namely the group which is the kernel of the
    restriction map
    \[
         H^{1}(U_{\mu'}\sminus K ; \F) \to
         H^{1}(U_{\mu}\sminus K ; \F).
    \]
\end{lemma}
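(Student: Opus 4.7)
The plan is to compute the fibers of $r$ by direct gauge-theoretic analysis, identify them with $\ker(\mathrm{res})$ via the obstruction map coming from the central extension $1\to\F\to\SU(2)\to\SO(3)\to 1$, and then verify local triviality of $r$.

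First, I would parametrize the fiber over a point $[E,A,\sigma]\in\bonf_l(\check Y;\mu)$. Any two $\mu'$-extensions $\sigma'_1,\sigma'_2$ of a chosen representative $\sigma$ differ by a unique $h=(\sigma'_1)^{-1}\sigma'_2\in\mathrm{Aut}_{\SO(3)}(E_{\mu'})$. The restrictions $\sigma'_1|_{U_\mu}$ and $\sigma'_2|_{U_\mu}$ are $\mu$-equivalent exactly when $h|_{U_\mu\sminus K}$ lifts to a section of $\SU(2)_{E_\mu}$, while $\sigma'_1$ and $\sigma'_2$ are themselves $\mu'$-equivalent exactly when $h$ lifts globally to $\SU(2)_{E_{\mu'}}$. (When $(E,A)$ has non-trivial parallel automorphisms $\tau$, one inserts $\tau$ into the chain of equalities, but its obstruction classes on $U_\mu$ and $U_{\mu'}$ are compatible under restriction, so the analysis is unchanged.) The fiber of $r$ is therefore the quotient of the two subgroups of $\mathrm{Aut}_{\SO(3)}(E_{\mu'})$ just described.

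Next, I would apply the obstruction homomorphism coming from the central extension,
\[
\delta\colon\mathrm{Aut}_{\SO(3)}(E_{\mu'})\longrightarrow H^1(U_{\mu'}\sminus K;\F),
\]
whose kernel is the subgroup of gauge transformations that lift to $\SU(2)_{E_{\mu'}}$. Naturality with respect to restriction to $U_\mu$ yields a commutative square with the analogous map $\delta_\mu$, from which the fiber is identified with $\mathrm{Im}(\delta)\cap\ker(\mathrm{res})$. To see that this equals all of $\ker(\mathrm{res})$, I would realize any $\alpha\in\ker(\mathrm{res})$ by an explicit gauge transformation trivial on $U_\mu$: the long exact sequence of the pair writes $\alpha$ as the image of a relative class in $H^1(U_{\mu'}\sminus K,\,U_\mu\sminus K;\F)$, and Poincar\'e--Lefschetz duality realizes such a class by an embedded codimension-one $\F$-cycle $S\subset U_{\mu'}\sminus(K\cup\overline{U_\mu})$. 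Choose a tubular neighborhood $N$ of $S$ small enough that $E_{\mu'}|_N$ is trivial, and define $h$ to be the identity outside $N$ and, in the chosen trivialization, the gauge transformation executing one full rotation $R(2\pi t)$ about a fixed axis of $\SO(3)$ as the normal coordinate $t$ crosses $N$. Then $h|_{U_\mu}=1$ and $\delta(h)=\alpha$. Being a subgroup of $H^1(\cdot;\F)$, the fiber is an elementary abelian $2$-group.

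Finally, a Coulomb-slice chart for $\bonf_l(\check Y;\mu)$ near $[E,A,\sigma]$ lifts, via each extension of $\sigma$ to a $\mu'$-marking, to a parallel chart for $\bonf_l(\check Y;\mu')$; these charts are permuted freely and transitively by the covering-group action $\sigma'\mapsto\sigma' h$ with $h$ realizing a class in the fiber. This displays $r$ as locally trivial over its image. Because the covering-group action is by homeomorphisms, the image is open and closed in $\bonf_l(\check Y;\mu)$, hence a union of connected components. The main obstacle is the realizability step: producing, from a prescribed class in $\ker(\mathrm{res})$, a concrete gauge transformation of $E_{\mu'}$ trivial on $U_\mu$. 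The Poincar\'e-dual cycle construction handles this cleanly thanks to the low dimension of the bifolds and because $E_{\mu'}$ trivializes over a sufficiently thin tubular neighborhood of the dual cycle, reducing the local model to the standard generator of $\pi_1(\SO(3))=\F$.
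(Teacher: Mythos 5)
Your argument follows the same route as the paper's proof: both parametrize the fiber of $r$ by automorphisms of $E_{\mu'}$ whose restriction to $E_\mu$ lifts to determinant $1$, taken modulo those lifting globally, and match the quotient with $\ker(\mathrm{res})$ via the obstruction class; the realizability step you supply is precisely what the paper's terse ``these form a group isomorphic to the kernel of the restriction map'' leaves implicit. Two minor points worth tightening: (i) if $w_2(E_{\mu'})$ is nonzero on a closed component of the Poincar\'e-dual surface $S$, then $E_{\mu'}$ does not trivialize over the tubular neighborhood $N$, but the construction survives because any $\SO(3)$-bundle over a $2$-complex admits an $\SO(2)$-reduction and the transverse $2\pi$-rotation can be taken about the axis of that reduction (equivalently, the odd-loop component of $\SO(3)$ is simply connected, so a section of the associated loop-space bundle over $S$ exists without a trivialization); (ii) the parenthetical about a nontrivial parallel automorphism $\tau$ is better handled by invoking strongness of $\mu$ directly --- if $\sigma_1', \sigma_2'$ both restrict to $\mu$-markings isomorphic to $\sigma$ and some $\tau\in\Gamma_{E,A}$ makes $(\sigma_1')^{-1}\tau\sigma_2'$ lift, then restriction to $U_\mu$ shows $\sigma^{-1}\tau\sigma$ lifts, so strongness forces $\tau=1$ and the $\tau$ drops out.
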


\begin{proof}
   The image consists of isomorphism classes of triples $(E,A,\tau)$
   for which the map $\tau: E_{\mu} \to E$ can be extended to some $\tau' :
   E_{\mu'} \to E$ over $U_{\mu'}\sminus K$. This set
   is open and closed in $\bonf_{l}(\check Y; \mu)$, so it is a union
   of components of this Banach manifold.

   If $(E,A,\tau')$ and $(E,A,\tau'')$ are two elements of a fiber of $r$, then
   the difference of $\tau'$ and $\tau''$ is an automorphism
   $\sigma : E_{\mu'} \to E_{\mu'}$ whose restriction to $E_{\mu}$
   lifts to determinant $1$. The fiber consists of such automorphisms
   $\sigma$ modulo those that lift to determinant $1$ on the whole of
   $E_{\mu'}$. These form a group isomorphic to the kernel of the
   restriction map
    \[
         H^{1}(U_{\mu'}\sminus K ; \F) \to
         H^{1}(U_{\mu}\sminus K ; \F).
    \]
   Over the components that form its image, the map $r$ is a
   quotient map for this elementary abelian $2$-group.
\end{proof}

All of the above definitions can be formulated
in the $4$-dimensional case. So for a compact, connected
4-dimensional bifold $\check X$ we can talk about marking data $\mu$
in the same way. We have a space $\bonf_{l}(\check X; \mu)$
parametrizing isomorphism classes of $\mu$-marked $\SO(3)$
connections, and this space is a Banach manifold if $\mu$ is strong.

\subsection{ASD connections and the index formula}

Let $\check X$ be a closed, oriented Riemannian bifold of
dimension $4$, and let $(X,\Sigma)$ be the associated pair. We can, as
usual, consider the anti-self-duality condition,
\[
        F^{+}_{A}=0
\]
in the bifold setting. We write
\[
           M(\check X) \subset \bonf_{l}(\check X)
\]
for the moduli space of anti-self-dual bifold connections. It is independent
of the choice of Sobolev exponent $l\ge 3$. We can also introduce
marking data $\mu$, and consider the moduli space
\[
         M(\check X ; \mu) \subset \bonf_{l}(\check X ; \mu).
\]

We write $\kappa(E,A)$ for the orbifold version of the characteristic
class $-(1/4) p_{1}(E)$, which we can compute as the Chern-Weil integral,
       \begin{equation}\label{eq:kappa}
                 \kappa( E,A) = \frac{1}{32\pi^{2}}\int_{\check X} \tr
                     (F_{ A} \wedge F_{ A}) .
\end{equation}
Our normalization means that $\kappa$ coincides with $c_{2}(\tilde
E)[X]$ if $E$ lifts to an $\SU(2)$ bundle $\tilde E$ and $\Sigma$ is
absent. In general $\kappa$ may be non-integral. We refer to $\kappa$
as the (topological) \emph{action}.

The formal dimension of
the moduli space in the neighborhood of $[E,A]$ 
is given by the index of the linearized equations
with gauge fixing, which we write as
 \begin{equation}\label{eq:d-def}
      d( E, A) = \ind (-d^{*}_{A} \oplus d^{+}_{A}).
\end{equation}
This definition does not require $A$ to be anti-self-dual and defines
a function which is constant on the components of $\bonf_{l}(\check
X)$. If the marking is strong, then for a generic metric $\check g$ on
$\check X$, the marked moduli space $M(\check X ; \mu)$ is smooth and
of dimension $d(E,A)$ in the neighborhood of any anti-self-dual
connection $(E,A)$, unless the connection is flat.

We shall give a formula for the formal dimension $d(E,A)$ in terms of
$\kappa$ and the topology of $(X,\Sigma)$. To do so, we must digress
to say more about foams $\Sigma$.

We shall define a self-intersection number $\Sigma\cdot\Sigma$ which
coincides with the usual self-intersection number of a (not
necessarily orientable) surface in $X$ if there are no seams. We can
regard $\Sigma$ as the image of an immersion of a surface with corners,
$i:\Sigma^{+}\to X$, which is injective except at $\partial \Sigma^{+}$,
which is mapped to the seams as a $3$-fold covering. The corners of
$\Sigma^{+}$ are mapped to the tetrahedral points of $\Sigma$. The 
tangent spaces to the three branches of $\Sigma$ span a
3-dimensional subspace $V_{q}$ at each point $q$ on the seams and vertices. This
determines a $3$-dimensional subbundle
\[
            V \subset i^{*}(TX)|_{\partial\Sigma^{+}}.
\]
Since $V$ contains the directions tangent to $i(\Sigma^{+})$, it
determines a $1$-dimensional subbundle $W$ of the normal bundle $N\to\Sigma^{+}$ to the
immersion:
\[
           W \subset N|_{\partial\Sigma^{+}}.
\]
Although the $2$-plane bundle $N \to \Sigma^{+}$ is not necessarily
orientable, it has a well-defined ``square'', $N^{[2]}$. (Topologically, this is
the bundle obtained by identifying $n$ with $-n$ everywhere.) The
orientation bundles of both $N$ and $N^{[2]}$ are canonically
identified with the orientation bundle of $\Sigma^{+}$, using the
orientation of $X$.
The subbundle $W$ determines a section $w$ of
$N^{[2]}|_{\partial\Sigma^{+}}$, and there is a relative Euler number,
\begin{equation}\label{eq:euler-number}
         e(N^{[2]}, w)[\Sigma^{+},\partial\Sigma^{+}]
\end{equation}
obtained from the pairing in (co)homology with coefficients in the
orientation bundle.

\begin{definition}\label{def:self-intersection}
    We define $\Sigma\cdot\Sigma$ to be half of the relative Euler
    number \eqref{eq:euler-number}.
\end{definition}
 
If $V$ is orientable, then so is
$W$. In this case, we may choose a trivialization of $W$ and
obtain a section of $N$ along $\partial \Sigma^{+}$. This removes the
need to pass to the square of $N$ and also shows that
$\Sigma\cdot\Sigma$ is an integer. If $V$ is non-orientable, then
$\Sigma\cdot\Sigma$ may be a half-integer.

With this definition out of the way, we can state the index theorem
for the formal dimension $d(E,A)$.

\begin{proposition}\label{prop:dimension}
    The index $d(E,A)$ is given by
\begin{equation}\label{eq:dimension}
        d(E,A) = 8 \kappa - 3 (1 -  b^{1}(X) + b^{+}(X)) +
        \chi(\Sigma) + \frac{1}{2} \Sigma\cdot\Sigma - \frac{1}{2}|\tau|,
\end{equation}
where $\chi(\Sigma)$ is the ordinary Euler number of the $2$-dimensional
complex underlying the foam $\Sigma$, the integer $|\tau|$ is the
number of tetrahedral points of $\Sigma$, and $\Sigma\cdot\Sigma$ is
the self-intersection number from
Definition~\ref{def:self-intersection}.
\end{proposition}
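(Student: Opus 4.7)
The plan is to compute the index by additivity, starting from the familiar smooth case and accounting for the singular strata of $\Sigma$ one at a time. Write $d_{0}(E,A) = 8\kappa - 3(1 - b^{1}(X) + b^{+}(X))$ for the Atiyah--Hitchin--Singer formula on a smooth closed Riemannian $4$-manifold. When $\Sigma = \emptyset$, both sides of \eqref{eq:dimension} reduce to $d_{0}$ and there is nothing to prove. The task is therefore to show that $d(E,A) - d_{0}(E,A)$ equals $\chi(\Sigma) + \tfrac{1}{2}\,\Sigma\cdot\Sigma - \tfrac{1}{2}|\tau|$, realized as a sum of universal local contributions from the facets, seams, and tetrahedral points of $\Sigma$.

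First I would settle the case in which $\Sigma$ is a smoothly embedded (possibly non-orientable) closed surface with no seams or tetrahedral points. Near such a $\Sigma$ the bifold model is $\R^{2}\times(\R^{2}/\{\pm 1\})$, and the bifold condition forces the $\SO(3)$ connection to reduce locally to an $O(2)$ connection (the order-two stabilizer acts as $\operatorname{diag}(1,-1,-1)$ on $\R^{3}$, preserving an $\R\oplus\R^{2}$ splitting). This is the setup of \cite{KM-unknot}, and the index computation there---extended from the orientable to the general case by passing to $N^{[2]}$ in place of $N$---gives exactly $\chi(\Sigma) + \tfrac{1}{2}\Sigma\cdot\Sigma$ for the correction.

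Next comes the excision step. Because the linearized ASD operator is local and elliptic, the difference $d(E,A) - d_{0}(E,A)$ is additive under cutting $\check X$ along smooth hypersurfaces disjoint from $\Sigma$, and more generally depends only on the germ of the bifold along $\Sigma$; one formalizes this with an Atiyah--Patodi--Singer argument on tubular neighborhoods of the strata. Hence the total correction decomposes as a sum of contributions, one from each facet, each seam, and each tetrahedral point, each of which is a universal constant depending only on the local model. The facet contribution from the previous step is $\chi(F) + \tfrac{1}{2}e(N^{[2]},w)[F,\partial F]$; summing over the facets of the resolution $\Sigma^{+}\to \Sigma$ and including the Euler-characteristic contributions along the seams (whose own local bifold model $\R\times(\R^{3}/V_{4})$ is a product and contributes no additional Euler-class term) reassembles $\chi(\Sigma) + \tfrac{1}{2}\Sigma\cdot\Sigma$ by the very definition of $\Sigma\cdot\Sigma$. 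This is essentially a bookkeeping check that the relative Euler class of $(\Sigma^{+},\partial\Sigma^{+})$ glues correctly.

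The hard part will be pinning down the tetrahedral contribution $c_{\tau}$. Excision already guarantees that each tetrahedral point contributes a universal constant, independent of the ambient bifold and of the connection. To show $c_{\tau} = -\tfrac{1}{2}$ I would exhibit one explicit closed bifold containing a controlled number of tetrahedral points whose index can be computed by independent means---most naturally a quotient of a smooth $4$-manifold by a $V_{8}$-action, or a connected-sum model in which two transverse seams cross at a single tetrahedral vertex---and then match both sides of \eqref{eq:dimension}, with the other terms already known from the earlier steps, to solve for $c_{\tau}$. Setting up a tractable model and verifying that the combinatorial inputs $\chi(\Sigma)$ and $\Sigma\cdot\Sigma$ (as interpreted through the ``surface with corners'' $\Sigma^{+}$) are computed consistently with the $V_{8}$ singularity is the main technical obstacle.
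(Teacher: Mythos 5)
Your overall plan---start from the Atiyah--Hitchin--Singer formula, use the smooth-surface case of \cite{KM-unknot}, invoke excision/locality of the index to decompose the correction into universal contributions from the singular strata, and finally pin down the tetrahedral constant by a $V_8$ model computation---is structurally the same plan the paper follows.

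Where your sketch is too quick is the seams. You dismiss the seam contribution with the observation that the local bifold model $\R\times(\R^3/V_4)$ is a product, so it ``contributes no additional Euler-class term'', and call the reassembly of the facet contributions over the resolution $\Sigma^+$ ``essentially a bookkeeping check.'' That is not really right: a seam circle can have one of three monodromy types (the permutation of the three branches around it may be trivial, an involution of two of them, or a $3$-cycle), and excision only reduces you to verifying one closed model of each type. Moreover, $\Sigma\cdot\Sigma$ is defined by a \emph{relative} Euler number on $\Sigma^+$ whose boundary section $w$ is built from the $3$-plane field $V$ spanned by the three branches along the seam; how the facet contributions glue along a nontrivial-monodromy seam therefore genuinely depends on the seam geometry and must be checked against the definition. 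The paper's proof does exactly this: it doubles a standard seam model in $S^1\times B^3$ to a closed foam in $S^1\times S^3$, passes to a $2$- or $3$-fold cyclic cover to reduce the nontrivial monodromy types to the trivial one (using that both sides of \eqref{eq:dimension} are multiplicative under finite covers), and then exhibits a flat $V_4$ bundle carrying the $S^1$-action to conclude $d(E,A)=0$, matching the right-hand side. This is a concrete verification, not a formal locality observation. Your tetrahedral step is the right idea and the paper realizes it with the model you anticipate: after doubling to make the number of tetrahedral points even, the quotient $S^4/V_8$ with its flat $V_4$ bifold connection has $d=0$, $\chi(\Sigma)=4$, $b^1=b^+=\Sigma\cdot\Sigma=0$, and $|\tau|=2$, so $0=-3+4-\tfrac{1}{2}\cdot 2$ fixes the constant at $-\tfrac{1}{2}$.
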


\begin{proof}
    In the case that $\Sigma$ has no seams, the result coincides with
    the index formula in \cite{KM-unknot}. (If $\Sigma$ is also
    orientable then the formula appears in \cite{KM-gtes-I}.) 
    So the result is known in this case.
    
    Consider next the case that $\Sigma$ has no tetrahedral points.
    The seams of $\Sigma$ form circles, and the neighborhood of each
    circle has one of three possible types: the three branches of the
    foam are permuted by the monodromy around the circle, and
    permutation may be trivial, an involution of two of the branches,
    or a cyclic permutation of the three.
    So (in the absence of tetrahedral points) an excision
    argument shows that it is sufficient to verify the index
    formula for just three examples, one containing seams of each of
    three types. A standard model for a neighborhood of a seam $s
    \cong S^{1}$ is a foam in $S^{1}\times B^{3}$. By
    doubling this standard model, we obtain a foam in $X
    = S^{1}\times S^{3}$ with two seams of the same type. If the
    branches are permuted non-trivially by the monodromy, 
    we can now pass to a $2$-fold or $3$-fold
    cyclic cover of $X$, and so reduce to the case of seams with
    trivial monodromy. (Both
    sides of the formula in the Proposition are multiplicative under
    finite covers.) When the monodromy permutation of the branches is
    trivial, 
    the pair $(X,\Sigma)$ that we have described
   carries a circle action along the $S^{1}$ factor and there is a
   flat bundle $(E,A)$ with holonomy group $V_{4}$, also acted on by
   $S^{1}$.  The circle action on the bundle means that the index
   $d(E,A)$ is zero, as is the right-hand side. This completes the
   proof in the absence of tetrahedral points.

   Consider finally the case that $\Sigma$ has tetrahedral points. By
   taking two copies if $(X,\Sigma)$ if necessary, we may assume that
   the number of tetrahedral points is even. By an excision argument,
   it is then enough to verify the formula in the case of a standard
   bifold $\check X$ with two tetrahedral points, namely the quotient
   of $S^{4}$ by the action of \[ V_{8}\subset \SO(4) \subset \SO(5)
   . \]
   In this model case, there is again a flat bifold connection with
   holonomy $V_{4}$, obtained as a global quotient of the trivial
   bundle on $S^{4}$. For this example we have $d(E,A)=0$, as one can
   see by looking at corresponding operator on the cover $S^{4}$. The
   pair $(X,\Sigma)$ is topologically a $4$-sphere containing a foam
   which is the suspension of the $1$-skeleton of a tetrahedron. The
   Euler number $\chi(\Sigma)$ is $4$, and the terms $b_{1}(X)$,
   $b^{+}(X)$ and $\Sigma\cdot\Sigma$ in \eqref{eq:dimension} are
   zero. So the formula is verified in this case:
   \[
               0 = -3 + 4 - \frac{1}{2} \times 2.
   \]
   This completes the proof.
\end{proof}

From the formula, one can see that $\kappa(E,A)$ belongs to
$(1/32)\Z$, because $2 ( \Sigma\cdot\Sigma )$ is an integer.

\subsection{Bubbling}

Uhlenbeck's theorem \cite{Uhlenbeck} applies in the orbifold
setting. So if $[E_{i}, A_{i}] \in M(\check X)$ 
is a sequence of anti-self-dual
bifold connections on the closed oriented bifold $\check
X$ and if their actions $\kappa(E_{i},A_{i})$ are bounded, 
then there exists a subsequence $\{i'\} \subset \{i\}$, 
an anti-self-dual connection $(E,A)$, a finite
set of points $Z\subset\ \check X$ and isomorphisms 
\[
    \tau_{i'} : (E,A)|_{\check X\sminus (\Sigma \cup Z)} \to 
                       (E_{i'},A_{i'})|_{\check X\sminus (\Sigma
                         \cup Z)}                     
\]
such that the connections $(E, \tau^{*}(A_{i'})$ converge on compact
subsets of $\check X \sminus (K\cup Z)$ to $(E,A)$ in the
$C^{\infty}$ topology. Furthermore,
\begin{equation}\label{eq:energy-loss}
       \kappa(E,A) \le \liminf \kappa(E_{i'}, A_{i'}),
\end{equation}
and if equality holds then $Z$ can be taken to be empty and the
convergence is strong (i.e. the convergence is in the topology of
$M(\check X)$).

When the inequality is strict, the difference is accounted for by
bubbling at the points $Z$. The difference is therefore equal to
the sum of the actions of some finite-action solutions on bifold
quotients of $\R^{4}$ by either the trivial group, the group of order
$2$, the group $V_{4}$, or the group $V_{8}$. 
Any solution on such a quotient of $\R^{4}$ pulls back
to a solution on $\R^{4}$ with integer action; so on the bifold, the
action of any bubble lies in $(1/8)\Z$. If there are no tetrahedral
points, then the action lies in $(1/4)\Z$. Thus we have:

\begin{lemma}\label{lem:Uhlenbeck}
    In the Uhlenbeck limit, either the action inequality
    \eqref{eq:energy-loss} is an equality and the convergence is
    strong, or the difference is at least $1/8$. If the difference is
    exactly $1/8$, then the set of bubble-points $Z$ consists of a
    single point which is a tetrahedral point of $\Sigma$.

    If there are no tetrahedral points, and the energy-loss is
    non-zero, then the difference is at least $1/4$. If the difference is
    exactly $1/4$, then the set of bubble-points $Z$ consists of a
    single point which lies on a seam of $\Sigma$.
\end{lemma}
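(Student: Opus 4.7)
The plan is to bound below the action of each individual bubble that can appear in the Uhlenbeck limit. By the form of compactness already recalled, any energy lost is accounted for by a finite collection of bubbles, each being a finite-action $\SO(3)$ anti-self-dual connection on one of the four local bifold models, namely $\R^{4}$ itself or the quotient of $\R^{4}$ by $\F$, $V_{4}$, or $V_{8}$. These four cases correspond respectively to a smooth point of $\check X$, a point on a facet of $\Sigma$, a point on a seam (including interior seam points, which have stabilizer $V_{4}$ coming from the product model $\R\times(\R^{3}/V_{4})$), and a tetrahedral point.

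First I would verify that a nontrivial bubble on $\R^{4}/H$ has action at least $1/|H|$. Pulling back along the orbifold covering $\R^{4}\to \R^{4}/H$ gives a finite-action $\SO(3)$ anti-self-dual connection on $\R^{4}$; since $\R^{4}$ is contractible, the bundle lifts to $\SU(2)$, and its action equals $c_{2}$, which is a non-negative integer and is at least $1$ unless the connection is flat (the classical BPST lower bound). Nontriviality is preserved under the finite covering, so the bubble action on the quotient is at least $1/|H|$, giving the four lower bounds $1$, $1/2$, $1/4$ and $1/8$ for the four stabilizer types.

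Combining these, the total energy loss is a sum over bubble points in $Z$ of numbers each at least $1/8$, so it is either zero or at least $1/8$. Equality at $1/8$ forces $|Z|=1$ with the single bubble of $V_{8}$-type, hence located at a tetrahedral point of $\Sigma$. In the absence of tetrahedral points the $V_{8}$ case cannot occur, so the minimal bubble action jumps to $1/4$, realized only by a single $V_{4}$-type bubble, which by the local model sits on a seam. The remaining assertion, that no energy loss implies strong convergence and empty $Z$, is the final clause of Uhlenbeck's theorem quoted just above the lemma.

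There is no genuinely hard step; the lemma is a bookkeeping consequence of Uhlenbeck compactness together with the enumeration of local bifold models and their stabilizer orders. The only external input is the classical lower bound of $1$ on the action of a nontrivial finite-action anti-self-dual connection on $\R^{4}$, and for $\SO(3)$ connections this reduces to the $\SU(2)$ statement by lifting over the contractible base.
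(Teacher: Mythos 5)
Your argument is correct and follows essentially the same route as the paper's discussion immediately preceding the lemma: pull each bubble back along the local orbifold covering $\R^{4}\to\R^{4}/H$, observe that the pulled-back $\SO(3)$ connection lifts to $\SU(2)$ with integer (and, if nontrivial, positive) second Chern number, and conclude that each bubble contributes action at least $1/|H|$ with $|H|\in\{1,2,4,8\}$, from which the lower bounds and the identification of the bubble type at equality follow by a counting argument. The only difference is that you make explicit the $\SU(2)$ lift and the classical lower bound of $1$, where the paper phrases the same fact as "the action of any bubble lies in $(1/8)\Z$."
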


Suppose that $\Sigma$ has no tetrahedral points. 
From the dimension formula, Proposition~\ref{prop:dimension}, we see
that when bubbling occurs, 
the dimension of the moduli space drops by
at least $2$. Let us write $M_{d}(\check X ; \mu)$ for the moduli
space of $\mu$-marked instantons $(E,A,\tau)$ whose action $\kappa$ is such that
the formal dimension at $[E,A,\tau]$ is $d$. Suppose 
that the marking data $\mu$ is strong, that $M_{0}(\check X; \mu)$
is regular, and that all moduli space of negative formal dimension are
empty. Uhlenbeck's theorem and the Lemma tell us that $M_{0}(\check X; \mu)$ is
a finite set and that $M_{2}(\check X; \mu)$ has a compactification,
\[
             \bar M_{2} = M_{2} \cup s \times M_{0},
\]
where $s$ is the union of the seams of $\Sigma$.

\begin{proposition}\label{prop:cone-on-V}
    In the above situation, when $\Sigma$ has no tetrahedral point, 
    for each $q\in s$ and $\alpha\in M_{0}$,
    an open neighborhood of $(q,\alpha)$ in $\bar M_{2}$ is
    homeomorphic to 
     \[
           N(q) \times T
      \]
     where $N(q) \subset s$ is a neighborhood of $q$ in the seam $s$, and $T$ is a cone on
     $4$ points (i.e. the union of $4$ half-open intervals $[0,1)$
     with their endpoints ${0}$ identified). In particular, if $M_{2}$
     is regular, then $\bar M_{2}$ is homeomorphic to an identification space of
     a compact $2$-manifold with boundary, $S$, by an identification
     which maps $\partial S$ to $s$ by a $4$-sheeted covering.
\end{proposition}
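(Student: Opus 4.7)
\smallskip\noindent\textbf{Proof proposal.}
The plan is to combine a classification of the minimal bubbles at a seam point with a standard gluing construction in the bifold setting. Near a point $q$ on the seam $s$, the bifold $\check X$ is locally modeled on $\R\times(\R^{3}/V_{4})$, where the three singular half-planes meet along $\{0\}\times\{0\}$. A bubbling sequence localized at $q$ rescales, after passing to a subsequence, to a finite-action ASD bifold connection on this model, which extends across infinity to a bifold instanton on the one-point compactification. By Lemma~\ref{lem:Uhlenbeck}, in the absence of tetrahedral points the minimal bubble has action exactly $1/4$, and a single bubble of this action already accounts for the full drop in formal dimension from $2$ to $0$.

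The first main step is to classify these charge-$1/4$ bifold instantons on the local seam model modulo translation along the seam, dilation, and marked gauge equivalence. Lifting to the $V_{4}$-fold cover $\R^{4}$, such a bubble becomes a $V_{4}$-equivariant charge-$1$ $\SU(2)$-instanton centered at the origin. Fixing the center and normalizing the scale, the equivariance data produces a discrete set of isomorphism classes: I would show this set consists of exactly four elements. Three of them correspond to bubbles whose fiber-wise $V_{4}$-action on the bundle is compatible with the three ways to pair the three distinguished involutions at the incident facets with the three order-two elements of $V_{4}\subset\SO(3)$; the fourth corresponds to the bubble type that interacts with the seam only generically (i.e.\ the one that survives pulled back to the smooth local picture). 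Each of the four bubble types is regular on the non-compact model, with cokernel exactly accounted for by the formal-dimension drop.

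Next I would apply the usual bifold gluing construction. Given $\alpha\in M_{0}$, a nearby seam point $q'\in N(q)$, a small scale $\lambda\in(0,\lambda_{0})$, and one of the four bubble types, the grafting procedure produces an approximate ASD solution; since $M_{0}(\check X;\mu)$ is regular and the bubble is regular, the linearized gluing problem is surjective and the non-linear contraction produces a unique nearby genuine element of $M_{2}(\check X;\mu)$. This gives, for each bubble type $i\in\{1,2,3,4\}$, an embedding
\[
\Phi_{i}:N(q)\times[0,\lambda_{0})\longrightarrow\bar M_{2},
\qquad \Phi_{i}(q',0)=(q',\alpha),
\]
and the four maps agree on $N(q)\times\{0\}$. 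Together they exhibit the required homeomorphism with $N(q)\times T$. Exhaustiveness of the charts near $(q,\alpha)$ follows from a Taubes-type argument: by Uhlenbeck compactness any sequence converging in $\bar M_{2}$ to $(q,\alpha)$ is captured, for large index, by one of the four gluing charts, and the uniqueness clause of the gluing theorem identifies the limit.

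The main obstacle is the enumeration and transversality of the bubble types: one must verify that the four equivariance classes on $\R^{4}/V_{4}$ really are distinct in the marked setting, that each is unobstructed, and that no further non-generic bubbles of action $1/4$ exist. Once the local model $N(q)\times T$ is established at every bubbling point, the final assertion is formal: under the assumption that $M_{2}$ is regular, $\bar M_{2}\setminus(s\times M_{0})$ is a smooth $2$-manifold, the strata $s\times\{\alpha\}$ form $1$-manifolds with a $4$-to-$1$ local collar, and gluing these collars gives the presentation of $\bar M_{2}$ as a compact surface $S$ with boundary mapped $4$-to-$1$ to $s$.
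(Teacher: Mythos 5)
Your overall plan -- identify the minimal bubble on the local seam model $\R\times(\R^{3}/V_{4})$, glue, establish exhaustiveness by a Taubes-style argument, and assemble the four gluing charts into a neighborhood of the form $N(q)\times T$ -- is the same strategy the paper uses, and the global conclusion (surface with boundary mapping $4$-to-$1$ to $s$) is argued correctly as a formal consequence. The difference, and the place where your proposal has a genuine gap, is the classification step that is supposed to produce the number $4$.

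The paper's classification is cleaner and, more importantly, correct. Up to gauge, translation along the seam and dilation there is only \emph{one} $\kappa=1/4$ bifold instanton on $\check\R^{4}$: the unframed moduli space is $M_{2}(\check\R^{4})=s\times\R^{+}$, so modulo the evident symmetries it is a single point. The factor of $4$ does not come from four distinct bubble types; it comes from the \emph{framing} (equivalently, the gluing parameter). The framed moduli space is $\tilde M_{2}(\check\R^{4})=s\times\R^{+}\times V_{4}$, where the last factor is the set of identifications of the limiting flat bifold connection on $S^{3}_{\infty}/V_{4}$ with the standard one, i.e.~the commutant of $V_{4}$ in $\SO(3)$, which is again $V_{4}$ and has order $4$. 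Thus the four gluing charts indexed by $V_{4}$ all glue in the \emph{same} instanton; they converge to the same ideal boundary point as $\lambda\to 0$, producing the cone on $4$ points.

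Your proposed enumeration -- ``three of them correspond to bubbles whose fiber-wise $V_{4}$-action on the bundle is compatible with the three ways to pair the three distinguished involutions at the incident facets with the three order-two elements of $V_{4}\subset\SO(3)$; the fourth corresponds to the bubble type that interacts with the seam only generically'' -- does not describe anything that is actually a free choice. The pairing of the three facets with the three nontrivial elements of $V_{4}$ is forced by the bifold condition (each order-$2$ stabilizer must act nontrivially on the fiber), not a parameter; and because all four framed bubbles are the same unframed connection, none of them ``interacts with the seam only generically.'' So the enumeration principle you are relying on would not, if pursued, produce the number $4$, and the promised verification ``that no further non-generic bubbles of action $1/4$ exist'' would be attacking the wrong question. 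Replacing this step with the framed-moduli-space computation above closes the gap and recovers the paper's argument; the remainder of your gluing and exhaustiveness outline is standard and consistent with it.
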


\begin{proof}
  This proposition is an orbifold adaptation of the more familiar
  non-orbifold version \cite{Donaldson-connections} which describes
  the neighborhood of the stratum $X\times M_{0}$ in the Uhlenbeck
  compactification of $M_{8}$. In the non-orbifold version, the local
  model near $(x, [A])$ is $N(x) \times \mathrm{Cone}(\SO(3))$, where
 $N(x)$ is a $4$-dimensional neighborhood of $x\in X$ and $\SO(3)$ arises
 as the gluing parameter. The above proposition is similar, except
 that the cone on $\SO(3)$ has been replaced by $T$, which is a cone
 on the Klein $4$-group $V_{4}$. 

  Let $\R^{4}$ be Euclidean $4$-space and let $\check\R^{4}$ be its
  bifold quotient by $V_{4}$ acting on the last three coordinates. Let
  $s\subset\R^{4}$ be the seam, i.e. the line $\R\times 0$ fixed by
  $V_{4}$. The standard $1$-instanton moduli space on $\R^{4}$ is the
  $5$-dimensional space with center and scale coordinates: 
 \[
  M_{5}(\R^{4}) = \R^{4} \times \R^{+}.
  \]
  The moduli space of solutions with $\kappa=1/4$ on $\check \R^{4}$
  is the set of fixed points of the $V_{4}$ action on $M_{5}$. This
  space is $2$-dimensional, with a center coordinate constrained to
  lie on $s$ and scale coordinate as before:
  \[
    M_{2}(\check \R^{4}) =s \times \R^{+}.
  \]
   On $\R^{4}$, we may pass to the $8$-dimensional framed moduli space
   $\tilde M_{8}(\R^{4})$ parametrizing isomorphism classes of
   instantons $A$ together with an identification of the limiting
   flat connection on the sphere at infinity $S^{3}_{\infty}$ with the
   trivial connection in $\R^{3}\times S^{3}_{\infty}$. This is the
   product,
\[
    \tilde M_{8}(\R^{4}) = \R^{4} \times \R^{+} \times \SO(3).
\]
   In the bifold case, the framing data is an identification of the
   flat bifold connection on $S^{3}_{\infty}/V_{4}$ with a standard
   bifold connection whose holonomy group is $V_{4}$. The choice of
   identification is now the commutant of $V_{4}$ in $\SO(3)$, namely
   $V_{4}$ itself. So the framed moduli space is
 \[
      \tilde M_{2}(\check \R^{4}) =s \times \R^{+} \times V_{4}.
  \]
  With this moduli space understood, the usual proof from the
  non-orbifold case carries over without change.
\end{proof}

When tetrahedral points are present, bubbling is a codimension-$1$
phenomenon, meaning that even $1$-dimensional moduli spaces may be
non-compact. 
We have the following counterpart of the previous
proposition. Consider a $1$-dimensional moduli space $M_{1}$ of bifold
connections on $(X,\Sigma)$. Its Uhlenbeck compactification is the space
\[
             \bar M_{1} = M_{1} \cup \tau \times M_{0},
\]
where $\tau$ is the finite set of tetrahedral points of $\Sigma$.

\begin{proposition}\label{prop:tetrahedral-cone-on-V}
    In the above situation, 
    for each  tetrahedral point $q$ and $\alpha\in M_{0}$,
    an open neighborhood of $(q,\alpha)$ in $\bar M_{1}$ is
    homeomorphic to $T$, i.e.~again the union of $4$ half-open intervals $[0,1)$
     with their endpoints ${0}$ identified. In particular, if $M_{1}$
     is regular, then $\bar M_{1}$ is homeomorphic to an identification space of
     a compact $1$-manifold with boundary, by an identification
     which identifies the boundary points in sets of four.
\end{proposition}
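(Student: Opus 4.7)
The plan is to adapt the argument for Proposition~\ref{prop:cone-on-V} to the tetrahedral case, with the role of the seam replaced by a single point and the local bifold model changed from $\R^{4}/V_{4}$ to $\R^{4}/V_{8}$. The main steps are (i) compute the local framed moduli space of action-$1/8$ instantons on $\check\R^{4}=\R^{4}/V_{8}$, and (ii) feed this into the standard instanton gluing theorem to describe $\bar M_{1}$ near $(q,\alpha)$.

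For step (i), I would begin with the standard $1$-instanton moduli space $M_{5}(\R^{4})=\R^{4}\times\R^{+}$ on the cover and take $V_{8}$-fixed points. Since $V_{8}\subset\SO(4)$ acts on $\R^{4}$ with only the origin fixed, the center is pinned at $0$, leaving only the scale parameter, so
\[
   M_{1}(\check\R^{4}) = \R^{+},
\]
in agreement with the formal dimension at $\kappa=1/8$. For the framed moduli space, the non-triviality requirement on order-$2$ stabilizers forces the holonomy of the flat bifold connection on $S^{3}_{\infty}/V_{8}$ to have image equal to the Klein group $V_{4}\subset\SO(3)$ (this is the same computation that determines the standard model at a tetrahedral point of any bifold connection). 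Framings then form a torsor for the commutant of this $V_{4}$ in $\SO(3)$, which is $V_{4}$ itself. Hence
\[
   \tilde M_{1}(\check\R^{4}) = \R^{+}\times V_{4},
\]
and collapsing the scale-zero end to a single point gives exactly the cone $T$ on $4$ points.

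For step (ii), the standard gluing construction of \cite{Donaldson-connections}, adapted to the bifold setting as in the proof of Proposition~\ref{prop:cone-on-V}, produces a local chart on $\bar M_{1}$ near $(q,\alpha)$ modelled on $\R^{+}\times V_{4}$ with $\{0\}\times V_{4}$ collapsed to $(q,\alpha)$. Four ends of $M_{1}$ therefore accumulate at each pair $(q,\alpha)\in\tau\times M_{0}$, and the global statement that $\bar M_{1}$ is a compact $1$-manifold with boundary points identified in groups of four follows immediately.

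The main subtlety — and the reason this proposition is separated from Proposition~\ref{prop:cone-on-V} — is that tetrahedral bubbling is a codimension-one phenomenon rather than codimension three, so the points of $\tau\times M_{0}$ really do sit at the boundary of $\bar M_{1}$ instead of forming higher-codimension strata. The analytic side of the gluing (exponential decay for the concentrated solution on $\check\R^{4}$, cutting off against the background connection on $(X,\Sigma)$, and inverting the linearised operator under the regularity hypotheses on $M_{0}$ and $M_{1}$) is insensitive to this change in codimension, so I do not anticipate a serious new obstacle beyond the explicit identification of $\tilde M_{1}(\check\R^{4})$ given above.
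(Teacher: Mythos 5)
Your proposal is correct and follows exactly the route the paper intends: the paper's entire proof of this proposition is the single sentence ``The proof is much the same as the proof of the previous proposition,'' and you have supplied precisely the adaptation that sentence calls for — replacing $\R^{4}/V_{4}$ by $\R^{4}/V_{8}$, noting that the $V_{8}$-fixed locus of $\R^{4}\times\R^{+}$ is $\{0\}\times\R^{+}$, identifying the image of the limiting holonomy as $V_{4}$ so that the framing torsor is again $V_{4}$, and obtaining $\tilde M_{1}(\check\R^{4})=\R^{+}\times V_{4}$ with cone point $T$. One small quibble: in the paper's terminology the seam case is ``codimension $2$'' (the moduli-space dimension drops by $2$) while you wrote ``codimension three,'' but this slip occurs only in your closing heuristic remark and does not affect the argument.
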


\begin{proof}
    The proof is much the same as the proof of the previous proposition.
\end{proof}

\subsection{The Chern-Simons functional}

Let $\check Y$ be a closed, oriented, $3$-dimensional bifold
and let $\mu$ be strong marking data on $\check Y$. The tangent space
to $\bonf_{l}(\check Y ; \mu)$ at a point represented by a connection
$(E,A)$ is isomorphic to the kernel of $d^{*}_{A}$ acting on
$L^{2}_{l}(\check Y; \Lambda^{1}\check Y \otimes E)$. As usual, there
is a locally-defined smooth function, the Chern-Simons function, 
on $\bonf_{l}(\check Y; \mu)$
whose formal $L^{2}$ gradient is $*F_{A}$. On the universal cover of
each component of
$\bonf_{l}(\check Y; \mu)$, the Chern-Simons function is single-valued
and well-defined up to the addition of a constant. 

If we have a closed loop $z$ in $\bonf_{l}(\check Y; \mu)$, then we
have a $1$-parameter family of marked connections, \[
 \zeta(t) = (E(t), A(t),
\tau(t))
\] 
parametrized by $[0,1]$ together with an isomorphism $\sigma$ from
$\zeta(0)$ to $\zeta(1)$. The isomorphism
$\sigma$ is determined uniquely by the data, because the marking is
strong, which means that $\zeta(0)$ has no automorphisms. Identifying
the two ends, we have a connection $(E_{z}, A_{z})$ over the bifold $S^{1}\times
\check Y$. 

This $4$-dimensional connection has an index $d(E_{z},
A_{z})$ and an action $\kappa(E_{z}, A_{z})$. We can interpret both
as usual, in terms of the Chern-Simons function. Up to a constant factor,
$\kappa(E_{z}, A_{z})$ is equal to minus the change in $\CS$ along the
path $\zeta(t)$:
\[
               \kappa(E_{z}, A_{z}) = \frac{1}{32 \pi^{2}}
               (\CS(\zeta(0)) - \CS(\zeta(1))).
\]
The index $d(E_{z}, A_{z})$ is equal to the spectral flow
of a family of elliptic operators related to the formal Hessian
of $\CS$, along the path $\zeta$: that is,
\[
       d(E_{z}, A_{z}) = \sflow_{\zeta}(D_{A})
\]
where $D_{A}$ is the ``extended Hessian'' operator on $\check Y$,
\[
          D_{A} = \begin{bmatrix} 
                0 & - d^{*}_{A} \\ -d_{A} & *d_{A} \end{bmatrix}
\]
acting on $L^{2}_{l}$ sections of $(\Lambda^{0} \oplus
\Lambda^{1})\otimes E$.

An important consequence of the index formula,
Proposition~\ref{prop:dimension}, is the proportionality of these of
two quantities. For any path $\zeta$ corresponding to a closed loop $z$ in
$\bonf_{l}(\check Y; \mu)$, we have
\begin{equation}
    \label{eq:monotone}
                  \CS(\zeta(0)) - \CS(\zeta(1)) = 4 \pi^{2}\sflow_{\zeta}(D_{A}).
\end{equation}
Since the spectral flow is always an integer, we also see that $\CS$
defines a single-valued function with values in the circle $\R/(4\pi^{2}\Z)$.

\subsection{Representation varieties}\label{subsec:RepVarieties}

The critical points of $\CS : \bonf_{l}(\check Y; \mu) \to
\R/(4\pi^{2}\Z)$ are the isomorphism classes of $\mu$-marked connections
$(E, A, \tau)$ for which $A$ is flat. We will refer to the critical
point set as the \emph{representation variety} and write
\[
    \Rep(\check Y; \mu) \subset \bonf_{l}(\check Y; \mu).
\]
We may also write this as $\Rep(Y,K ;\mu)$.
In the absence of the marking, this would coincide with a space of
homomorphisms from the orbifold fundamental group  $\pi_{1}(\check Y ,
y_{0})$ to $\SO(3)$, modulo the action of $\SO(3)$ by conjugation.
In terms of the pair $(Y, K)$, we are looking at conjugacy classes of
homomorphisms
\[
         \rho : \pi_{1}(Y\sminus K, y_{0}) \to \SO(3)
\]
satisfying the constraint that, for each edge $e$ and any
representative $m_{e}$ for the conjugacy class of the meridian of $e$,
the element $\rho(m_{e})$ has order $2$ in $\SO(3)$.

We can adapt this description to incorporate the marking data
$\mu=(U_{\mu},E_{\mu})$ as follows. Let $w\subset U_{\mu}$ be a closed
1-dimensional submanifold dual to $w_{2}(E_{\mu})$. One
$U_{\mu}\sminus w$, we can lift $E_{\mu}$ to an $\SU(2)$ bundle
$\tilde E_{\mu}$, and we fix an isomorphism $p:\mathrm{ad}(\tilde
E_{\mu}) \to E_{\mu}|_{U_{\mu}\sminus w}$. Via $p$, any flat $\SO(3)$
connection $A_{\mu}$ on $E_{\mu}$ gives rise to a flat $\SU(2)$ connection
$\tilde A_{\mu}$ on $\tilde E_{\mu}$ with the property that, for each
component $v\subset w$, the holonomy the meridian $m_{v}$
of $v$ is $-1 \in \SU(2)$. If we pick a basepoint $y_{0}$ in
$U_{\mu}\sminus (K\cup w)$, then we have the following
description. The representation variety $\Rep(\check Y; \mu)$
corresponds to $\SO(3)$-conjugacy classes of pairs $(\rho,
\tilde\rho_{\mu})$, where
\begin{enumerate}
\item $\rho :\pi_{1}(Y\sminus K, y_{0}) \to \SO(3)$ is a
    homomorphism with the property that $\rho(m_{e})$ has order $2$,
    for every edge $e$ of $K$;
\item $\tilde\rho_{\mu} : \pi_{1} (U_{\mu} \sminus (K\cup w),
    y_{0})\to \SU(2)$ is a  homomorphism with the property that
    $\tilde\rho_{\mu}(m_{v})$ is $-1$, for every component $v$ of $w$;
\item the diagram formed from $\rho$, $\tilde\rho_{\mu}$, the adjoint
    homomorphism $\SU(2)\to\SO(3)$ and the inclusion 
     \[
                 U_{\mu} \sminus (K\cup w) \to Y\sminus K
     \]
     commutes.
\end{enumerate}
Note that these conditions imply that $\tilde\rho_{\mu}(m_{e})$ has
order $4$ in $\SU(2)$, for every meridian of $K$ contained in $U_{\mu}$.
We give two basic examples.

\subparagraph{Example: the theta graph.} Take $Y=S^{3}$ and $K=\Theta$ an
unknotted theta graph: two vertices joined by three coplanar arcs. 
Take $U_{\mu}$ to be a ball containing $\Theta$ and
take $E_{\mu}$ to be the trivial $\SO(3)$
bundle. Lemma~\ref{lem:marking} tells us that $\mu$ is strong. Using the above
description, we see that $\Rep(S^{3},\Theta; \mu)$ is the space of
conjugacy classes of homomorphism $\tilde\rho : \pi_{1}(B^{3}\sminus
\Theta) \to \SU(2)$ mapping meridians to elements of order $4$. Up to
conjugacy, there is exactly one such homomorphism. Its image is the
quaternion group of order $8$. So 
$\Rep(S^{3},\Theta; \mu)$ consists of a single point.

\subparagraph{Example: a Hopf link.}
Take $Y=S^{3}$ again and take $K=H$, a Hopf link. Take $U_{\mu}$ to
be a ball containing $H$ and take $w_{2}(E_{\mu})$ to be dual to an
arc $w$ joining the two components of $H$. This example is similar to
the previous one, and was exploited in \cite{KM-unknot}. The
representation variety $\Rep(S^{3}, H; \mu)$ is again a single point,
corresponding to a unique homomorphism with image the quaternion
group in $\SU(2)$.

As an application of these basic examples, we have the following
observation.

\begin{lemma}\label{lem:Hopf-link-Rep}
    Let $K\subset Y$ be an embedded web. Let $B$ be a ball
    in $Y$ disjoint from $K$, and let $K^{\sharp}$ be the disjoint
    union of $K$ with either a theta graph $\Theta\subset B$ or a Hopf
    link $H\subset B$. Let $\mu$ be marking data for $(Y,K^{\sharp})$, 
    with $U_{\mu} =
    B$. In the Hopf link case, take $w_{2}(E_{\mu})$ to be dual to an
    arc joining the two components. Then the representation variety
    \[
                \Rep(Y,K^{\sharp} ; \mu)
      \]
     can be identified with the space of homomorphisms
     \[
                  \rho : \pi_{1} (Y\sminus K, y_{0}) \to \SO(3)
      \]
     which map meridians of $K$ to elements of order $2$.
\end{lemma}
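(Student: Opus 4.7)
The plan is to combine a van Kampen decomposition with the two basic examples worked out just above, and to use the fact that the marking provided by the theta graph or the Hopf link is strong (so that its representation variety is a single point with trivial stabilizer).

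First, I would split $Y \smallsetminus K^{\sharp}$ as $X_{1} \cup X_{2}$ where $X_{2} = B \smallsetminus (\Theta \text{ or } H)$ and $X_{1}$ is a slight thickening of $Y \smallsetminus B^{\circ}$, intersected with $Y \smallsetminus K$. Since $B$ is disjoint from $K$, we have $X_{1} \cap X_{2} \simeq \partial B \cong S^{2}$, which is simply connected. Van Kampen then gives a free product decomposition, and because $\pi_{1}(B)$ is trivial, filling in the ball in $X_{1}$ to recover $Y \smallsetminus K$ does not change $\pi_{1}$. So
\[
   \pi_{1}(Y\smallsetminus K^{\sharp}, y_{0}) \;\cong\; \pi_{1}(Y\smallsetminus K, y_{0}) * \pi_{1}(X_{2}, y_{0}).
\]
Accordingly a point of $\Rep(Y, K^{\sharp}; \mu)$ is the data of a triple $(\rho_{1}, \rho_{2}, \tilde\rho_{\mu})$, up to simultaneous $\SO(3)$-conjugation, where $\rho_{1}$ is the restriction to $\pi_{1}(Y\smallsetminus K)$ and $(\rho_{2}, \tilde\rho_{\mu})$ is the marked data on $X_{2}$ and its lift.

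Next I would invoke the two basic examples. In either the theta-graph case or the Hopf-link case, the lemma has already identified $\Rep(S^{3}, \Theta;\mu)$ and $\Rep(S^{3}, H;\mu)$ with a single point, corresponding to a unique homomorphism $\tilde\rho_{\mu}$ whose image is the quaternion group $Q_{8} \subset \SU(2)$. Because $X_{2}$ (inside $Y$) has the same fundamental group as the corresponding configuration in $S^{3}$, the pair $(\rho_{2}, \tilde\rho_{\mu})$ is uniquely determined up to $\SO(3)$-conjugation, and I may fix a standard representative $(\rho_{2}^{0}, \tilde\rho_{\mu}^{0})$.

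The main point to check is then the triviality of the residual stabilizer. After fixing $(\rho_{2}^{0}, \tilde\rho_{\mu}^{0})$, the remaining $\SO(3)$-conjugation freedom on $\rho_{1}$ is the image in $\SO(3)$ of the $\SU(2)$-commutant of $\tilde\rho_{\mu}^{0}(\pi_{1}(X_{2}))=Q_{8}$. Since $Q_{8}$ generates $\SU(2)$ in the sense that its centralizer is the center $\{\pm 1\}$, and $\{\pm 1\}$ projects to the identity of $\SO(3)$, the residual stabilizer is trivial. Hence each $\SO(3)$-orbit on the set of triples has a unique representative with $(\rho_{2}, \tilde\rho_{\mu}) = (\rho_{2}^{0}, \tilde\rho_{\mu}^{0})$, and we obtain a bijection between $\Rep(Y, K^{\sharp}; \mu)$ and the set of honest homomorphisms $\rho_{1}:\pi_{1}(Y\smallsetminus K, y_{0})\to \SO(3)$ sending meridians of $K$ to involutions.

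The main obstacle is really this last computation: checking that the pair coming from $\Theta$ or $H$ has trivial centralizer in $\SO(3)$. This is the same fact that makes the marking data strong in Lemma~\ref{lem:marking}, so conceptually it is already established; the rest of the argument is an application of van Kampen and of the two example computations, and is essentially bookkeeping.
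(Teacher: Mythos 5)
The paper gives no proof of this lemma; it is presented as an ``observation'' that is ``an application of these basic examples,'' so you are filling in a gap the authors left implicit. Your argument is correct and is essentially the one the authors must have had in mind. The van Kampen decomposition along $\partial B\cong S^{2}$ correctly identifies $\pi_{1}(Y\sminus K^{\sharp})$ with the free product $\pi_{1}(Y\sminus K)*\pi_{1}(B\sminus\Theta)$ (or with $H$ in place of $\Theta$), so specifying $\rho$ on the whole complement is the same as specifying $\rho_{1}$ and $(\rho_{2},\tilde\rho_{\mu})$ independently; the two examples then pin down $(\rho_{2},\tilde\rho_{\mu})$ up to conjugacy, with image $Q_{8}\subset\SU(2)$; and the centralizer of $Q_{8}$ in $\SU(2)$ being the center $\{\pm 1\}$ kills the residual $\SO(3)$ freedom, giving the desired bijection with plain homomorphisms $\rho_{1}:\pi_{1}(Y\sminus K)\to\SO(3)$ sending meridians to involutions.

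Two small presentational remarks: when you say ``the lemma has already identified $\Rep(S^{3},\Theta;\mu)$ \dots with a single point,'' you mean the two worked examples preceding the lemma, not the lemma itself; and it is worth stating explicitly that in the theta case $E_{\mu}$ is taken to be the trivial bundle, which is what the paper's example uses. Neither of these affects the correctness of the argument, and your observation that the trivial-centralizer computation is precisely the content that makes the marking strong in Lemma~\ref{lem:marking} is exactly the right conceptual link.
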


Note that the space described in the conclusion of the lemma is not
the quotient by the action of $\SO(3)$ by conjugation, but simply the
space of homomorphisms. We introduce abbreviated terminology for the
versions of these representation varieties that we use most:

\begin{definition}\label{def:Rsharp}
    For web $K\subset\R^{3}$, we denote by $\Rep^{\sharp}(K)$ the
    space of homomorphisms
   \[
          \Rep^{\sharp}(K) = \{ \, \rho : \pi_{1} (\R^{3}\sminus K) \to \SO(3) \mid
           \text{$\rho(m_{e})$ has order $2$ for all edges $e$ } \, \},
    \]
    a space that we can identify with $\Rep(S^{3},K^{\sharp} ; \mu)$
    by the above lemma. We write $\Rep(K)$ for the quotient by the
    action of conjugation,
    \[
            \Rep(K) = \Rep^{\sharp}(K)  / \SO(3).
    \]
\end{definition}

\subsection{Examples of representation varieties}

\label{sec:examples-Rep}

We examine the representation varieties $\Rep^{\sharp}(K)$ and
$\Rep(K)$ for some webs $K$ in $\R^{3}$. First, we make some simple
observations. As in Section~\ref{sec:connections}, as long
as $K$ is non-empty, a flat
$\SO(3)$ bifold connection $(E,A)$ has automorphism group
$\Gamma$ described by one of the following cases, according to
the image of the corresponding representation $\rho$ of the
fundamental group:
\begin{enumerate}
\item the image of $\rho$ is a $2$-element group and the automorphism
    group $\Gamma$ is $O(2)$;
\item the image of $\rho$ is $V_{4}$ and the
    $\Gamma$ is also $V_{4}$;
\item the image of $\rho$ is contained in $O(2)$ and strictly contains
    $V_{4}$, so that the automorphism group $\Gamma(E,A)$ is $\Z/2$;
\item the image of $\rho$ is not contained in a conjugate of $O(2)$
    and the automorphism group $\Gamma$ is trivial.
\end{enumerate}
The first case arises only if  $K$ has no vertices. 
We refer to the remaining three cases as \emph{$V_{4}$ connections}, \emph{fully $O(2)$
  connections}, and \emph{fully irreducible connections} respectively.
By an \emph{$O(2)$ connection}, we mean either a fully $O(2)$
connection or a $V_{4}$ connection.

For a conjugacy class of representations $\rho$ in $\Rep(K)$, the
preimage in $\Rep^{\sharp}(K)$ is a copy of $\SO(3)/\Gamma$, where
$\Gamma$ is described by the above cases. The quotient $\SO(3)/O(2)$
is $\RP^{2}$, the quotient $\SO(3)/V_{4}$ is the flag manifold of
real, unoriented flags in $\R^{3}$, while $\SO(3)/(\Z/2)$ is a lens
space $L(4,1)$.

The following is a simple observation. (In the language of graph theory,
it is the statement that Tait colorings of a cubic graph are the same
as nowhere-zero $4$-flows.)

\begin{lemma}\label{lem:V4-is-Tait}
    If $K\subset \R^{3}$ is any web, then the $V_{4}$ connections in
    $\Rep(K)$ correspond bijectively to the set of all Tait colorings
    of $K$ modulo permutations of the three colors.
\end{lemma}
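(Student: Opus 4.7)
The plan is to build the bijection explicitly by reading off, from a representation with image $V_{4}$, a color for each edge from the value of the representation on its meridian. The three non-identity elements of $V_{4}\subset\SO(3)$ are the three $\pi$-rotations about the coordinate axes, and I fix once and for all an identification of this $3$-element set with the colors $\{1,2,3\}$.

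First, given a $V_{4}$-connection in $\Rep(K)$, pick a representative $\rho:\pi_{1}(\R^{3}\setminus K)\to\SO(3)$ whose image is exactly $V_{4}$ and which sends each meridian $m_{e}$ to a non-identity element of $V_{4}$. Because $V_{4}$ is abelian and the meridians along a single edge $e$ are mutually conjugate in $\pi_{1}(\R^{3}\setminus K)$, the value $\rho(m_{e})$ depends only on $e$; assign to $e$ the corresponding color $c(e)$.

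Next, verify that $c$ is a Tait coloring and that the construction reverses. At each trivalent vertex, a small linking $2$-sphere punctured by the three incident edges is a thrice-punctured sphere whose fundamental group is free on the three meridians $m_{1},m_{2},m_{3}$ modulo the single relation $m_{1}m_{2}m_{3}=1$. In $V_{4}$, three non-identity elements $a,b,c$ satisfy $abc=1$ precisely when they are pairwise distinct (from $c=(ab)^{-1}=ab$, whence $c\neq a$ and $c\neq b$ as soon as $a\neq b$). This is exactly the Tait condition at the vertex. The remaining Wirtinger-type relations $\alpha\beta\alpha^{-1}=\gamma$ at crossings of a planar diagram hold automatically because $V_{4}$ is abelian (they reduce to $\gamma=\beta$, consistent with $\beta$ and $\gamma$ being meridians of the same edge). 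Reversing the construction, any Tait coloring defines a homomorphism $\pi_{1}(\R^{3}\setminus K)\to V_{4}$, with image all of $V_{4}$ whenever $K$ carries at least one vertex.

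Finally, identify the two equivalence relations. Two representations with image $V_{4}$ are $\SO(3)$-conjugate exactly when they differ by conjugation by some $g\in N_{\SO(3)}(V_{4})$; such a $g$ permutes the three $\pi$-rotations generating $V_{4}$, inducing a homomorphism $N(V_{4})\to\mathrm{Aut}(V_{4})\cong S_{3}$. Since the three non-identity elements of $V_{4}$ correspond to three mutually orthogonal unoriented axes in $\R^{3}$, the normalizer $N_{\SO(3)}(V_{4})$ is the rotation group of the inscribed octahedron, isomorphic to $S_{4}$, and the quotient $N(V_{4})/V_{4}\cong S_{3}$ maps isomorphically onto $\mathrm{Aut}(V_{4})$, so every permutation of the three colors is realized by $\SO(3)$-conjugation. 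Hence $\SO(3)$-conjugacy classes of representations with image $V_{4}$ correspond bijectively to $\mathrm{Aut}(V_{4})$-orbits of Tait colorings, i.e.~to Tait colorings modulo permutations of the three colors.

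The only step that is not routine bookkeeping is the finite-group identification $N_{\SO(3)}(V_{4})/V_{4}\cong S_{3}\cong\mathrm{Aut}(V_{4})$, which is what allows $\SO(3)$-conjugation to match the full $S_{3}$ color-permutation action; everything else follows from the standard presentation of $\pi_{1}(\R^{3}\setminus K)$ and the fact that $V_{4}$ is abelian.
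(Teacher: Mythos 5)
Your proof is correct and uses the same core idea as the paper: $V_4$ is abelian, so a homomorphism $\pi_1(\R^3\sminus K)\to V_4$ sending meridians to order-$2$ elements is the same data as an edge-coloring. The paper's proof is a single sentence; you fill in the details it leaves implicit, namely that the vertex relation $m_1m_2m_3=1$ forces the Tait condition and that the normalizer computation $N_{\SO(3)}(V_4)/V_4\cong S_3\cong\mathrm{Aut}(V_4)$ matches $\SO(3)$-conjugacy with the color-permutation action.
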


\begin{proof}
    Since $V_{4}$ is abelian, a homomorphism from
    $\pi_{1}(\R^{3}\setminus K)$ to $V_{4}$ which maps each meridian
    to an element of order $2$ is just the same as an edge-coloring of
    $K$ by the set of three non-trivial elements of $V_{4}$.
\end{proof}

So the $V_{4}$ connections contribute one copy of the flag manifold
$\SO(3)/V_{4}$ to $\Rep^{\sharp}(K)$ for each permutation-class of
Tait coloring of $K$.

\subparagraph{Example: the unknot.}
Let $K$ be an unknotted circle in $S^{3}$. The fundamental group of
the complement is $\Z$, and $\Rep(K)$ contains a single point
corresponding to the representation with image $\Z/2$. The stabilizer
is $O(2)$ and $\Rep^{\sharp}(K)$ is homeomorphic to $\RP^{2}$.

\subparagraph{The theta graph.}
For the theta graph $K$, the representation variety $\Rep(K)$ consists of
a single $V_{4}$ representation, corresponding to the unique class of
Tait colorings for $K$.

\subparagraph{The tetrahedron graph.}
Let $K$ be the $1$-skeleton of a standard simplex in $\R^{3}$, viewed as
a web. All representations in $\Rep(K)$ are again $V_{4}$ connections,
and there is only one of these, because there is only way to
Tait-color this graph, up to permutation of the colors. So
$\Rep^{\sharp}(K)$ is a copy of the flag manifold again.

\subparagraph{The Hopf link with a bar.}
Let $K$ be formed by taking the two circles of the standard Hopf link 
and joining the two
components by an unknotted arc. The complement deformation-retracts to
a punctured torus in such a way that the two generators for the torus
are representatives for the meridians of the Hopf link, and the
puncture is a meridian for the extra arc. If $a$, $b$ and $c$ are
these classes, then a presentation of $\pi_{1}$ is $[a,b]=c$.
For any $\rho\in \Rep^{\sharp}(K)$, the $\rho(a)$ and $\rho(b)$ are
rotations about axes in $\R^{3}$ whose angle is $\pi/4$, because the
commutator needs to be of order $2$. The stabilizer is $\Z/2$ and
$\Rep^{\sharp}(K)$ is therefore a copy of $L(4,1)$.

\subparagraph{The dodecahedron graph.}

Let $K$ be the $1$-skeleton of a regular dodecahedron in $\R^{3}$. The
representation variety  $\Rep^{\sharp}(K)$ is described in
\cite{KM-dodecahedron-rep}. We summarize the results here. The
dodecahedron has $60$ different Tait colorings which form $10$ orbits
under the permutation group of the colors. These contribute $10$
$V_{4}$-connections to $\Rep(K)$ and thence $10$ copies of the flag
manifold to $\Rep^{\sharp}(K)$. There also exist exactly two fully
irreducible representations in $\Rep(K)$. These make a  contribution of
$2$ copies of $\SO(3)$ to $\Rep^{\sharp}(K)$. The representation
variety $\Rep^{\sharp}(K)$ consists therefore of $10$ copies of the
flag manifold and two copies of $\SO(3)$.

\section{Construction of the functor $J$}
\label{sec:J}

\subsection{Discussion}

In this section we will define the instanton Floer homology $J(\check
Y; \mu)$ for any closed, oriented bifold $\check Y$ with
strong marking data $\mu$. The approach is, by now, very standard. We
start with the circle-valued Chern-Simons function on
$\bonf_{l}(\check Y ; \mu)$ and add a suitable real-valued function
$f$ as a perturbation. The perturbation is chosen so as to make $\CS + f$
formally Morse-Smale. We then construct the Morse complex $(C, d)$ for
this function, with $\F$ coefficients, and we define $J(\check
Y;\mu)$ to be the homology of the complex. 
Nearly all the steps are already laid out elsewhere in
the literature: the closest model is the exposition in
\cite{KM-unknot}, which we will follow closely. That paper in turn
draws on the more complete expositions in \cite{KM-singular} and
\cite{KM-book}, to which we eventually refer for details of the proofs.

However, there is one important issue that arises here that is new. The
codimension-$2$ bubbling phenomenon detailed in
Lemma~\ref{lem:Uhlenbeck} and Proposition~\ref{prop:cone-on-V} means
that there are extra considerations in the proof that $d^{2}=0$ in the
Morse complex.

\subsection{Holonomy perturbations}

The necessary material on holonomy perturbations can be drawn directly
from \cite{KM-unknot}, with slight modifications to deal with the
marking data. Fix $\check Y$ with marking data $\mu = (U_{\mu},
E_{\mu})$. Choose a lift of $U_{\mu}$ to a $U(2)$ bundle $\tilde
E_{\mu}$, and fix a connection $\theta$ on its determinant. A $\mu$-marked
connection $(E,A,\tau)$ on $\check Y$ determines a $U(2)$ connection
$\tilde A_{\mu}$ on $E_{\mu}$, with determinant $\theta$. Let $q$ be a
smooth loop in $Y\sminus K$ based at $y$. If we identify the fiber
$E_{y}$ with $\R^{3}$, then the holonomy of $A$ around the loop
becomes an element of $\SO(3)$. If $y$ belongs to $U_{\mu}$, let us
identify the fiber of $\tilde E_{\mu}$ with $\C^{2}$. 
If the loop is entirely contained in
$U_{\mu}$, then using $\tau$ we can interpret the holonomy as an
element of $U(2)$.

As in \cite{KM-unknot}, we now consider a collection
$\mathbf{q} = (q_{1}, \dots, q_{k})$ where each $q_{i}$ is an
immersion
\[
         q_{i} :   S^{1} \times D^{2} \to Y\sminus K.
\]
We suppose that all the $q_{i}$ agree on $p\times D^{2}$, where $p$ is
a basepoint. Let us suppose also that the image of $q_{i}$ is contained in
$U_{\mu}$ for $1\le i \le j$. Choose a trivialization of $\tilde
E^{\mu}$ on the image of $p\times D^{2}$. Then for each $x\in D^{2}$,
the holonomy of $A$ around the $k$ loops determined by $x$ gives a
well-defined element of
\begin{equation}\label{eq:group-product}
          U(2)^{j} \times\SO(3)^{k-j}.
\end{equation}
Fix any conjugation-invariant function
\[
           h :  U(2)^{j} \times\SO(3)^{k-j} \to \R
\]
Denoting by $\mathrm{Hol}_{x}$ the holonomy element in \ref{eq:group-product},
we obtain for each $x$ a well-defined function
$
      H_{x} = h \comp \mathrm{Hol}_{x}: \bonf_{l}(\check Y; \mu) \to \R.
$
We then integrate this function over $D^{2}$ with respect to a
compactly supported bump-form $\nu$ with integral $1$, to obtain a
function
\[
\begin{gathered}
    f_{\mathbf{q}} : \bonf_{l}(\check Y; \mu) \to \R \\
    f_{\mathbf{q}}(E,A,\tau) =\int_{D^2} H_x(A)\nu.
\end{gathered}
\]
We refer to such functions as \emph{cylinder functions}, as in
\cite{KM-unknot}.

Next, following \cite{KM-unknot}, we introduce a suitable Banach
space $\Pert$ of real sequences $\pi = \{\pi_{i}\}$, and a suitable infinite
collection of cylinder functions $\{f_{i}\}$ so that for each $\pi \in
\Pert$, the sum
\[
         f_{\pi} = \sum_{i} \pi_{i} f_{i}
\]
is convergent and defines a smooth function on $\bonf_{l}(\check Y;
\mu)$. Furthermore, we arrange that the formal $L^{2}$ gradient of
$f_{\pi}$ defines a smooth vector field on the Banach manifold
$\bonf_{l}(\check Y;\mu)$, which we denote by $V_{\pi}$. With suitable
choices for $\Pert$ and the $f_{i}$, we arrange that the analytic
conditions of \cite[Proposition 3.7]{KM-singular} hold. In order to have a
large enough space of perturbations, we form the countable collection
$f_{i}$ by taking, for every $k$, a countable  $C^{\infty}$-dense set
in the space of $k$-tuples of immersions $(q_{1},\dots,q_{k})$; and
for each of these $k$-tuples of immersions, we take a
$C^{\infty}$-dense collection of conjugation-invariant functions $h$. 
Cylinder functions separate points and tangent vectors in
$\bonf_{l}(\check Y; \mu)$, because the $\SO(3)$ holonomies already do
so, up to finite ambiguity, and the extra data from the loops in
$U_{\mu}$ is enough to resolve the remaining ambiguity. So we can
achieve the transversality that  we need. We state these consequences
now, referring to \cite{KM-singular,KM-book} for proofs.

Let $\Crit_{\pi} \subset \bonf_{l}(\check Y; \mu)$ be the set of
critical points of $\CS+f_{\pi}$. The space $\Crit_{0}$ coincides with
the representation variety $\Rep(\check Y; \mu)$. For any $\pi$, the
space $\Crit_{\pi}$ is compact, and for a residual set of
perturbations $\pi$ all the critical points are non-degenerate, in the
sense that the Hessian operator has no kernel. In this case,
$\Crit_{\pi}$ is a finite set.

Now fix such a perturbation $\pi_{0}$ with the property that $\Crit_{\pi}$ is
non-degenerate. For any pair of critical points $\alpha$, $\beta$ in
$\Crit_{\pi}$, we can then form the moduli space $M(\alpha,\beta)$ of
formal gradient-flow lines of $\CS+f_{\pi}$ from $\alpha$ to
$\beta$. This can be interpreted as a moduli space of solutions to a
the perturbed anti-self-duality equations on the $4$-dimensional
bifold $\R\times \check Y$. Note that the (non-compact) foam $\R\times
K$ in $\R\times Y$ has no tetrahedral points. Each component of $M(\alpha,
\beta)$ has a formal dimension, given by the spectral flow of the
Hessian. We write $M_{d}(\alpha,\beta)$ for the component of formal
dimension $d$. The group $\R$ acts by translations, and we write
\[
       M'_{d}(\alpha,\beta) = M_{d}(\alpha,\beta)/R.
\]
The relation \eqref{eq:monotone} implies
a bound on the action $\kappa$ depending only on $d$, across moduli
spaces $M_{d'}(\alpha,\beta)$ for all $\alpha$ and $\beta$ and all
$d'\le d$. This is the ``monotone'' condition of \cite{KM-singular}.

We can now find a perturbation
\[
       \pi = \pi_{0} + \pi_{1}
\]
such that $f_{\pi_{1}}$ vanishes in a neighborhood of the critical
point set $\Crit_{\pi_{0}} = \Crit_{\pi}$, and such that the moduli
spaces $M(\alpha,\beta)$ are regular for all $\alpha$ and $\beta$. 

We shall call a perturbation $\pi$ a \emph{good} perturbation if the
critical set $\Crit_{\pi}$ is non-degenerate and all moduli spaces
$M(\alpha,\beta)$ are regular. We shall call $\pi$ \emph{$d$-good} if
it satisfies the weaker condition that the moduli spaces
$M_{d'}(\alpha,\beta)$ are regular for all $d'\le d$.

Suppose $\pi$ is $2$-good. Then the moduli space
$M'_{1}(\alpha,\beta)$ is a finite set of
points. Write
\[
             n_{\alpha,\beta} = \# M'_{1}(\alpha,\beta) \bmod 2.
\]
 Let $C$ be the $\F$ vector space with basis $\Crit_{\pi}$
and let $d: C \to C$ be the linear map whose matrix entry from
$\alpha$ to $\beta$ is $n_{\alpha,\beta}$.

\subsection{Proof that $d^{2}=0$ mod $2$}

We wish to show that $(C,d)$ is a complex whenever $\pi$ is good. Let
us first summarize the usual argument, in order to isolate where a new
issue arises. To show that $d^{2}$ is zero is to show, for all
$\alpha$ and $\beta$ in $\Crit_{\pi}$,
\[
          \sum_{\gamma} n_{\alpha,\gamma}n_{\gamma,\beta} = 0 \pmod 2.
\]
One considers the $1$-dimensional moduli space
$M'_{2}(\alpha,\beta)=M_{2}(\alpha,\beta)/\R$, of trajectories from
$\alpha$ to $\beta$, and one proves that it has a compactification
obtained by adding broken trajectories. The broken trajectories
correspond to elements of $M'_{1}(\alpha,\gamma)\times M'_{1}(\gamma, \beta)$
for some critical point $\gamma$, and the number of broken
trajectories is the sum above. Finally, one must show that the
compactification of $M'_{2}(\alpha,\beta)$ has the structure of a
manifold with boundary, and uses the fact that the number of boundary
points of a compact $1$-manifold is even.

The extra issue in the bifold case is that $2$-dimensional
moduli spaces such as $M_{2}(\alpha,\beta)$ may have non-compactness
due to bubbling, just as in the case of closed manifolds, as described
in Lemma~\ref{lem:Uhlenbeck}. There are no tetrahedral points, so 
the codimension-$2$ bubbles can only arise
on the seams, which in the cylindrical case means that lines $\R\times
V$, where $V\subset \check Y$ is the set of vertices of the web $K$.
 After dividing by translations, we have the following
description of a compactification of $M'_{2}(\alpha,\beta)$:
\[
          \bar M'_{2}(\alpha,\beta) = M'_{2}(\alpha,\beta) \cup 
               \left(\bigcup_{\gamma}M'_{1}(\alpha,\gamma) \times
              M'_{1}(\gamma,\beta) \right) \cup \left( 
           V \times M'_{0}(\alpha,\beta) \right ).
\]
Note that $M'_{0}(\alpha,\beta)$ is empty unless $\alpha=\beta$,
because it consists only of constant trajectories. So the matrix entry
of $d^{2}$ from $\alpha$ to $\beta$ can be shown to be zero for
$\alpha\ne\beta$ as usual. For $\alpha=\beta$, we have
\[
          \bar M'_{2}(\alpha,\alpha) = M'_{2}(\alpha,\alpha) \cup 
               \left(\bigcup_{\gamma}M'_{1}(\alpha,\gamma) \times
              M'_{1}(\gamma,\alpha) \right) \cup V.
\]

We would like to be able to apply Proposition~\ref{prop:cone-on-V} to
the cylinder $\R\times\check Y$, to conclude that a neighborhood of
each vertex $v \in V$ in $\bar M'_{2}(\alpha,\alpha)$ is homeomorphic
to $4$ intervals joined at one endpoint. This would tell us that $4$
ends of the $1$-manifold $M'_{2}(\alpha,\alpha)$ are incident at each
point of $V$ in the compactification. Since $4$ is even (and indeed,
since the number of vertices is also even), it would follow that
$d^{2}=0$.

However, Proposition~\ref{prop:cone-on-V} requires gluing theory which
has not been carried out in the situation where holonomy perturbations
are present at the point where the bubble occurs. For a holonomy
perturbation $f_{\pi}$, the $\check Y$-support of $f_{\pi}$ will mean
the closure in $\check Y$ of the union of the images  of all the 
immersions  $q : (S^{1}\times D^{2}) \to \check Y\sminus K$ used in
the cylinder functions for $f_{\pi}$. If $f_{\pi}$ is a \emph{finite}
sum of cylinder functions, then the $\check Y$-support is a compact
subset of $\check Y \subset K$ and is therefore disjoint from some
open neighborhoods of the vertex set $V$. In this case, the gluing
theory for bubbles at $V$ is standard, and we can apply
Proposition~\ref{prop:cone-on-V} to conclude that $d^{2}=0$.

It may not be the case that there is a good perturbation which is a
finite of cylinder functions. However, the proof that $d^{2}=0$ only
involves moduli spaces with $d\le 2$. So the following Proposition is
the key.

\begin{proposition}
    There exists a $2$-good perturbation $f_{\pi}$ which is a finite
    sum of cylinder functions.
\end{proposition}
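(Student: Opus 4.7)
The plan is to start from a good (infinite-sum) perturbation, truncate it to a finite sum, and show that $2$-goodness is preserved under sufficiently good approximation. The existence of a good perturbation $\pi \in \Pert$, produced as a convergent series $f_\pi = \sum_i \pi_i f_i$ of cylinder functions, is laid out in the preceding subsection following \cite{KM-singular, KM-book}. For each $N$, let $\pi^N = (\pi_1,\dots,\pi_N,0,0,\dots)$; then $f_{\pi^N}$ is a finite sum of cylinder functions and $\pi^N \to \pi$ in the Banach space norm on $\Pert$. The task is to show that $\pi^N$ is $2$-good for all sufficiently large $N$.

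I would split the argument into two openness statements. First, non-degeneracy of the critical set is open near $\pi$: applying the implicit function theorem to the vector field $V_{\pi'}$ at each non-degenerate zero of $V_\pi$ gives a unique nearby critical point of $\CS + f_{\pi'}$, still non-degenerate, for $\pi'$ in a small Banach neighborhood of $\pi$. One must also exclude ``new'' critical points appearing under small perturbation; this follows from the standard a priori compactness estimates for critical points of the perturbed Chern--Simons function (as in \cite{KM-singular}), which are uniform for $\pi'$ varying in a bounded neighborhood of $\pi$. Hence for $\pi'$ close enough to $\pi$, the critical set $\Crit_{\pi'}$ consists of exactly one non-degenerate point near each point of $\Crit_\pi$, and nothing else.

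Second, regularity of the relevant moduli spaces is open near $\pi$. By the monotone relation \eqref{eq:monotone}, each moduli space $M_{d'}(\alpha,\beta)$ with $d' \le 2$ carries an a priori bound on the action $\kappa$ depending only on $d'$ and on $\alpha, \beta$; together with finiteness of $\Crit_\pi$, this leaves only finitely many relevant moduli spaces. Each such moduli space, together with its compactification by broken trajectories and bubbles described in Lemma~\ref{lem:Uhlenbeck} and Proposition~\ref{prop:cone-on-V}, is compact. Regularity of a compact collection of Fredholm problems whose linearizations depend continuously on $\pi'$ is an open condition; combined with continuity of the identification of critical points from the previous step, this gives an open neighborhood of $\pi$ on which all moduli spaces $M_{d'}(\alpha,\beta)$, $d'\le 2$, remain regular.

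Combining the two openness statements, there is a neighborhood $\mathcal{U} \subset \Pert$ of $\pi$ consisting entirely of $2$-good perturbations. Since $\pi^N \to \pi$, some $\pi^N$ lies in $\mathcal{U}$, and the corresponding $f_{\pi^N}$ is the required finite-sum $2$-good perturbation. The main obstacle I anticipate is the compactness step for the critical sets as $\pi$ varies: one must check that the a priori estimates for zeros of $V_{\pi'}$ depend on $\pi'$ only through its Banach-space norm, so that the critical sets $\{\Crit_{\pi'}\}$ are uniformly bounded and hence trapped in a small neighborhood of $\Crit_\pi$. This is a straightforward extension of the estimates in \cite{KM-singular, KM-book}, but it is the only place where the Banach topology on $\Pert$ enters in an essential way.
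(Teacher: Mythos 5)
Your overall strategy---start from a good (infinite-sum) perturbation, truncate, and argue that $2$-goodness is open in $\Pert$---is exactly the paper's strategy. Your first step (non-degeneracy of the critical set persists, via the implicit function theorem and a priori compactness) is fine, and matches the paper modulo the cosmetic difference that the paper truncates $\pi_0$ first so that the critical set is literally unchanged. The gap is in your second step.

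You assert that each moduli space $M_{d'}(\alpha,\beta)$ with $d'\le 2$, ``together with its compactification by broken trajectories and bubbles,'' is compact, and then appeal to the principle that regularity of a compact family of Fredholm problems is an open condition. This is circular in exactly the case that matters. For $d'\le 1$ there is genuine compactness (no bubbling can occur in those moduli spaces once lower ones are regular), and the usual openness argument goes through; the paper does this by a short induction on $d'$. But for $d'=2$ the Uhlenbeck compactification $\bar M_2(\alpha,\alpha)$ adds bubble points $V\times M_0$, and at such an ideal point there is \emph{no} Fredholm operator in the same function space on the cylinder whose surjectivity you could cite: the limit is a bubble tree, not a solution of the PDE on $\R\times\check Y$. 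Moreover, the structure result you invoke, Proposition~\ref{prop:cone-on-V}, is stated by the authors to rely on gluing theory that ``has not been carried out in the situation where holonomy perturbations are present at the point where the bubble occurs''---so it cannot be used to set up the openness argument here; this proposition is, rather, a beneficiary of the result you are trying to prove.

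What is actually needed (and what the paper supplies) is a direct argument that a hypothetical sequence of non-regular solutions $A_m\in M_2(\alpha,\alpha)_m$ cannot converge to a bubble at a seam. The paper's proof does this by: rescaling conformally so that a long cylinder on $S^3/V_4$ is inserted at the bubble point; showing the perturbation terms are exponentially small on that neck; normalizing the cokernel elements $\omega_m$ and using regularity of $M_0(\alpha,\alpha)$ and of instantons on $\R^4/V_4$ to force $\omega_m\to 0$ on both the cylinder end and the bubble; and finally ruling out the remaining possibility that the $L^2$ mass of $\omega_m$ concentrates on the neck, via a weighted (exponential decay) estimate. This analysis is the essential content of the proposition and is missing from your proof: the ``openness of regularity'' you need at $d'=2$ is precisely what the bubble analysis establishes, and does not follow from soft compactness considerations.
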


\begin{proof}
     We can approximate any $\pi$ by a finite sum, so the issue is
     openness of the required regularity conditions. Recall that we
     construct good perturbations $\pi$ as $\pi_{0} + \pi_{1}$, where
     $\pi_{0}$ is chosen first to make $\Crit_{\pi_{0}}$
     non-degenerate. The non-degeneracy of the compact critical set
     $\Crit_{\pi_{0}}$ is an open condition, so we certainly arrange
     that $f_{\pi_{0}}$ is a finite sum by truncating $\pi_{0}$ after
     finitely many terms.

     With $\pi_{0}$ fixed, choose a good $\pi=\pi_{0} + \pi_{1}$ as
     before, with $f_{\pi} = f_{\pi_{0}}$ in a neighborhood of
     $\Crit_{\pi}$. Choose an approximating sequence
      \[
                 \pi^{m} \to \pi
      \]
      in $\Pert$, with the property that each $f_{\pi^{m}}$ is a
      finite sum and $f_{\pi^{m}}=f_{\pi_{0}}$ on the same
      neighborhood of $\Crit_{\pi}$. From the fact that $\pi$ is
      $2$-good, we wish to conclude that $\pi^{m}$ is $2$-good for
      sufficiently large $m$.

      Let $M(\alpha,\beta)_{m}$ denote the
      moduli spaces for the perturbation $\pi_{m}$. 
      For $d \le 1 $, there is a straightforward induction argument to show that
      $\pi_{m}$ is $d$-good for $m$ sufficiently large. This starts
      with the observation that, for $d$ sufficiently negative, the
      moduli spaces $M_{d}(\alpha,\beta)_{m}$ are empty because they have
      negative action $\kappa$, as a consequence of equation
      \eqref{eq:monotone}. If $d_{0} \le 0$ is the first dimension for
      which any the moduli spaces $M'_{d_{0}}(\alpha,\beta)_{m}$ is
      non-empty, then these moduli spaces are compact, and their
      regularity is therefore an open condition. It follows that these
      moduli spaces are regular (i.e. empty if $d_{0}<0$) for $m$
      sufficiently large.

      Compactness also holds for $M'_{1}(\alpha,\beta)$ once the lower
      moduli spaces are regular. So we may assume that all the
      $\pi^{m}$ are $1$-regular. We must prove that they are then
      $2$-regular for large $m$.

      Suppose the contrary. 
      Passing to a subsequence, we may assume
      that there is a critical point $\alpha$ and for each $m$ a
      perturbed anti-self-dual solution $A_{m}$ representing a point
      of $M_{2}(\alpha,\alpha)_{m}$ which is not regular. This means that
      the cokernel of the linearized operator is not surjective at $A_{m}$, and
      we can choose a  non-zero solution 
      \[      
      \omega_{m} \in L^{2}_{l}(\R\times
      \check Y; \Lambda^{+}\otimes E)
      \]
       of the formal adjoint equation (i.e. the operator
       $d^{*}_{A_{m}}$ plus a perturbation determined by the holonomy
       of $A_{m}$).   

        In seeking a contradiction, the interesting cases are when
        $A_{m}$ is either converging to a broken trajectory or is
        approaching a point in the Uhlenbeck compactification with a
        bubble at a point on a seam. The case of a broken trajectory
        is standard: if a sequence $A_{m}$ converges to a broken
        trajectory and the components of the broken trajectory are
        regular, then $A_{m}$ is regular for large $m$. This is a standard
        consequence of the proof of the gluing theorem for broken
        trajectories.

\begin{figure}
    \begin{center}
        \includegraphics[scale=0.85]{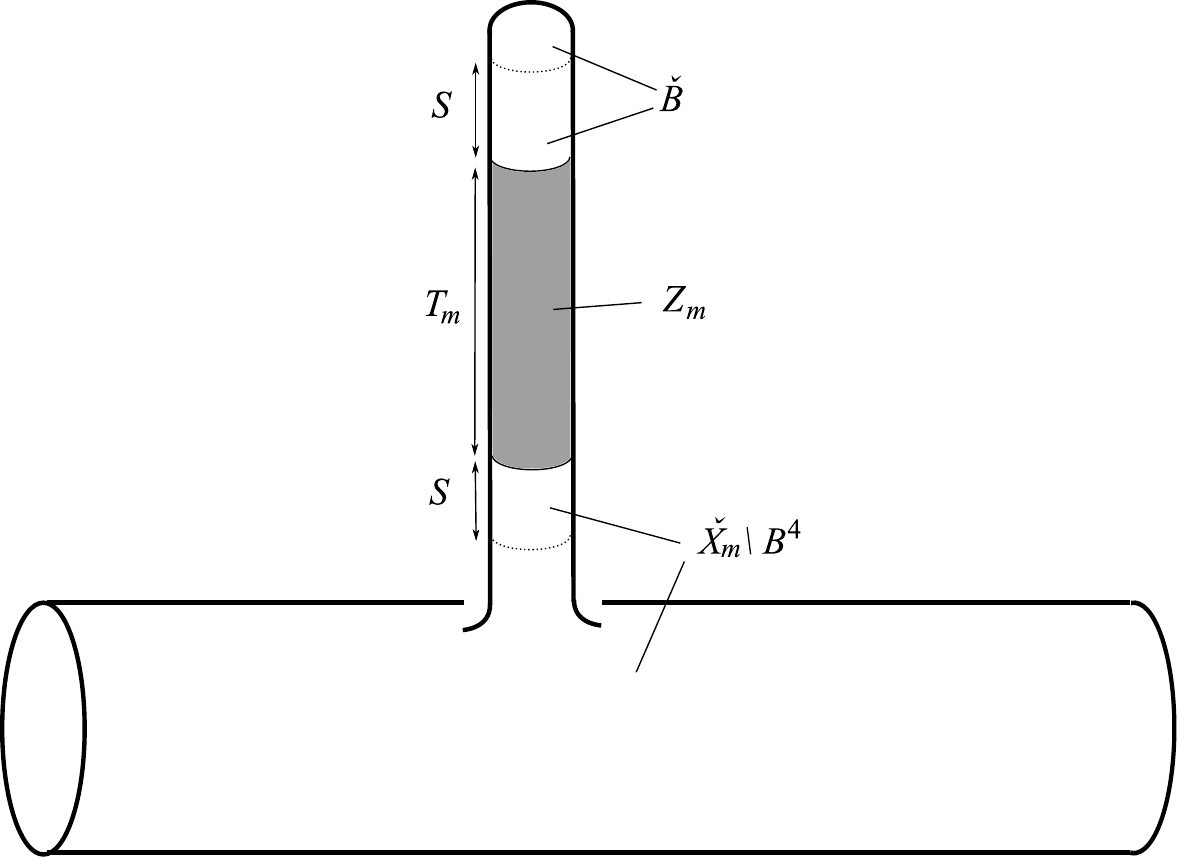}
    \end{center}
    \caption{\label{fig:conformal}
   Making a conformal change at the bubble.}
\end{figure}

        So let us suppose that $A_{m}$ converges to a point $(p,A)$ in
        the Uhlenbeck compactification, where $p$ is a point on a seam
        $\R\times \{v\}$ and $A \in M_{0}(\alpha,\alpha)$ is the
        constant solution. Let us apply a conformal change to
        $\R\times \check Y$, as indicated in
        Figure~\ref{fig:conformal}, so that the new manifold  $\check
        X_{m}$ contains
        a cylinder on $S^{3}/V_{4}$ of length $T_{m}+2S$. The length
        $S$ will be fixed and large. We can choose the lengths $T_{m}$
        and the centers of the conformal transformations so that the
        $A_{m}$ converge in the sense of broken trajectories as
        $T_{m}\to\infty$. In the limit, we obtain two pieces. We
        obtain 
        an orbifold
        instanton on $\check B=B^{4}/V_{4}$ equipped with a cylindrical end
        (or equivalently $S^{4}/V_{4}$), with action $\kappa=1/4$; and
        we obtain the conformal transformation of the 
         constant solution  $A$ on $(\R\times\check
        Y)\sminus\{p\}$.  

        On $\check X_{m}$, the equations satisfied by $A_{m}$ are of
        the form $F^{+}_{A_{m}} = W_{m}$, where $W_{m}$ is obtained
        from the holonomy perturbation. In the original metric on
        $\R\times \check Y$, these perturbing terms satisfy a uniform
        $L^{\infty}$ bound. So in the conformally equivalent metric of $\check X_{m}$, there
        is a bound of the form
         \[
                      \| W_{m} \|_{L^{\infty}(Z_{m})} \le c_{1}
                      e^{-c_{2} S}
         \]
          on the cylinder $Z_{m}$ (see
          Figure~\ref{fig:conformal}). For any $\epsilon$, we
           may arrange, by increasing $S$, that the connections
           $A_{m}$ are all $\epsilon$-close to the constant flat
           $V_{4}$-connection in $L^{p}_{1}$ norm on $Z_{m}$, for any $p$.
             
          Consider now the cokernel elements $\omega_{m}$. Scale these
          so that their $L^{2}$ norm is $1$ on $\check X_{m}$. Since
          the $A_{m}$ are converging locally in $L^{p}_{1}$ for all
          $p$, we have a uniform $C^{0}$ bound on $\omega_{m}$ in the
          metric of $\check X_{m}$, and (after passing to a
          subsequence) convergence to a limit $\omega$.  We may assume
          that the $\omega_{m}$ have uniform exponential decay on the
          two ends of $\R\times \check Y$, for otherwise we are in the
          standard case where the bubble plays no role. Because
          $M_{0}(\alpha,\alpha)$ is regular, the limit $\omega$ must
          be zero on $\check X_{m}\sminus \check B^{4}$, and the uniform
          exponential decay on the two ends means that
          \[
                 \int_{\check X_{m}\sminus \check B^{4}}
                 |\omega_{m}|^{2} \to 0.
          \]
          Because all orbifold instantons on $\R^{4}/V^{4}$ are also
          regular, we have
          \[
              \int_{\check B}
                 |\omega_{m}|^{2} \to 0
          \]
          also. It follows that
           \[
       \int_{Z_{m}}
                 |\omega_{m}|^{2} \to 1.
           \]
           However, $\omega_{m}$ is converging to zero on the two
           boundary components of the cylinder $Z_{m} = [0,T_m]\times S^{3}/V_{4}$, and satisfies
           an equation which is schematically of the shape
            \[
                d^{*}\omega_{m} + A_{m}\cdot \omega_{m}= Q_{m},
            \]
            where $Q_{m}$ is the contribution coming from the
            non-local perturbation (and is therefore dependent on the
            value of $\omega_{,}$ on parts of $\check X_{m}$ that are
            not in $Z_{m}$).  The $L^{p}_{1}$ convergence of the terms
            $A_{m}$ mean that the corresponding multiplication
            operators that appear on the left can be taken to be
            uniformly small in operator norm, as operators either from
            $L^{2}_{1}$ to $L^{2}$, or as operators on weighted
            Sobolev spaces, $L^{2}_{1,\delta}\to L^{2}_{\delta}$,
            weighted by $e^{\delta T}$. The terms $Q_{m}$ are also
            uniformly bounded in weighted $L^{2}$ norms. So standard
            arguments (see \cite{KM-book} for example) establish that
            the $L^{2}$ norm of $\omega_{m}$ on length-$1$ cylinders
            $[t,t+1]\times (S^{3}/V_{4})$ is uniformly controlled by a
            function that decays exponentially towards the middle of
            the cylinder.  It follows that the integral of
            $|\omega_{m}|^{2}$ on $Z_{m}$ is also going to zero, which
            is a contradiction.
\end{proof}

The Proposition shows that we can construct $2$-good perturbations for
which $d^{2}=0$. It follows that, in fact, $(C,d)$ is a complex for
all $2$-good perturbations, and therefore for all good
perturbations. This is because, up to isomorphism, the complex $C$ and
boundary map $d$ are stable under small changes in the perturbation,
by the compactness of $\Crit_{\pi}$ and $M'_{1}(\alpha,\beta)$. So we
can always approximate a given good perturbation by one for which the
proof that $d^{2}=0$ applies, without actually changing $d$. Thus, we
have:
\begin{proposition}
    The map $d : C\to C$ satisfies $d^{2}=0$, for all good
    perturbations $\pi$.
\end{proposition}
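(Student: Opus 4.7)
The plan is to reduce the general statement to the finite-sum case already handled by the previous proposition, via a deformation-invariance argument. First, fix a good perturbation $\pi$ and a $2$-good finite-sum perturbation $\pi^{*}$ supplied by the previous proposition. For $\pi^{*}$, the $\check Y$-support of $f_{\pi^{*}}$ is compact in $\check Y\sminus K$, and hence disjoint from an open neighborhood of the vertex set $V$. Consequently, any seam bubbling in the cylindrical bifold $\R\times\check Y$ occurs at a point $\R\times\{v\}$ where the perturbing term vanishes identically, so the standard unperturbed orbifold gluing theory of Proposition~\ref{prop:cone-on-V} applies directly at the stratum $V\times M'_{0}(\alpha,\alpha)$.

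Next, assemble the compactification of $M'_{2}(\alpha,\beta)$ for the perturbation $\pi^{*}$. For $\alpha\ne\beta$ the stratum $V\times M'_{0}(\alpha,\beta)$ is empty and the classical argument gives a compact $1$-manifold with boundary whose endpoint count equals $\sum_{\gamma} n_{\alpha,\gamma}n_{\gamma,\beta}\bmod 2$, hence zero. For $\alpha=\beta$, Proposition~\ref{prop:cone-on-V} describes a neighborhood of each $(v,\alpha)$ in $\bar M'_{2}(\alpha,\alpha)$ as four half-open intervals joined at a point, so each vertex contributes an even number of ends; since the number of vertices of $K$ is also even, the extra contribution is even, and $d^{2}=0$ still follows from parity. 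This establishes the conclusion for the particular perturbation $\pi^{*}$.

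Finally, I transfer the conclusion to the arbitrary good perturbation $\pi$ by a stability argument. The critical set $\Crit_{\pi}$ is compact and nondegenerate, and the moduli spaces $M'_{1}(\alpha,\beta)$ are compact, regular, zero-dimensional; all of these properties are open conditions in $\Pert$. Therefore, for any sufficiently close $\tilde\pi\in\Pert$, there is a canonical bijection between $\Crit_{\tilde\pi}$ and $\Crit_{\pi}$ and an equality of the mod-$2$ counts $n_{\alpha,\beta}$. Approximating $\pi$ by $2$-good finite-sum perturbations $\pi^{*}_{m}\to\pi$ of the type produced above, we obtain $(C_{\pi^{*}_{m}},d_{\pi^{*}_{m}})\cong(C_{\pi},d_{\pi})$ for $m$ large, so $d_{\pi}^{2}=d_{\pi^{*}_{m}}^{2}=0$.

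The substantive analytic obstacle has already been disposed of by the previous proposition, which guarantees that $2$-good perturbations can be arranged to have compact support away from the vertices so that unperturbed orbifold gluing applies. The only conceptual point remaining here is the observation that, for a codimension-$2$ bubble at a $V_{4}$ singular point, Proposition~\ref{prop:cone-on-V} contributes \emph{four} half-open intervals rather than the $\SO(3)$-worth of the non-orbifold theory; this is still even, so the parity argument that underlies $d^{2}=0$ is not disturbed.
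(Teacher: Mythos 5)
Your proposal is correct and follows essentially the same route as the paper: approximate a given good perturbation by a $2$-good finite sum of cylinder functions (using the preceding proposition), verify $d^2=0$ there via Proposition~\ref{prop:cone-on-V} and the parity count of ends at the stratum $V\times M'_0$, and then transfer back by noting that the compactness of $\Crit_\pi$ and $M'_1(\alpha,\beta)$ makes the complex $(C,d)$ stable under small changes in the perturbation. The extra detail you give on the finite-sum case is precisely what the paper lays out in the paragraphs preceding this proposition.
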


We can now define the instanton Floer homology:

\begin{definition}
    For any closed, oriented bifold $Y$ with strong marking data $\mu$, we
    define $J(\check Y; \mu)$ to be the homology of the complex
    $(C,d)$ constructed above, for any choice of orbifold Riemannian
    metric $\check g$ and any good perturbation $\pi$. When $\check Y$
    is obtained from a $3$-manifold $Y$ with an embedded web
    $K$, we may write $J(Y, K ; \mu)$.
\end{definition}

\subsection{Functoriality of $J$}
\label{sec:functoriality}

In the above definition, the group $J(\check Y; \mu)$ depends on a
choice of metric and perturbation. As usual with Floer theories, the
fact that $J(\check Y; \mu)$ is a topological invariant is a formal
consequence of the more general functorial property of instanton
homology. 

Let $\check Y_{1}$ and $\check Y_{0}$ be two $3$-dimensional
bifolds with orbifold metrics $\check g_{1}$ and $\check
g_{0}$. Let $\check X$ be an oriented cobordism from $\check
Y_{1}$ to $\check Y_{0}$. Equip $\check X$ with an orbifold metric
that is a product in a collar of each end. Let $\mu_{1}$ and $\mu_{0}$
be marking data for the two ends, and let $\nu$ be marking data for
$\check X$ that restricts to $\mu_{1}$ and $\mu_{0}$ at the
boundary. Let $\pi_{1}$ and $\pi_{0}$ be good perturbations on $\check
Y_{1}$ and $\check Y_{0}$.  If we attach cylindrical ends to $\check
X$ and choose auxiliary perturbations in the neighborhoods of the two
boundary components \cite{KM-unknot, KM-singular, KM-book}, then we
have moduli spaces of solutions to the perturbed anti-self-duality
equations on the bifold:
\begin{equation}\label{eq:cobordism-moduli-space}
                  M(\check X, \nu ; \alpha,\beta).
\end{equation}
Here $\alpha$ and $\beta$ are critical points for the perturbed
Chern-Simons functional, i.e.~generators for the chain complexes
$(C_{1}, d_{1})$ and $(C_{0}, d_{0})$ associated with $(\check Y_{1},
\mu_{1})$ and $(\check Y_{0}, \mu_{0})$. Thus we
can use $(\check X; \nu)$ to define a linear map
\[
               (C_{1}, d_{1}) \to (C_{0}, d_{0})
\]
between the respective complexes, by counting solutions to the
perturbed equations in zero-dimensional moduli spaces. The usual proof
that the linear map defined in this way is a \emph{chain} map involves
moduli spaces only of dimension $0$ and $1$. In particular, the
codimension-$2$ bubbling phenomenon of Proposition~\ref{prop:cone-on-V}
does not enter into the argument (unlike the proof that
$d^{2}=0$). However, if the foam $\Sigma$ has tetrahedral points, then
the codimension-$1$ bubbling phenomenon of
Proposition~\ref{prop:tetrahedral-cone-on-V} comes into play.  That
proposition tells us that
the number of ends of each $1$-dimensional moduli space that are
accounted for by bubbling in the Uhlenbeck compactification is a
multiple of $4$, and in particular even. So the
usual proof carries over as long as we are using coefficients $\F$.

In this way, $(\check X; \nu)$ defines a map on homology,
\[
   J(\check X; \nu) :   J(\check Y_{1}; \mu_{1}) \to J(\check Y_{0} ; \mu_{0}).
\]
Furthermore, a chain-homotopy argument shows that the map is
independent of the choice of metric on the interior of $\check X$ and
is independent also of the auxiliary perturbations on the cobordism.

Consider now the composition of two cobordisms. Suppose we have two
cobordisms $(\check X' ; \nu')$ and $(\check X'' ; \nu'')$, where the
first is a cobordism to $(\check Y; \mu)$ and the second is a
cobordism from $(\check Y; \mu)$. This means that the restrictions of
$E_{\nu'}$ and $E_{\nu''}$ to the common subset $E_{\mu}\subset \check
Y$ are the same bundle (not just isomorphic), so together they define
a bundle $E_{\nu'}\cup E_{\nu''}$. We can therefore
join the cobordisms canonically to  make a composite cobordism with marking data, 
\[
             (X; \nu) = (\check X' \cup \check X'' ; \nu' \cup \nu'').
\]

In the above situation it is \emph{not} always the case that
\begin{equation}\label{eq:compositon-law}
                J(\check X; \nu) =  J(\check X''; \nu'') \comp  J(\check X'; \nu').
\end{equation}
To see the reason for this, consider the problem of joining together
marked connections. Suppose we have marked connection $(E', A',
\sigma')$ and $(E'', A'', \sigma'')$ on $(\check X' ; \nu')$ and
$(\check  X'' ; \nu'')$ respectively. By restriction, we obtain two
$\mu$-marked connections on $(\check Y; \mu)$, say $(F' , B', \tau')$
and $(F'', B'', \tau'')$. Let us suppose these are isomorphic: they
define the same point in $\bonf(\check Y ; \mu)$. To avoid the issue
of differentiability, let us also suppose that  $(E', A',
\sigma')$ coincides with the pull-back of $(F' , B', \tau')$ in a
collar of $\check Y$, and similarly with $E''$. Because the marking is
strong, there is a unique isomorphism, which is a bundle isomorphism
\[
            \phi : F' \to F''
\]
such that $\phi^{*}(B'') = B'$ and such that the map 
\[
     \psi =  (\tau'')^{-1} \phi \tau' : E_{\mu} \to E_{\mu}
\] 
lifts to determinant $1$.  

These conditions on $\phi$ are not enough to allow us
to construct a $\nu$-marked connection $(E, A, \sigma)$ on the union
$\check X$. Certainly we can use $\phi$ to glue $E'$ to $E''$ along
the common boundary, and the union of the connections $A'$ and $A''$
will give us a connection $A$. So we do have an unmarked $\SO(3)$
connection $(E,A)$ on $\check X$. However, $(E,A)$ does not have a
$\nu$-marking. Instead of having a bundle isomorphism
\[
              \sigma :  E_{\nu} \to E|_{U_{\nu}\sminus \Sigma}
\]
we have a pair of bundle isomorphisms $\bar\sigma'$ and $\bar\sigma''$
(essentially the maps $\sigma'$ and $\sigma''$) defined over
$U_{\nu'}$ and $U_{\nu''}$. To form $\sigma$, we would want
$\bar\sigma'$ and $\bar\sigma''$ to be equal on their common domain
$E_{\mu}$. Instead, we are only given that
$(\bar\sigma'')^{-1}\sigma'|_{E_{\mu}}$ lifts to determinant $1$.

For any $\SO(3)$ bundle $F$, let $\delta(F)$ denote the group of
bundle automorphisms of $F$ modulo those that lift to determinant
$1$. From the above discussion, we see that we need the following
diagram to be a fiber product:
\[
  \xymatrix{  & \delta(E_{\nu'}) \ar[dr] & \\ \delta(E_{\nu})  \ar[ur]
    \ar[dr] & & \delta(E_{\mu})  \\ & \delta(E_{\nu''}) \ar[ur]&}               
\]
Since $\delta(F)$ is isomorphic to $H^{1}$ of the base with $\F$
coefficients, this is equivalent to the exactness of the sequence
\begin{equation}\label{eq:exact-on-H1}
       0 \to H^{1}(W;\F) \to H^{1}(W' ; \F) \oplus H^{1}(W'' ;
       \F)
                 \to H^{1}(W' \cap W'' ; \F) \to 0
\end{equation}
where $W= U_{\nu} \sminus \Sigma \subset \check X$ and so on. To
summarize this, we have:

\begin{proposition}
    The composition law \eqref{eq:compositon-law} holds provided that
    the sequence \eqref{eq:exact-on-H1} is exact.
\end{proposition}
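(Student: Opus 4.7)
The plan is to combine the standard neck-stretching construction for cobordism maps in instanton Floer theory with the fiber-product analysis of markings developed just above the statement.

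First, I would replace $\check X$ by a one-parameter family $\check X_T$ of bifold cobordisms obtained by inserting a product cylinder $[-T,T]\times \check Y$ along the common neck $\check Y$, using the product metric and auxiliary perturbation that already appear there. Because $J(\check X;\nu)$ is independent of the metric and auxiliary perturbations on the interior, its matrix entries $\langle J(\check X;\nu)\beta_1,\beta_0\rangle$ may be computed from any $\check X_T$. The usual neck-stretching analysis then degenerates zero-dimensional moduli spaces $M_0(\check X_T,\nu;\beta_1,\beta_0)$ as $T\to\infty$ into configurations that split at a critical point $\alpha\in \Crit_\pi$ on $(\check Y;\mu)$. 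The only bifold-specific non-compactness that can affect the $0$-dimensional count is codimension-one bubbling at tetrahedral points in the ends of $1$-dimensional moduli spaces; by Proposition~\ref{prop:tetrahedral-cone-on-V}, such ends occur in multiples of four and so contribute $0$ mod $2$. After discarding these, the count on $\check X_T$ for large $T$ is carried by compatible pairs of $\nu'$- and $\nu''$-marked solutions on the completed pieces whose restrictions to the neck are isomorphic as $\mu$-marked connections at some $\alpha$.

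Next I would carry out the fiber count, which is where the hypothesis enters. Given a compatible pair $(A',\sigma')\in M_0(\check X',\nu';\beta_1,\alpha)$ and $(A'',\sigma'')\in M_0(\check X'',\nu'';\alpha,\beta_0)$, the discussion preceding the proposition produces a canonical underlying $\SO(3)$ connection $(E,A)$ on $\check X$, and the set of isomorphism classes of $\nu$-markings $\sigma$ on $(E,A)$ that restrict to the given pair is a torsor for the kernel of the natural map
\[
    \delta(E_{\nu}) \;\longrightarrow\; \delta(E_{\nu'}) \times_{\delta(E_\mu)} \delta(E_{\nu''}).
\]
Identifying $\delta(F)$ with $H^1$ of the relevant complement with $\F$ coefficients, this map is an isomorphism precisely when the Mayer--Vietoris sequence \eqref{eq:exact-on-H1} is exact. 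Under the hypothesis of the proposition, therefore, every compatible pair lifts to exactly one $\nu$-marked solution on $\check X$, giving a bijection
\[
    M_0(\check X,\nu;\beta_1,\beta_0) \;\longleftrightarrow\; \bigsqcup_{\alpha\in\Crit_\pi} M_0(\check X',\nu';\beta_1,\alpha)\times M_0(\check X'',\nu'';\alpha,\beta_0).
\]
Counting mod $2$ then gives the matrix identity which is the composition law \eqref{eq:compositon-law}.

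The main technical obstacle I anticipate is verifying the gluing/neck-stretching theorem in this bifold setting with holonomy perturbations across the neck. However, $\R\times \check Y$ contains no tetrahedral points, and its only singularities are the seam lines $\R\times V$ whose associated bubbling is codimension two and so irrelevant for the zero- and one-dimensional moduli spaces used to extract the composition law. Consequently the standard gluing arguments, essentially those already invoked in the proof that $d^2=0$ and in the proof that $J(\check X;\nu)$ is a chain map, carry over with no substantive change.
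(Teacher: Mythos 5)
Your proposal is correct and follows essentially the same route as the paper: the composition law is analyzed by degenerating the neck and examining whether a compatible pair of $\nu'$- and $\nu''$-markings on the two pieces glues to a unique $\nu$-marking, and the obstruction is encoded in the condition that the square of $\delta$-groups be a fiber product, which translates via $\delta(F)\cong H^1(\cdot;\F)$ into exactness of \eqref{eq:exact-on-H1}. The only small imprecision is the phrase ``is a torsor for the kernel'': that set may be empty if the map is not surjective, so the correct statement is that exactness (equivalently, the map being an isomorphism) is what guarantees a unique lift --- which you in fact conclude in the next sentence.
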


Using the Mayer-Vietoris sequence, we see that one way to ensure
exactness of \eqref{eq:exact-on-H1} is to require that the maps
\[
               H^{i}(W'; \F) \to H^{i}(W' \cap W''; \F)
\]
are surjective for $i=0$ and $1$.  So we make the following
definition.

\begin{definition}
    If $(\check X , \nu)$ is a marked cobordism from $(\check Y_{1},
    \mu_{1})$ to $(\check Y_{0}, \mu_{0})$, we say that the marking
    data $\nu$ is \emph{right-proper} if the map
      \[
                H^{i}(U_{\nu} \sminus \Sigma ;\F) \to
                H^{i}(U_{\mu_{0}} \sminus K_{0};\F) 
       \]
     is surjective for $i=0$ and $1$. We define left-proper similarly.
\end{definition}

Thus the composition law \eqref{eq:compositon-law} 
always holds if the marking data $\nu'$ on $(\check X' ; \nu')$ is
right-proper, or if the marking data $\nu''$ on $(\check X'' ; \nu'')$
is left-proper. Furthermore, if we compose two cobordisms with
right-proper marking, then the composite marking is right-proper also
(and similarly with left-proper). 

Passing to the language of smooth $3$-manifolds $Y$ with embedded
webs $K$, and smooth $4$-manifolds $X$ with embedded
foams $\Sigma$, we have the following description. According to the
definition, a foam comes with marked points (dots) on the faces, but
we consider first the case that \emph{there are no dots}. There
is a category in which an object is a quadruple $(Y, K, \mu, a)$,
where $Y$ is a closed, oriented, connected $3$-manifold, $K$ is an
embedded web, $\mu$ is strong marking data, and $a$ is
auxiliary data consisting of a choice of orbifold metric $\check g$ on
a corresponding orbifold $\check Y$ and a choice of good perturbation
$\pi \in \Pert$. A morphism in this category from $(Y_{1}, K_{1},
\mu_{1}, a_{1})$ to $(Y_{0}, K_{0}, \mu_{0}, a_{0})$ 
is an isomorphism class of compact, oriented,
4-manifold-with-boundary, 
$X$, containing a foam $\Sigma$ and right-proper marking
data $\nu$, together
with orientation-preserving identifications $\iota$ of $(\partial X, \partial
\Sigma, \partial \nu)$ with 
\[ (-Y_{1}, K_{1}, \mu_{1}) \cup (Y_{0},
K_{0}, \mu_{0}).
\]
The construction $J$ defines a functor from this category to vector
spaces over $\F$.

If only the auxiliary data $a_{i}$ differ, then there is a canonical
product morphism from $(Y, K , \mu, a_{1})$ to $(Y, K, \mu, a_{0})$,
which is an isomorphism. So we can drop the auxiliary data $a$ and
take an object in our category to be a triple $(Y, K, \mu)$ and an
morphism to be (still) an isomorphism class of triples
$(X,\Sigma,\nu)$.

\begin{proposition}\label{prop:Cato-functor}
    Let $\Cat_{o}$ be the category in which an object is a triple $(Y,
    K,\mu)$, consisting a  closed,
    connected, oriented $3$-manifold $Y$, an embedded web
    $K$, and strong marking data $\mu$, and in which the morphisms are
    isomorphism classes of foam cobordisms $(X,\Sigma,\nu)$ with
    right-proper marking data $\nu$. (Here again $X$ is an oriented
    $4$-manifold with boundary and $\Sigma$ is an embedded foam, without
    dots.) Then
    $J$ defines a functor from $\Cat_{o}$ to vector spaces over $\F$.
\end{proposition}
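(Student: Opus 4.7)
The plan is to verify the three functor axioms in turn, drawing on the apparatus already assembled in this section. For a morphism represented by $(X,\Sigma,\nu)$ from $(Y_{1},K_{1},\mu_{1})$ to $(Y_{0},K_{0},\mu_{0})$, I construct the chain-level map by choosing an orbifold metric on $\check X$ that is a product on collars of both ends, attaching cylindrical ends, and fitting in auxiliary holonomy perturbations matching the good perturbations $\pi_{1},\pi_{0}$ at the boundary. Counting mod~$2$ the zero-dimensional components of the moduli spaces $M(\check X,\nu;\alpha,\beta)$ of \eqref{eq:cobordism-moduli-space} then gives a linear map $(C_{1},d_{1})\to (C_{0},d_{0})$. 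To establish the chain-map property one reads off the ends of the one-dimensional moduli spaces: the honest Floer boundary consists of broken trajectories at either end, contributing the usual $d_{0}J+Jd_{1}$ terms, while by Lemma~\ref{lem:Uhlenbeck} the only other codimension-$1$ Uhlenbeck strata occur at tetrahedral points of $\Sigma$, and Proposition~\ref{prop:tetrahedral-cone-on-V} shows these contribute ends in groups of four, hence vanish mod~$2$. Independence of the interior metric and of the cobordism-perturbation is the standard chain-homotopy argument via parametrized moduli spaces over $[0,1]$, which justifies suppressing the auxiliary data $a_{i}$ in the definition of $\Cat_{o}$.

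For preservation of composition, suppose $(X',\Sigma',\nu')$ and $(X'',\Sigma'',\nu'')$ meet at $(Y,K,\mu)$ with composite $(X,\Sigma,\nu)=(X'\cup X'',\,\Sigma'\cup\Sigma'',\,\nu'\cup\nu'')$. I will use neck-stretching along the collar of the common boundary to identify rigid solutions on $\check X$ with pairs of rigid solutions on $\check X'$ and $\check X''$ whose restrictions coincide in $\bonf_{l}(\check Y;\mu)$ \emph{and} whose markings glue to a $\nu$-marking of the bundle on $\check X$. As spelled out in the discussion preceding the proposition, this second condition is exactly the exactness of the sequence \eqref{eq:exact-on-H1}, equivalently the cartesianness of the fiber-product diagram of groups $\delta(\cdot)$. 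A Mayer--Vietoris computation shows that right-properness of $\nu'$ implies this exactness, yielding \eqref{eq:compositon-law}; the same Mayer--Vietoris argument shows that the composite marking $\nu$ is again right-proper, so morphisms in $\Cat_{o}$ do compose to morphisms in $\Cat_{o}$.

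For the identity, take the product cobordism $([0,1]\times Y,\,[0,1]\times K)$ with pulled-back marking $\nu = [0,1]\times\mu$, which is evidently right-proper. For a cylindrical metric and the trivial cobordism perturbation, the zero-dimensional moduli spaces $M(\check X,\nu;\alpha,\beta)$ are empty for $\alpha\ne\beta$ and consist of the constant trajectory at each critical point when $\alpha=\beta$, so the induced chain map is the identity; by chain-homotopy invariance this is true for any other admissible choice of metric and perturbation.

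The hard part, and the only point where anything genuinely new has to be said, is the composition law. The neck-stretching analysis is the bifold analogue of the construction in \cite{KM-book}, but the non-standard ingredient is the homological bookkeeping of $\nu$-markings under gluing; right-properness is tailored precisely so that the combinatorial gluing of markings matches the analytic gluing of connections bijectively. Granting this, the remaining verifications follow the templates of \cite{KM-unknot,KM-singular,KM-book}.
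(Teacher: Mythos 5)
Your proposal is correct and follows essentially the same route as the paper: chain maps via zero-dimensional cobordism moduli spaces, chain-map property via broken trajectories plus the mod-$4$ count of tetrahedral bubbling ends (Proposition~\ref{prop:tetrahedral-cone-on-V}), chain-homotopy independence of the interior metric and perturbation, and—the substantive point—the composition law reduced to exactness of \eqref{eq:exact-on-H1}, which right-properness guarantees by Mayer--Vietoris, with the same Mayer--Vietoris argument showing right-proper cobordisms compose to right-proper cobordisms. Your explicit verification of the identity morphism via the product cobordism with cylindrical metric is a welcome addition to the paper's more compressed exposition.
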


We use the terminology $\Cat_{o}$ rather than $\Cat$ because we
reserve $\Cat$ for the category in which the foams are allowed to have
\emph{dots}. We turn to this matter next.

\subsection{Foams with dots}
\label{subsec:dots}

Until now, the foam  $\Sigma$ has been dotless. We now explain how to
extend the definition of the linear maps defined by cobordisms to the
case of foams with dots. For now, we work in the general setting of
cobordisms between $3$-dimensional bifolds equipped with strong marking data.

So consider a $4$-manifold $X$ with a foam
$\Sigma\subset X$ and strong marking data $\nu$. Let $\check X$ be a
corresponding $4$-dimensional Riemannian bifold. Let $\delta \in
\Sigma$ be a point lying on a face (a ``dot''). Associated with
$\delta$ is a real line bundle over the space of connections,
\[
            L_{\delta} \to \bonf_{l}(\check X; \nu),
\]
defined as follows. Let $B_{\delta}\subset X$ be a ball neighborhood
of $\delta$, and consider marking data $\mu$ given by the trivial
bundle over $U_{\mu}=B_{\delta}\setminus \Sigma$. There is a double covering,
\[
               \bonf_{l}(\check X; \nu, \mu) \to  \bonf_{l}(\check X; \nu)
\]
where the left-hand side is the space of isomorphism classes of bifold
connections $(E,A)$ equipped with \emph{two} markings: a $\nu$-marking
$\tau$, and a $\mu$-marking $\sigma$. The covering map is the map that
forgets $\sigma$:
\[
              (E,A,\tau,\sigma) \mapsto (E, A, \tau).
\]
The fact that this is a double cover is essentially the same point as
in Lemma~\ref{lem:covering}.
 We take $L_{\delta}$
to be the real line bundle associated to this double cover.

The line bundle $L_{\delta}$ can be regarded as being pulled back from
the space of connections over a suitable open subset of $X$. Specifically,
suppose $X_{\delta}\subset X$ is a connected open set such that
the two restriction maps
\[
             H^{1}( U_{\nu} ; \F)  \to H^{1}(X_{\delta} \cap
             U_{\nu} ; \F)
\]
and
\[
             H^{1}( U_{\mu} ; \F)  \to H^{1}(X_{\delta} \cap
             U_{\mu} ; \F)
\]
are injective. On $X_{\delta}$ we have marking data $\nu_{\delta}$ and
$\mu_{\delta}$ by restriction; and the line bundle $L_{\delta}$ can be
regarded as pulled back via the restriction map
\[
 \bonf_{l}(\check X; \nu) \to  \bonf_{l}(\check X_{\delta};
 \nu_{\delta}).
\]
We will choose $X_{\delta}$ so that it is disjoint from the foam and
the boundary of $X$. (Think of a regular neighborhood of a collection
of loops in $X$.)

In this way, given a collection of points $\delta_{i}$ ($i=1,\dots,n$), we obtain a
collection of real line bundles $L_{\delta_{i}}$ which we may regard as pulled
back from spaces of marked connections over \emph{disjoint} open sets
$X_{\delta_{i}}\subset X$. Let $s_{i}$ be a section of
$L_{\delta_{i}}$ that is also pulled back from $X_{\delta_{i}}$, and
let
\[
        V(\delta_{i})\subset  \bonf_{l}(\check X; \nu)
\]
be its zero-set. If $\check X$ is a bifold cobordism to which we
attach cylindrical ends, we can then consider the moduli spaces
of solutions to the perturbed anti-self-duality equations
\eqref{eq:cobordism-moduli-space} \emph{cut
  down} by the $V(\delta_{i})$:
\[
                M(\check X, \nu ; \alpha,\beta) \cap V(\delta_{1})
                \cap\dots\cap V(\delta_{n}).
\]
The sections $s_{\delta_{i}}$ can be chosen so that all such
intersections are transverse. The disjointness of the $X_{\delta_{i}}$
ensures the necessary compactness properties when bubbles occur and
these moduli spaces define chain maps in the usual way. The resulting
maps on homology are the required maps defined by the cobordism with
dots $\delta_{1},\dots,\delta_{n}$ on the foam. We have:

\begin{proposition}\label{prop:Cat-functor}
    The above construction extends $J$ from the category $\Cat_{o}$ of
    dotless foams to the category $\Cat$ of foams with dots, in which an object is a triple $(Y,
    K,\mu)$, consisting a  closed,
    connected, oriented $3$-manifold $Y$, an embedded web
    $K$, and strong marking data $\mu$, and in which the morphisms are
    isomorphism classes of foam cobordisms $(X,\Sigma,\nu)$ with
    right-proper marking data $\nu$.
\end{proposition}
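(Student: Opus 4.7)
The plan is to verify that the construction given just above the statement—cutting down cobordism moduli spaces by transverse zero sets $V(\delta_i)$ of sections $s_i$ of the line bundles $L_{\delta_i}$—defines a functor extending the functor $J$ of Proposition~\ref{prop:Cato-functor} from $\Cat_{o}$ to $\Cat$. Four things need to be checked: (a) the zero-dimensional cut-down moduli spaces are compact, so their mod-$2$ counts give well-defined chain-level maps; (b) the chain-map identity continues to hold; (c) the resulting maps on $J$ are independent of the choice of sections, metric and perturbation; and (d) the composition law \eqref{eq:compositon-law} is respected.

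For (a) and (b), pick transverse sections $s_i$ of the $L_{\delta_i}$ pulled back from disjoint open sets $X_{\delta_i}\subset X\sminus(\Sigma\cup\partial X)$. A generic section of the real line bundle $L_{\delta_i}$ cuts formal dimension down by one, so starting from $M(\check X,\nu;\alpha,\beta)$ of formal dimension $n$ we obtain, after cutting down by $n$ dots, a space of formal dimension zero; a Sard--Smale argument supplies the transverse intersections. Compactness of these cut-down spaces must survive the degenerations catalogued in Section~\ref{sec:functoriality}: broken trajectories at the two ends, codimension-$2$ seam bubbles (Proposition~\ref{prop:cone-on-V}), and the codimension-$1$ tetrahedral bubbles (Proposition~\ref{prop:tetrahedral-cone-on-V}). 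Because each $X_{\delta_i}$ is disjoint from $\Sigma$, bubble points in any Uhlenbeck limit never lie in $X_{\delta_i}$; by continuity of parallel transport away from the bubbles, the limiting connection still lies in $V(\delta_i)$. Hence the cut-down moduli spaces inherit exactly the expected boundary strata. The tetrahedral bubble contribution still comes in multiples of four and so vanishes mod $2$, and the chain-map identity (b) follows from the usual $1$-dimensional boundary count.

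For (c), a generic path of cutting sections produces a parameterized cut-down moduli space whose $0$-dimensional part provides a chain homotopy between the two chain maps; independence of metric and perturbation then reduces to the chain-homotopy template already used in Section~\ref{sec:functoriality}. For (d), suppose $(\check X,\nu) = (\check X',\nu')\cup(\check X'',\nu'')$ is a composition of cobordisms with right-proper marking, the dots partitioning as $\{\delta_i'\}\cup\{\delta_j''\}$ on the two halves. Choose cutting sections $s_i'$ pulled back from $X'_{\delta_i'}\subset \check X'$ and $s_j''$ from $X''_{\delta_j''}\subset \check X''$; this is simultaneously a valid choice for the composite and for each factor. The gluing argument used to prove Proposition~\ref{prop:Cato-functor} then splits the zero-dimensional composite cut-down moduli spaces on $\check X$ into products indexed by intermediate critical points on $(\check Y,\mu)$, with the cut-down factors matching those on $\check X'$ and $\check X''$ exactly. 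This yields \eqref{eq:compositon-law} in $\Cat$.

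The main obstacle is (a): ensuring that the nonlocal holonomy perturbations do not spoil transversality of the cut-down moduli spaces at the tetrahedral-bubble strata. The disjointness of the $X_{\delta_i}$ from the foam and from one another is what resolves this. A conformal rescaling that concentrates on a tetrahedral bubble, carried out in the style of the proof of the Proposition immediately preceding this one, leaves each $X_{\delta_i}$ and its section $s_i$ essentially unchanged, so the limiting cut-down configuration remains regular and the expected boundary-count identities go through.
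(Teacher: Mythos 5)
Your proposal is correct and follows essentially the same route as the paper: the paper's argument (given just before the proposition statement, not in a separate proof block) amounts to observing that the sections can be chosen transversely, that disjointness of the $X_{\delta_i}$ from the foam, from each other, and from $\partial X$ preserves the compactness properties needed at bubble strata, and that the rest follows ``in the usual way.'' Your write-up fills in the details the paper leaves implicit — the dimension count ruling out smooth-point bubbles, the continuity-of-restriction argument showing the cut-down constraint survives to the Uhlenbeck limit, the mod-$4$ cancellation at tetrahedral strata, the chain-homotopy for independence of sections, and the gluing argument for composition — but the key idea (disjointness of the supports $X_{\delta_i}$) is exactly the one the paper identifies.
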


As a special case of the construction, given a web $K\subset Y$ we can define a collection of
operators on $J(Y,K,\mu)$, one for each edge of $K$:

\begin{definition}\label{def:u-map}
    For each edge $e$ of a web $K$, we write
\[
            u_{e} : J(Y,K,\mu) \to J(Y,K,\mu)
\]
    for the operator corresponding to cylindrical cobordism with a
    single dot on the face corresponding to the edge $e$.
\end{definition}

The operators are $u_{e}$ are not independent. Consider a morphism
$(X,\Sigma, \nu)$ in the foam category $\Cat$, let $s$ be a seam in
$\Sigma$, and let $\delta_{1}$, $\delta_{2}$ and $\delta_{3}$ be
points on the three facets of $\Sigma$ that are locally incident along
$s$. Let $\Sigma(\delta_{i})$ be the foam obtained from
$\Sigma$ be placing one additional dot at $\delta_{i}$.
\begin{figure}
    \begin{center}
        \includegraphics[scale=0.4]{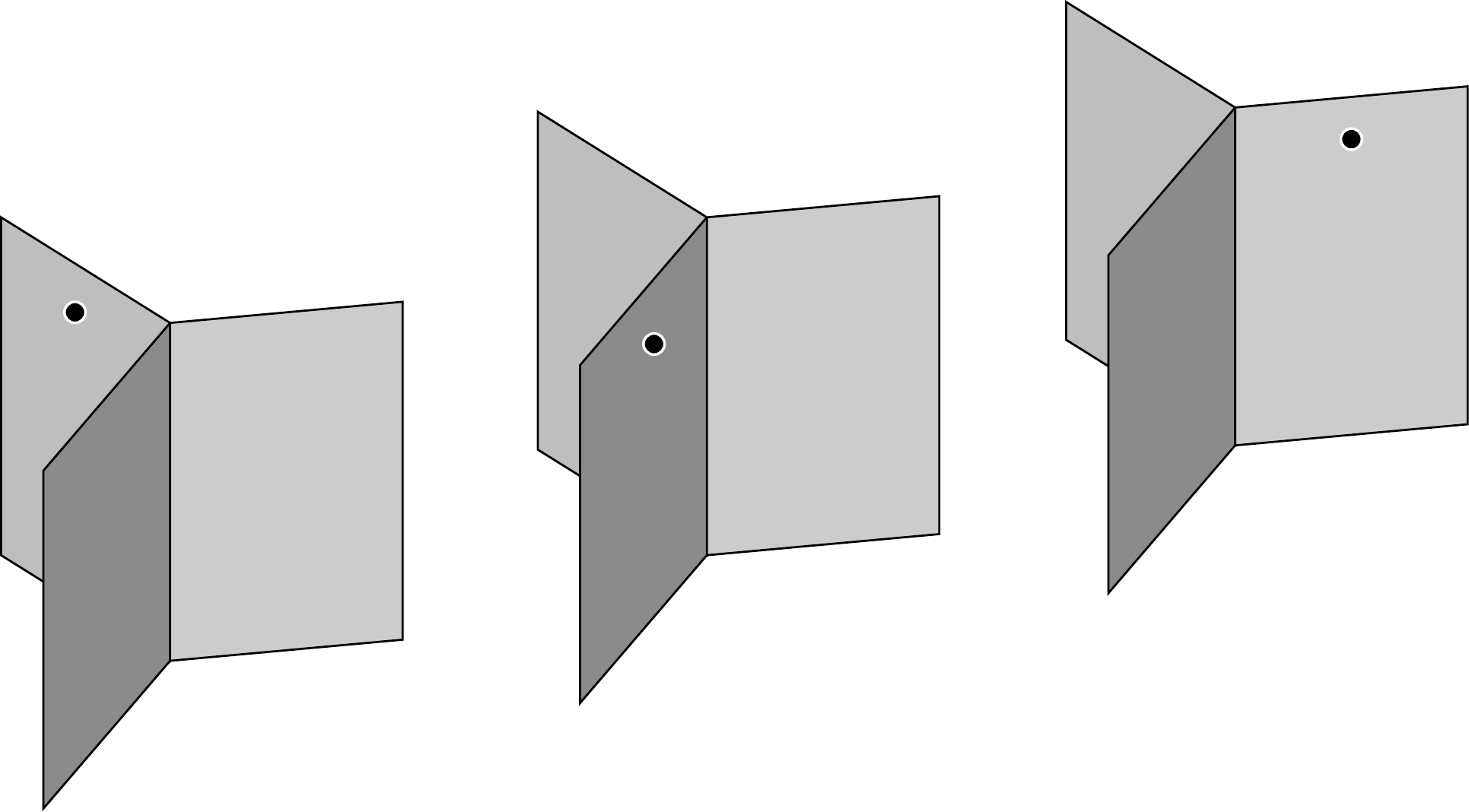}
    \end{center}
    \caption{\label{fig:first-dot-rule}
   The three foams $\Sigma(\delta_{i})$ ($i=1,2,3$) that appear in Proposition~\ref{prop:dot-migration}.}
\end{figure}
 See
Figure~\ref{fig:first-dot-rule}. Then we have relation (which is also
part of the set-up of the $\sl_{3}$ homology of \cite{Khovanov-sl3}):

\begin{proposition}\label{prop:dot-migration}
    The linear maps $J(\delta_{i})=J(X,\Sigma(\delta_{i}),\nu)$
    satisfy the relation
     \[
               J(\delta_{1}) + J(\delta_{2}) + J(\delta_{3})=0.
     \]
     In particular (and equivalently), if $e_{1}$, $e_{2}$ and $e_{3}$
     are the edges incident at a vertex of $K$,  then the three
     operators $u_{e_{i}}$ from Definition~\ref{def:u-map} satisfy
     $u_{e_{1}}+u_{e_{2}}+ u_{e_{3}}=0$.
\end{proposition}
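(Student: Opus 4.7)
The plan is to exhibit a canonical trivialization of the tensor product $L_{\delta_1}\otimes L_{\delta_2}\otimes L_{\delta_3}$ over the space of $\nu$-marked bifold connections on $\check X$, and then to translate this trivialization into a chain-level identity of the cut-down cobordism maps defining the $J(\delta_i)$.

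For the trivialization, I would choose three small disjoint regions $X_{\delta_1},X_{\delta_2},X_{\delta_3}$ sitting inside a single ball $B$ meeting the three facets along a small arc of the seam $s$, together with a common trivialization of the local $\SO(3)$-marking bundle on $B\setminus \Sigma$. With this choice, each $L_{\delta_i}$ is identified with the real line bundle associated to the double cover of $\bonf_l(\check X;\nu)$ that records the sign in an $\SU(2)$-lift of the $\SO(3)$-holonomy around the meridian of facet~$i$, exactly as in the double-covering construction of Lemma~\ref{lem:covering}. The three meridians represent the three non-trivial involutions of the stabilizer $V_4$ at a point of $s$, and their product is null-homotopic in $B\setminus \Sigma$ (it bounds a small transverse disk intersecting $s$ once). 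Consequently the product of any three $\SU(2)$-lifts of these meridian monodromies lies in the kernel of $\SU(2)\to\SO(3)$ and is independent of the connection. This gives a canonical global trivialization $L_{\delta_1}\otimes L_{\delta_2}\otimes L_{\delta_3}\cong \underline{\R}$.

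To pass to the chain level, I would take generic sections $s_i$ of $L_{\delta_i}$ whose zero sets $V(\delta_i)$ cut the relevant cobordism moduli spaces $M(\check X,\nu;\alpha,\beta)$ transversely, and consider the section $s_1\otimes s_2\otimes s_3$ of the trivialized tensor product. Joining it to the canonical nonvanishing section of $\underline{\R}$ by a generic one-parameter family yields an $\F$-chain $W\subset \bonf_l(\check X;\nu)$ of one dimension higher than the $V(\delta_i)$ with $\partial W = V(\delta_1)+V(\delta_2)+V(\delta_3)\pmod 2$. Cutting the $1$-dimensional cobordism moduli spaces by $W$ produces $0$-dimensional moduli spaces whose mod-$2$ count assembles into a chain homotopy between $0$ and $J(\delta_1)+J(\delta_2)+J(\delta_3)$, proving the first assertion. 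The equivalent statement $u_{e_1}+u_{e_2}+u_{e_3}=0$ follows by specializing this to the cylindrical cobordism on $K$ with the three dots placed on the three facets meeting at the seam over a vertex.

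The main obstacle is to make the usual gluing and Uhlenbeck-compactness arguments go through for these families of cut-down moduli spaces in the bifold setting. The codimension-$2$ bubbling along seams from Proposition~\ref{prop:cone-on-V} does not affect the relevant $1$-dimensional moduli spaces, while the codimension-$1$ bubbling at tetrahedral points from Proposition~\ref{prop:tetrahedral-cone-on-V} contributes to boundary counts in multiples of four and hence vanishes mod $2$, exactly as in the functoriality argument of Section~\ref{sec:functoriality}. Once these points are verified, the standard one-manifold-with-boundary argument for the chain of $W$-cut-down moduli spaces completes the proof.
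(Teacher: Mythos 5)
Your proposal is correct and takes essentially the same approach as the paper: both arguments rest on the canonical triviality of $L_{\delta_1}\otimes L_{\delta_2}\otimes L_{\delta_3}$ over the configuration space, witnessed locally over a small region linking the seam (via the fact that the product of the three facet-meridians is null-homotopic), and both conclude that the sum of the three cut-down cobordism maps vanishes. The paper is terser, invoking the triviality over a connected set $Z$ containing the marking region and a $2$-sphere linking the seam and then citing the resulting cancellation of the dual homology classes, while you spell out the chain-level realization via the chain $W$ bounding $V(\delta_1)+V(\delta_2)+V(\delta_3)$; these are the same argument at different levels of detail.
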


\begin{proof}
    The corresponding real line bundles $L_{i}$ satisfy
     \[
                     L_{1}\otimes L_{2} \otimes L_{3}=\underline{\R}
     \]
     so the homology classes dual to the zero-sets $V_{i}$ of sections
     of these line bundles add up to zero. Furthermore, this is
     already true in the space of connections on a connected open set $Z\subset
     \check X$ that includes the domain of the marking data $\nu$
     together with a neighborhood of a $2$-sphere meeting $\Sigma$ in
     the three points $\delta_{i}$ (i.e. a $2$-sphere which is a link
     of the seam). We can use $Z$ in place of $X_{\delta_{i}}$ in defining
     the maps corresponding to the foams with dots, and the result follows.
\end{proof}

There is an alternative way to incorporate dots into our
definitions. The authors do \emph{not} know in general whether this
definition is equivalent to our standard definition (the one
above). Given a foam with dots, $\Sigma \subset X$, we can
construct a new foam $\Sigma' \subset X$ as follows. We choose 
standard $4$-balls $B_{i}$ centered on each dot $\delta_{i}$ and meeting $\Sigma$ in
standard $2$-disks $D_{i}$. We define $\Sigma'$ to be the union of $\Sigma$ with
collection of genus-$1$ surfaces with boundary, $T_{i}$. Each $T_{i}$ is
contained in the corresponding ball $B_{i}$,
where it is isotopic to a standardly-embedded surface in
$B^{3}$, with $\partial T_{i}$ meeting $D_{i}$ in a circle so as to
form a new circular seam in $\Sigma'$.
\begin{figure}
    \begin{center}
        \includegraphics[scale=0.4]{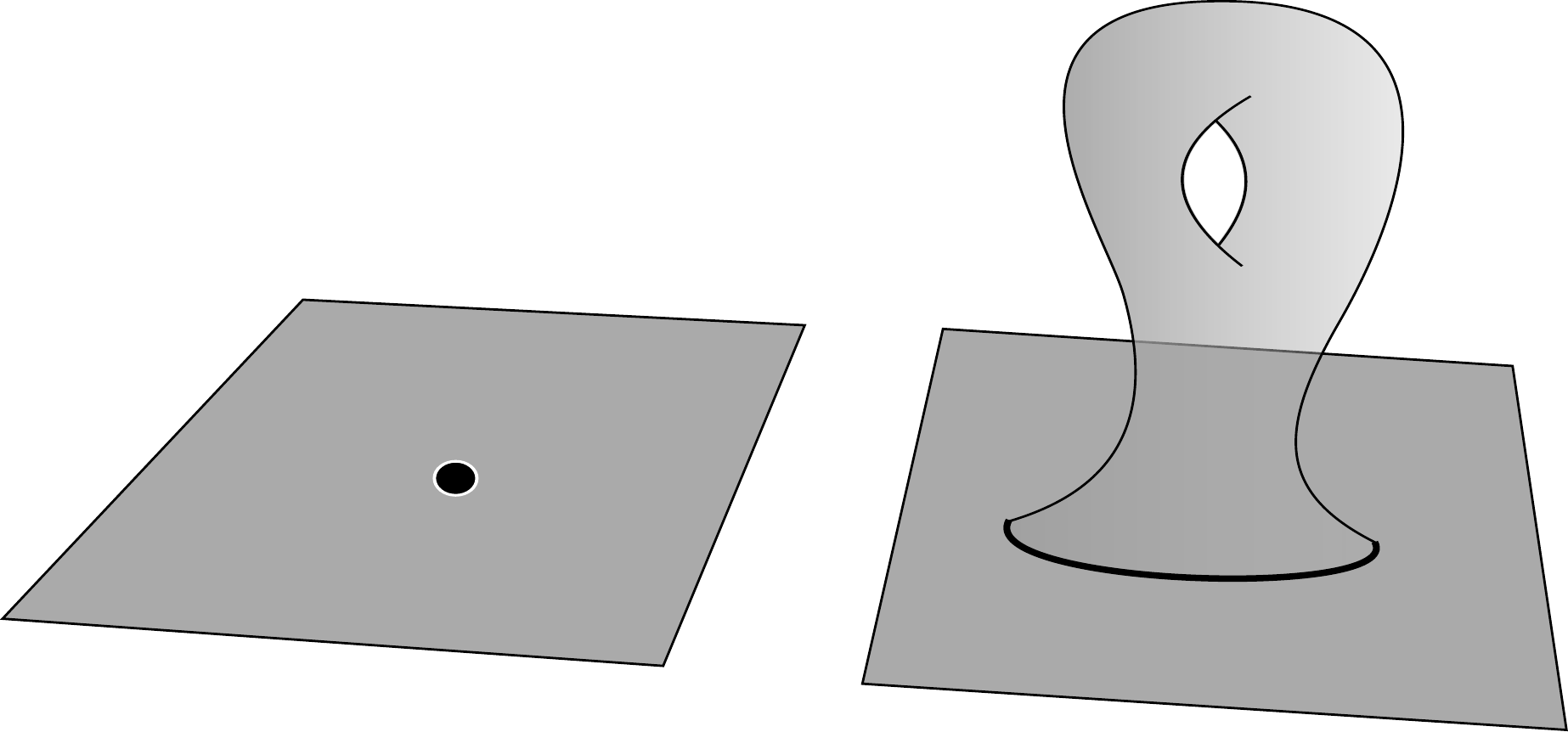}
    \end{center}
    \caption{\label{fig:dot-construction}
   Replacing a dot with a genus-$1$ surface attached along a circular seam.}
\end{figure}
 See
Figure~\ref{fig:dot-construction}. 
This construction is
functorial, from the category in which the morphisms are foams
possibly with dots, to the category of dotless foams. We extend the
definition of $J$ to foams with dots by composing with this functor.

We have now defined two different ways to extend the functoriality of
$J$, from foams without dots to foams with dots. Since it is not clear
that the definitions are the same, we take the first as standard. It
turns out that the two definitions \emph{are} the same in the more
restricted setting of the functor $\Jsharp$ defined in the next
subsection.

\subsection{Defining $J^{\sharp}$}
\label{subsec:def-Jsharp}

The objects in the category $\Cat$ on which $J$ is a functor are
required to be equipped with strong marking data, in order to avoid
reducibles. Following \cite{KM-unknot}, we introduce a variant of the
construction which applies to arbitrary webs in $3$-manifolds.
 Suppose we are
given a closed, connected, oriented $3$-manifold $Y$ with a framed basepoint $y_{0}$. Given
a web $K \subset Y$ disjoint from the basepoint, we can form the
union $K\cup H$, where the Hopf link $H$ is contained in a standard
ball centered on $y_{0}$, disjoint from $K$. The framing at $y_{0}$ is
used to put the Hopf link $H$ in a standard position. 
Let $\mu$ be
the strong marking data with $U_{\mu}$ a ball containing $H$ and
$E_{\mu}$ a bundle with $w_{2}$ dual to an arc joining the two
components of $H$ (as in Lemma~\ref{lem:Hopf-link-Rep}). We consider
$K^{\sharp}=K\cup H$ as a web in $Y$, with strong marking data $\mu$, and so we
can define
\begin{equation}
    \label{eq:general-sharp}
    \begin{aligned}
        \Jsharp(Y, K) &= J(Y, K^{\sharp} ; \mu) \\
    \end{aligned}
\end{equation}
Given a foam cobordism $(X,\Sigma)$ from $(Y_{1}, K_{1})$ to $(Y_{0},
K_{0})$ and a framed arc in $X$ joining the framed basepoints in
$Y_{1}$ and $Y_{2}$, then we can insert $[0,1]\times H$ as a foam in
$X$ along the arc in a standard way, and take standard strong marking
data $\nu = [0,1]\times \mu$ in $X$. In this way, $(X,\Sigma)$ gives
rise to a homomorphism,
\[
            \Jsharp(X,\Sigma) : \Jsharp(Y_{1},K_{1}) \to \Jsharp(Y_{0}, K_{0}).
\]
(We omit the basepoints and arcs from our notation, but this is not
meant to imply that the map $\Jsharp(X,\Sigma)$ is independent of the
choice of arc.)

\begin{definition}
    We define a category $\Cat^{\sharp}$ whose objects are pairs
    $(Y,K,y)$ consisting of a closed, oriented connected $3$-manifold
    $Y$ containing a web $K$ and framed basepoint $y$. The morphisms
    are isomorphism classes of triples $(X,\Sigma, \gamma)$, where $X$
    is a connected oriented $4$-manifold, $\Sigma$ is a foam, and
    $\gamma$ is a framed arc joining the basepoints. Thus $\Jsharp$ is
    a functor, 
\[
          \Jsharp : \Cat^{\sharp} \to (\text{Vector spaces over $\F$}).
\]
by composing ``$\mathrm{Sharp}$'' with $J$. 
\end{definition}

Sometimes we will regard $\Jsharp$ as defining a functor from the
category of webs in $\R^{3}$ and isotopy classes of foams in $[0,1]\times \R^{3}$, by
compactifying $\R^{3}$ and putting the framed basepoint at
infinity. We will then just write $\Jsharp(K)$ etc.~for a web $K$ in
$\R^{3}$.

There is a variant of this definition that we can consider. We can
define
\[
       \Isharp : \Cat^{\sharp} \to (\text{Vector spaces over $\F$})
\]
in much the same way, except that instead of using the marking $\mu$ 
with $U_{\mu}=B^{3}$ as above, we instead use the marking $\mu'$
with $U_{\mu'}$ the whole of the three-manifold $Y$. We still take $E_{\mu'}$ to be a
bundle with $w_{2}$ dual to the same arc. Thus,
\begin{equation}
    \label{eq:general-I-sharp}
    \begin{aligned}
         \Isharp(Y, K) &= J(Y, K\cup H ; \mu') .
    \end{aligned}
\end{equation}
In the case that $K$ has no
vertices, this is precisely the knot invariant $\Isharp(K)$ defined in
\cite{KM-unknot}, except that we are now using $\F$
coefficients. Over $\F$ at least, our definition extends that of
\cite{KM-unknot} by allowing embedded webs rather than
just knots and links.

There is another simple abbreviation to our notation that will be
convenient. It is a straightforward fact that $\Jsharp(S^{3},
\emptyset)$ is $\F$ (see below), so a cobordism from $(S^{3}, \emptyset)$ to
$(Y,K)$ in the category $\Cat^{\sharp}$ determines a vector in
$\Jsharp(Y,K)$. This allows us to adopt the following two conventions:

\begin{definition}\label{def:implied-ball}
    Let $X$ be compact, oriented $4$-manifold with boundary the
    connected $3$-manifold $Y$, and let $\Sigma$ be a foam in $X$ with
    boundary a web $K\subset Y$. Let a basepoint $y\in Y$ be
    given. Then we write $\Jsharp(X,\Sigma)$ for the element of
    $\Jsharp(Y,K)$ corresponding the cobordism from $(S^{3},
    \emptyset)$ to $(Y,K)$ obtained from $(X,\Sigma)$ by removing a
    ball from $X$ and connecting a point on its boundary to $y$ by an
    arc.

    Similarly, given closed, connected manifold $X$ containing a
    closed foam $\Sigma$, we will write $\Jsharp(X,\Sigma)$ for the
    scalar in $\F$ corresponding to the morphism from
    $(S^{3},\emptyset)$ to itself, obtained from $(X,\Sigma)$ by
    removing two balls disjoint from $\Sigma$ and joining them by an
    arc. We refer to $\Jsharp(X,\Sigma)$ in this context as \emph{the
    $\Jsharp$-evaluation of the closed foam.}
\end{definition}

We can also use the language of bifolds directly, rather than
manifolds with embedded webs. We will consider $3$-dimensional bifolds
$\check Y$ with a framed basepoint $y_{0}$ (in the smooth part of
$\check Y$, not an orbifold point). For such a $\check Y$, we write
the corresponding invariant as $\Jsharp(\check Y)$. Given a bifold
cobordism $\check X$ from $\check Y_{1}$ to $\check Y_{0}$ and a
framed arc joining basepoints in these, we obtain a homomorphism
\[
               \Jsharp(\check X) : \Jsharp(\check Y_{1}) \to
               \Jsharp(\check Y_{0}).
\]
We will also allow the conventions of
Definition~\ref{def:implied-ball} in this context, and so talk about
$\Jsharp(\check X)$ for an orbifold with connected boundary, or with
no boundary.

\subsection{Simplest calculations}

We have the following three simple examples of calculation of
$\Jsharp$ for webs $K$ in $\R^{3}$.

\begin{proposition}\label{prop:simple-Jsharp}
   We have the following special cases.
   \begin{enumerate}
   \item If $K$ is the empty web in $\R^{3}$, then
       $\Jsharp(K)=\F$.
   \item If the web $K\subset\R^{3}$ has an embedded bridge, then $\Jsharp(K)=0$.
   \end{enumerate}
\end{proposition}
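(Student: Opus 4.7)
The plan has two parts, handled separately.

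\emph{Part (1).} I would compute $\Jsharp(\emptyset) = J(S^{3}, H; \mu)$ by direct inspection of the Morse complex. By the Hopf-link example in Section~\ref{subsec:RepVarieties}, the representation variety $\Rep(S^{3}, H; \mu)$ consists of a single point, the unique conjugacy class of homomorphism $\pi_{1}(S^{3}\sminus H) \to \SU(2)$ with image the quaternion group of order $8$. Because $\mu$ is strong (Lemma~\ref{lem:marking}\eqref{it:Hopf-strong}), the corresponding bifold connection is irreducible. Its $\SO(3)$-holonomy is $V_{4}$, which has no nonzero invariants on the adjoint representation, so the first cohomology of the deformation complex vanishes and the critical point is non-degenerate. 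The trivial perturbation is therefore good (or any sufficiently small one is), and the Morse complex has a single generator with zero differential, giving $\Jsharp(\emptyset) = \F$.

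\emph{Part (2).} Let $e$ be the embedded bridge, witnessed by a $2$-sphere $S \subset \R^{3}$ with $S \cap K = \{p\}$ a single transverse point on $e$. The plan is to produce a foam cobordism $(X,\Sigma) \in \Cat^{\sharp}$ from $(\R^{3}, K)$ to itself whose induced map $\Jsharp(\Sigma)$ is simultaneously the identity and the zero map on $\Jsharp(K)$, forcing $\Jsharp(K) = 0$. Take $X = [0,1] \times \R^{3}$ and let $\Sigma$ be $[0,1] \times K$ with an additional closed $2$-sphere facet, a copy of $S \times \{1/2\}$, attached to the product edge $[0,1] \times e$ along a small circular seam encircling $(p, 1/2)$. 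This is a valid foam precisely because $S$ meets $K$ at the single transverse point $p$.

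On the identity side, the added $2$-sphere facet is a local modification supported near $(p,1/2)$: one slides the circular seam off the product edge and collapses the now-disjoint sphere by a foam isotopy, so $\Jsharp(\Sigma) = \mathrm{id}_{\Jsharp(K)}$. On the vanishing side, one neck-stretches across $S \times \{1/2\}$: the moduli spaces contributing to $\Jsharp(\Sigma)$ split into paired contributions over the two sides of the sphere. By the index formula of Proposition~\ref{prop:dimension}, combined with the double-cover $\Z/2$-symmetry produced by the extra sphere facet (in the style of Lemma~\ref{lem:covering}), these contributions pair up, so the mod-$2$ chain-level count vanishes and $\Jsharp(\Sigma) = 0$. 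Equating the two descriptions of $\Jsharp(\Sigma)$ yields $\Jsharp(K) = 0$.

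The main technical obstacle is justifying the vanishing side rigorously, since one must carry out the neck-stretching and gluing analysis near the new circular seam in the style of Proposition~\ref{prop:cone-on-V}, and confirm by explicit index calculation for the bifold model $(S^{4}, S^{2})$ that the insertion of a closed $2$-sphere facet annihilates the induced chain map. The identity side is relatively mechanical, but the careful moduli-space accounting is where the real work lies.
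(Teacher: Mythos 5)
Your argument is essentially the paper's: by Lemma~\ref{lem:Hopf-link-Rep}, the critical set for $(S^{3}, \emptyset\cup H, \mu)$ is identified with $\Rep^{\sharp}(\emptyset)$, which is a single point, and this point is a non-degenerate critical point of the Chern-Simons functional. Your phrasing via the quaternion-group representation on the Hopf-link side is the same picture viewed before the identification; fine.

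\textbf{Part (2).} Here your approach is genuinely different from the paper's, and it does not work. The paper's proof is a one-line observation: if $e$ is an embedded bridge, the $2$-sphere $S$ witnessing it meets $K$ only in the transverse point $p\in e$, so $S\sminus\{p\}$ is a disk in $\R^{3}\sminus K$ whose boundary is isotopic to the meridian $m_{e}$. Hence $m_{e}$ is null-homotopic in $\R^{3}\sminus K$, so no $\rho$ can send $m_{e}$ to an element of order $2$, the representation variety $\Rep^{\sharp}(K)$ is empty, the (compact) critical set remains empty after a small good perturbation, the Floer complex has no generators, and $\Jsharp(K)=0$. You should recognize this as the direct route; all of the analytic machinery you invoke is unnecessary.

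Beyond missing the simple argument, your construction has two concrete problems. First, the object you describe is not a foam as defined in the paper: the sphere $S\times\{1/2\}$ meets the facet $[0,1]\times e$ in the single transverse point $(p,1/2)$, not along a circle, and a foam requires exactly three sheets meeting along each seam; a closed $2$-sphere meeting a facet transversely at a point has no foam-local model. If you instead intend the bubble-bursting configuration (a sphere-minus-disk $D'$ glued to the facet along a small circle, as in Proposition~\ref{prop:bubble-bursting}), then $D\cup D'$ must bound a $3$-ball disjoint from the rest of the foam, which the bridge sphere generally does not, and more to the point bubble-bursting with zero dots gives $\Jsharp(\Sigma)=0$ \emph{unconditionally}, not the identity. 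This is the second problem: the ``identity side'' of your argument is false. Sliding the seam off and collapsing the sphere is a surgery, not an isotopy of foams, and the cobordism you build does not induce $\mathrm{id}_{\Jsharp(K)}$. Since the identity side fails, you cannot conclude $\Jsharp(K)=0$ from your construction, and the ``vanishing side'' (which is asserted without a real argument) is moot.
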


\begin{proof}
    In both cases, we start by applying
    Lemma~\ref{lem:Hopf-link-Rep}, which tells us that the unperturbed
    set of critical points for the Chern-Simons functional for
    $(S^{3}, K\cup H , \mu)$ can be identified with
     \[
         \Rep^{\sharp}(K) = \{ \,\rho : \pi_{1}(\R^{3}\sminus K) \to \SO(3)
                    \mid \text{$\rho(m_{e})$ has order $2$ for all edges $e$}\,\}.
       \]
     In the first case of the proposition, the representation variety
      is a single point (the trivial representation of the trivial
      group), and this point is non-degenerate as a critical point of
      Chern-Simons. So $\Jsharp$ has rank $1$. 

      In the case of an embedded bridge $e$, the meridian $m_{e}$
       is null-homotopic, so there is no $\rho$ such that
      $\rho(m_{e})$ has order $2$. The representation variety is
      therefore empty, and $\Jsharp(K)=0$.
\end{proof}

\section{Excision}
\label{sec:excision}

\subsection{Floer's excision argument}

Floer's excision theorem \cite{Braam-Donaldson, KM-sutures} can be
applied in the setting of the $\SO(3)$ instanton homology $J$. We
spell out statements of the basic result for cutting and gluing along
tori (which is the original version), as well as cutting an gluing
along $2$-spheres with three orbifold points.

Let $(\check Y, \mu)$ be a $3$-dimensional bifold containing a web with strong marking
data $\mu$. We will temporarily allow the case that $\check Y$ is not
connected, in which case the marking data $\mu$ is strong only if its
restriction to each component of $\check Y$ is strong. Let $T_{1}$, $T_{2}$
be disjoint, oriented, embedded $2$-tori in $U_{\mu}$. (In particular, they lie
in the smooth part of $Y$.) We require
the additional property that the restrictions, $E_{i}$, of the marking
bundle $E_{\mu}$ to the tori $T_{i}$ have non-zero $w_{2}$. Pick
an identification between the tori, $T_{1}\to T_{2}$, and lift it to an
isomorphism between the bundles. By cutting along $T_{1}$, $T_{2}$ and
regluing (using the preferred identifications in an
orientation-preserving way), we obtain a new object $(\check Y' , \mu')$.
In $\check Y'$, we also end up with a pair of embedded tori $T_{1}'$
and $T_{2}'$.

We will be concerned with the case that $T_{1}$ and $T_{2}$ lie in
distinct components $\check Y_{1}$, $\check Y_{2}$ of $\check Y$. In
this case, cutting along $T_{i}$ cuts $\check Y_{i}$ into pieces
$\check Y_{i}^{+}$ and $\check Y_{i}^{-}$. The new bifold $\check Y'$
has components $\check Y'_{1} = \check Y_{1}^{+} \cup \check
Y_{2}^{-}$ etc.

\begin{theorem}[Floer's excision]
In the above setting, when $T_{1}$, $T_{2}$ are separating tori in
distinct components $\check Y_{1}$, $Y_{2}$, we have
\[
                J(\check Y, \mu) = J(\check Y', \mu').
\]
\end{theorem}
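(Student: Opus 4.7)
The plan is to adapt Floer's original excision argument \cite{Braam-Donaldson} to the marked bifold setting, constructing a bifold cobordism $(\check W, \nu)$ from $(\check Y, \mu)$ to $(\check Y', \mu')$ whose induced map $J(\check W)$ is an isomorphism. The 4-manifold $\check W$ is the standard Floer excision cobordism: one thinks of it as obtained from $\check Y \times [0, 1/2]$ and $\check Y' \times [1/2, 1]$ by altering the middle via an insertion of the form $T \times P$, where $P$ is an oriented genus-zero surface with four boundary circles, viewed as a surface cobordism between the two ways of pairwise-identifying four copies of $T$. This piece sits inside $\check W$ so that, at the $\{0\}$-end, the four tori are paired up into the gluings defining $\check Y$, while at the $\{1\}$-end they are paired up into the swapped gluings defining $\check Y'$. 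The whole of $\check W$ lies outside the singular locus of the bifolds, since $T_{1}, T_{2} \subset U_{\mu}$ were assumed to be in the smooth part.

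Because $E_{\mu}|_{T_{i}}$ has nonzero $w_{2}$ and the restriction $H^{2}(P;\F)\to H^{2}(\partial P;\F)$ is surjective, one can extend the given $w_{2}$ classes across $T \times P$ to an $\SO(3)$-bundle $E_{\nu}$ on an open set $U_{\nu}$ containing $U_{\mu}\cup U_{\mu'}\cup(T\times P)$, matching the two ends. The marking $\nu$ is automatically strong because any $\nu$-marked connection restricts at each end to a $\mu$- or $\mu'$-marked connection with trivial automorphism group, and is right-proper because the inclusion of the $\{0\}$-end complement $U_{\mu}\setminus K$ into $U_{\nu}\setminus\Sigma$ is surjective on $H^{0}$ and $H^{1}$ (the added piece $T \times P$ contributes only new classes). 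Thus $\check W$ defines a morphism in $\Cat$ and gives the desired map $J(\check W) : J(\check Y, \mu) \to J(\check Y', \mu')$; the backward cobordism $\check W'$ is built symmetrically.

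To show that $J(\check W') \circ J(\check W)=\mathrm{id}$, one applies the composition law (valid by right-properness of $\nu'$) and observes that the composite $\check W' \cup \check W$ is a cobordism from $\check Y$ to itself that contains a submanifold of the form $T \times \Sigma$, where $\Sigma$ is the closed oriented surface obtained by gluing $P$ to itself along $\partial P$. Complementary to a tubular neighborhood of $T \times \Sigma$, the composite deformation-retracts to the trivial cylinder $\check Y \times [0,1]$. Stretching the bifold metric along the boundary tori of this neighborhood and analyzing the limiting broken trajectories reduces the problem to showing that the induced contribution of the $T \times \Sigma$ piece, equipped with the extended marking $w_{2} \ne 0$, acts as the identity on $J(\check Y,\mu)$ rather than as zero. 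This is the main obstacle and is the bifold analogue of the classical ``$T^{2}\times\Sigma$ calculation'' of \cite{Braam-Donaldson, KM-sutures}: one analyzes the flat bifold $\SO(3)$ connections on $T \times \Sigma$ subject to the $w_{2}$-constraint, verifies that the holonomy-perturbed Morse complex has a single generator in the relevant degree, and checks by a K\"unneth-style argument that the corresponding cobordism map scales as $1\in\F$. Once this is established, the symmetric argument applied to $J(\check W) \circ J(\check W')$ shows that $J(\check W)$ and $J(\check W')$ are mutually inverse, proving the theorem.
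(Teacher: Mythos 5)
Your overall plan---build the Floer excision cobordism with compatible marking data, then show that $\check W$ and its reverse induce mutually inverse maps by analyzing the composite---is precisely the argument the paper defers to \cite{Braam-Donaldson} and \cite{KM-unknot}, so the approach is the right one. The topological description of the middle piece, however, is off in a way that would derail the actual computation. In the standard construction, the inserted piece is $T^{2}\times S$ where $S$ is a \emph{saddle square}: a disk whose boundary circle is a union of corner arcs, some lying in the two ends of $\check W$ and some lying along the interfaces $T^{2}\times[0,1]$ with $V\times[0,1]$ (here $V$ is the complement of tubular neighborhoods of the $T_{i}$). A genus-zero surface $P$ with four boundary \emph{circles} does not glue in: $T^{2}\times\partial P$ would consist of $3$-tori, whereas the pieces one actually needs to match are $T^{2}\times[0,1]$'s. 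For the same reason, the composite $\check W'\circ\check W$ does not contain a closed $T^{2}\times\Sigma$ as a codimension-zero submanifold---both saddle pieces still have boundary after gluing, and the composite remains a cobordism from $\check Y$ to $\check Y$. What one actually does is stretch the metric along the embedded $3$-tori separating the saddle pieces from the product pieces and use that the Floer homology of $T^{3}$ with $w_{2}$ nonzero on $T^{2}\times\mathrm{pt}$ is one-dimensional, with the flat $\SO(3)$ connection there isolated and nondegenerate; this is exactly the ``essential point'' the paper singles out for the analogous spheres case in Theorem~\ref{thm:excision-street}. Finally, the appeal to surjectivity of $H^{2}(P;\F)\to H^{2}(\partial P;\F)$ is vacuous (both groups vanish for a surface with boundary); the extension of $E_{\nu}$ across the middle piece is instead controlled by $H^{2}(T^{2}\times S;\F)\to H^{2}(T^{2}\times\partial S;\F)$, where the relevant generator is pulled back from $T^{2}$ and extends automatically.
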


\begin{proof}
    The proof is a straightforward adaption of the original argument
    of Floer \cite{Braam-Donaldson}. The case of separating tori was
    given in \cite{KM-unknot}.
\end{proof}

There is a variant of the excision argument, in which the tori
$T_{1}$, $T_{2}$ are replaced by spheres $S_{1}$, $S_{2}$ each with
three orbifold points.  That is, we ask that $S_{1}$, $S_{2}$ are
2-dimensional sub-orbifolds of $Y$, meeting the web $K$ of orbifold
points in three points each. We still require that $S_{i}\setminus K$
lies in $U_{\mu}$, and require the same separation hypotheses. By
cutting and gluing, we form $(Y', \mu')$ just as in the torus
case. The same result holds:

\begin{theorem}\label{thm:excision-street}
In the above setting, when $S_{1}$, $S_{2}$ are separating $3$-pointed spheres in
distinct components $\check Y_{1}$, $Y_{2}$, we have
\[
                J(\check Y, \mu) = J(\check Y', \mu').
\]
\end{theorem}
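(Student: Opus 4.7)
The plan is to adapt Floer's excision argument, carried out in the torus case in \cite{Braam-Donaldson} and \cite{KM-unknot}, with the 3-pointed sphere $\check S$ playing the role of the 2-torus. Throughout, $\check S$ denotes the 2-sphere with three orbifold points of order 2, whose orbifold fundamental group is $V_{4}$ (generated by meridians $a,b,c$ subject to $a^{2}=b^{2}=c^{2}=1$ and $abc=1$). Up to gauge, there is a unique flat $\SO(3)$ bifold connection on $\check S$ whose meridians have order 2; its automorphism group is $V_{4}\subset \SO(3)$, the commutant of the image of the holonomy. This rigidity is the analogue of the role played by the $w_{2}\ne 0$ flat connection on $T^{2}$ in the torus version.

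The cobordism construction proceeds as follows. Cut $\check Y$ open along $S_{1}$ and $S_{2}$ to produce a bifold-with-boundary $\check Y^{\circ}$ whose boundary consists of four copies of $\check S$. Let $Q$ be an oriented 2-manifold of genus zero with four boundary circles, arranged with two ``ends'' such that pairing the four circles one way reconstitutes $\check Y$ while pairing them the other way reconstitutes $\check Y'$. The cobordism $\check W$ from $\check Y$ to $\check Y'$ is obtained by gluing $\check S \times Q$ to $[0,1]\times \check Y^{\circ}$ along the four copies of $\check S \times S^{1}$. The marking data $\mu$ extends naturally to right-proper marking data $\nu$ on $\check W$, using the hypothesis $S_{i}\setminus K \subset U_{\mu}$. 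Running the construction in reverse yields a cobordism $\check W'$ from $\check Y'$ to $\check Y$.

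To show that $J(\check W)$ and $J(\check W')$ are inverse isomorphisms, analyze the composition $\check W' \circ \check W$ by a neck-stretching argument. In the limit, this composite is the identity cobordism on $\check Y$, together with a disjoint closed bifold of the form $\check S \times \Sigma$, where $\Sigma$ is a closed oriented surface obtained by doubling $Q$. By the product structure and the rigidity of the flat connection on $\check S$, the $\SO(3)$ bifold representation variety on $\check S \times \Sigma$ reduces to $\Hom(\pi_{1}(\Sigma),V_{4})/V_{4}$, exactly as in Floer's torus argument. A standard counting of these representations, carried out with $\F$ coefficients, shows that the $\check S\times \Sigma$ factor contributes the scalar $1$, so that $J(\check W'\circ\check W)$ equals the identity; the argument for the opposite composition is symmetric.

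The main obstacle is the nontrivial automorphism group $V_{4}$ of the flat connection on $\check S$, which must be tracked throughout the gluing and moduli-space analysis. This is handled, as in the torus case, by verifying that the marking data $\mu$ remains strong on $\check W$ so that no reducibles appear, and by checking that the codimension-$2$ bubbling of Proposition~\ref{prop:cone-on-V} on the seams of $\check S\times Q$ does not contribute to the low-dimensional moduli spaces that compute $J(\check W'\circ \check W)$. The latter is essentially automatic because the necks along which we stretch lie in the smooth part of the bifold, away from the orbifold seams, so the standard Floer-Braam-Donaldson neck-stretching analysis carries over verbatim once the rigidity of flat connections on $\check S$ is substituted for the corresponding rigidity on $T^{2}$.
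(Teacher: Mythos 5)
Your proposal follows the route the paper itself indicates: the paper proves this statement by citing \cite{Ethan-thesis} and remarking that ``the argument from the torus case needs no serious adaptation: the essential point is the existence of a unique $\SO(3)$ connection on the $2$-dimensional orbifold,'' and your sketch correctly identifies that rigidity (the unique flat bifold connection on the $3$-pointed sphere, with $V_4$ stabilizer) as the substitute for the $w_{2}\ne 0$ flat connection on $T^{2}$ in the Braam--Donaldson argument. Two small corrections to your cobordism picture: the gluing surface $Q$ should be a disk with four boundary arcs meeting at four corners (a square), not a four-holed sphere, so that $\check S\times Q$ is glued to $[0,1]\times\check Y^{\circ}$ along four copies of $\check S\times[0,1]$, with the four corner slices $\check S\times\{\mathrm{pt}\}$ implementing the two different end identifications; and $\check S\times Q$ has \emph{no} seams at all --- its orbifold locus consists of three parallel facets --- so the codimension-$2$ bubbling of Proposition~\ref{prop:cone-on-V} simply does not arise, which is a cleaner statement than stretching necks away from seams.
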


\begin{proof}
    This adaptation of the excision theorem is contained in
    \cite{Ethan-thesis}, in a version for any number orbifold points on
    the spheres. In the case of $3$ points, the argument from the
    torus case needs no serious adaptation: the essential point is the
    existence of a unique $\SO(3)$ connection on the $2$-dimensional orbifold.
\end{proof}

\begin{remark}
    In the non-separating case, some extra care is needed to obtain
    correct statements, because of the double-covers that correspond
    to possibly different strong marking data.
\end{remark}

\subsection{Applications of excision}

If $\check Y_{1}$ and $\check Y_{2}$ are bifolds with framed
basepoints (at non-orbifold points), 
then there is a preferred connected sum $\check Y = \check Y_{1} \#
\check Y_{2}$ obtained by summing at the basepoints. We can give the
new bifold a preferred framed basepoint, on the $2$-sphere where the sum
is made. Given bifold cobordisms with framed arcs joining the
basepoints, say
\[
             \check X_{i} : \check Y_{i} \to \check Y'_{i} , \qquad (i=1,2),
\]
then we can form a cobordism
\[
               \check X : \check Y \to \check Y
\]
by summing along the embedded arcs. 

\begin{proposition} \label{prop:tensor-product-bifold}
    In the above situation, we have an isomorphism
\[
                     \Jsharp(\check Y) = \Jsharp(\check Y_{1}) \otimes \Jsharp(Y_{2}).
\]
   Furthermore, this isomorphism is natural for the maps corresponding
   to cobordisms $\check X_{i}$ as above, so that
\[
                    \Jsharp(\check X) = \Jsharp(\check X_{1}) \otimes
                    \Jsharp(\check X_{2}).
\]
    The same applies with $\Isharp$ in place of $\Jsharp$.
\end{proposition}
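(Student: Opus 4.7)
The plan is to deduce the proposition from Floer's excision theorem together with the straightforward multiplicativity of $J$ under disjoint unions.

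First, I would establish that $J$ is multiplicative for disjoint unions of marked bifolds. If $(\check Y, \mu) = (\check Y_1, \mu_1) \sqcup (\check Y_2, \mu_2)$ with strong marking on each component, then $\bonf_{l}(\check Y; \mu) = \bonf_{l}(\check Y_1; \mu_1) \times \bonf_{l}(\check Y_2; \mu_2)$, the perturbed Chern--Simons functional is additive, and the critical set, moduli spaces of gradient trajectories, and independently chosen good perturbations all respect the product structure. The Morse complex for the disjoint union is therefore the tensor product over $\F$ of the two individual Morse complexes. Because we work over the field $\F$, taking homology commutes with tensor products. Applied with the Hopf-link markings defining $\Jsharp$ (or the whole-manifold markings defining $\Isharp$), this gives $\Jsharp(\check Y_1 \sqcup \check Y_2) = \Jsharp(\check Y_1) \otimes \Jsharp(\check Y_2)$, and similarly for $\Isharp$.

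The substantive step is to identify $\Jsharp$ of the disjoint union with $\Jsharp$ of the connected sum. I would compare two marked configurations: (i) the disjoint union $\check Y_1 \sqcup \check Y_2$ with its two Hopf-link markings $\mu_1, \mu_2$, and (ii) the disjoint union $(\check Y_1 \# \check Y_2) \sqcup (S^3, H, \mu_0)$, consisting of the connected sum with its single Hopf-link marking together with an auxiliary standard copy of $(S^3, H, \mu_0)$. By Proposition~\ref{prop:simple-Jsharp} and the disjoint-union formula already established, configuration (ii) has invariant $\Jsharp(\check Y_1 \# \check Y_2) \otimes \F = \Jsharp(\check Y_1 \# \check Y_2)$. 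The two configurations are related by Floer's excision theorem applied to a suitable pair of surfaces (either meridional tori around one Hopf-link component in each factor, or $3$-pointed spheres engineered from the marking data). The markings contain $w_2$-dual arcs between the components of each Hopf link, and these arcs cross the chosen surfaces transversely in a single point, so the non-triviality hypothesis $w_2(E_\mu|_{T_i}) \ne 0$ of the excision theorem is satisfied. The cut-and-reglue operation, with an orientation-preserving identification intertwining the markings, converts configuration (i) into configuration (ii), yielding the isomorphism $\Jsharp(\check Y_1) \otimes \Jsharp(\check Y_2) \cong \Jsharp(\check Y_1 \# \check Y_2)$.

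Naturality for cobordisms is obtained by running the same excision argument one dimension higher. A pair of marked cobordisms $\check X_1, \check X_2$ with marking data extended along the framed arcs joining basepoints can be assembled either as the disjoint union $\check X_1 \sqcup \check X_2$ or (after $1$-handle attachment along the arcs) as $\check X_1 \# \check X_2$. The excision argument, applied to the thickened surfaces $[0,1] \times T_i$ in the disjoint union, converts it into the connected-sum cobordism together with the product cobordism on the auxiliary $(S^3, H, \mu_0)$. The induced chain maps are compatible with the excision identifications on both ends, so passing to homology gives $\Jsharp(\check X_1 \# \check X_2) = \Jsharp(\check X_1) \otimes \Jsharp(\check X_2)$. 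The argument is identical for $\Isharp$, since only the marking datum changes and the topological cut-and-reglue is the same.

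The main obstacle is choosing the surfaces for the excision so that the cut-and-reglue operation genuinely yields $(\check Y_1 \# \check Y_2) \sqcup S^3$ (rather than some other gluing or Dehn filling of the cut pieces), and verifying that the marking data reassemble into the standard Hopf-link markings on each resulting piece. Both requirements are essentially a bookkeeping exercise: one tracks the $w_2$-dual arcs through the cut-and-reglue, checks that the identification of the two surfaces does not introduce extraneous Dehn twists, and confirms via Mayer--Vietoris that the $w_2$ class of the reassembled marking bundle is still represented by a standard arc between Hopf-link components on each piece.
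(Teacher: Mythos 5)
The paper states Proposition~\ref{prop:tensor-product-bifold} without a proof, as an application of the excision theorems preceding it, and your overall plan---disjoint-union multiplicativity at the chain level plus an excision converting $(\check Y_1, H_1,\mu_1)\sqcup(\check Y_2, H_2,\mu_2)$ into $(\check Y_1\#\check Y_2, H,\mu)\sqcup(S^3,H',\mu_0)$---is the intended argument and is essentially correct. Two points, however, are not mere bookkeeping and should be stated precisely. First, the surfaces must be tori $T_i=\partial N(H_{i,a})$ around a Hopf-link component in each factor; the ``$3$-pointed sphere'' alternative you offer is inapplicable, since the Hopf links have no trivalent vertices and so no sphere meeting them transversely in three orbifold points exists inside $U_{\mu_i}$. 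Second, and more importantly, the identification $\phi\colon T_1\to T_2$ must send the meridian of $H_{1,a}$ to a longitude of $H_{2,a}$ and vice versa. With this choice the two solid tori $N(H_{1,a})\cup_\phi N(H_{2,a})$ assemble into a genus-$1$ Heegaard splitting of $S^3$, with cores $H_{1,a}\cup H_{2,a}$ forming the Hopf link $H'$, while the complementary pieces $(\check Y_i\sminus N(H_{i,a}))$ assemble into $\check Y_1\#\check Y_2$ containing the Hopf link $H_{1,b}\cup H_{2,b}$. The ``identity'' identification (meridian to meridian), which your closing remark about avoiding extraneous Dehn twists seems to favor, instead produces $S^2\times S^1$ summands and does not yield the connected sum. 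Since the excision theorem is valid for every admissible identification, the point is not about whether the theorem applies but about which reglued bifold it produces; choosing the meridian/longitude swap is the step that makes configuration (ii) appear, so it should be made explicit rather than relegated to a tracking exercise.
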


\begin{corollary}
    \label{cor:tensor-product}
     Suppose $K=K_{1}\cup K_{2}$ is a split web in
     $\R^{3}$, meaning that there is an embedded $2$-sphere $S$ which
     separates $K_{1}$ from $K_{2}$. Then there is an isomorphism,
\[
         \Jsharp(K) = \Jsharp(K_{1}) \otimes \Jsharp(K_{2}).
\]
    Moreover, if $\Sigma$ is a split cobordism, meaning that
    $\Sigma=\Sigma_{1}\cup \Sigma_{2}$ and $\Sigma$ is disjoint from
    $[0,1]\times S$, then
\[
         \Jsharp(\Sigma) = \Jsharp(\Sigma_{1})\otimes \Jsharp(\Sigma_{2}).
\] 
    The same applies with $\Isharp$ in place of $\Jsharp$.
\end{corollary}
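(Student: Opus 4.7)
The plan is to deduce the corollary from Proposition~\ref{prop:tensor-product-bifold} by exhibiting a split web in $\R^3$ as a bifold connected sum. Let $K = K_1 \cup K_2$ be separated by an embedded $2$-sphere $S\subset \R^3$, and compactify to $S^3$ with framed basepoint at infinity. Then $S$ bounds balls $B_1, B_2\subset S^3$ with $K_i$ contained in the interior of $B_i$. Choose a framed basepoint $y_i$ in the interior of $B_j$ (with $j\ne i$), so that $y_i$ lies in the complement of $K_i$, and regard $(S^3,K_i,y_i)$ as an object of $\Cat^{\sharp}$. I claim that, up to an ambient isotopy supported away from $K$, the bifold connected sum $(S^3,K_1,y_1)\#(S^3,K_2,y_2)$ agrees with $(S^3,K_1\cup K_2)$ equipped with a framed basepoint on $S$. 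To see this, take the ``small balls'' used in the connected-sum construction to be $B_j$ itself rather than a tiny neighborhood of $y_i$; gluing the remaining pieces along $\partial B_1=\partial B_2=S$ returns $B_1\cup_S B_2 = S^3$ with $K_1\cup K_2$ embedded. The framed basepoint can then be isotoped back to infinity without crossing $K$.

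With this identification in hand, Proposition~\ref{prop:tensor-product-bifold} immediately supplies $\Jsharp(K) = \Jsharp(K_1)\otimes\Jsharp(K_2)$. For the cobordism statement I would repeat the argument one dimension up. Since $\Sigma = \Sigma_1\cup\Sigma_2$ is assumed disjoint from $[0,1]\times S$, the foam lies in the union of $[0,1]\times B_1$ and $[0,1]\times B_2$. Cutting along $[0,1]\times S$ and closing each piece up produces two foam cobordisms in $[0,1]\times S^3$, and joining them by framed arcs chosen inside the complementary regions $[0,1]\times B_j$ recovers $([0,1]\times S^3,\Sigma)$ as the connected-sum cobordism of the two halves. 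The naturality clause of Proposition~\ref{prop:tensor-product-bifold} then delivers $\Jsharp(\Sigma) = \Jsharp(\Sigma_1)\otimes\Jsharp(\Sigma_2)$. The assertion for $\Isharp$ follows verbatim, since the proposition covers both functors.

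The only point requiring careful verification is that the framed arc used to make the connected sum of cobordisms can be routed without crossing $\Sigma$, and that the resulting isomorphism class in $\Cat^{\sharp}$ is the natural one. This is automatic from the disjointness $\Sigma\cap ([0,1]\times S) = \emptyset$, which leaves an open region through which to run the arc. Beyond this minor bookkeeping there is no substantive obstacle to overcome, since the analytic content---the splitting of the Floer chain complex across a long neck at a connected-sum $2$-sphere---has already been absorbed into Proposition~\ref{prop:tensor-product-bifold}.
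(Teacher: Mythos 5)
Your proof is correct and follows the same route as the paper, which states the corollary immediately after Proposition~\ref{prop:tensor-product-bifold} without a separate argument precisely because the identification you describe---a split web in $S^3$ as the bifold connected sum $(S^3,K_1)\#(S^3,K_2)$ (taking the balls removed in the sum to be $B_1$ and $B_2$ themselves), and the split cobordism as the sum of the two halves along framed arcs routed through $[0,1]\times B_j$ away from $\Sigma_i$---is the intended deduction. The final relocation of the framed basepoint from the connect-sum sphere $S$ to the standard basepoint at infinity is routine since $S^3\setminus K$ is connected, and the corollary asserts only existence of an isomorphism, so no canonicity issue arises.
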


These excision results will be often applied through the following
corollaries, which we state first in the language of bifolds.  Let
$\check Q$ be a closed, oriented, connected $3$-dimensional
bifold. Let $\check P_{i}$ be $4$-dimensional bifolds with boundary
$\check Q$, for $i=1,\dots, n$. In the notation of
Definition~\ref{def:implied-ball}, these have invariants
\[
             \Jsharp( \check P_{i}) \in \Jsharp(\check Q).
\]
Suppose that we also have a collection of cobordisms
\[
                   \check X_{i} : \check Y \to \check Y', \qquad i=1,\dots,n,
\]
such that $\check X_{i}$ contains in its interior an embedded copy of
$\check P_{i}$. Suppose the complements $\check X_{i} \setminus 
\mathrm{int}(\check P_{i})$ are all identified, in a way that restricts to
the identity on their common boundary components $\check Y$, $\check
Y'$ and $\check Q$.  We then have

\begin{corollary}
    If the elements of $\Jsharp(\check Q)$ defined by the bifolds $\check P_{i}$ satisfy a
    relation
     \[
                        \sum_{i=1}^{n} \Jsharp(\check P_{i}) = 0,
     \]
     then it follows also that
     \[
                           \sum_{i=1}^{n} \Jsharp(\check X_{i}) = 0
     \]
     as maps from $\Jsharp(\check Y)$ to $\Jsharp(\check Y')$.
      The same applies with $\Isharp$ in place of $\Jsharp$.
\end{corollary}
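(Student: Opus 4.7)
My plan is to exhibit a single linear map
\[
F : \Jsharp(\check Q) \to \mathrm{Hom}\bigl(\Jsharp(\check Y),\, \Jsharp(\check Y')\bigr)
\]
with the property that $F(\Jsharp(\check P_i)) = \Jsharp(\check X_i)$ for every $i$; the corollary then follows at once from linearity, since $\sum_i \Jsharp(\check X_i) = F\bigl(\sum_i \Jsharp(\check P_i)\bigr) = F(0) = 0$.

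To construct $F$, let $W$ denote the common complement $\check X_i \sminus \mathrm{int}(\check P_i)$, which by hypothesis is independent of $i$. I would regard $W$ as a bifold cobordism from $\check Y \sqcup \check Q$ to $\check Y'$, taking the orientation on $\check Q$ to be the one induced as $\partial W$ (opposite to its orientation as $\partial \check P_i$). The tensor-product formula of Proposition~\ref{prop:tensor-product-bifold} identifies $\Jsharp(\check Y \sqcup \check Q)$ with $\Jsharp(\check Y) \otimes \Jsharp(\check Q)$, so $W$ induces a linear map $\Jsharp(W) : \Jsharp(\check Y) \otimes \Jsharp(\check Q) \to \Jsharp(\check Y')$, and I take $F(\xi)(v) := \Jsharp(W)(v \otimes \xi)$.

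To identify $F(\Jsharp(\check P_i))$ with $\Jsharp(\check X_i)$, I would decompose $\check X_i$ along $\check Q$ as the composition of $W$ with the disjoint-union cobordism $\mathrm{id}_{\check Y} \sqcup \check P_i$, where $\mathrm{id}_{\check Y}$ is the cylindrical cobordism on $\check Y$ and $\check P_i$ is regarded as a cobordism from $S^3$ to $\check Q$ via Definition~\ref{def:implied-ball}. The composition law for $\Jsharp$, together with a second application of Proposition~\ref{prop:tensor-product-bifold} applied to the disjoint-union morphism, then yields
\[
\Jsharp(\check X_i) = \Jsharp(W) \circ \bigl(\mathrm{id}_{\Jsharp(\check Y)} \otimes \Jsharp(\check P_i)\bigr) = F(\Jsharp(\check P_i)),
\]
as required; the same argument applies verbatim with $\Isharp$ replacing $\Jsharp$.

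The point requiring care, and the main obstacle, is the justification of the composition law for $\Jsharp$ across the decomposition along $\check Q$: in general this demands right-proper (or left-proper) marking data on one side of the cut. Since in the $\sharp$-setting the auxiliary marking is supported along a framed basepoint arc, and any such arc can be chosen in $\check X_i$ to lie entirely inside $W$ and to miss $\check P_i$, the marking data restricts trivially to both $\check P_i$ and $\check Q$. The properness hypothesis is therefore vacuous along the cut, and the composition law applies; the rest of the argument is formal.
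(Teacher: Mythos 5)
Your high-level strategy --- construct a bilinear pairing $F$ from the common complement $W$, check $F(\Jsharp(\check P_i))=\Jsharp(\check X_i)$, and finish by linearity --- is the right shape, but the step you flag as ``the main obstacle'' is handled incorrectly, and the error is not cosmetic.

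The issue is the location of the Hopf-link marking. By Definition~\ref{def:implied-ball}, the element $\Jsharp(\check P_i)\in\Jsharp(\check Q)$ is defined using a cobordism obtained from $\check P_i$ by removing a ball and inserting a Hopf-link tube $[0,1]\times H$ along an arc running \emph{inside $\check P_i$} from the removed ball to $q_0\in\check Q$; at the $\check Q$ end this tube ends on the Hopf link $H_{\check Q}$ that is part of the definition of $\Jsharp(\check Q)$. By contrast, in $\check X_i$ the Hopf-link tube runs along an arc $\gamma\subset W$ from $\check Y$ to $\check Y'$ and --- as you point out --- it misses $\check P_i$ and $\check Q$. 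So when you cut $\check X_i^\sharp$ along $\check Q$, the $\check P_i$-side piece has \emph{no} Hopf-link tube at all, and the $\check Q$ boundary carries \emph{no} Hopf link. Your claim that ``the marking data restricts trivially to $\check Q$, so properness is vacuous'' is true but cuts the other way: with trivial marking on $\check Q$, the chain-level gluing that the composition law produces does not take place over the complex computing $\Jsharp(\check Q)$, but over the complex of the bifold $\check Q$ with \emph{no} marking (which, for a general $\check Q$, is not even the complex of a strongly-marked object, and can contain reducibles). In particular $W$, as it arises from cutting $\check X_i^\sharp$, is \emph{not} a morphism whose incoming end is $(\check Q,\text{Hopf marking})$, and the element it pairs against on the $\check P_i$ side is not $\Jsharp(\check P_i)$. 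The identification $F(\Jsharp(\check P_i))=\Jsharp(\check X_i)$ therefore does not follow formally from the composition law.

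Bridging this gap is exactly where excision is needed: one must modify $W$ (equivalently $\check X_i^\sharp$) so that a Hopf link appears on $\check Q$ --- for instance by adding an arm to the marking tube that runs to $q_0\in\check Q$, turning the arc $\gamma$ into a tree --- and then use the tensor-product excision isomorphism of Proposition~\ref{prop:tensor-product-bifold} to show that this modification neither changes the map $\Jsharp(\check X_i)$ nor alters which element of $\Jsharp(\check Q)$ appears in the resulting pairing. That excision step is the actual content of the corollary, and it is missing from the proposal. (A secondary, more minor point: Proposition~\ref{prop:tensor-product-bifold} is stated for connected sums at framed basepoints rather than for disjoint unions, so it is not directly what identifies $\Jsharp(\check Y\sqcup\check Q)$ with $\Jsharp(\check Y)\otimes\Jsharp(\check Q)$; the disjoint-union statement is the elementary K\"unneth formula, but then the same modification of the Hopf-link foam is needed to apply it here.)
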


We next restate the last corollary as it applies to the case of webs. 
Let $W$ be a web in a connected $3$-manifold $Q$. For $i=1,\dots, n$, let $P_{i}$ be a
connected, oriented $4$-manifold with boundary $Q$, and let
$V_{i}\subset P_{i}$ be a foam with boundary $W$.  These determine
elements 
\[
          \Jsharp(P_{i},V_{i}) \in \Jsharp (Q,  W).
\]
Suppose we also have a collection of foam cobordisms
\[
             (X_{i}, \Sigma_{i}) : (Y, K) \to (Y',K').
\]
Suppose that each $(X_{i}, \Sigma_{i})$ contains an interior copy of
$(P_{i}, V_{i})$, and that the complements are all identical. 
We then have the following restatement:

\begin{corollary}\label{cor:local-relations-webs}
    If $\sum \Jsharp(P_{i},V_{i}) = 0$, then $\sum \Jsharp(X_{i},\Sigma_{i}) = 0$, as maps
    from $\Jsharp(Y,K) \to \Jsharp(Y',K')$. The same holds with $\Isharp$
    in place of $\Jsharp$.
\end{corollary}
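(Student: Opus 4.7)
The plan is to deduce this statement as essentially a change of language from the previous corollary. The passage from pairs $(Y,K)$ of a $3$-manifold with an embedded web to a corresponding $3$-dimensional bifold $\check Y$ was spelled out in Section~\ref{sec:Preliminaries}, and the same construction converts the $4$-dimensional pairs $(P_{i}, V_{i})$ and $(X_{i},\Sigma_{i})$ into $4$-bifolds $\check P_{i}$ and $\check X_{i}$.  Under this translation, the hypothesis that each $(X_{i},\Sigma_{i})$ contains an interior copy of $(P_{i},V_{i})$ with a common complement becomes the hypothesis that each $\check X_{i}$ contains an interior copy of $\check P_{i}$ with a common complement, matching the hypothesis of the previous corollary verbatim.

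The second step is to account for the $\sharp$-decoration. By definition, $\Jsharp$ is computed by attaching a Hopf link $H$ in a standard ball $B$ centered at the framed basepoint (at infinity, say), together with its standard strong marking $\mu$, and then applying $J$ to the resulting bifold. I would simply arrange the basepoint and the ball $B$ so that $H$ lies inside the common complement $X_{i}\sminus \operatorname{int}(P_{i})$ for all $i$; this is unproblematic, since the complements are identified and the basepoint is chosen before the $P_{i}$ are inserted. With this arrangement, the bifolds $(\check X_{i},H,\mu)$ and $(\check Q,H,\mu)$ of the $\sharp$-construction satisfy the hypotheses of the previous (bifold) corollary, with $\check P_{i}$ unchanged (since $H$ is disjoint from $\check P_{i}$) and with the relation $\sum \Jsharp(P_{i},V_{i})=0$ interpreted as a relation among the corresponding elements of $J(\check Q\,\#\, H;\mu) = \Jsharp(Q,W)$.

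Invoking the previous corollary then gives $\sum \Jsharp(\check X_{i}) = 0$ as maps $\Jsharp(\check Y) \to \Jsharp(\check Y')$, which is exactly the desired conclusion.  The statement for $\Isharp$ follows by the same argument, the only change being that the marking data is taken with $U_{\mu'}$ equal to all of $Y$ rather than just a ball; this modification is localized away from the $\check P_{i}$ and so does not affect the excision-based reasoning behind the previous corollary.

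The only genuinely substantive point to check is the compatibility of the Hopf-link marking with the decomposition $\check X_{i} = \check P_{i}\cup (\check X_{i}\sminus \operatorname{int}\check P_{i})$; but because $H$ and its marking bundle $E_{\mu}$ can be placed entirely in the common complement, which is identified across all $i$, the marking data $\nu_{i}$ on $\check X_{i}$ restricts identically on the pieces shared between the $\check X_{i}$, and the previous corollary applies without modification. I do not anticipate any real obstacle beyond this bookkeeping.
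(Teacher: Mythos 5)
Your proposal is correct and follows essentially the same approach as the paper, which treats this corollary as a direct restatement of the preceding bifold corollary under the dictionary $(Y,K)\leftrightarrow\check Y$, $(X,\Sigma)\leftrightarrow\check X$, etc. The bookkeeping point about placing the Hopf link in the common complement is the right thing to note, though it is worth remembering that the previous corollary is already stated in terms of $\Jsharp$ (whose definition builds in the Hopf link and its marking near the framed basepoint), so one does not need to re-verify its hypotheses after adding $H$; it suffices to observe that the $\sharp$-decoration is supported away from the $\check P_i$ and hence is compatible with the common identification of the complements.
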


\section{Calculations}

\subsection{The unknot and the sphere with dots}

\begin{proposition}\label{prop:unknot-u-map}
    For the unknotted circle, the homology $\Jsharp(K)$ has rank
    $3$. Equipped  with the
    operator $u_{e} : \Jsharp(K) \to \Jsharp(K)$ (see
    Definition~\ref{def:u-map}), it is isomorphic to
    $\F[u]/u^{3}$ as a module over the polynomial algebra. 
\end{proposition}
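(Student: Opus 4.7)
The plan is to compute $\Jsharp(K)$ by a Morse--Bott analysis of the critical set of the Chern--Simons functional, and then identify the $u$-operator geometrically.

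By the definition of $\Jsharp$ in subsection~\ref{subsec:def-Jsharp} together with Lemma~\ref{lem:Hopf-link-Rep}, the unperturbed critical set of $\CS$ on $\bonf_{l}(S^{3}, K\cup H;\mu)$ is identified with $\Rep^{\sharp}(K)$. For the unknot, $\pi_{1}(\R^{3}\sminus K)\cong\Z$, and a homomorphism sending the meridian to an element of order $2$ in $\SO(3)$ is specified by an unoriented axis of rotation. Hence $\Rep^{\sharp}(K)\cong\RP^{2}$, precisely the single $O(2)$-orbit described in section~\ref{sec:examples-Rep} (the stabilizer $O(2)$ gives $\SO(3)/O(2)=\RP^{2}$). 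The first step is to verify that this critical set is Morse--Bott: the null space of the formal Hessian at a critical point is tangent to the $\SO(3)$-orbit, and the complement is non-degenerate by a standard gauge-theoretic argument.

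The second step is a Morse--Bott perturbation. One chooses a small holonomy perturbation from $\Pert$ whose restriction to $\RP^{2}$ is a standard Morse function with one critical point in each index $0$, $1$, $2$; the space of cylinder functions is $C^{\infty}$-dense enough to produce this, since holonomies around meridians of $K$ already separate the axes. An adiabatic-limit/stretching argument (modeled on the analogous computation of $\Isharp$ of the unknot in \cite{KM-unknot}) identifies the resulting Floer complex, with $\F$ coefficients, with the Morse complex of $\RP^{2}$. This yields $\Jsharp(K)\cong H_{*}(\RP^{2};\F)\cong\F^{3}$.

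The third step is to analyze the operator $u_{e}$. By subsection~\ref{subsec:dots}, $u_{e}$ is realized by cutting down by the zero set of a section of the real line bundle $L_{\delta}$ on $\bonf$, associated with the $\F$-cover of marked connections that arises by adding trivial $\mu$-marking data in a small ball around a dot on the cylindrical face. The obstruction to lifting a flat $\SO(3)$ connection on this ball to $\SU(2)$, computed via the holonomy of the meridian of $K$, is nontrivial exactly over the orientation double cover $S^{2}\to\RP^{2}$. Hence $L_{\delta}|_{\RP^{2}}$ is the tautological line bundle, with $w_{1}(L_{\delta}|_{\RP^{2}})$ generating $H^{1}(\RP^{2};\F)$. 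In the Morse--Bott model, cutting down by $V(\delta)$ translates to cap product with this class, so under $H_{*}(\RP^{2};\F)\cong\F[x]/x^{3}$ the operator $u_{e}$ acts as multiplication by $x$, giving the module isomorphism $\Jsharp(K)\cong\F[u]/u^{3}$.

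The main obstacle is the rigorous execution of the Morse--Bott reduction and the matching of the cut-down moduli spaces with the topological cap-product picture for $L_{\delta}|_{\RP^{2}}$. The Morse--Bott perturbation scheme is standard in principle, but one has to confirm that the class of holonomy perturbations used in section~\ref{sec:J} can simultaneously achieve a Morse function on the critical $\RP^{2}$ and the regularity of the unparameterized gradient-flow moduli spaces; the rest of the argument is then a direct translation of the analogous step in \cite{KM-unknot} to the $\SO(3)$ bifold setting.
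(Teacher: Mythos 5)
Your proposal follows the paper's route: identify $\Rep^{\sharp}(K)\cong\RP^2$, perturb to a Morse function with three critical points, match the Floer complex to the Morse complex of $\RP^2$, and identify $L_\delta$ with the tautological line bundle so that $u_e$ becomes cap product with the generator of $H^1(\RP^2;\F)$. The computation of $L_\delta|_{\RP^2}$ via $\SU(2)$-lifts of meridional holonomy is the right identification and matches what the paper asserts.

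The one step you pass over too quickly is the identification of the Floer differential with the Morse differential. An appeal to an ``adiabatic-limit/stretching argument'' does not by itself rule out contributions to $M_1(\alpha_i,\alpha_j)$ from trajectories of larger action $\kappa$ that wrap nontrivially in $\bonf_l(\check Y;\mu)$. Such a trajectory from (say) $\alpha_0$ to $\alpha_1$, with Morse relative index $-1$, would have a component of formal dimension $1$ precisely when the minimal spectral flow around loops in $\bonf_l$ is $\le 2$ — and the general grading proposition in the paper only guarantees spectral flow $\equiv 0 \pmod 2$ for vertexless $K$, which would not exclude it. What the paper actually invokes is the stronger fact that, for this particular configuration $(S^3, K\cup H;\mu)$, the theory is $\Z/4$ graded. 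With that, the only moduli spaces $M_1(\alpha_i,\alpha_j)$ of formal dimension $1$ are those with Morse relative index exactly $1$, namely $(\alpha_2,\alpha_1)$ and $(\alpha_1,\alpha_0)$, and these are precisely the ones approximating Morse trajectories. You should make this $\Z/4$-grading step explicit rather than defer it to an unspecified limiting argument; the same grading also feeds into the module-structure computation (the cut-down moduli spaces are still of low energy for the same reason). Otherwise the plan is sound and matches the paper's proof.
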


\begin{proof}
    The representation variety
      $\Rep^{\sharp}(K)$ is a copy of $\RP^{2}$ (as in 
       section~\ref{sec:examples-Rep}). We can choose a small,
      good perturbation $\pi$ so that $f_{\pi}$ restricts to a
      standard Morse function on $\RP^{2}$, leading to three critical
      points $\alpha_{0}$, $\alpha_{1}$ and $\alpha_{2}$.  For the
      special case of the unknot the Floer homology is $\Z/4$ graded,
      and the only $1$-dimensional moduli spaces are therefore
      $M_{1}(\alpha_{2},\alpha_{1})$ and
      $M_{1}(\alpha_{1},\alpha_{0})$. These moduli spaces approximate
      the Morse trajectories on $\RP^{2}$, so $\Jsharp(K) =
      H_{*}(\RP^{2} ; \F)$, which has rank $3$.

     For the module structure, a dot $\delta$ on the knot, the
    double-cover which corresponds to the line bundle $L_{\delta}$ is
    the non-trivial double cover of $\RP^{2}$. The calculation only involves Morse
    trajectories on $\RP^{2}$, so the result is the same as for a
    calculation of the cap product with the one-dimensional cohomology
    class acting on the homology of $\RP^{2}$.
\end{proof}

As a simple application of Corollary~\ref{cor:local-relations-webs}, we
have the following result.

\begin{proposition}\label{prop:u-cubed}
    Let $K\subset Y$ be a web, let $e$ be an edge of $K$, and $u_{e}$
    the corresponding operator on $\Jsharp(Y,K)$. (See
    Definition~\ref{def:u-map}.) Then $u_{e}^{3}=0$.
\end{proposition}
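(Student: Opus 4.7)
The strategy is to reduce the claim to the already established relation $u^3 = 0$ on the unknot (Proposition~\ref{prop:unknot-u-map}) by way of the locality principle of Corollary~\ref{cor:local-relations-webs}. I interpret $u_e^3$ as $\Jsharp(X,\Sigma)$ where $(X,\Sigma)$ is the cylindrical cobordism $([0,1]\times Y,\ [0,1]\times K)$ with three dots placed on the facet $[0,1]\times e$. Now I would pick a small $4$-ball $B\subset (0,1)\times Y$ centered at an interior point of $[0,1]\times e$, small enough that $B$ meets the foam transversely in a standard $2$-disk $D$, and small enough that all three dots can be isotoped inside this disk. Write $D_k$ for the disk $D$ decorated with $k$ dots. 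Then $(B,D_3)$ is a local pair whose boundary is $(S^3,U)$, an unknotted circle in $S^3$, and it sits inside $(X,\Sigma)$ as the local piece required by Corollary~\ref{cor:local-relations-webs}.

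Next I would identify the class $\Jsharp(B,D_k)\in \Jsharp(S^3,U)$. Using the convention of Definition~\ref{def:implied-ball}, the pair $(B,D_0)$ determines a vector $\Jsharp(B,D_0)\in \Jsharp(S^3,U)$. Composing $(B,D_0)$ with $k$ copies of the cylinder on $(S^3,U)$ carrying a single dot produces the composite pair $(B,D_k)$: the dots initially placed on the cylindrical piece can be isotoped along the common facet $D\cup ([0,1]\times U)$ into the interior disk $D$, and dots on a single facet can be freely moved. By functoriality this gives
\[
\Jsharp(B,D_k) \;=\; u^k\cdot \Jsharp(B,D_0)
\]
in $\Jsharp(S^3,U)$, where $u$ is the operator of Definition~\ref{def:u-map} on the boundary unknot.

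Now Proposition~\ref{prop:unknot-u-map} identifies $\Jsharp(S^3,U)$ with $\F[u]/u^3$ as a module over $\F[u]$, so $u^3=0$ on this space. Taking $k=3$, I conclude $\Jsharp(B,D_3)=0$. Applying Corollary~\ref{cor:local-relations-webs} with the single local pair $(P_1,V_1)=(B,D_3)$ and the (trivial one-term) relation $\Jsharp(P_1,V_1)=0$ in $\Jsharp(S^3,U)$, I obtain the desired relation $\Jsharp(X,\Sigma)=u_e^3=0$ on $\Jsharp(Y,K)$.

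The only non-formal input is the identity $\Jsharp(B,D_k) = u^k\cdot \Jsharp(B,D_0)$, which I expect to be the main point to verify carefully; but it is essentially automatic from the functoriality of $\Jsharp$ and the free isotopy of dots within a facet, so no serious obstacle should arise.
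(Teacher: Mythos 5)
Your proof follows the same strategy as the paper's: apply Corollary~\ref{cor:local-relations-webs} to reduce the claim to showing that the disk $D(3)$ with three dots defines the zero vector in $\Jsharp(S^3, U)$, and then deduce this from Proposition~\ref{prop:unknot-u-map}. The identity $\Jsharp(B,D_k)=u^k\Jsharp(B,D_0)$ that you flag as the one nontrivial verification is indeed the same functoriality point the paper tacitly relies on (it reappears explicitly in Corollary~\ref{cor:disk-to-unknot}), so the argument is correct and matches the paper's.
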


\begin{proof}
    We apply Corollary~\ref{cor:local-relations-webs}, to the case
    that $M=B^{3}$, the web $W$ is a single arc in $B^{3}$, and
    $(N,V)$ is a disk with $3$ dots. We see that it is
    sufficient to check that a disk with three dots defines the zero
    element in $\Jsharp(K)$ when $K$ is the unknot. 
    This in turn follows from Proposition~\ref{prop:unknot-u-map}.
\end{proof}

The calculation of $\Jsharp$ with its module structure for the unknot
$K$ is closely related to the evaluation of the closed foam
$S(k)$, consisting of an unknotted $2$-sphere $S$ in $\R^{4}$ or $S^{4}$,
with $k$ dots. Before continuing, it will be useful to make some
general remarks about closed foams.

A closed foam $\Sigma$ in $S^{4}$ evaluates to $0$ or $1$,
\[
           \Jsharp(\Sigma) \in \F
\]
by the convention of Definition~\ref{def:implied-ball}. Because the definition involves counting solutions in
$0$-dimensional moduli spaces, we can read off the action $\kappa$ of the relevant
solutions from the dimension formula \eqref{eq:dimension}. Taking
account of the codimension-$1$ constraints corresponding to the dots on
$\Sigma$, and writing $k$ for the number of dots, we must have
\[
 8 \kappa  +
        \chi(\Sigma) + \frac{1}{2} \Sigma\cdot\Sigma -
        \frac{1}{2}|\tau| - k = 0.
\]
This gives us some necessary conditions for a non-zero evaluation.
Since $\kappa$ is non-negative, we must have
\begin{equation}\label{eq:action-inequality}
              k \ge \chi(\Sigma) + \frac{1}{2} \Sigma\cdot\Sigma -
        \frac{1}{2}|\tau|.
\end{equation}

 We evaluate $\Jsharp(\Sigma)$ for a sphere with dots.

\begin{proposition}\label{prop:sphere-with-dots}
    Let $S(k)$ denote the unknotted $2$-sphere with $k$ dots, as a foam
    in $\R^{4}$ or $S^4$. Then $\Jsharp(S(k))=1$ if $k=2$ and is $0$ otherwise.
\end{proposition}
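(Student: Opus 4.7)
The plan is to reduce the evaluation of the closed foam $S(k)$ to a pairing computation in $\Jsharp(K)\cong\F[u]/u^3$ for $K$ the unknot (Proposition~\ref{prop:unknot-u-map}), plus one direct moduli-space input. First, cut $S^4$ along an equatorial $S^3$ meeting $S$ transversally in an unknotted circle $K$, decomposing $S^4 = B^4_+\cup B^4_-$ and $S = D^2_+\cup D^2_-$ into two slice disks. By the composition law of Proposition~\ref{prop:Cat-functor}, Definition~\ref{def:implied-ball}, and the cylinder-cobordism description of the dot operator (Definition~\ref{def:u-map}),
\[
\Jsharp(S(k)) \;=\; \phi\!\left(u_e^{k}\, v\right),
\]
where $v:=\Jsharp(B^4_+,D^2_+)\in\Jsharp(K)$ is the ``cup'' class and $\phi:=\Jsharp(B^4_-,D^2_-):\Jsharp(K)\to\F$ is the ``cap'' functional, with all $k$ dots migrated to one side (legitimate since $u_e$ arises from a cylinder and commutes with the splicing along $S^3$). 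The nilpotence $u_e^3=0$ (also Proposition~\ref{prop:u-cubed}) immediately yields $\Jsharp(S(k))=0$ for all $k\ge 3$.

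Next, write $v=c_0+c_1 u+c_2 u^2$ and $d_i:=\phi(u^i)$. The orientation-reversing reflection of $S^4$ that swaps the two hemispheres identifies the cap $B^4_-$ with the reverse of the cup $B^4_+$, and under the Floer TQFT this interchange corresponds to the Poincar\'e-duality pairing on $\Jsharp(K) = H_*(\RP^2;\F)$ coming from the Morse model of Proposition~\ref{prop:unknot-u-map}; this forces $d_i = c_{2-i}$. Over $\F=\F_2$ the three remaining values then collapse to
\[
\Jsharp(S(0))=c_1,\qquad \Jsharp(S(1))=0,\qquad \Jsharp(S(2))=c_0,
\]
so the proposition is equivalent to the two assertions $c_0=1$ and $c_1=0$.

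The main obstacle is producing these two numerical inputs by a direct moduli-space analysis on the bifold $(S^4,\, S\cup[0,1]\times H)$, where $[0,1]\times H$ is the standard Hopf-link tube of the sharp construction. Proposition~\ref{prop:dimension} pins the action of any contributing ASD connection at $\kappa=(k+1)/8$. For $k=0$ I would argue that no such $\kappa=1/8$ bifold ASD connection exists --- an action-lattice argument using the bubble quantization of Lemma~\ref{lem:Uhlenbeck} together with the absence of tetrahedral points and seams in the foam $S\cup[0,1]\times H$ --- forcing $c_1=0$. For $k=2$, I would identify a single regular $V_{4}$-bifold ASD connection of action $\kappa=3/8$ whose transverse intersection with the two dot sections $V(\delta_1)\cap V(\delta_2)$ from Section~\ref{subsec:dots} contributes exactly one point to the zero-dimensional moduli space, giving $c_0=1$. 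The rigidity and transversality of this single $k=2$ contribution is the subtle point on which the argument hinges; once it is in hand, the algebra and the hemisphere-swap symmetry assemble the remaining cases for free.
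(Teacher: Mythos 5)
Your general strategy (split $S^4$ into hemispheres, describe $\Jsharp(S(k))$ as the pairing $\phi(u_e^k v)$ with $v$ and $\phi$ coming from the two slice disks, and exploit $u_e^3=0$ for $k\ge3$) is a reasonable plan, and differs somewhat from the paper's, which simply reads off the constraint from the index formula without cutting. But the crucial inputs you propose to compute by moduli-space analysis are miscomputed, and the rest of the argument is built on that error.

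The action is pinned by \eqref{eq:dimension} (equivalently the formula preceding Proposition~\ref{prop:sphere-with-dots}) as $8\kappa = k - \chi(\Sigma) - \frac{1}{2}\Sigma\cdot\Sigma + \frac{1}{2}|\tau|$; for the unknotted sphere $\chi=2$, $\Sigma\cdot\Sigma=0$, $|\tau|=0$, so $\kappa=(k-2)/8$, not $(k+1)/8$. This sign error matters a lot. For $k=0$ and $k=1$, the correct $\kappa$ is \emph{negative}, so the moduli spaces are automatically empty --- there is nothing to prove, and no ``action-lattice argument'' is available or needed. Once you see $\kappa<0$ for $k\le 1$, the hemisphere-swap symmetry (which in any case you asserted rather than proved, and which isn't stated anywhere in the paper) becomes superfluous. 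For $k=2$, the correct $\kappa$ is $0$, so the contributing connections are \emph{flat}. Here your proposal to exhibit ``a single regular $V_4$-bifold ASD connection of action $\kappa=3/8$'' is wrong on two counts: the action is $0$, and the relevant flat connections are not $V_4$-connections but reducible $\Z/2$-image connections whose $\Rep^\sharp$-orbit is all of $\RP^2$ (the representation variety of the unknot, Section~\ref{sec:examples-Rep}). The moduli space is therefore not rigid; it is two-dimensional, and the two dot constraints cut it down. The correct $k=2$ calculation, as in the paper, is the cohomological pairing $\langle w_1^2,[\RP^2]\rangle = 1$ for the nontrivial real line bundle on $\RP^2$, not an isolated-point count.

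So the gap is concrete: the index/action bookkeeping is off, which makes both of the ``two numerical inputs'' you intend to supply by moduli-space analysis refer to nonexistent configurations. Fixing $\kappa=(k-2)/8$ both kills $k=0,1$ for free (rendering your symmetry scaffolding unnecessary) and replaces the proposed rigid $V_4$ instanton at $k=2$ by the mod-2 Euler class computation on the representation variety $\RP^2$.
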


\begin{proof}
    The fact that $\Jsharp(S(k))=0$ for $k\ge 3$ follows from
    Proposition~\ref{prop:u-cubed}. The inequality
    \eqref{eq:action-inequality} requires $k \ge 2$ in this case. So
    $S(2)$ is the only case where the evaluation can be
    non-zero. Furthermore, for the case $k=2$, the action $\kappa$ for
    the relevant moduli spaces is zero: we are looking for flat
    connections. As in the case of the unknot considered earlier, the
    representation variety of flat connections is $\RP^{2}$, and the
    real line bundle over $\RP^{2}$ corresponding to each dot is the
    non-trivial one. So $\Jsharp(S(2))$ is obtained by evaluating the
    square of the basic $1$-dimensional class on $\RP^{2}$, and the
    answer is $1$ as claimed.
\end{proof}

If $D(k)$ is a $2$-disk in $B^{4}$ with boundary a standard circle in
$S^{3}$, regarded as a foam with $k$ dots, then a formal corollary of
the above proposition is:

\begin{corollary}\label{cor:disk-to-unknot}
    The elements $\Jsharp(D(0))$, $\Jsharp(D(1)$ and $\Jsharp(D(2))$
    are a basis for $\Jsharp(K)$, where $K\subset S^{3}$ is a standard
    circle (the unknot).
\end{corollary}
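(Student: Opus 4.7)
The plan is to exhibit a dual basis of linear functionals via closed foam evaluations. Write $e_i = \Jsharp(D(i)) \in \Jsharp(K)$ for $i=0,1,2$. Since Proposition~\ref{prop:unknot-u-map} gives $\dim_{\F}\Jsharp(K) = 3$, it suffices to establish that $e_0, e_1, e_2$ are linearly independent.

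For each $j\in\{0,1,2\}$, view the disk with $j$ dots in reversed orientation as a foam cobordism $\bar D(j) : (S^{3},K) \to (S^{3},\emptyset)$, and set
\[
   \phi_j := \Jsharp(\bar D(j)) : \Jsharp(K) \to \Jsharp(\emptyset) = \F,
\]
using $\Jsharp(\emptyset)=\F$ from Proposition~\ref{prop:simple-Jsharp}. The composition $\bar D(j)\comp D(i)$ in $\Cat^{\sharp}$ glues two copies of $B^{4}$ along their common boundary $(S^{3},K)$ to produce the closed pair $(S^{4}, S(i+j))$. Hence by the composition law of Section~\ref{sec:functoriality},
\[
   \phi_j(e_i) \;=\; \Jsharp(S(i+j)),
\]
which by Proposition~\ref{prop:sphere-with-dots} equals $1$ when $i+j=2$ and $0$ otherwise. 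The $3\times 3$ matrix $[\phi_j(e_i)]$ is therefore antidiagonal with $1$'s, hence invertible; so the $e_i$ are linearly independent (with dual functionals $\phi_j$), and form a basis.

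The only nontrivial point is justifying the composition law at the step $\phi_j(e_i) = \Jsharp(S(i+j))$. The marking data $\nu$ attached in the $\Jsharp$ construction is supported on a Hopf link in a small ball near the basepoint arc and is disjoint from all of $D(i)$ and $\bar D(j)$. Consequently $U_\nu$ and its restriction to the gluing region are (homotopy equivalent to) balls, and the restriction map on $H^{*}(-;\F)$ appearing in the definition of right-properness is surjective in degrees $0$ and $1$. Thus the composition law \eqref{eq:compositon-law} applies and the corollary follows.
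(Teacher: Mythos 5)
Your proof is correct and follows the same route the paper intends when it calls the corollary ``formal'': the $3\times 3$ pairing matrix $[\phi_{j}(e_{i})] = [\Jsharp(S(i+j))]$ is the $\F$-antidiagonal matrix by Proposition~\ref{prop:sphere-with-dots}, hence nonsingular, and combined with $\dim\Jsharp(K)=3$ from Proposition~\ref{prop:unknot-u-map} this forces the $e_{i}$ to be a basis. One small imprecision in your right-properness remark: the relevant space is $U_{\nu}\sminus\Sigma$, which is a Hopf-link complement crossed with an interval and so is \emph{not} contractible; what makes the restriction $H^{i}(U_{\nu}\sminus\Sigma;\F)\to H^{i}(U_{\mu_{0}}\sminus K_{0};\F)$ surjective is the product structure $U_{\nu}\sminus\Sigma\cong[0,1]\times(U_{\mu_{0}}\sminus K_{0})$, not that the spaces are balls --- but the conclusion you draw is the right one.
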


Using Corollary~\ref{cor:tensor-product}, we obtain from this a simple
criterion for testing whether an element of the Floer homology of an
unlink is zero:

\begin{lemma}\label{lem:unlink-test}
    Let $(S^{3}, K_{n})$ be an $n$-component unlink in the
    $3$-sphere. Let $(X,\Sigma_{i})$ be webs with boundary $(S^{3},
    K_{n})$. Then, in order that a relation
   \[
            \sum_{i} \Jsharp(X,\Sigma_{i})=0
   \]
   holds in $\Jsharp(S^{3}, K_{n})$, it is necessary and sufficient to
   test, for all $n$-tuples $(k_{1}, \dots, k_{n})\in \{0,1,2\}^{n}$,
   that the relation
   \[
          \sum_{i} \Jsharp(\bar X, \bar \Sigma_{i})=0,
   \]
   holds in $\F$, where $\bar X$ is obtained from $X$ by adding a
   ball, and $\bar\Sigma_{i}$ is obtained from $\Sigma_{i}$ by adding
   a standard disk with $k_{m}$ dots, $D(k_{m})\subset B^{4}$, to the
   $m$'th component of $K_{n}$.
\end{lemma}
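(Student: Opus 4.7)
The plan is to reduce the lemma to a statement about the non-degeneracy of a certain pairing on $\Jsharp$ of the unknot, exploiting the tensor-product structure for split unions. First, by Corollary~\ref{cor:tensor-product} applied $n-1$ times, we have a canonical identification
\[
   \Jsharp(S^{3}, K_{n}) \;=\; \Jsharp(S^{3}, K_{1})^{\otimes n},
\]
and this identification is natural for split cobordisms. A cobordism $(X,\Sigma_{i})$ from $(S^{3},\emptyset)$ to $(S^{3},K_{n})$ produces an element of the left-hand side, and capping off with $D(k_{1})\cup\cdots\cup D(k_{n})\subset B^{4}$ gives a cobordism from $(S^{3},K_{n})$ back to $(S^{3},\emptyset)$, hence a linear functional
\[
   \ell_{(k_{1},\dots,k_{n})}: \Jsharp(S^{3},K_{n})\to \F, \qquad
   v \mapsto \Jsharp(\bar X,\bar\Sigma_{i})\text{ when }v=\Jsharp(X,\Sigma_{i}).
\]
Because the capping-off cobordism is itself a split cobordism (the disks are disjoint), Corollary~\ref{cor:tensor-product} again yields the factorization $\ell_{(k_{1},\dots,k_{n})}=\ell_{k_{1}}\otimes\cdots\otimes\ell_{k_{n}}$, where $\ell_{k}:\Jsharp(S^{3},K_{1})\to\F$ is given by capping off with $D(k)$.

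Next, I need to show that $\ell_{0},\ell_{1},\ell_{2}$ form a basis of the dual space of $\Jsharp(S^{3},K_{1})$. By Proposition~\ref{prop:unknot-u-map} this dual space is $3$-dimensional, so it is enough to show linear independence. The key computation is the matrix of pairings $\ell_{i}(\Jsharp(D(j)))$. Gluing $D(i)$ (viewed as a cap) to $D(j)$ along their common boundary produces the closed unknotted sphere $S(i+j)$ in $S^{4}$, and so by Proposition~\ref{prop:sphere-with-dots}
\[
   \ell_{i}\bigl(\Jsharp(D(j))\bigr) \;=\; \Jsharp\bigl(S(i+j)\bigr) \;=\; \begin{cases} 1, & i+j=2,\\ 0, & \text{otherwise.}\end{cases}
\]
This $3\times 3$ matrix is the anti-diagonal identity, hence invertible over $\F$. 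Combined with Corollary~\ref{cor:disk-to-unknot}, which tells us that $\Jsharp(D(0)),\Jsharp(D(1)),\Jsharp(D(2))$ are a basis of $\Jsharp(S^{3},K_{1})$, this shows the pairing is perfect, so the $\ell_{k}$ are a dual basis.

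Taking tensor products, the $3^{n}$ functionals $\ell_{(k_{1},\dots,k_{n})}$ for $(k_{1},\dots,k_{n})\in\{0,1,2\}^{n}$ form a basis of the dual of $\Jsharp(S^{3},K_{n})$. Consequently an element $\sum_{i}\Jsharp(X,\Sigma_{i})\in\Jsharp(S^{3},K_{n})$ vanishes if and only if all of these functionals vanish on it, which is precisely the condition that $\sum_{i}\Jsharp(\bar X,\bar\Sigma_{i})=0$ in $\F$ for every tuple $(k_{1},\dots,k_{n})$. This gives the lemma. The one step requiring the most care is verifying that the capping-off morphism really does split as a tensor product under the identification provided by Corollary~\ref{cor:tensor-product}; this is a naturality check for the splitting isomorphism and uses that the disks $D(k_{m})$ lie in disjoint balls separated by the spheres implicit in the iterated split decomposition of $K_{n}$.
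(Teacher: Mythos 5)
Your proof is correct and takes essentially the same approach the paper intends: the paper states this lemma without a separate proof, presenting it as a direct consequence of Corollary~\ref{cor:tensor-product} (for the tensor-factorization of both $\Jsharp(K_n)$ and the capping functional) and Corollary~\ref{cor:disk-to-unknot} (for the basis of $\Jsharp$ of the unknot). Your explicit verification via the anti-diagonal pairing matrix $\ell_i(\Jsharp(D(j)))=\Jsharp(S(i+j))$ from Proposition~\ref{prop:sphere-with-dots} is exactly the mechanism underlying Corollary~\ref{cor:disk-to-unknot}, so you are filling in the detail rather than following a different route.
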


\subsection{The theta foam and the theta web}

A \emph{theta foam} is a closed foam formed from three disks in
$\R^{3}$ meeting along a common circle, the seam. We write $\Theta$
for a typical theta-foam in $\R^{4}$, and $\Theta(k_{1}, k_{2},
k_{3})$ for the same foam with the addition of $k_{i}$ dots on the
$i$'th disk.

\begin{proposition}\label{prop:theta-evaluation}
    For the theta foam with dots, $\Theta(k_{1}, k_{2}, k_{3})$, we
    have
    \[
                  \Theta(k_{1}, k_{2}, k_{3}) = 1
     \]
    if $(k_{1}, k_{2}, k_{3}) = (0,1,2)$ or some permutation
    thereof. In all other cases the evaluation is zero. 
\end{proposition}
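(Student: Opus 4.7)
The plan is to identify the evaluation as cup-product computations on the real flag manifold $F = \SO(3)/V_4$. First, I determine the flat moduli space: the orbifold fundamental group of $(S^4, \Theta)$ is generated by three meridians $m_1, m_2, m_3$ with relations $m_i^2 = 1$ and $m_1 m_2 m_3 = 1$ (the link-of-seam relation), so $\pi_1^{\mathrm{orb}} \cong V_4$. After the $\sharp$-marking data rigidifies the gauge group, the framed flat moduli space becomes the conjugation orbit $\SO(3)/V_4 = F$, a $3$-manifold with $\pi_1(F) = Q_8$ (preimage of $V_4$ in $\SU(2)$) and $H_1(F;\F) = V_4$. The dimension formula (Proposition~\ref{prop:dimension}) then forces $\tfrac{1}{2}\Theta\cdot\Theta = 3$, so a nontrivial evaluation with $k = k_1+k_2+k_3$ dots uses a dimension-$k$ moduli space; in particular the evaluation vanishes whenever $k < 3$.

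For $k=3$ the relevant moduli space is $F$. Each dot $\delta_i$ on facet $i$ determines a real line bundle $L_i \to F$ whose first Stiefel--Whitney class is the character of $V_4 = H_1(F;\F)$ that detects whether the $\SU(2)$-lift of $\rho(m_i) \in V_4$ flips along a loop. A short computation in $Q_8$ shows that the three classes $w_1(L_i)$ are precisely the three non-trivial characters of $V_4$, which I write as $\alpha,\beta,\alpha+\beta$ in a suitable basis (consistent with the dot-migration identity $L_1 \otimes L_2 \otimes L_3 = \underline{\R}$). Borel's theorem identifies
\[
H^*(F;\F) = \F[\alpha,\beta]\big/\bigl(\alpha^2+\alpha\beta+\beta^2,\ \alpha^2\beta+\alpha\beta^2\bigr),
\]
the two relations being the Stiefel--Whitney classes $w_2$ and $w_3$ of the standard representation of $V_4$ on $\mathfrak{so}(3) = \R^3$. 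In this ring, $H^3(F;\F) = \F$ is generated by $\alpha^2\beta = \alpha\beta^2$, while $\alpha^3 = \beta^3 = 0$. The evaluations follow:
\[
\Theta(0,1,2) = \int_F \beta(\alpha+\beta)^2 = \alpha^2\beta+\beta^3 = 1,
\qquad
\Theta(1,1,1) = \int_F \alpha\beta(\alpha+\beta) = \alpha^2\beta+\alpha\beta^2 = 0,
\]
and all permutations of $(0,1,2)$ evaluate to $1$ by the $S_3$-symmetry cyclically permuting $\alpha,\beta,\alpha+\beta$.

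The case $k>3$ is handled by combining Proposition~\ref{prop:u-cubed} (any triple with some $k_i \geq 3$ vanishes) with the dot-migration relation of Proposition~\ref{prop:dot-migration}. For example, at $k=6$ migration from $(2,2,1)$ gives $\Theta(3,2,1) + \Theta(2,3,1) + \Theta(2,2,2) = 0$, so $\Theta(2,2,2) = 0$; the cases $k = 4$ and $k = 5$ are dispatched similarly by migrating from triples with a zero entry and using $u^3=0$ to kill terms with an entry $\geq 3$. The main obstacle in executing this plan is the explicit identification of the line bundles $L_i$ with the three non-trivial characters of $V_4$: this requires a concrete description of the universal flat connection on $F \times (S^4 \setminus \Theta)$ and a careful tracking of how its meridian holonomy lifts to $\SU(2)$ along loops in $F = \SU(2)/Q_8$.
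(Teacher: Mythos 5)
Your proposal takes essentially the same route as the paper: identify the flat moduli space for the theta bifold with the real flag manifold $F = \SO(3)/V_4$, interpret the dots as tautological real line bundles $L_i$ on $F$, and read off the answer as a pairing in $H^*(F;\F)$. A few remarks. (1) There is a small slip when you claim the dimension formula forces $\tfrac12\Theta\cdot\Theta = 3$; in fact the theta foam (three disks sharing a boundary circle) has $\chi(\Theta)=3$ and $\Theta\cdot\Theta = 0$, and the formula reads $k = 8\kappa + \chi(\Theta) + \tfrac12\Theta\cdot\Theta$, so the $3$ comes from $\chi$, not from $\Theta\cdot\Theta$. Your conclusion $k \geq 3$ is still correct. (2) For $k > 3$, the paper combines $u^3=0$ with the observation that $4\kappa\in\Z$, which forces $k$ to be odd, thereby killing $k=4,6$ directly. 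You replace the parity argument with pure dot-migration and $u^3=0$; this works, but the case $\Theta(2,1,1)$ is not quite ``dispatched similarly'': migrating from $(1,1,1)$ gives $\Theta(2,1,1)+\Theta(1,2,1)+\Theta(1,1,2)=0$, which only yields $0$ after invoking the $S_3$ symmetry of the theta foam so that the three terms agree (then $3=1$ over $\F$). Once $\Theta(2,1,1)=0$, migration from $(2,1,0)$ handles $(2,2,0)$, and then $(2,2,1)$, $(2,2,2)$ fall in turn as you say. (3) The ``main obstacle'' you flag — the precise identification of the $w_1(L_i)$ with the three nontrivial characters of $V_4$, compatibly with the dot-migration relation — is asserted without proof in the paper as well (``the three real line bundles corresponding to dots on the three sheets are the three tautological line bundles''); so your plan meets the same standard of rigor on this point. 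Your explicit Borel presentation $H^*(F;\F) = \F[\alpha,\beta]/(\alpha^2+\alpha\beta+\beta^2,\ \alpha^2\beta+\alpha\beta^2)$ and the resulting cup-product computations are correct.
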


\begin{proof}
    In the case of the theta foam in $\R^{4}$, any solution of the
    equation has action satisfying $4\kappa\in\Z$. One can verify this
    by considering a bifold connection on $(S^{4},\Theta)$: any such
    gives rise to an ordinary $\SO(3)$ action of the $V_{4}$-cover
    $S^{4}$, where the action must be an integer.
    The relation in Proposition~\ref{prop:dot-migration} tells us that
    \[
            \Jsharp( \Theta(k_{1}, k_{2}, k_{3})) = \Jsharp( \Theta(k_{1}-1, k_{2}+1,
             k_{3})) +  \Jsharp(\Theta(k_{1}-1, k_{2}, k_{3}+1))
    \]
    as long as $k_{1}\ne 0$. So the evaluation for 
   $\Theta(k_{1}, k_{2}, k_{3})$ will be
    completely determined once we know the evaluations for the cases
    with $k_{1}=0$. So we examine $\Theta(0, k_{2}, k_{3})$. For a
    non-zero evaluation, we require $k_{i} \le 2$, because of
    Proposition~\ref{prop:u-cubed}. The inequality
    \eqref{eq:action-inequality} requires $k_{2}+k_{3}\ge 3$ in this
    case. Furthermore, $k_{2}+k_{3}$ must be odd, because $4\kappa$ is an
    integer. These constraints mean we need only
    look at $\Theta(0,1,2)$, and here the moduli space we are
    concerned with is  a moduli space of flat connections. The
    representation variety in this example is the flag manifold
    $F=\SO(3)/V_{4}$, and the three real line bundles corresponding to
    dots on the three sheets are the three tautological line bundles
    $L_{1}$, $L_{2}$, $L_{3}$ on $F$. The evaluation of
    $\Theta(0,1,2)$ is therefore equal to
     \[
                  \bigl\langle w_{1}(L_{2}) w_{1}(L_{3})^{2} , [F] \bigr\rangle,
     \]
    which is $1$.
\end{proof}

Let $K$ be a theta web. Corresponding to the three edges, there are
three operators $u_{i} : \Jsharp(K) \to \Jsharp(K)$.

\begin{proposition}
    For the theta web $K$, the instanton homology $\Jsharp(K)$ can be
    identified with the ordinary $\F$ homology of the flag manifold $F=\SO(3)/V_{4}$
    in such a way that the operators $u_{i}$ correspond to the
    operation of cap product with the classes $w_{1}(L_{i})$, where
    $L_{1}$, $L_{2}$, $L_{3}$ are the tautological line bundle on the
    flag manifold. Concretely, this means that the dimension is $6$
    and that the instanton homology is a cyclic module over the
    algebra generated by the $u_{i}$ which satisfy the relations
\[
\begin{gathered}
    u_{1} + u_{2} + u_{3} = 0 \\
         u_{1}u_{2} + u_{2}u_{3} + u_{3}u_{1}=0 \\
                u_{1}u_{2}u_{3}=0.
\end{gathered}
\]
\end{proposition}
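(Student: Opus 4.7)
The plan is to model $\Jsharp(K)$ on the Morse homology of the flag manifold $F=\SO(3)/V_4$, following the strategy used for the unknot in Proposition~\ref{prop:unknot-u-map} but now with a three-dimensional critical manifold. First I would identify the unperturbed critical set. The theta web admits a unique Tait coloring up to permutation, so by Lemma~\ref{lem:V4-is-Tait} the representation variety $\Rep(K)$ consists of one $V_4$-connection, and $\Rep^{\sharp}(K)=\SO(3)/V_4$ is a single copy of the flag manifold $F$. Via Lemma~\ref{lem:Hopf-link-Rep} this is also the set of critical points of the unperturbed Chern--Simons functional for $(S^{3},K\cup H;\mu)$.

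Next I would choose a small holonomy perturbation so that $\CS+f_\pi$ restricts to a standard Morse function on $F$, producing $6$ non-degenerate critical points matching the $\F$-Betti numbers $1,2,2,1$ of $F$. Monotonicity (equation~\eqref{eq:monotone}) forces every gradient trajectory between perturbed critical points to carry strictly positive action unless its spectral flow is zero, so for small enough perturbation the only contributing trajectories are small Morse trajectories on $F$. This identifies $\Jsharp(K)$ with $H_{*}(F;\F)$ as a graded $\F$-vector space of dimension $6$; this Morse--Bott identification is the main obstacle, as one must verify that no long trajectories sneak in and that the Morse--Bott spectral sequence degenerates, exactly as in the $\RP^{2}$-argument for the unknot, but now in the more delicate $3$-dimensional setting.

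I would then identify each operator $u_{e_i}$ with cap product by $w_{1}(L_i)$. Via Lemma~\ref{lem:covering}, the line bundle $L_{\delta_i}$ attached to a dot on the $i$th face is pulled back from the double cover of $\bonf_l(\check Y;\mu)$ over a neighborhood of $e_i$ determined by lifting the $\SO(3)$ holonomy around a nearby meridian to $\SU(2)$. Restricted to the critical set $F=\SO(3)/V_4$, this lifting obstruction is precisely the character $\chi_i:V_4\to\{\pm 1\}$ sending the generator fixing the $i$th axis to $-1$, so $L_{\delta_i}|_F$ is the tautological bundle $L_i$. The three tautological line bundles assemble into the defining representation $\R^{3}=L_1\oplus L_2\oplus L_3$ of $V_4$ on $F$, and the Whitney sum formula applied to the trivial bundle $\underline{\R^{3}}$ gives exactly
\[
 w_1(L_1)+w_1(L_2)+w_1(L_3)=0, \quad \sum_{i<j} w_1(L_i)w_1(L_j)=0, \quad w_1(L_1)w_1(L_2)w_1(L_3)=0,
\]
which translate into the three stated relations among the $u_i$.

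Finally I would verify the cyclic module structure. The ring $\F[a_1,a_2,a_3]$ modulo these three elementary symmetric relations has Hilbert series $1+2t+2t^{2}+t^{3}$, total dimension $6$, matching $\dim\Jsharp(K)$; $\F$-Poincaré duality for the compact $3$-manifold $F$ then identifies $H_{*}(F;\F)$ with a cyclic $H^{*}(F;\F)$-module generated by $[F]$. To check that this cyclic generator is actually realized, I would take $v\in\Jsharp(K)$ to be the class defined by the simplest foam cap $\Sigma_{0}\subset B^{4}$ (three disks meeting along an arc), and use the theta-foam evaluation of Proposition~\ref{prop:theta-evaluation}: the pairing of $u_{1}^{k_1}u_{2}^{k_2}u_{3}^{k_3}v$ against the analogous element from the opposite cap equals $\Jsharp(\Theta(k_1,k_2,k_3))$, which is non-zero for $(0,1,2)$ and its permutations. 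This non-degeneracy of the induced pairing, together with the dimension count, confirms that $v$ generates $\Jsharp(K)$ as a module over the $u_i$, completing the identification with $H_{*}(F;\F)$ as a module over $H^{*}(F;\F)$.
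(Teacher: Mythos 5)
Your last paragraph contains the argument that actually proves the proposition, and you should lead with it: the pairing non-degeneracy established via Proposition~\ref{prop:theta-evaluation} is itself the lower bound $\dim\Jsharp(K)\ge 6$, and this is exactly how the paper proceeds. The paper uses the representation variety (one copy of $F$) only to obtain the \emph{upper} bound $\dim\Jsharp(K)\le 6$, via the general Morse-theoretic inequality; it never needs to show that the Morse--Bott spectral sequence degenerates, nor that no long trajectories appear. Your middle paragraph frames that degeneration as a necessary step and (correctly) flags it as an unresolved gap, but then your final paragraph silently renders it unnecessary. Restructured as ``representation variety gives $\le 6$, half-theta-foam pairings give $\ge 6$, equality follows,'' the argument is complete and is essentially the paper's proof.

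Two smaller points. First, your identification of $u_i$ with cap product by $w_1(L_i)$ via the $\SU(2)$-lifting obstruction and the Whitney sum formula is a nice conceptual explanation of where the three relations come from, but as written it presupposes the Morse--Bott identification $\Jsharp(K)\cong H_*(F;\F)$, which you have not established. The paper instead reads the relations directly off the same matrix of pairings between the classes $x_{\pm}(k_1,k_2,k_3)$ that proved independence; once you know the pairing of $u_1^{a}u_2^{b}u_3^{c}\,x_-(0,0,0)$ against each $x_+(0,k_1,k_2)$, the relations follow by linear algebra without appealing to the cohomology ring of $F$. Second, you should specify the $6$-element basis: the paper takes $x_-(0,k_1,k_2)$ with $0\le k_1\le 1$, $0\le k_2\le 2$, and checks that its pairing matrix with the dual basis $x_+(0,k_1,k_2)$ is non-singular using the theta-foam evaluations; ``non-degeneracy of the induced pairing'' needs this explicit choice to be a finite check.
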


\begin{proof}
    The representation variety for the theta web is the flag manifold,
    so the rank of the instanton homology is at most $6$. On the other
    hand, we can see that the rank is at least $6$ as
    follows. Consider a theta-foam cut into two pieces by a
    hyperplane, so as to have cobordisms $\Theta_{-}$ from
    $\emptyset$ to $K$ and $\Theta_{+}$ from $K$ to
    $\emptyset$. Putting $k_{i}$ dots on the $i$'th facet of
    $\Theta_{\pm}$ and applying $\Jsharp$, we obtain vectors
    $x_{-}(k_{1}, k_{2}, k_{3})$ in $\Jsharp(K)$ and covectors
    $x_{+}(k_{1}, k_{2}, k_{3})$. The pairings between these can be
    evaluated using the knowledge of the closed theta foam
    (Proposition~\ref{prop:theta-evaluation}). If we restrict to the
    elements $x_{\pm}(0, k_{1}, k_{2})$ with $k_{1}\le 1$ and
    $k_{2}\le 2$, then the resulting matrix of pairings is
    non-singular. So the corresponding $6$ elements $x_{-}(0, k_{1},
    k_{2})$ in $\Jsharp(K)$ are independent. The relations satisfied
    by the operators $u_{i}$ can be read off similarly.
\end{proof}

As a corollary of the above proposition and excision, we have:

\begin{proposition}\label{prop:vertex-relations}
    Let $K$ be any web in a $3$-manifold $Y$, and let $u_{1}$, $u_{2}$,
    $u_{3}$ be the operators corresponding to three edges of $K$ that
    are incident at a common vertex. Then the operators satisfy the
    same relations,
\begin{equation}\label{eq:vertex-relations}
\begin{gathered}
    u_{1} + u_{2} + u_{3} = 0 \\
         u_{1}u_{2} + u_{2}u_{3} + u_{3}u_{1}=0 \\
                u_{1}u_{2}u_{3}=0.
\end{gathered}
\end{equation}
    In particular, any monomial $u_{1}^{a}u_{2}^{b}u_{3}^{c}$ of total
    degree $4$ or more is zero.
\end{proposition}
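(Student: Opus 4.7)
The plan is to get the first relation $u_{1}+u_{2}+u_{3}=0$ immediately from Proposition~\ref{prop:dot-migration} applied at the vertex, and to deduce the other two relations from the theta-web calculation via the local-relations principle, Corollary~\ref{cor:local-relations-webs}. To localize at the vertex $v$, choose a small $4$-ball $P$ inside the cylindrical cobordism $[0,1]\times Y$, centered on a point of the seam $[0,1]\times\{v\}$ and small enough that the foam $[0,1]\times K$ meets $P$ in three disks joined along an arc---a local model of a theta-foam cobordism. The boundary $\partial P\cong S^{3}$ then meets the foam in a standard theta web $\Theta$. For each triple $(k_{1},k_{2},k_{3})$ of non-negative integers, let $V(k_{1},k_{2},k_{3})\subset P$ be the local foam with $k_{i}$ dots on the $i$'th facet, so that $\partial V(k_{1},k_{2},k_{3})=\Theta$; set
\[
x(k_{1},k_{2},k_{3})=\Jsharp(P,V(k_{1},k_{2},k_{3}))\in\Jsharp(S^{3},\Theta).
\]

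Under the identification $\Jsharp(S^{3},\Theta)\cong H_{*}(F;\F)$, where $F=\SO(3)/V_{4}$ is the flag manifold (as in the theta-web proposition), the three operators on $\Jsharp(\Theta)$ coming from dots on the edges of $\Theta$ act as cap product with $w_{1}(L_{i})$, where $L_{1},L_{2},L_{3}$ are the tautological real line bundles on $F$. Because $L_{1}\oplus L_{2}\oplus L_{3}$ is the trivial rank-three bundle on $F$, the mod-$2$ elementary symmetric functions $e_{1},e_{2},e_{3}$ in the classes $w_{1}(L_{i})$ all vanish; capping with the fundamental class then gives
\[
x(1,0,0)+x(0,1,0)+x(0,0,1)=0,\quad x(1,1,0)+x(0,1,1)+x(1,0,1)=0,\quad x(1,1,1)=0
\]
as element relations in $\Jsharp(S^{3},\Theta)$. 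Now apply Corollary~\ref{cor:local-relations-webs} with ambient cobordisms all equal to $([0,1]\times Y,[0,1]\times K)$ decorated with various dots concentrated inside $P$, with common exterior $([0,1]\times Y)\setminus\mathrm{int}(P)$ (carrying its induced dotless foam), and with inserts $(P,V(k_{1},k_{2},k_{3}))$. The boundary element produced by $(P,V(k_{1},k_{2},k_{3}))$ is $x(k_{1},k_{2},k_{3})$, while the global cobordism map is the operator $u_{1}^{k_{1}}u_{2}^{k_{2}}u_{3}^{k_{3}}$ on $\Jsharp(Y,K)$. The three displayed vanishing relations therefore transfer to the relations \eqref{eq:vertex-relations} on the operators $u_{i}$.

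For the statement about monomials of total degree at least four: dots on different facets commute (they may be placed at distinct heights in the cylinder and then isotoped past one another), so $u_{1},u_{2},u_{3}$ generate a commutative $\F$-subalgebra of operators. Substituting $u_{3}=u_{1}+u_{2}$ into $u_{1}u_{2}+u_{2}u_{3}+u_{3}u_{1}=0$ gives $u_{1}^{2}+u_{1}u_{2}+u_{2}^{2}=0$, and substituting into $u_{1}u_{2}u_{3}=0$ gives $u_{1}^{2}u_{2}+u_{1}u_{2}^{2}=0$, whence $u_{1}^{2}u_{2}^{2}=u_{1}(u_{1}^{2}u_{2})=u_{1}^{3}u_{2}=0$; together with $u_{i}^{3}=0$ from Proposition~\ref{prop:u-cubed}, these force every monomial of total degree at least four in $u_{1},u_{2}$ to vanish, and the substitution $u_{3}=u_{1}+u_{2}$ then handles the general monomial $u_{1}^{a}u_{2}^{b}u_{3}^{c}$. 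The main technical point is the localization step; once Corollary~\ref{cor:local-relations-webs} is in hand and the boundary of the local $4$-ball is recognized as carrying the theta web, the remainder is routine algebra.
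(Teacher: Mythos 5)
Your proposal is correct and follows essentially the same route as the paper: the paper states this proposition as an immediate corollary of the theta-web computation together with the excision principle (Corollary~\ref{cor:local-relations-webs}), which is exactly the localization you carry out by placing the ball $P$ around a point of the seam and recognizing $\partial P$ as carrying a theta web. Your additional algebraic verification that total degree $\ge 4$ monomials vanish (using $u_1^3=0$ from Proposition~\ref{prop:u-cubed} and the two higher relations) is a correct expansion of the paper's ``in particular'' claim.
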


\subsection{The tetrahedron web and its suspension}

\begin{figure}
    \begin{center}
        \includegraphics[scale=0.50]{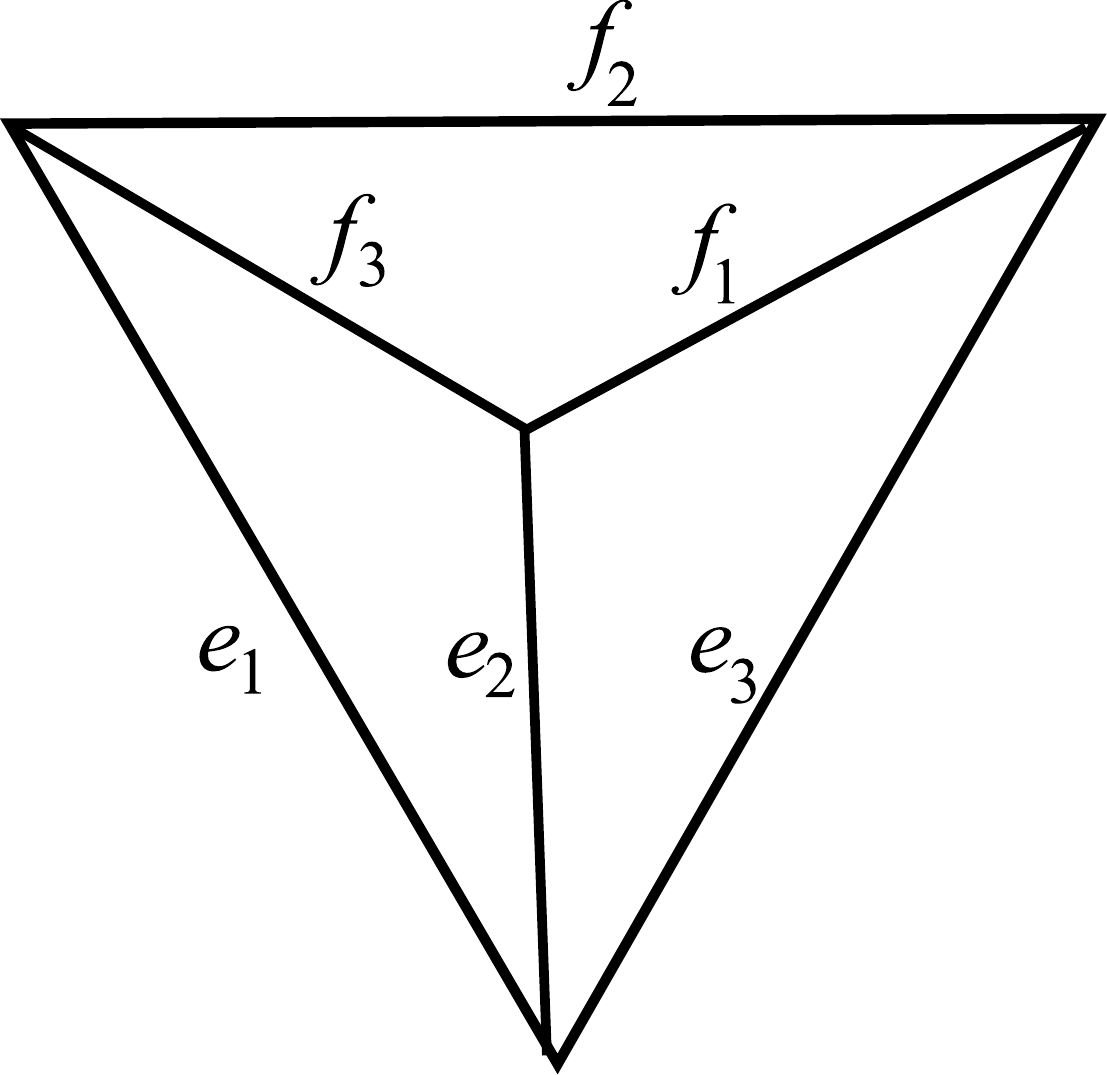}
    \end{center}
    \caption{\label{fig:tetrahedron-web}
   The tetrahedron.}
\end{figure}

Let $K$ be the tetrahedron web: the graph drawn in
Figure~\ref{fig:tetrahedron-web}, with the edges labeled as shown. Let
$T\subset B^{4}$ be the cone on $K$, regarded as a foam with one
tetrahedral point. Let $E_{i}$ and $F_{i}$ be the facets of $T$
corresponding to the edges $e_{i}$ and $f_{i}$ of $K$. Let $T(k_{1},
k_{2}, k_{3})$ denote the foam $T$ with the addition of $k_{i}$ dots
on the facet $E_{i}$. Let $S\subset S^{4}$ be the double of $T$, the
suspension of $K$ with two tetrahedral points. Let $S(k_{1}, k_{2},
k_{3})$ be defined similarly.

We can evaluate the closed foams $S(k_{1}, k_{2}, k_{3})$ just as we
did for the theta foam, and the answers are the same, as the next
proposition states.

\begin{proposition}\label{prop:S-evaluation}
    For the foam $S(k_{1}, k_{2}, k_{3})$, we
    have
    \[
                  S(k_{1}, k_{2}, k_{3}) = 1
     \]
    if $(k_{1}, k_{2}, k_{3}) = (0,1,2)$ or some permutation
    thereof. In all other cases the evaluation is zero. 
\end{proposition}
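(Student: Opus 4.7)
The plan is to mirror the proof of Proposition~\ref{prop:theta-evaluation} for the theta foam, with adaptations to handle the two tetrahedral points of $S$. The argument breaks into three parts: (a) use dot-migration and $u^3=0$ to reduce the question to a small list of $(k_1,k_2,k_3)$; (b) use the dimension formula and the bubbling lemma to further constrain the list; and (c) evaluate the one surviving case by identifying the relevant moduli space with the flag manifold.

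For (a), the seam through the vertex of the tetrahedron at which edges $e_1,e_2,e_3$ meet gives a relation $u_{E_1}+u_{E_2}+u_{E_3}=0$ by Proposition~\ref{prop:dot-migration}. Iterating this recursion (together with the analogous relations at the other three seams) and using the $S_3$ symmetry under permutation of the $k_i$ reduces the evaluation to configurations in which some $k_i$ equals $0$ and the remaining dots lie only on the $E$-facets. Proposition~\ref{prop:u-cubed} then forces each $k_i \le 2$.

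For (b), apply Proposition~\ref{prop:dimension} to $(S^4,S)$ with $|\tau|=2$, $b^1=b^+=0$. Computing $\chi(S)=-2$ directly from the CW-structure of the suspension of the tetrahedron (four vertices plus two cone points, six edges plus eight cone-edges, six triangular faces) and verifying that $\Sigma\cdot\Sigma=0$ using the cone structure near the tetrahedral points, the equation $d=0$ becomes
\[
   8\kappa \;=\; (k_1+k_2+k_3) + 3 - \chi(S) - \tfrac{1}{2}\Sigma\cdot\Sigma + \tfrac{1}{2}|\tau|,
\]
so $\kappa\ge 0$ gives $k_1+k_2+k_3\ge 3$. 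Since bifold actions lie in $(1/8)\Z$ (Lemma~\ref{lem:Uhlenbeck}), and a more careful analysis of the $V_8$-cover of a small neighbourhood of each tetrahedral point forces $8\kappa$ to have a definite parity, one obtains a parity constraint on $k_1+k_2+k_3$. Combined with $k_i\le 2$, the only surviving configuration is $(0,1,2)$ up to permutation; and in this case $\kappa=0$, so we are counting flat connections.

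For (c), in the flat case the moduli space is exactly $\Rep(S^4,S;\mu)/\text{aut}$. By Lemma~\ref{lem:V4-is-Tait} and the discussion of the tetrahedron in Section~\ref{sec:examples-Rep}, the tetrahedron admits a unique Tait colouring up to permutation of colours, so the representation variety is a single copy of the flag manifold $F=\SO(3)/V_4$. The three real line bundles $L_{\delta_i}$ cut out by the three dots on the facets $E_1,E_2,E_3$ restrict, over $F$, to the three tautological $\F$-line bundles $L_1,L_2,L_3$, because the three facets $E_i$ correspond to three edges of the tetrahedron meeting at a common vertex and thus to the three distinct non-trivial characters of $V_4$. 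The evaluation therefore becomes
\[
   \bigl\langle\, w_1(L_1)^{k_1}\,w_1(L_2)^{k_2}\,w_1(L_3)^{k_3},\,[F]\,\bigr\rangle,
\]
which equals $1$ precisely when $(k_1,k_2,k_3)$ is a permutation of $(0,1,2)$, in exact analogy with Proposition~\ref{prop:theta-evaluation}.

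The main obstacle will be the second step: verifying that $\Sigma\cdot\Sigma=0$ and, more delicately, establishing the parity constraint on $8\kappa$ in the presence of the two tetrahedral points. One must show that the index contribution from each tetrahedral singularity is half-integral in a controlled way, so as to rule out the residual $(k_1,k_2,k_3)\in\{(0,0,4),(0,2,2),\dots\}$ (after relaxing $k_i\le 2$, some of these are already excluded by $u^3=0$, but the parity argument is needed to eliminate $(0,2,2)$ and its permutations). An alternative route that could circumvent this would be to cut $S$ by a hyperplane into two copies of the cone $T$ and compute the pairing between the two resulting vectors in $\Jsharp$ of the tetrahedron web, but since $T$ itself contains a tetrahedral point, this reduction pushes the same issue one level down rather than dissolving it.
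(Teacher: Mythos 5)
Your step (c) — identifying the $\kappa=0$ moduli space with the flag manifold $\SO(3)/V_4$, recognizing the three line bundles corresponding to dots on $E_1,E_2,E_3$ as the tautological bundles, and evaluating the product of Stiefel--Whitney classes — is correct and matches the paper's treatment of the $k=3$ case, which is handled ``as in the case of the theta foam.''

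However, step (b) contains a computational error and step (a) leaves a gap which your own ``main obstacle'' remarks correctly flag as unresolved.  First, the Euler characteristic: the suspension of the tetrahedron's $1$-skeleton has $6$ vertices, $14$ edges, and $12$ faces (each of the six original edges is coned off \emph{twice}, once to each pole), so $\chi(S)=4$, not $-2$; you appear to have counted only $6$ faces.  Moreover, the paper's balance equation for the $\Jsharp$-evaluation of a closed foam is $8\kappa = k - \chi(\Sigma) - \tfrac{1}{2}\Sigma\cdot\Sigma + \tfrac{1}{2}|\tau|$ with \emph{no} $+3$ term (that term is absorbed by the modifications involved in removing two balls and inserting the Hopf-link cylinder).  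Plugging in $\chi(S)=4$, $\Sigma\cdot\Sigma=0$, $|\tau|=2$ gives $8\kappa = k - 3$, which is what yields $k\geq 3$.  With your figures the equation reads $8\kappa = k+6$ and the desired inequality does not follow.

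Second, the vanishing for $k\geq 4$: you do not need a parity argument tied to the $V_8$-covers near the tetrahedral points, and in fact that sub-argument is never supplied.  The paper's route is excision back to the theta foam: Proposition~\ref{prop:vertex-relations} (itself a corollary of the theta web calculation, hence already available and not circular here) says that any monomial $u_1^a u_2^b u_3^c$ of total degree $\geq 4$ in the dot operators at a single vertex vanishes.  Since the facets $E_1,E_2,E_3$ all emanate from one vertex of the tetrahedron, the class $\Jsharp(T(k_1,k_2,k_3))=u_1^{k_1}u_2^{k_2}u_3^{k_3}\Jsharp(T)$ is zero whenever $k_1+k_2+k_3\geq 4$, and hence so is the pairing that computes $\Jsharp(S(k_1,k_2,k_3))$.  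This disposes of $(0,2,2)$ and all the higher cases at once; relying only on $u^3=0$ and the single linear relation $u_1+u_2+u_3=0$, as you do in (a), is not enough, since those do not force degree-$4$ monomials to vanish.
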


\begin{proof}
    In the case that $k=k_{1}+k_{2}+k_{3}$ is less than $3$, we obtain
    $0$. In the case that $k=3$, we obtain the evaluation of the
    corresponding ordinary cohomology class on the flag manifold, as
    in the case of the theta foam. When $k\ge 4$, the evaluation is
    zero, by an application of excision to reduce to the case of the
    theta foam.
\end{proof}

\begin{proposition}\label{prop:tetrahedron-calc}
    The instanton homology $\Jsharp(K)$ has dimension $6$. It is
    generated by the elements $\Jsharp(T(0, k_{2}, k_{3}))$ with $0\le
    k_{2}\le 1$ and $0\le k_{3}\le 2$. The operators $u_{1}$, $u_{2}$,
    $u_{3}$ corresponding to the edges $e_{1}$, $e_{2}$, $e_{3}$
    satisfy the relations of the cohomology of the flag manifold
    \eqref{eq:vertex-relations} and $\Jsharp(K)$ is a cyclic module
    over the corresponding polynomial algebra, with generator $\Jsharp(T)$.
\end{proposition}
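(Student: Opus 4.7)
The plan is to adapt the strategy used just above for the theta web. First, I would establish the upper bound $\dim \Jsharp(K) \le 6$ by examining the representation variety. From the discussion in Section~\ref{sec:examples-Rep}, $\Rep^{\sharp}(K)$ is a single copy of the flag manifold $F = \SO(3)/V_{4}$, corresponding to the unique Tait coloring of the tetrahedron up to color permutation. After a small Morse--Smale perturbation of $\CS$, the critical points become a standard set of Morse critical points on $F$, giving at most $\dim_{\F} H_{*}(F;\F) = 6$ generators of the Floer complex.

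For the lower bound, I would imitate the pairing argument used for the theta web. The closed foam $S \subset S^{4}$ is the double of $T$ along its boundary web $K$, so splitting $S^{4}$ along an equatorial $S^{3}$ exhibits $S$ as the composite of cobordisms $T_{-} \colon \emptyset \to K$ and $T_{+} \colon K \to \emptyset$, each a copy of $T$ (with its tetrahedral point). Placing $k_{2}$ dots on facet $E_{2}$ and $k_{3}$ dots on facet $E_{3}$ of $T_{-}$ produces a vector
\[
      x_{-}(0,k_{2},k_{3}) = \Jsharp(T(0,k_{2},k_{3})) \in \Jsharp(K),
\]
and a covector $x_{+}(0,k_{2}',k_{3}') \in \Jsharp(K)^{*}$ by the same recipe applied to $T_{+}$. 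Their pairing is the evaluation $\Jsharp(S(0,k_{2}+k_{2}',k_{3}+k_{3}'))$, which by Proposition~\ref{prop:S-evaluation} is $1$ precisely when $(0,k_{2}+k_{2}',k_{3}+k_{3}')$ is a permutation of $(0,1,2)$, and $0$ otherwise. Indexing both rows and columns by $(k_{2},k_{3}) \in \{0,1\}\times\{0,1,2\}$, the resulting $6\times 6$ matrix over $\F$ is supported near the anti-diagonal with a few additional entries, and an elementary row reduction shows it has rank $6$. Hence the six elements $\Jsharp(T(0,k_{2},k_{3}))$ are linearly independent, and $\dim\Jsharp(K) \ge 6$.

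With both bounds in hand, the remaining assertions are immediate. The operators $u_{1},u_{2},u_{3}$ correspond to three edges incident at a common vertex of $K$, so Proposition~\ref{prop:vertex-relations} supplies exactly the relations \eqref{eq:vertex-relations}. By Definition~\ref{def:u-map}, adding a dot on the facet $E_{i}$ of the cobordism $T$ computes $u_{i}$ acting on $\Jsharp(T)$, so $\Jsharp(T(0,k_{2},k_{3})) = u_{2}^{k_{2}}u_{3}^{k_{3}}\,\Jsharp(T)$. The six independent vectors therefore all lie in the cyclic submodule generated by $\Jsharp(T)$, which must then be all of $\Jsharp(K)$. Identifying $u_{i}$ with $w_{1}(L_{i})$ matches the resulting presentation with the $\F$-cohomology ring of the flag manifold.

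The only genuine step of work is verifying non-degeneracy of the pairing matrix, and this reduces to a small finite linear-algebra check once Proposition~\ref{prop:S-evaluation} is in hand; the rest is assembly of results already proved.
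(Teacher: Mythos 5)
Your argument is correct and follows essentially the same route as the paper: the upper bound $\dim\Jsharp(K)\le 6$ comes from the flag-manifold representation variety, the lower bound from pairing the classes $\Jsharp(T(0,k_{2},k_{3}))$ against their doubles and invoking the closed-foam evaluations of Proposition~\ref{prop:S-evaluation}, and the module statements then follow from Proposition~\ref{prop:vertex-relations} together with the observation that adding a dot to $E_{i}$ composes with $u_{i}$. You have spelled out a few things the paper leaves implicit — the explicit anti-diagonal structure and rank-$6$ check of the pairing matrix, and the deduction of the cyclic-module assertion — but the underlying reasoning is the same.
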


\begin{proof}
   The representation variety of $K^{\sharp}$ is again the flag
    manifold, so the dimension of $\Jsharp(K$ is at most $6$.
    On the other hand, we can compute the pairings of the classes
    $\Jsharp(T(k_{1}, k_{2}, k_{3})$ with all their doubles, using our
    knowledge of $S(k_{1}, k_{2}, k_{3})$, and from this we see that
    the six elements $\Jsharp(0, k_{2}, k_{3})$ for $k_{2}$ and
    $k_{3}$ in the given range are independent. So the rank is exactly
    $6$. 
\end{proof}

To complete our description of $\Jsharp(K)$, we must describe the
operators corresponding to the other three edges, $f_{1}$, $f_{2}$,
$f_{3}$.

\begin{proposition}
    The operators $v_{1}$, $v_{2}$, $v_{3}$ on $\Jsharp(K)$
    corresponding to the edges  $f_{1}$, $f_{2}$,
$f_{3}$ are equal to $u_{1}$, $u_{2}$, $u_{3}$ respectively. 
\end{proposition}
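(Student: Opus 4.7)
The plan is to show that the operator $w := v_i + u_i$, which I will first prove is independent of $i$, vanishes on $\Jsharp(K)$. Applying the linear vertex relation from Proposition~\ref{prop:vertex-relations} at each of the three vertices $B$, $C$, $D$ of the tetrahedron (at each of which one $e$-edge and two $f$-edges are incident) gives
\[
   u_1 + v_2 + v_3 \;=\; u_2 + v_1 + v_3 \;=\; u_3 + v_1 + v_2 \;=\; 0,
\]
and combined with the relation $u_1 + u_2 + u_3 = 0$ at the remaining vertex $A$, these force $v_1 + u_1 = v_2 + u_2 = v_3 + u_3$; call this common value $w$. Next, expanding the quadratic vertex relations at $B$, $C$, $D$ using $v_i = u_i + w$, and invoking the quadratic relation at $A$, reduces each to $w(u_i + w) = 0$ for $i = 1, 2, 3$. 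Taking pairwise differences and using $u_1 + u_2 + u_3 = 0$ then yields $w u_i = 0$ for every $i$, and in particular $w^2 = 0$.

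Since $w$ commutes with each $u_j$ (both come from dot-decorated cylindrical cobordisms that can be placed at disjoint heights), and since $\Jsharp(K)$ is cyclic over $R := \F[u_1, u_2, u_3]/(\sigma_1, \sigma_2, \sigma_3)$ by Proposition~\ref{prop:tetrahedron-calc}, the operator $w$ is realized as multiplication by an element of $R$. The conditions $w u_i = 0$ place this element in the annihilator of the augmentation ideal. Because $R$ is the mod-$2$ cohomology ring of the real flag manifold $F = \SO(3)/V_4$ and hence a Poincar\'e duality algebra, this annihilator is the one-dimensional socle, spanned by the top-degree class $\omega$. Thus $w = c\omega$ for some $c \in \F$.

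To pin down $c$, I use the bilinear pairing on $\Jsharp(K)$ obtained by doubling along $K$, whose target is the evaluation $\Jsharp(S)$ of the closed foam $S$. Proposition~\ref{prop:S-evaluation}, together with the cyclic-module description, identifies this pairing with the Frobenius form on $R$, so $c$ equals $\langle w \cdot \Jsharp(T), \Jsharp(T) \rangle$, which in characteristic two is the sum of the $\Jsharp$-evaluations of $S$ carrying a single dot on the facet $F_i$ and on the facet $E_i$. The second of these vanishes directly by Proposition~\ref{prop:S-evaluation}; the first vanishes by functoriality applied to any orientation-preserving $\pi$-rotation of $(\R^3, K)$ that swaps $e_i$ with $f_i$ --- such a rotation exists among the three non-trivial elements of the Klein four-group of rotations of the tetrahedron, and it extends to a self-diffeomorphism of $(S^4, S)$ carrying the facet $F_i$ to the facet $E_i$. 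Hence $c = 0$, so $w = 0$ and $v_i = u_i$ as claimed.

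The step most likely to require care is the identification of the doubling pairing with the Frobenius form on $R$, which rests on combining the explicit closed-foam evaluations of Proposition~\ref{prop:S-evaluation} with the cyclic-module structure from Proposition~\ref{prop:tetrahedron-calc}. Once that structural fact is in hand, the remainder of the argument is the brief algebraic computation outlined above.
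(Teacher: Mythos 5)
Your proof is correct and follows a genuinely different route from the paper's. The paper's argument reduces, via the known evaluations of $S(k_1,k_2,k_3)$ and the action inequality~\eqref{eq:action-inequality}, to determining the single unknown evaluation $S_1(0,1,2)$ (a dot on $F_1$, one on $E_2$, two on $E_3$), and then rules out $S_1(0,1,2)=1$ by noting that this would force $v_1 = u_1 + 1$, contradicting the nilpotency $v_1^3=u_1^3=0$. Your argument is more structural: the linear and quadratic vertex relations from Proposition~\ref{prop:vertex-relations} at all four vertices constrain the common difference $w=v_i+u_i$ to act as multiplication by an element of the socle of $R = \F[u_1,u_2,u_3]/(\sigma_1,\sigma_2,\sigma_3)$, which is one-dimensional by Poincar\'e duality for the flag manifold; a single pairing computation then forces the coefficient $c$ to vanish. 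What the paper's proof buys is elementariness (no appeal to the socle or duality algebra structure of $R$); what yours buys is a clean conceptual mechanism that dispatches all three $v_i$ simultaneously and makes clear exactly which algebraic constraints are in play.

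One small simplification of the final step: the symmetry argument you invoke to kill the $\Jsharp$-evaluation of $S$ with a single dot on $F_1$ is valid (the $\pi$-rotation of the tetrahedron about the $e_2$--$f_2$ axis does swap $e_1\leftrightarrow f_1$, and extends to an orientation-preserving diffeomorphism of $(S^4,S)$), but it is unnecessary: the action bound~\eqref{eq:action-inequality} for the foam $S$ reads $k \ge \chi(S) + \tfrac12 S\cdot S - \tfrac12|\tau| = 4 + 0 - 1 = 3$, so any decoration of $S$ by fewer than three dots evaluates to zero regardless of which facets carry them. Both summands in $\langle wt,t\rangle$ therefore vanish for that reason alone.
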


\begin{proof}
    We will show that $v_{1}=u_{1}$.
    Let $S_{1}$ denote the foam $S$ equipped with a single dot on the
    facet corresponding to $f_{1}$. Let $S_{1}(k_{1}, k_{2}, k_{3})$
    denote the same foam with addition dots on the facets
    corresponding to the edges $e_{i}$. It will suffice to show that 
    \[
                  S_{1}(0, k_{2}, k_{3}) = S(1, k_{2}, k_{3})
    \]
   for all values of $k_{2}$ and $k_{3}$. As usual if
   $1+k_{2}+k_{3}\le 2$, then the evaluation is zero, while if
   $1+k_{2}+k_{3}=3$ we are evaluating an ordinary cohomology class on
   the flag manifold. Since the line bundle corresponding to a dot on
   $f_{1}$ is the same as the line bundle corresponding to a dot on
   $e_{1}$, equality does hold when $1+k_{2}+k_{3} = 3$. The only case
   still in doubt is when $1+k_{2}+k_{3}=4$, and the only potentially
   non-zero evaluations here are $S_{1}(0, 1, 2)$ or $S_{1}(0, 2, 1)$
   (which are certainly equal, by symmetry). We must show that
   $S_{1}(0, 1, 2)=0$. 

   Suppose instead that $S_{1}(0,1,2)=1$. With this complete
   information about $S_{1}(0, k_{1}, k_{2})$ we can evaluate the
   matrix elements of $v_{1}$ on our standard basis, and we find that
   $v_{1}+u_{1}=1$ as operators on $\Jsharp(K)$. But $u_{1}$ and
   $v_{1}$ are both nilpotent because $u_{1}^{3}=v_{1}^{3}=0$, so this
   is impossible.
\end{proof}

\subsection{Upper bounds for some prisms}

Let $L_{n}$ be the planar trivalent graph formed from two concentric
$n$-gons with edges joining each vertex of the outer $n$-gon to the
corresponding vertex of the inner $n$-gon. We call $L_{n}$ the
$n$-sided prism. Later we will be able to determine $\Jsharp(L_{n})$
entirely; but for now we can obtain upper bounds on the rank of
$L_{n}$ for small $n$. These upper bounds will be used later, in
section~\ref{sec:relations}, after which we will have the tools to
show that these upper bounds are exact values.

\begin{lemma}\label{lem:upper-bounds}
    The dimension of $\Jsharp(L_{n})$ is at most $12$ for $n=2$, at
    most $6$ for $n=3$, and at most $24$ for $n=4$.
\end{lemma}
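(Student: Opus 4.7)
The plan is to bound $\dim \Jsharp(L_n)$ by the total $\F$-Betti number of the representation variety $\Rep^{\sharp}(L_n)$ from Definition~\ref{def:Rsharp}, following the Morse-Bott strategy used in Propositions~\ref{prop:theta-evaluation} and~\ref{prop:tetrahedron-calc}. A small holonomy perturbation of $\CS$ can be chosen so that its critical set is a Morse-Bott deformation of $\Rep^{\sharp}(L_n)$, and a subsequent generic refinement produces nondegenerate critical points whose count is bounded above by $\dim_{\F} H_*(\Rep^{\sharp}(L_n); \F)$; this bound passes to $\dim \Jsharp(L_n)$.

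I would first exploit the local vertex structure. At any trivalent vertex, the three meridian images $a, b, c \in \SO(3)$ of order $2$ with $abc = 1$ must pairwise commute, since $ab = c^{-1} = c$ of order $2$ forces commutation; so they are rotations by $\pi$ about three mutually perpendicular axes and generate a local $V_4 \subset \SO(3)$. By Lemma~\ref{lem:V4-is-Tait}, the globally $V_4$-valued $\SO(3)$-conjugacy classes in $\Rep(L_n)$ biject with Tait colorings modulo permutation of the three colors: direct enumeration gives $12 = 2 \cdot 6$ colorings for $L_2$, $6 = 1 \cdot 6$ for $L_3$, and $24 = 4 \cdot 6$ for the $3$-cube $L_4$. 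Each orbit contributes one copy of the flag manifold $F = \SO(3)/V_4$, which has total $\F$-Betti number $6$ (as was used for the theta web).

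The main step is then to describe $\Rep^{\sharp}(L_n)$ in full---including any fully $O(2)$ connections (copies of $L(4,1)$) or fully irreducible ones (copies of $\SO(3)$)---and verify that the total Betti number is exactly $12, 6, 24$. For $L_3$ the odd triangular faces should rule out non-$V_4$ components: an $O(2)$-valued representation requires exactly one meridian image at each vertex to be the central involution of $O(2)$, and the resulting parity constraint around each $3$-cycle of the triangular prism is inconsistent, so $\Rep^{\sharp}(L_3) = F$. For $L_2$, the two bigon vertex relations force $\rho(m_{f_1}) = \rho(m_{f_2})$ (since at each bigon vertex the third involution in the local $V_4$ is determined by the two bigon meridian images), after which the top and bottom local $V_4$s share this common axis but may rotate independently about it, yielding an explicit description $\Rep^{\sharp}(L_2) \cong F \times S^1$ of total Betti $6 \cdot 2 = 12$; the two $V_4$-orbits appear as two points on the $S^1$ factor. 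For $L_4$ an analogous but more intricate analysis using the six square faces yields total Betti $24$. The main obstacle is this last case: controlling the cycle constraints imposed simultaneously by all four-cycles of the cube requires bookkeeping significantly more delicate than for $L_2$ or $L_3$, and it is this combinatorial-geometric calculation that is the most involved step of the proof.
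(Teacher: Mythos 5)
Your overall strategy agrees with the paper's: bound $\dim\Jsharp(L_n)$ by the $\F$-Betti number of the critical set of a (suitably perturbed) Chern--Simons functional, with the unperturbed critical set being $\Rep^{\sharp}(L_n)$. Your observation that the three meridian images at a trivalent vertex generate a local $V_4$, and your Tait-coloring counts $2$, $1$, $4$ (giving $12$, $6$, $24$ copies of $H_*(\SO(3)/V_4;\F)$), are the same counts that appear in the paper. Your description of $\Rep^{\sharp}(L_2)$ as a circle bundle over the flag manifold with the two $V_4$ classes embedded in it captures the right structure; the bundle need not literally be a product, but its mod-$2$ Euler class vanishes, so the total $\F$-Betti number is still $12$.

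Two gaps remain. First, the parity obstruction you propose for $L_3$ does not go through: a perfect matching of ``$i$-labelled'' edges meeting each triangular face exactly once does exist on the triangular prism (take one edge from each triangle together with the remaining spoke), so there is no inconsistency around the $3$-cycles. What actually rules out non-$V_4$ $O(2)$ representations is that every even $2$-factor of $L_3$ is a single $6$-cycle, which forces all the reflection labels into a single pair $\{r, ir\}$; equivalently, one checks in a face presentation of $\pi_1(\R^3\sminus L_3)$ that the generator corresponding to the inner triangular region is forced to be the identity.

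Second, and more fundamentally, the proposed route for $L_4$---compute the total $\F$-Betti number of $\Rep^{\sharp}(L_4)$ and conclude---does not apply as stated, because $\Rep^{\sharp}(L_4)$ is not a Morse--Bott critical set. The unbased variety $\Rep(L_4)$ is a tripod: three open arcs of fully $O(2)$ conjugacy classes sharing a common endpoint, with the four $V_4$ classes at the four arc-ends. The paper's proof therefore constructs an explicit holonomy perturbation whose critical set, after perturbation, consists exactly of the four copies of $\SO(3)/V_4$ and has no critical points in the interiors of the arcs; the bound $24$ is the total Betti number of this perturbed critical set, not of $\Rep^{\sharp}(L_4)$. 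The paper itself defers the construction of this perturbation to a subsequent paper, so the difficulty here is not merely combinatorial bookkeeping of the square faces: it is that the Betti-number count must be applied to the perturbed critical set, and producing that perturbation is the real content of the $n=4$ case.
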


\begin{proof}
    For $n=2$ and $n=3$, the representation varieties
    $\Rep^{\sharp}(L_{n})$ consists of $2$ (respectively, $1$) copy of
    the flag manifold $\SO(3)/V_{4}$. These are the orbits of the flat
    connections with image $V_{4}$, of which there are two distinct
    classes in the case of $L_{2}$. In both cases, the Chern-Simons
    function is Morse-Bott, so an upper bound for the ranks of
    $\Jsharp(L_{n})$ are the dimension of the ordinary homology of the
    representation variety, which are $12$ and $6$ respectively.

   For the case $n=4$, the representation variety is not
   Morse-Bott. The un-based representation variety $\Rep(L_{4})$
   consists of three arcs with one endpoint in common. The common
   point and the three other endpoints of the arc correspond to
   $V_{4}$ representations in $\Rep(L_{4})$. The interiors of the
   three arcs parametrize flat connections with image contained in
   $O(2)$ but not in $V_{4}$. Using an explicit holonomy perturbation,
   one can perturb the Chern-Simons function by a small function $f$
   which has critical points at the common endpoint and the three
   other endpoints of the arc, but whose restriction to the interiors
   of the three arcs has no critical points. After adding this
   perturbation, the critical set in $\Rep^{\sharp}(L_{4})$ is
   Morse-Bott and comprises
   four copies of $\SO(3)/V_{4}$. We omit the details for this
   explicit construction, because there is a more robust argument
   based on the exact triangle satisfied by $\Jsharp$ which will be
   treated in a subsequent paper \cite{KM-jsharp-triangles}.
\end{proof}

\section{Relations}
\label{sec:relations}

Sections \ref{subsec:neck-cutting}--\ref{subsec:square-reln}
establish properties of $\Jsharp$ that aid calculation. With the
exception of the triangle relation
(section~\ref{subsec:triangle-reln}), these properties (and some of their
proofs) are motivated by \cite{Khovanov-sl3}, where similar properties
of Khovanov's $\sl_{3}$ homology are either proved or (in the case of
the neck-cutting relation) taken as axioms. 

\subsection{The neck-cutting relation}
\label{subsec:neck-cutting}

The following proposition describes what happens when  a foam is
surgered along an embedded disk. The statement of the proposition is
illustrated in Figure~\ref{fig:neck-cutting}.
\begin{figure}
    \begin{center}
        \includegraphics[scale=0.40]{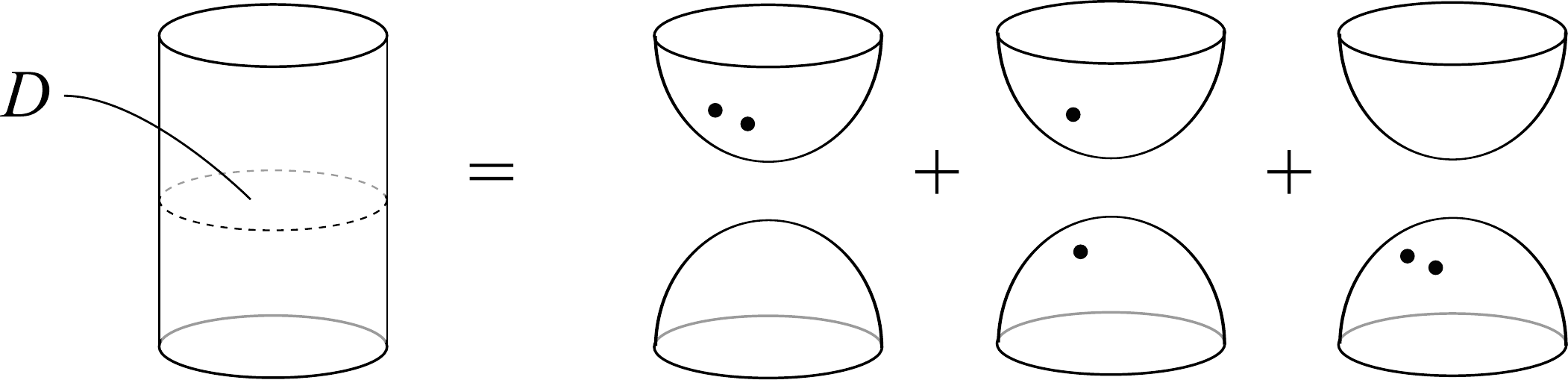}
    \end{center}
    \caption{\label{fig:neck-cutting}
   The neck-cutting relation, {\itshape cf.}~\cite{Khovanov-sl3}. The
   disk $D$ is not part of the original foam.}
\end{figure}
 See also \cite{Khovanov-sl3}.

\begin{proposition}[Neck-cutting]\label{prop:neck-cutting}
    Let $(X,\Sigma)$ be a cobordism defining a morphism in
    $\Cat^{\sharp}$. Suppose that $X$ contains an embedded disk $D$
    whose boundary lies in the interior of a facet of $\Sigma$, which
    it meets transversely. Suppose that the trivialization of the
    normal bundle to $D$ at the boundary which $\Sigma$ determines
    extends to a trivialization over the disk. Let $\Sigma'$ be the
    foam obtained by surgering $\Sigma$ along $D$, replacing the
    annular neighborhood of $\partial D$ in $\Sigma$ with two parallel
    copies of $D$. Let
    $\Sigma'(k_{1}, k_{2})$ be obtained from $\Sigma'$ by adding
    $k_{i}$ dots to the $i$'th copy of $D$. Then 
    \[
        \Jsharp(X,\Sigma) = \Jsharp(X,\Sigma'(0,2)) + \Jsharp(X,\Sigma'(1,1)) + \Jsharp(X,\Sigma'(2,0)). 
     \]
\end{proposition}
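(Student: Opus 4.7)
The plan is to combine the excision-based local relations principle (Corollary~\ref{cor:local-relations-webs}) with the unlink test (Lemma~\ref{lem:unlink-test}) to reduce the identity to a finite verification using the sphere-with-dots evaluation of Proposition~\ref{prop:sphere-with-dots}.

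First I would localize. Since the normal bundle of $D$ in $X$ is trivial and its trivialization agrees with the product structure of $\Sigma$ near $\partial D$, a small tubular neighborhood $N$ of $D$ satisfies $N \cong B^4$, its boundary $Q = \partial N$ is a standard $S^3$, and $N \cap \Sigma$ is a standard annulus $A$ with $\partial A$ a two-component unlink $W \subset Q$. The four cobordisms in the proposition agree outside $N$; inside $N$ the foam is either $A$ or one of the three pairs $D(k_1, k_2)$ of parallel dotted disks with $k_1 + k_2 = 2$. The framed basepoint and its Hopf-link marking can be placed disjoint from $N$. Corollary~\ref{cor:local-relations-webs} therefore reduces the desired identity to the local relation
\[
    \Jsharp(N,A) + \Jsharp(N,D(2,0)) + \Jsharp(N,D(1,1)) + \Jsharp(N,D(0,2)) \;=\; 0
\]
in $\Jsharp(S^3,W)$.

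To verify this local relation I would apply Lemma~\ref{lem:unlink-test}: for each pair $(m_1, m_2) \in \{0,1,2\}^2$, cap the two components of $W$ with standard dotted disks $D(m_1), D(m_2)$ and check that the four scalar evaluations sum to $0$ in $\F$. Capping $A$ yields an unknotted $2$-sphere carrying $m_1 + m_2$ dots, while capping $D(i,j)$ (with $i + j = 2$) yields a split union of two unknotted $2$-spheres with $i + m_1$ and $j + m_2$ dots respectively; by Corollary~\ref{cor:tensor-product} the latter evaluation is the product of the two sphere evaluations, and by Proposition~\ref{prop:sphere-with-dots} a dotted sphere evaluates to $1$ precisely when it carries exactly two dots. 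A short case analysis then shows that for each $(m_1, m_2)$ with $m_1 + m_2 = 2$ the annulus contributes $1$ and exactly one of the three disk pairs (namely the one with $(i,j) = (2 - m_1, 2 - m_2)$) also contributes $1$, giving $0$ in $\F$; in every other case all four contributions vanish.

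The only step requiring genuine care is the localization: one must use the normal-bundle trivialization hypothesis to ensure that $N$ really is a standard $B^4$ containing $A$ in a standard position, so that the four local fillings can be inserted interchangeably into the same ambient complement. With this in hand the remainder is the routine finite check above.
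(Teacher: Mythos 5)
Your proof is correct and follows essentially the same path as the paper's: localize via Corollary~\ref{cor:local-relations-webs} to a relation in $\Jsharp(S^3, K_2)$ for the 2-component unlink, apply Lemma~\ref{lem:unlink-test} to reduce to closed-foam evaluations, and finish with Proposition~\ref{prop:sphere-with-dots}. The explicit case analysis on $(m_1,m_2)$ you supply is the (correct) content behind the paper's one-line remark that the resulting identity among dotted-sphere evaluations follows directly.
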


\begin{proof}
    Using the excision principle,
    Corollary~\ref{cor:local-relations-webs}, we see that it is enough
    to test this in the local case that $X$ is a  $4$-ball,
    $\Sigma$ is a standard annulus, $\Sigma'$ is a pair of disks, and
    we regard $(X,\Sigma)$ etc.~as cobordisms from $\emptyset$ to the
    unlink of $2$ components. Thus the relation to be proved is a
    relation in $\Jsharp(K_{2})$, where $K_{2}\subset S^{3}$ is the
    $2$-component unlink. To prove this relation in $\Jsharp(K_{2})$, we
    use Lemma~\ref{lem:unlink-test}. This reduces the question to the
    $\Jsharp$-evaluation of some closed foams in the $4$-sphere: the
    foams all consist of spheres $S$ with a certain number of dots, and
    the relation to be proved is
     \begin{multline*}
          \Jsharp(S(k_{1}+k_{2})) = \Jsharp(S(k_{1}))\Jsharp(S(k_{2}+2))
  \\ +  \Jsharp(S(k_{1}+1))\Jsharp(S(k_{2}+1))  +
\Jsharp(S(k_{1}+2))\Jsharp(S(k_{2}))    . 
\end{multline*}
   This relation follows directly from Proposition~\ref{prop:sphere-with-dots} 
\end{proof}

Using neck-cutting, we can evaluate closed surfaces, as long as they
are standardly embedded.

\begin{proposition}
    Let $\Sigma_{g}\subset \R^{4}$ be a genus-$g$ surface embedded as
    the boundary of a standard handlebody in $\R^{3}$. Then
    $\Jsharp(\Sigma_{1})=1$ and $\Jsharp(\Sigma_{g})=0$ for $g\ge 2$.
    If $g\ge 1$ and $\Sigma_{g}$ has one or more dots, then its
    evaluation is zero.
\end{proposition}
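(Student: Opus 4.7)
The plan is to apply the neck-cutting relation (Proposition~\ref{prop:neck-cutting}) inductively, reducing any standardly embedded $\Sigma_g$ (possibly decorated with dots) to a sphere with a controlled number of dots, whose $\Jsharp$-evaluation is given by Proposition~\ref{prop:sphere-with-dots}. Since $\Sigma_g$ bounds a standard handlebody $H\subset\R^{3}\subset\R^{4}$, for each $g\ge 1$ one can choose the surgery disk $D\subset H$ to be a meridian disk of the $g$-th handle, whose boundary is a non-separating curve on $\Sigma_g$. The fourth coordinate direction of $\R^{4}$ trivializes the normal bundle of $D$ in a way that agrees with the framing induced by $\Sigma_g$ along $\partial D$, so the hypothesis of the neck-cutting proposition is satisfied.

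For the base case $g=1$, surgering along such a $D$ turns the torus into an unknotted sphere $S$. If $\Sigma_1$ carries $k$ dots placed away from the cut, neck-cutting expresses $\Jsharp(\Sigma_1)$ (with $k$ extra dots) as a sum of three terms, each an unknotted sphere carrying the original $k$ dots together with $(k_1,k_2)$ additional dots on the two new capping disks, with $k_1+k_2=2$. Both capping disks lie in the single connected facet of $S$, and the line bundle $L_\delta$ from Section~\ref{subsec:dots} depends only on the facet containing $\delta$, so all three summands coincide with $\Jsharp(S(k+2))$. Working over $\F$, the sum collapses to $\Jsharp(S(k+2))$, which by Proposition~\ref{prop:sphere-with-dots} is $1$ when $k=0$ and $0$ otherwise. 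This establishes both $\Jsharp(\Sigma_1)=1$ and the vanishing for $\Sigma_1$ with one or more dots.

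For the inductive step with $g\ge 2$, surgering along a meridian of the $g$-th handle produces a standardly embedded $\Sigma_{g-1}$, and the same facet-coincidence argument reduces neck-cutting to the identity $\Jsharp(\Sigma_g\text{ with }k\text{ dots})=\Jsharp(\Sigma_{g-1}\text{ with }k+2\text{ dots})$. Unrolling yields $\Jsharp(\Sigma_g\text{ with }k\text{ dots})=\Jsharp(S(k+2g))$, which by Proposition~\ref{prop:sphere-with-dots} vanishes whenever $k+2g\ne 2$; this covers every $g\ge 2$ with no dots, as well as every $g\ge 1$ together with at least one dot.

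The one delicate point is the claim that dots on a single connected facet contribute only through their total count. This follows from the construction of $L_\delta$ in Section~\ref{subsec:dots}: the auxiliary open set $X_\delta$ used to define $L_\delta$ can be chosen uniformly over an entire facet, so all line bundles arising from dots on that facet are mutually isomorphic, and a closed-foam evaluation depends only on the number of dots on each facet rather than their positions within facets. With this in hand the three-term neck-cutting sum genuinely collapses in $\F$, and the induction runs cleanly.
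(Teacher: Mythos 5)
Your proposal is correct and takes essentially the same approach as the paper, whose proof consists of the single sentence ``Use the neck-cutting relation to reduce to the case of a sphere with dots.'' You have filled in the details carefully: the choice of meridian disks, the framing condition, the inductive reduction $\Jsharp(\Sigma_g \text{ with } k \text{ dots}) = \Jsharp(S(k+2g))$, and the observation that the three neck-cutting terms coincide over $\F$ because dot placement within a facet is immaterial. All of these checks are sound.
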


\begin{proof}
    Use the neck-cutting relation to reduce to the case of a sphere
    with dots.
\end{proof}

As in \cite{Khovanov-sl3}, one can use the evaluation of spheres and
theta foams together with the neck-cutting relation to obtain other
relations. In particular:

\begin{proposition}[Bubble-bursting]\label{prop:bubble-bursting}
    Let $D$ be an embedded disk in the interior of a facet of a foam
    $\Sigma\subset X$. Let $\gamma$ be the boundary of $D$, and let
    $\Sigma'$ be the foam $\Sigma' = \Sigma\cup D'$, where $D'$ is a
    second disk meeting $\Sigma$ along the circle $\gamma$, so that
    $D\cup D'$ bounds a $3$-ball. Let $\Sigma'(k)$ denote $\Sigma'$ with
    $k$ dots on $D'$. Let $\Sigma(k)$ denote $\Sigma$ with $k$ dots on
    $D$. Then we have
     \[
               \Jsharp(\Sigma'(k)) = \Jsharp(\Sigma(k-1))
     \]
      for $k=1$ or $2$, and $\Jsharp(\Sigma'(k)=0$ otherwise. 
\end{proposition}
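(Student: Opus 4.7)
The plan is to reduce to a local calculation via Floer excision, and then verify the resulting identity in the Floer homology of the unknot by pairing against the basis of Corollary~\ref{cor:disk-to-unknot} and using the theta-foam and sphere-with-dots evaluations already in hand.

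First I would observe that the modification $\Sigma \to \Sigma'$ is supported in an arbitrarily small $4$-ball $B \subset X$ meeting $\Sigma$ in a standardly embedded disk containing $D$. By Corollary~\ref{cor:local-relations-webs}, this reduces the statement to the local model where $X = B^4$, $\Sigma = D$ is a flat disk bounding a standard unknot $K \subset S^3$, and $\Sigma' = D \cup D'$ is $D$ together with a small bubble $D'$ attached along the circle $\gamma \subset \mathrm{int}(D)$. In this local picture the claim becomes the identity, in $\Jsharp(K)$,
\[
\Jsharp(\Sigma'(k)) = \Jsharp(D(k-1)) \quad (k=1,2), \qquad \Jsharp(\Sigma'(k)) = 0 \ \text{otherwise,}
\]
where $\Sigma'(k)$ carries $k$ dots on $D'$.

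By Corollary~\ref{cor:disk-to-unknot}, the elements $\Jsharp(D(j))$, $j \in \{0,1,2\}$, form a basis of $\Jsharp(K)$, and the pairing with the dually placed disks $\bar D(j) \subset \bar B^4$ carrying $j$ dots detects this basis via closed-foam evaluation in $S^4$. So I would test the identity by capping off in $\bar B^4$ with $\bar D(j)$ for $j=0,1,2$. Capping $D(k-1)$ yields a standard $2$-sphere carrying $k-1+j$ dots, whose evaluation, by Proposition~\ref{prop:sphere-with-dots}, equals $1$ iff $k-1+j = 2$. Capping $\Sigma'(k)$ instead produces a closed foam in $S^4$ consisting of the sphere $S = D \cup \bar D$ with the bubble disk $D'$ attached along an interior circle $\gamma \subset S$; the circle $\gamma$ divides $S$ into two disks (the piece $D_{\mathrm{in}}$ of $D$ inside $\gamma$, and $D_{\mathrm{out}} \cup \bar D$ outside), and together with $D'$ one has three disks meeting along the common seam $\gamma$, i.e.\ a theta foam. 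The dot count is $0$ on $D_{\mathrm{in}}$, $j$ on $D_{\mathrm{out}}\cup\bar D$, and $k$ on $D'$, so by Proposition~\ref{prop:theta-evaluation} the evaluation is $1$ iff $\{0,j,k\}$ is a permutation of $\{0,1,2\}$.

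Matching the two computations case by case gives: for $k=1$ both sides yield $1$ exactly when $j=2$; for $k=2$, exactly when $j=1$; for $k=0$ the theta evaluation always vanishes (it has two entries equal to $0$); and for $k\ge 3$ it vanishes because $k \notin \{0,1,2\}$. All three basis-pairings therefore agree, which proves the proposed identity in $\Jsharp(K)$ and hence the proposition via excision. The only non-routine step is the geometric identification of the capped foam as a theta foam together with the correct bookkeeping of dots on its three facets; everything else is a mechanical application of Corollary~\ref{cor:local-relations-webs} together with the two closed-foam evaluations already established.
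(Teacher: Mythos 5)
Your proof is correct, but it takes a different route from the paper's. The paper's proof of Proposition~\ref{prop:bubble-bursting} is a one-step deduction from the already-established neck-cutting relation (Proposition~\ref{prop:neck-cutting}): it chooses a circle $\gamma_1$ parallel to $\gamma$ just outside $D$ (so encircling the bubble), applies neck-cutting along a disk with boundary $\gamma_1$, and observes that the surgered foam is the split union of a copy of $\Sigma$ with a standard theta foam carrying $k$ dots on one facet. The multiplicative property for split unions and the closed theta-foam evaluations then give the formula directly, without ever passing to the unknot model.

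You instead go back to first principles: excise down to the local model where $\Sigma$ is a flat disk bounding the unknot, and then test the claimed identity in $\Jsharp(\text{unknot})$ by pairing against the dotted-disk dual basis of Corollary~\ref{cor:disk-to-unknot}, identifying the capped-off $\Sigma'(k)$ as a theta foam with dot count $(0,j,k)$ and the capped-off $\Sigma(k-1)$ as a sphere with $k-1+j$ dots, and matching the theta-foam and sphere evaluations case by case. Both arguments are correct and both ultimately bottom out in Propositions~\ref{prop:theta-evaluation} and~\ref{prop:sphere-with-dots}. What the paper's route buys is brevity and an illustration of how the local relations compose, in the spirit of \cite{Khovanov-sl3}; what your route buys is self-containedness, since you do not need to have proved neck-cutting first --- in effect you are re-running the proof strategy of Proposition~\ref{prop:neck-cutting} (excision plus Lemma~\ref{lem:unlink-test}) inline, applied directly to the bubble configuration. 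Your identification of the capped foam as a theta foam and the dot bookkeeping are correct.
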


\begin{proof}
    Let $\gamma_{1}$ be a circle parallel to $\gamma$ in
    $\Sigma\setminus D$. Apply the neck-cutting relation to the
    surgery of $\Sigma'$ along a disk with boundary $\gamma'$. The
    surgered foam is the union of a foam isotopic to $\Sigma$ with
    theta foam. 
\end{proof}

Instead of adding a disk $D'$ to $\Sigma$, we can add a standard
genus-$1$ surface $T$ with boundary $\gamma$. The resulting foam
$\Sigma\cup T$ was considered earlier in
section~\ref{subsec:dots}. Let $\Sigma(k)$ again denote $\Sigma$ with
$k$ dots in the disk. Then we have

\begin{proposition}\label{prop:tori-as-dots}
    In the above situation, $\Jsharp(\Sigma\cup T) = \Jsharp(\Sigma(1))$.
\end{proposition}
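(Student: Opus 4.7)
The plan is to reduce to the two previous propositions, neck-cutting and bubble-bursting. First I apply Proposition~\ref{prop:neck-cutting} to the foam $\Sigma\cup T$, surgering along a meridian disk $D_{\mu}$ of the handle of the standardly embedded genus-$1$ surface $T$. Since $T$ sits in a $3$-ball $B^{3}\subset B^{4}$ in the standard way, the circle $\mu=\partial D_{\mu}$ lies in the interior of the facet $T$ and the normal-framing hypothesis of Proposition~\ref{prop:neck-cutting} is satisfied. The relation then yields
\[
    \Jsharp(\Sigma\cup T) \;=\; \Jsharp(\Sigma'(0,2)) + \Jsharp(\Sigma'(1,1)) + \Jsharp(\Sigma'(2,0)),
\]
where $\Sigma'$ denotes the surgered foam, in which the annular neighborhood of $\mu$ on $T$ has been replaced by two parallel copies of $D_{\mu}$ carrying the indicated dots.

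The key observation is that $\mu$ is non-separating on the genus-$1$ surface $T$. Cutting $T$ along $\mu$ therefore gives a connected planar surface with three boundary circles ($\gamma$ together with two copies of $\mu$), and capping the two $\mu$-boundaries with parallel copies of $D_{\mu}$ produces a single disk $T'$ with $\partial T' = \gamma$. In particular the two parallel copies of $D_{\mu}$ lie in one and the same facet of $\Sigma'$, so a dot placed on either of them may be freely moved to any other point of $T'$. It follows that the three foams $\Sigma'(k_{1},k_{2})$ appearing on the right represent the same morphism in $\mathrm{Cat}^{\sharp}$, namely $\Sigma\cup T'$ equipped with two dots on $T'$. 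Over $\F$ the three-term sum collapses to a single copy of this morphism.

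To finish, I observe that $\Sigma\cup T'$ is precisely in the configuration of Proposition~\ref{prop:bubble-bursting}: the disk $D$ bounded by $\gamma$ in the original facet of $\Sigma$, together with the new disk $T'$, bound a $3$-ball in the ambient $4$-ball in which $T$ was inscribed. Applying bubble-bursting with $k=2$ therefore identifies $\Jsharp$ of this foam with $\Jsharp(\Sigma(1))$, which is the claimed equality. The only real obstacle is the geometric verification that $\mu$ can be taken non-separating on $T$ and that the two surgery caps end up in a single facet of $\Sigma'$; once this is in hand the proof is just the concatenation of Propositions~\ref{prop:neck-cutting} and~\ref{prop:bubble-bursting} together with the characteristic-two cancellation of the three-fold sum.
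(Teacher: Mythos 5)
Your proof is correct, but it takes a genuinely different route from the paper's. The paper performs \emph{two} surgeries along circles parallel to $\gamma$ --- one in the facet of $\Sigma$ just outside $D$, one on $T$ just inside $\gamma$ --- so that $\Sigma\cup T$ splits into a disjoint union of $\Sigma$, a closed theta foam, and a closed torus; then it invokes multiplicativity (Corollary~\ref{cor:tensor-product}) together with the known evaluations from Propositions~\ref{prop:sphere-with-dots} and~\ref{prop:theta-evaluation}. You instead do a \emph{single} surgery, along the meridian of the handle of $T$. The crucial geometric point you correctly identify is that $\mu$ is non-separating on $T$, so the two surgery caps land in one and the same facet $T'$ of the surgered foam; dot-migration within a facet then makes the three neck-cutting terms literally the same foam, and over $\F$ they collapse to one copy (since $3\equiv 1$). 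Bubble-bursting (Proposition~\ref{prop:bubble-bursting}) with $k=2$ finishes the job. This is a cleaner argument: it uses only one surgery instead of two, avoids invoking the evaluations of closed theta foams and tori, and cleverly exploits the $\F$ coefficients to dispense with the three-term sum. Both approaches need the standard fact that a dot may be slid freely within its facet; you state this explicitly, whereas the paper leaves it implicit.
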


\begin{proof}
    By two surgeries, the foam $\Sigma\cup T$ becomes a split union of
    $\Sigma$, a theta foam and a standard torus. Apply the
    neck-cutting relation and the known evaluations for theta foams
    and tori.
\end{proof}

The proposition above justifies the alternative description of how to
incorporate dots on foams, from section~\ref{subsec:dots}.

\subsection{The bigon relation}

Let $K\subset Y$ be a web containing a bigon: a pair of edges
spanning standard disk in $Y$, and let $K'$ be obtained from $K$ by
collapsing the bigon to a single edge. (See
Figure~\ref{fig:bigon-maps}.)  

\begin{proposition}
    The dimension of $\Jsharp(K)$ is twice the dimension of $\Jsharp(K')$.
\end{proposition}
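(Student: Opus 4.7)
The plan is to show $\Jsharp(K) \cong \Jsharp(K')^{\oplus 2}$ by constructing mutually inverse maps from zip and unzip foam cobordisms, and verifying the required identities using the excision principle (Corollary~\ref{cor:local-relations-webs}) together with the closed-foam evaluations established earlier in the section.

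Let $D \subset Y$ be the disk spanning the bigon, with $\partial D = e_1 \cup e_2$ where $e_1, e_2$ are the two edges of the bigon. In a ball $B$ around $D$, the web $K'$ appears as a single arc while $K$ appears as the bigon configuration. Following the conventions of \cite{Khovanov-sl3}, there are standard zip and unzip foams
\[
    Z \colon K' \to K, \qquad U \colon K \to K'
\]
supported in $[0,1]\times B$, each containing a distinguished disk facet $F$ isotopic to $D$, and agreeing with the product cobordism outside their support. For $i \in \{0,1\}$ let $Z(i)$, $U(i)$ denote these foams decorated with $i$ dots on $F$, and set
\[
   \alpha_i = \Jsharp(Z(i)) \colon \Jsharp(K') \to \Jsharp(K), \qquad \beta_i = \Jsharp(U(1-i)) \colon \Jsharp(K) \to \Jsharp(K').
\]
These assemble into $\alpha = (\alpha_0, \alpha_1) \colon \Jsharp(K')^{\oplus 2} \to \Jsharp(K)$ and $\beta = (\beta_0, \beta_1)^T \colon \Jsharp(K) \to \Jsharp(K')^{\oplus 2}$. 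The proposition follows once I establish
\[
    \beta_i \circ \alpha_j = \delta_{ij}\,\mathrm{id}_{\Jsharp(K')}, \qquad \alpha_0\beta_0 + \alpha_1\beta_1 = \mathrm{id}_{\Jsharp(K)},
\]
since these together imply $\alpha$ and $\beta$ are mutually inverse isomorphisms.

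For the first identity, the composite $U(1-i) \circ Z(j)$ is a cobordism $K' \to K'$ that agrees with the product cobordism on $K'$ outside $[0,1]\times B$. By excision (Corollary~\ref{cor:local-relations-webs}), the claim reduces to a local relation in the ball, and by a further excision and the unlink test (Lemma~\ref{lem:unlink-test}) to a collection of closed-foam evaluations in $S^4$. The local composite configuration is topologically a theta foam whose three facets carry dot counts determined by $i$ and $j$; by Proposition~\ref{prop:theta-evaluation}, its evaluation is $1$ precisely when the dot pattern is a permutation of $(0,1,2)$, which by a direct check occurs exactly when $i = j$, and vanishes otherwise.

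For the second identity, I would combine the neck-cutting relation (Proposition~\ref{prop:neck-cutting}) with the bubble-bursting relation (Proposition~\ref{prop:bubble-bursting}) to transform the product cobordism $[0,1]\times K$ into a sum of foams that can be identified, up to foam isotopy and the established local relations, with $Z(0)U(1) + Z(1)U(0)$. Again by excision, this matching reduces to a finite set of closed-foam evaluations, each of which is pinned down by Propositions~\ref{prop:sphere-with-dots} and~\ref{prop:theta-evaluation}. The main obstacle is the combinatorial bookkeeping in this second identity: unlike the first, which reduces directly to theta-foam evaluations in a single ball, the second requires identifying a specific local foam relation (essentially the bigon-removal relation of Khovanov's $\sl_3$ homology) whose verification uses that over $\F$ the asymmetric theta-foam evaluation pattern $(0,1,2)$ makes precisely the right middle terms vanish, leaving the outer two terms to form the desired decomposition of the identity.
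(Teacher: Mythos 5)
Your setup is the same as the paper's: the four zip/unzip cobordisms with dots play the roles of the maps $a,b,c,d$, and your first identity $\beta_i\alpha_j=\delta_{ij}$ corresponds to the paper's $ca=db=1$, $cb=da=0$, which the paper proves via bubble-bursting. That part is fine and essentially matches the paper.

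The gap is in the second identity $\alpha_0\beta_0+\alpha_1\beta_1=\mathrm{id}$. You propose to establish this by using neck-cutting and bubble-bursting to ``transform the product cobordism $[0,1]\times K$ into $Z(0)U(1)+Z(1)U(0)$.'' That is not possible with those tools. Neck-cutting (Proposition~\ref{prop:neck-cutting}) requires a circle lying in the interior of a single facet of the foam, but near the bigon the facets of the product cobordism are rectangles $[0,1]\times e_i$ ($e_i$ an arc), which carry no essential circles; any surgery there produces a trivial bubble and changes nothing. The composite $ZU$ is also not a bubble in the sense of Proposition~\ref{prop:bubble-bursting}: its two seams are disjoint arcs running along the $t=0$ and $t=1$ ends of the cobordism, not a circular seam, so bubble-bursting never applies. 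Unlike the first identity, the second is not a formal consequence of the local moves established so far. If instead one tried to verify the needed relation in $\Jsharp(L_2)$ by pairing against closed-foam evaluations, one would need to know that foam classes span $\Jsharp(-L_2)$, which is precisely what is \emph{not} known in general (compare Question~\ref{question:Generation-of-Jsharp}).

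The paper closes this gap with a non-local input that your argument lacks: from the first identity, $ac+bd$ is a projection onto an injective copy of $\Jsharp(K')^{\oplus 2}$, so to conclude $ac+bd=1$ it suffices to have an upper bound $\dim\Jsharp(K)\le 2\dim\Jsharp(K')$. The paper obtains this bound in the special case $K=L_2$, $K'=\Theta$ from the analysis of the representation variety (Lemma~\ref{lem:upper-bounds}, giving $\dim\Jsharp(L_2)\le 12$), and then transports the resulting element-level relation $\alpha+\beta=\epsilon$ in $\Jsharp(L_2)$ to the general case by excision and isotopy. Without invoking that dimension bound (or an equivalent replacement), the second identity does not follow.
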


\begin{proof}
The proof mirrors Khovanov's argument in \cite{Khovanov-sl3}.
There four standard morphisms,
\[
\begin{aligned}
    A, B : K' &\to K \\
    C, D : K &\to K'
\end{aligned}
\]
\begin{figure}
    \begin{center}
        \includegraphics[scale=0.50]{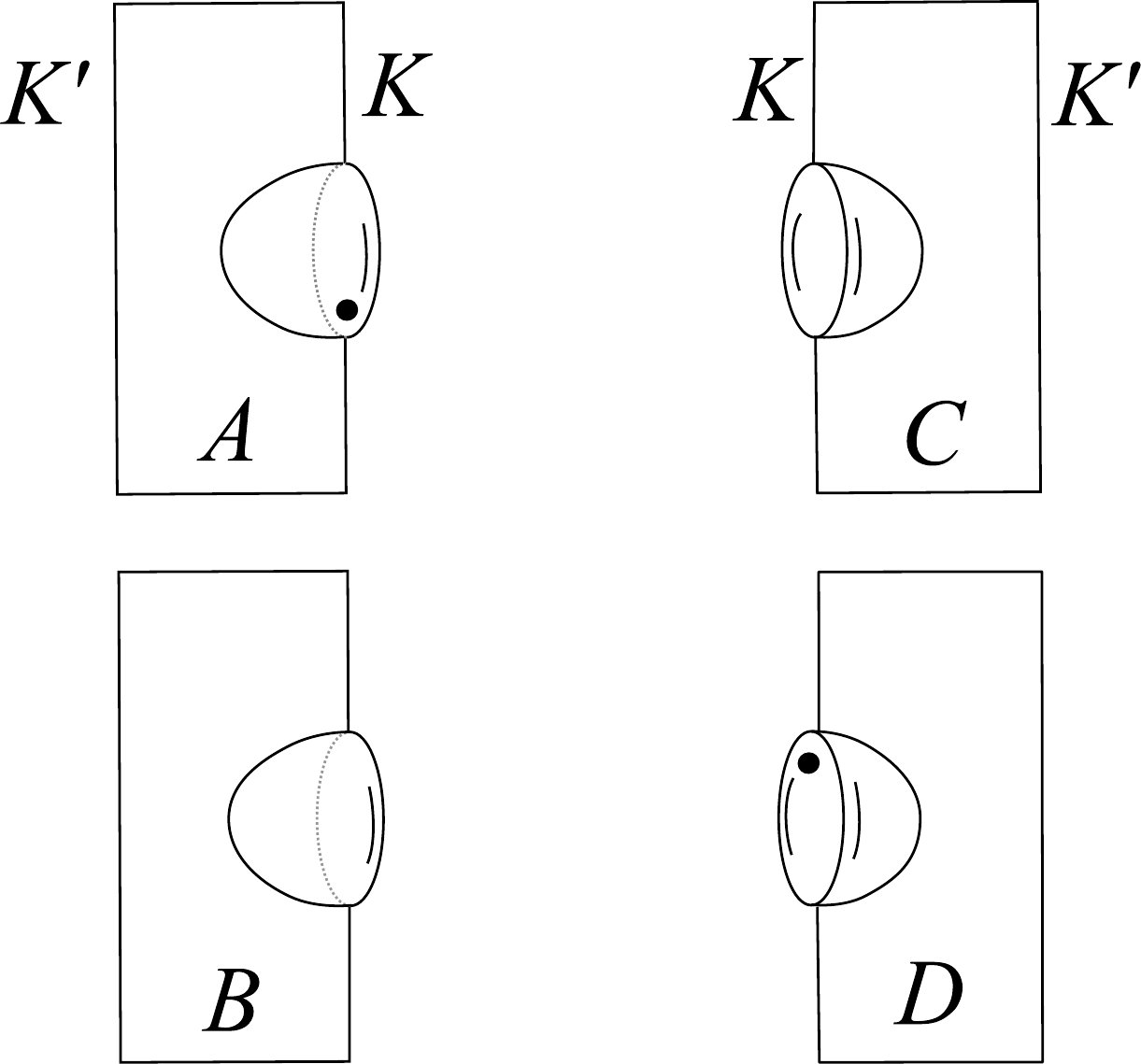}
    \end{center}
    \caption{\label{fig:bigon-maps}
   Four morphisms for the bigon relation.}
\end{figure}
as shown in Figure~\ref{fig:bigon-maps}, and these give rise to maps
\[
\begin{aligned}
   a, b : \Jsharp(K') &\to \Jsharp(K) \\
    c,d :  \Jsharp(K) &\to \Jsharp(K').
\end{aligned}
\]
The composite foams $CA$ and $DB$ are foams to which the
bubble-bursting relation applies, from which we see that $ca=1$ and
$db=1$. Similarly, $cb=da=0$. From this it follows that $a \oplus b$
maps $\Jsharp(K')\oplus\Jsharp(K')$ injectively into $\Jsharp(K)$ and
that
\[
      ac + bd : \Jsharp(K) \to \Jsharp(K)
\]
is a projection on the image of $a \oplus b$. If we can show that \[ac
+ bd=1,\] then we will be done. By the excision principle in the form
of Corollary~\ref{cor:local-relations-webs}, this is equivalent to
checking a relation
\begin{figure}
    \begin{center}
        \includegraphics[scale=0.4]{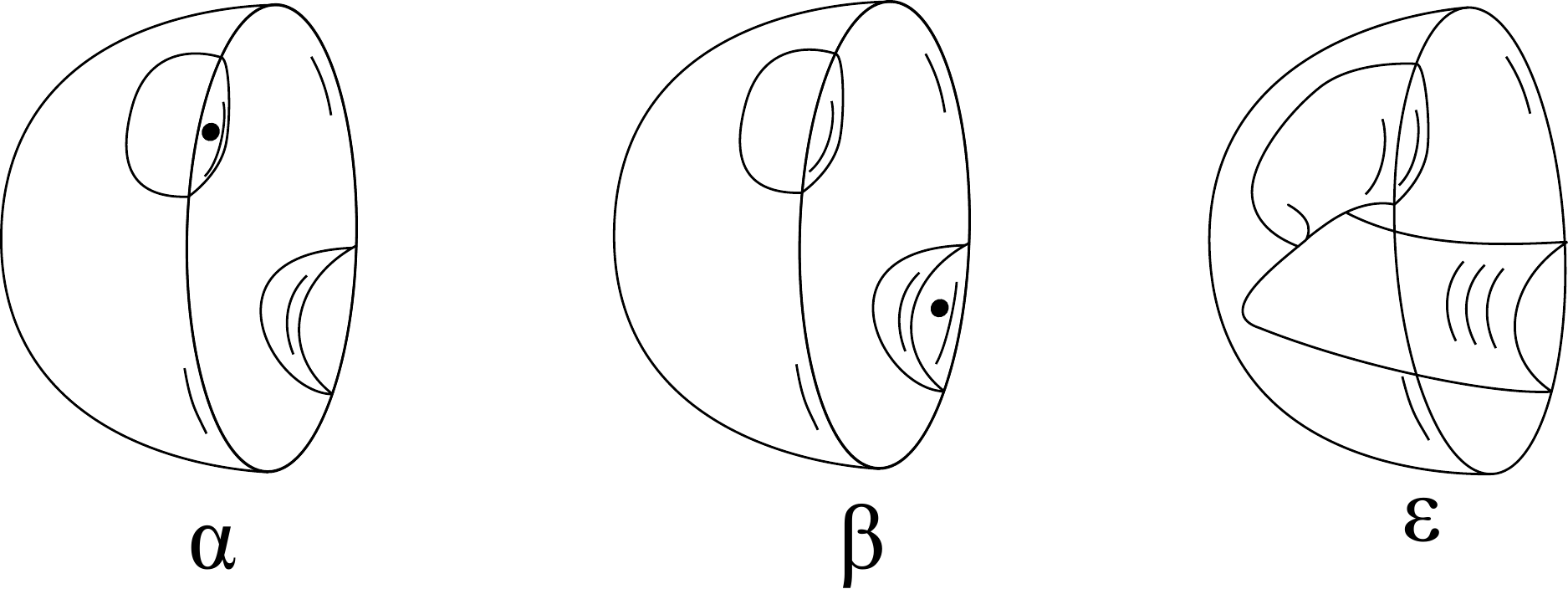}
    \end{center}
    \caption{\label{fig:bigon-composite-foams}
   The foams $\alpha$, $\beta$ and $\epsilon$.}
\end{figure}
\[
                \alpha + \beta = \epsilon
\]
in $\Jsharp(L_{2})$, where $\alpha$, $\beta$ and $\epsilon$ are the
dotted foams show in Figure~\ref{fig:bigon-composite-foams}.

In the special case that $K'$ is the theta web and $K$ is a prism
$L_{2}$, we know from Lemma~\ref{lem:upper-bounds} that $\Jsharp(K)$
has dimension at most $12$, which is twice the dimension of
$\Jsharp(K')$. So equality must hold, and in this special case we have
$ac + bd=1$. Applying this to the element
$\epsilon\in\Jsharp(K)=\Jsharp(L_{2})$, we obtain
\[
            ac(\epsilon) + bd(\epsilon)=\epsilon.
\]
\begin{figure}
    \begin{center}
        \includegraphics[scale=0.4]{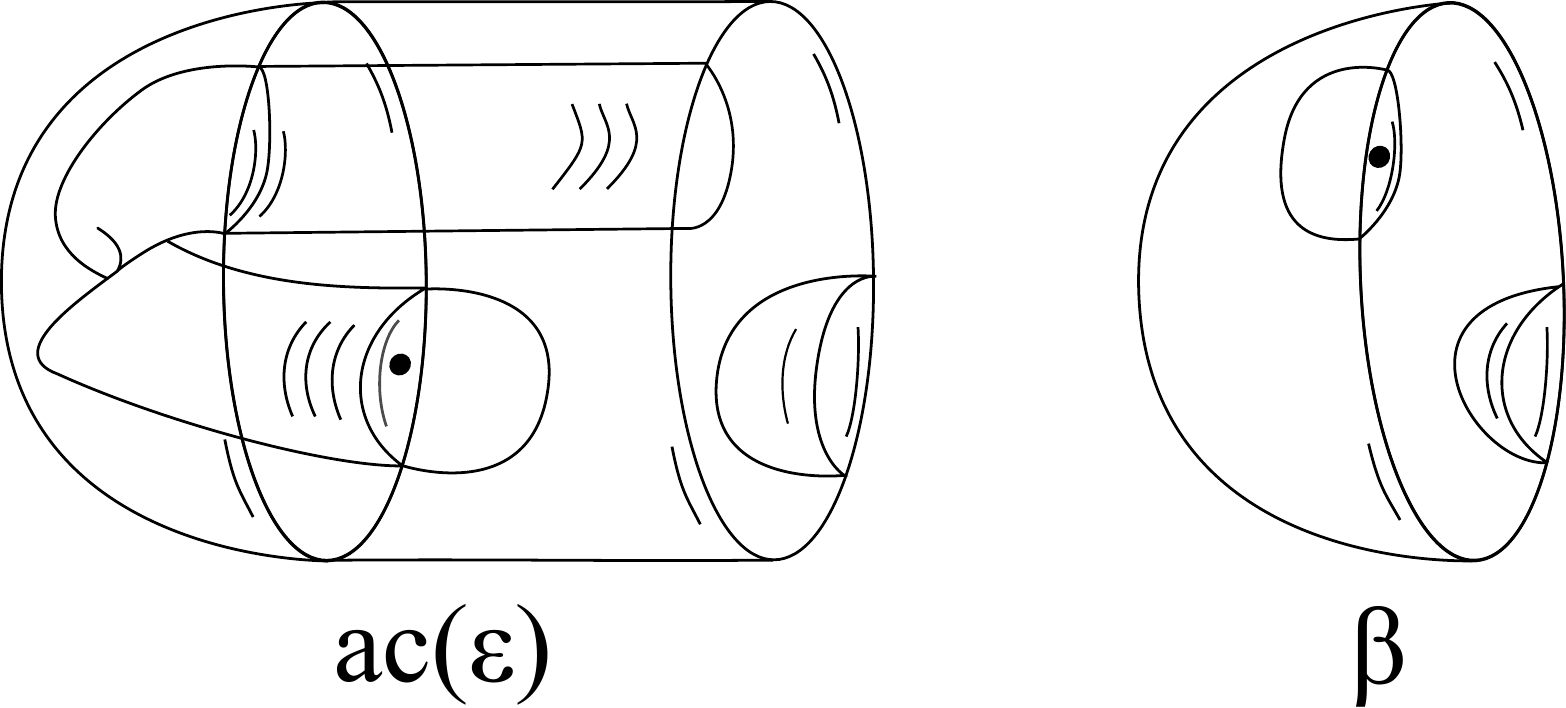}
    \end{center}
    \caption{\label{fig:ac-isotopy}
   An isotopy of foams showing that $ac(\epsilon)=\alpha$.}
\end{figure}
By simple isotopies, we see that $ac(\epsilon)=\alpha$ (see Figure~\ref{fig:ac-isotopy}) and
$bd(\epsilon)=\beta$, so the result follows.
\end{proof}

\subsection{The triangle relation}
\label{subsec:triangle-reln}

Let $K$ be a web containing three edges which form a triangle lying 
in a standard $2$-disk in $Y$. and $v$ a vertex of $K'$. Let $K'$ be obtained from
$K$ by collapsing the triangle to a single vertex, following the standard planar model shown in
Figure~\ref{fig:triangle-move}. 
\begin{figure}
    \begin{center}
        \includegraphics[scale=0.4]{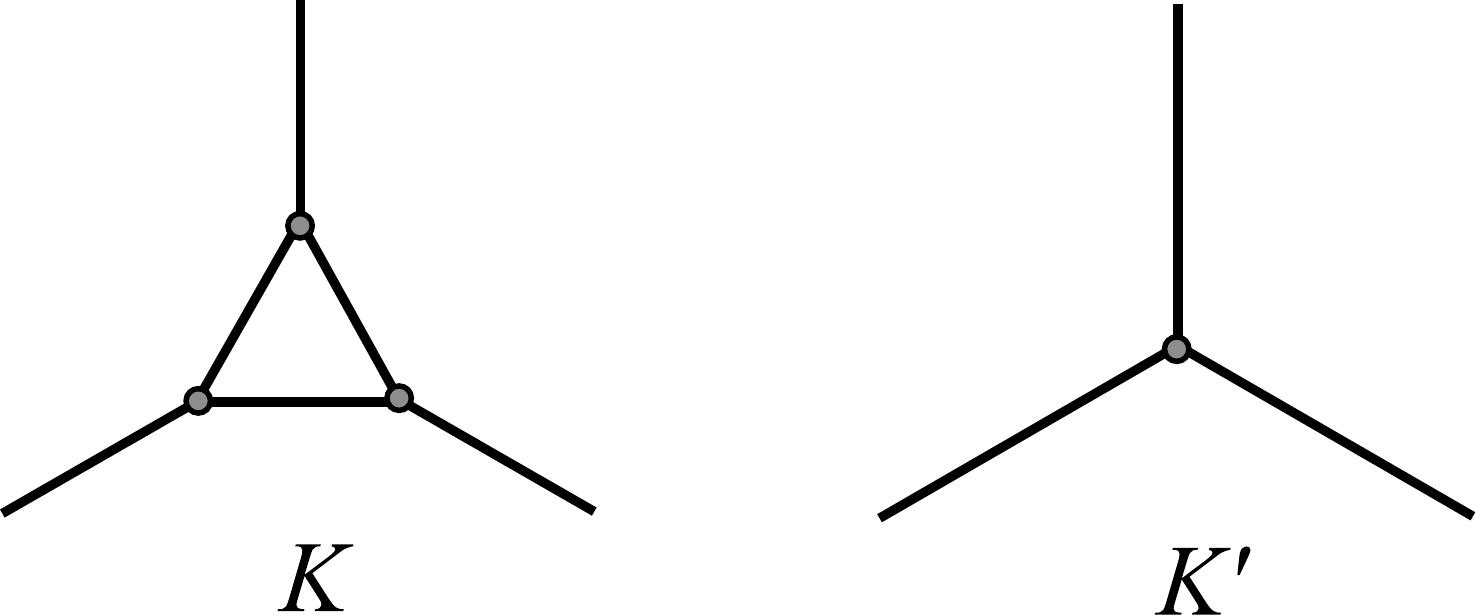}
    \end{center}
    \caption{\label{fig:triangle-move}
   The triangle move.}
\end{figure}
There are standard morphisms given by foams with one tetrahedral point each,
\[
\begin{aligned}
    A : K' &\to K \\
    C : K &\to K',
\end{aligned}
\]
and there are  resulting
linear maps
\[
\begin{aligned}
    a : \Jsharp(K') &\to \Jsharp(K) \\
    c : \Jsharp(K) &\to \Jsharp(K')
\end{aligned}
\]

\begin{figure}
    \begin{center}
        \includegraphics[scale=0.4]{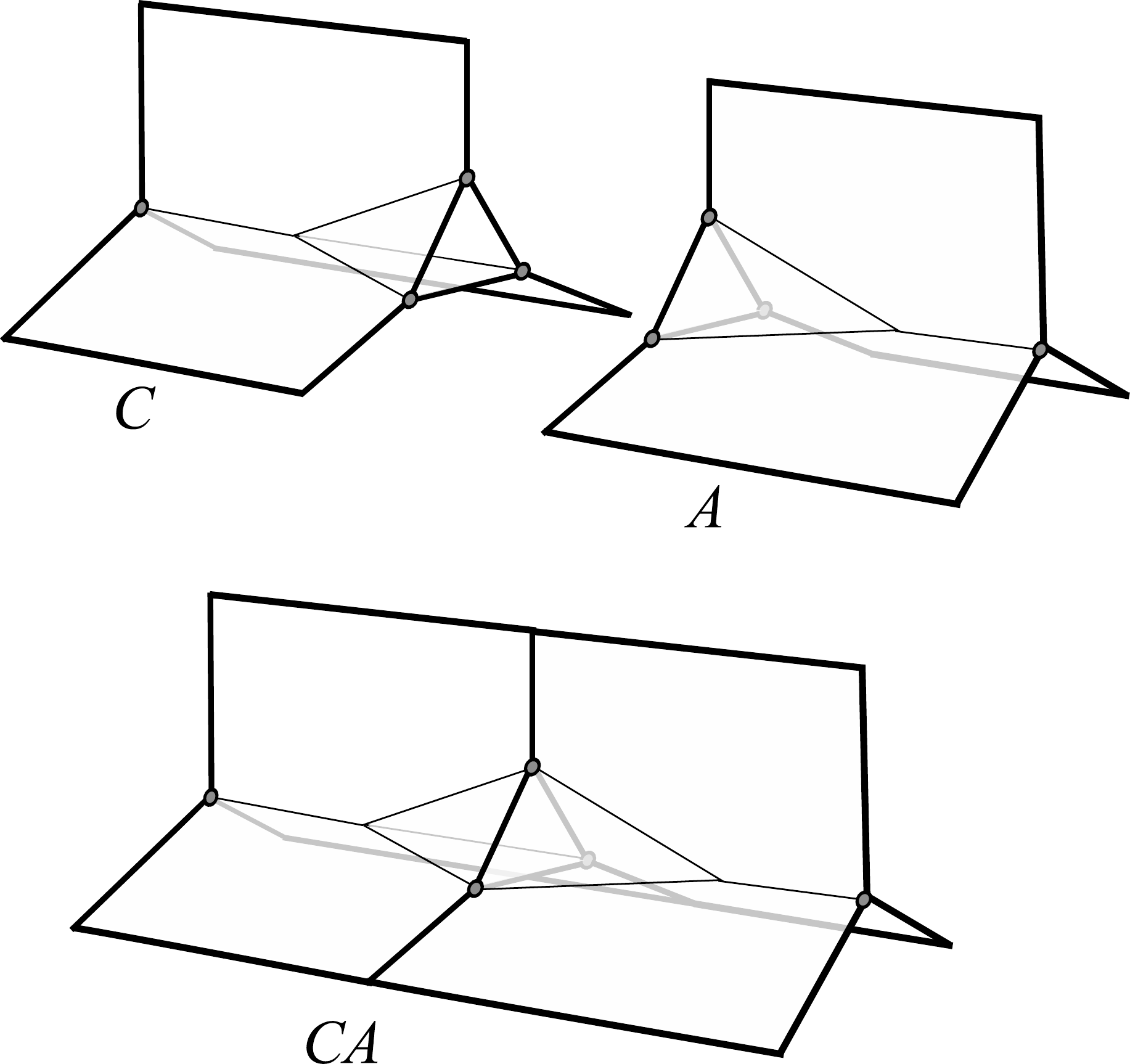}
    \end{center}
    \caption{\label{fig:composite-triangle-move}
   The foams $C$, $A$ and their composite $CA$, as a foam cobordism
   from $K'$ to $K'$.}
\end{figure}

\begin{proposition}\label{prop:triangle-relation}
    The instanton homologies $\Jsharp(K')$ and $\Jsharp(K)$ are
    isomorphic, and the maps $a$ and $c$ are mutually inverse isomorphisms.
\end{proposition}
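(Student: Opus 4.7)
The plan is to combine the excision principle with the cyclic $R$-module structures of $\Jsharp$ for the theta web and tetrahedron web, where $R$ denotes the $\F$-algebra generated by $u_1, u_2, u_3$ subject to the flag-manifold relations~\eqref{eq:vertex-relations} (so $\dim_{\F} R = 6$). First, by Corollary~\ref{cor:local-relations-webs}, one reduces to the universal local model in which $K'$ is the theta web $\Theta \subset S^3$ and $K$ is the tetrahedron web, with $A, C$ the standard foams of that local situation: the foams $A$ and $C$ are supported in a ball, so any relation between maps $\Jsharp(K') \to \Jsharp(K)$ in the local model propagates to an arbitrary ambient web by excision.

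In the local model, the preceding two propositions exhibit both $\Jsharp(\Theta)$ and $\Jsharp(K)$ as cyclic $R$-modules of $\F$-dimension $6$, generated by $\Jsharp(\Theta_-)$ and $\Jsharp(T)$ respectively, where $\Theta_- \subset B^4$ is the half-theta cobordism $\emptyset \to \Theta$ and $T \subset B^4$ is the cone foam $\emptyset \to K$. Since any cyclic $R$-module $R/I$ has $\F$-dimension $\dim R - \dim I$, a cyclic $R$-module of dimension exactly $\dim_{\F} R$ must be isomorphic to $R$ itself, so both instanton homologies are copies of the regular representation.

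Next I would verify that $a$ and $c$ are $R$-module homomorphisms. Under the triangle move, the three edges at the vertex of $\Theta$ are identified with the three ``external'' edges of the tetrahedron meeting at the vertex opposite the collapsed triangle, and the foams $A, C$ are supported away from the interiors of these edges; so by naturality $a$ and $c$ intertwine the respective operators $u_i$. The key identification of generators is this: the composite $A \circ \Theta_-$ is an embedded foam in $B^4$ with boundary $K$ and a single tetrahedral point, and by a foam isotopy (collapsing the seam-arc of $\Theta_-$ into the tetrahedral point of $A$) it is equivalent to the cone $T$. Hence $a(\Jsharp(\Theta_-)) = \Jsharp(T)$ and symmetrically $c(\Jsharp(T)) = \Jsharp(\Theta_-)$.

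With these ingredients, $ca$ is an $R$-module endomorphism of $\Jsharp(\Theta)$ fixing the cyclic generator $\Jsharp(\Theta_-)$, and must therefore equal the identity; likewise $ac = \mathrm{id}$ on $\Jsharp(K)$, so $a$ and $c$ are mutual inverses. The hard part will be the topological identification of the composite foam $A \circ \Theta_-$ with the cone $T$ via isotopy of foams with tetrahedral points in $B^4$; once that identification is secured, the remainder is a formal consequence of the cyclic $R$-module structure already established.
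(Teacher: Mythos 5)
The central gap is in the claim that $c(\Jsharp(T)) = \Jsharp(\Theta_-)$ ``symmetrically'' by a foam isotopy. Unlike $A\circ\Theta_-$, which has one tetrahedral point and is plausibly isotopic to the cone $T$, the composite $C\circ T$ has \emph{two} tetrahedral points (one from the cone point of $T$, one from the collapse foam $C$), while $\Theta_-$ has none. These foams are therefore not isotopic, and no naive ``symmetry'' produces the needed equality. This step is exactly where the hard content lives: one must show that $C\circ T$ and $\Theta_-$ give the same element of $\Jsharp$ of the theta web by pairing with the dual basis $\Theta_+(0,k_2,k_3)$, which reduces to the closed-foam evaluations $\Jsharp(S(0,k_2,k_3))=\Jsharp(\Theta(0,k_2,k_3))$ of Propositions~\ref{prop:S-evaluation} and~\ref{prop:theta-evaluation}. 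Without this you only know that $a$ is an isomorphism of cyclic $R$-modules, which by itself says nothing about whether $c$ is its inverse.

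A secondary concern is the opening sentence: Corollary~\ref{cor:local-relations-webs} propagates \emph{element-level} relations in $\Jsharp$ of a small boundary web to map-level relations for all ambient cobordisms; it does not directly say that a map-level relation verified in one ambient model carries over to others. To use the ``universal local model'' honestly, one must still show the local elements $\Jsharp(CA|_B)$ and $\Jsharp(\mathrm{id}|_B)$ of $\Jsharp(\theta)$ are equal, which again comes down to the same pairings. Your $R$-module framework and the observation that $\Jsharp$ of both the theta web and the tetrahedron is the regular representation of $R$ are good structural insights, closely aligned with the paper's use of the cyclic generators $\Jsharp(\Theta_-)$ and $\Jsharp(T)$; but they do not let you bypass the closed-foam computations that supply the identification of generators under $c$ (and hence also the dimension-count route via $L_3$, Lemma~\ref{lem:upper-bounds}, that the paper uses for $ac=1$).
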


\begin{proof}
    The proof follows the same plan as in the case of the bigon
    relation. To show that composite foam $CA$ gives the identity map
    on $\Jsharp(K')$, we apply the excision principle,
    Corollary~\ref{cor:local-relations-webs}, to the part of the
    composite shown on the left-hand side in
    Figure~\ref{fig:composite-triangle-move}. The left- and right-hand
    sides define elements in $\Jsharp$ of the theta graph $K_{\theta}$, which we
    must show are equal. We can demonstrate equality by checking that
    we have the same evaluation of closed foams when we pair both
    sides with the known basis of $\Jsharp(K_{\theta})$ coming from
    the foams $\Theta_{-}(0, k_{2}, k_{3})$. The required evaluations
    become (respectively) $\Jsharp(S(0, k_{2}, k_{3}))$, where $S$ is the cone on the
    tetrahedron web, and $\Jsharp(\Theta(0,k_{2},k_{3}))$, where
    $\Theta$ is the closed theta foam. These evaluations are equal by
    Proposition~\ref{prop:S-evaluation}
     and Proposition~\ref{prop:theta-evaluation}. This verifies that
     $ca=1$.

     To show that $ac=1$, we must show that 
      \begin{equation}\label{eq:tetrahedral-cancel}
               \alpha = \epsilon
      \end{equation}
     where $\alpha$ and $\epsilon$ are the elements in the instanton
     homology $\Jsharp(L_{3})$ corresponding to the foams in
     Figure~\ref{fig:AC-triangle-composite}. 
\begin{figure}
    \begin{center}
        \includegraphics[scale=0.4]{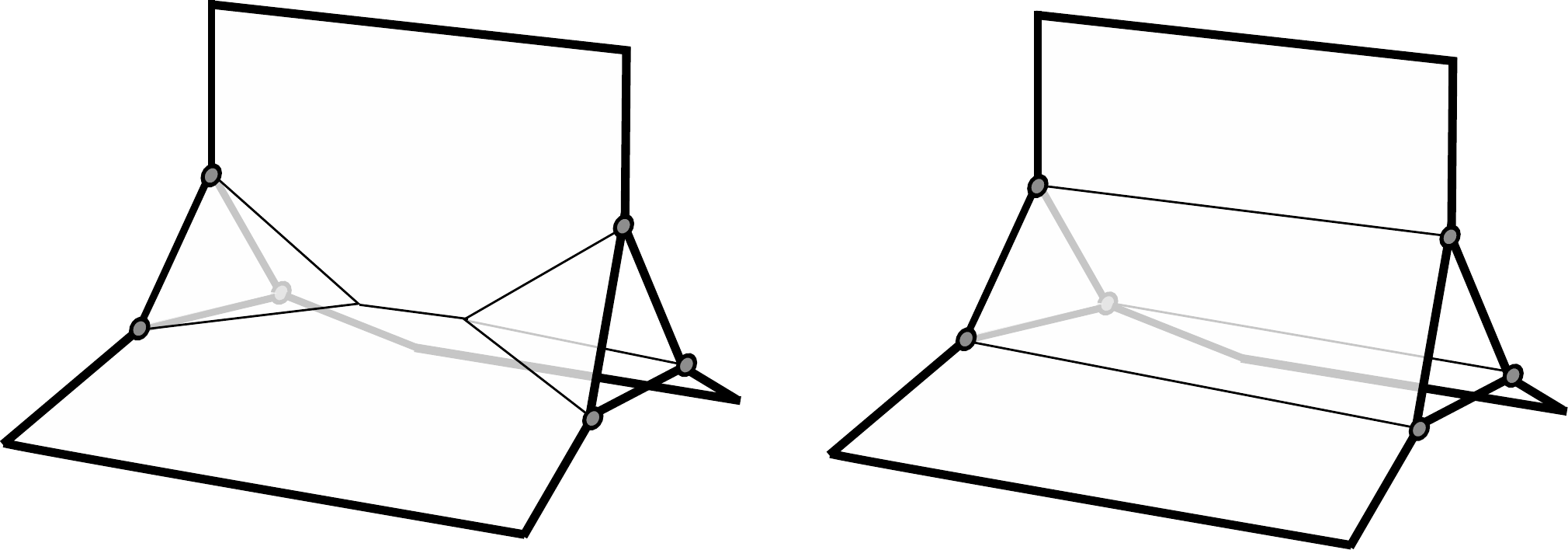}
    \end{center}
    \caption{\label{fig:AC-triangle-composite}
   The foams defining the elements $\alpha$ and $\epsilon$ in $\Jsharp(L_{3})$.}
\end{figure}
From the relation $ca=1$,
     it already follows that $AC$ is a projection onto the injective
     image of $\Jsharp(K)$ in $\Jsharp(K')$.  In the special case that
     $K'$ is the tetrahedron graph and $K$ is the prism $L_{3}$, we
     already know that $\dim \Jsharp(K) \le \dim\Jsharp(K')$ from
     Lemma~\ref{lem:upper-bounds}, so in this special case we know
     that $ac=1$ as endomorphisms of $\Jsharp(L_{3})$. Applying this
     to the element $\epsilon\in\Jsharp(L_{3})$, we obtain
     \[
           ac(\epsilon) = \epsilon.
      \]
     By an isotopy, we see that $ac(\epsilon)=\alpha$, and the proof
     is complete.
\end{proof}

We can reinterpret the central calculation in the above proof.
Let $(X,\Sigma)$ be a foam cobordism inducing a map $\Jsharp(X,\Sigma)
: \Jsharp(Y_{1}, K_{1}) \to \Jsharp(Y_{0}, K_{0})$. Suppose $x_{1}$,
$x_{2}$ are two distinct tetrahedral points in $\Sigma$ connected by a
seam-edge $\gamma$. A regular neighborhood of the arc $\gamma$ is a
4-ball meeting $\Sigma$ in a standard foam $W\subset B^{4}$ whose
boundary is the prism web $L_{3}$, the same foam that appears in the left-hand side of
Figure~\ref{fig:AC-triangle-composite}. Let $W'\subset B^{4}$ be the foam
with the same boundary shown in the right-hand side of the
figure, and let $\Sigma'$ be obtained from $\Sigma$ by replacing $W$
with $W'$.  We refer to the process of passing from $\Sigma$ to
$\Sigma'$ as \emph{canceling tetrahedral points.}

\begin{proposition}[Canceling tetrahedral points]
    In the above situation, we have $\Jsharp(X,\Sigma) = \Jsharp(X, \Sigma')$.
\end{proposition}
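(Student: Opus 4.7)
The plan is to reduce the statement to a local relation already verified in the proof of Proposition~\ref{prop:triangle-relation}. Specifically, the two foams $W$ and $W'$ in $B^{4}$ share the boundary web $L_{3}$, so they each determine an element of $\Jsharp(L_{3})$, and our task is to show that these two elements coincide; once we have that, Corollary~\ref{cor:local-relations-webs} (applied to the pair $(B^{4}, W)$ and $(B^{4}, W')$, with $(Y,K)=(Y',K')=\emptyset$ being the cobordism split off by the containing $(X,\Sigma)$ and $(X,\Sigma')$) gives $\Jsharp(X,\Sigma)=\Jsharp(X,\Sigma')$.

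Now I would identify these two local elements with the vectors called $\alpha$ and $\epsilon$ in Figure~\ref{fig:AC-triangle-composite}. The foam $W$ is, by construction, a regular neighborhood of the seam-arc $\gamma$ joining the two tetrahedral points $x_{1}$ and $x_{2}$; this is exactly the left-hand picture of that figure. The foam $W'$ is the substitute with no tetrahedral points, i.e.\ the composite foam $AC$ of the triangle relation, whose element in $\Jsharp(L_{3})$ is the vector $\epsilon$. Hence the desired identity $\Jsharp(B^{4},W)=\Jsharp(B^{4},W')$ in $\Jsharp(L_{3})$ is precisely the equation $\alpha=\epsilon$, i.e.~equation~\eqref{eq:tetrahedral-cancel}.

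Finally, I would simply invoke the fact that equation \eqref{eq:tetrahedral-cancel} was established during the proof of Proposition~\ref{prop:triangle-relation}, so no new analytic input is needed here. The only thing worth remarking is that there is no issue of orientation or framing of $\gamma$: the local picture at a pair of tetrahedral points joined by a seam is combinatorially rigid up to isotopy, so the local foam $W$ is uniquely determined up to isotopy rel boundary, and likewise for $W'$. Thus the main (and only) obstacle is checking that the local model $W$ really agrees with the composite foam appearing in Figure~\ref{fig:AC-triangle-composite}; this is a direct inspection of the combinatorics of the cone on $L_{3}$ with two opposite tetrahedral vertices. Once this identification is made, the excision step is purely formal.
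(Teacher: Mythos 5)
Your proof is correct and follows essentially the same route as the paper: reduce by excision (Corollary~\ref{cor:local-relations-webs}) to the local identity $\alpha=\epsilon$ in $\Jsharp(L_{3})$, which was already established as equation~\eqref{eq:tetrahedral-cancel} in the course of proving Proposition~\ref{prop:triangle-relation}.
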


\begin{proof}
    By excision this reduces to the same problem as before, namely
    proving that $\alpha=\epsilon$ (equation
    \eqref{eq:tetrahedral-cancel}) in $\Jsharp(S^{3}, L_{3})$.
\end{proof}

\subsection{The square relation}
\label{subsec:square-reln}

Like the bigon and triangle relations above, our statement and proof
of the square relation follows \cite{Khovanov-sl3}, with the same sorts
of adaptations as in the earlier cases.
Let $K \subset Y$ be a web containing a square: four edges connecting
four vertices in a standard disk. Let $K'$ and $K''$ be obtained from
$K$ as shown in Figure~\ref{fig:square-relation}.

\begin{figure}
    \begin{center}
        \includegraphics[scale=0.4]{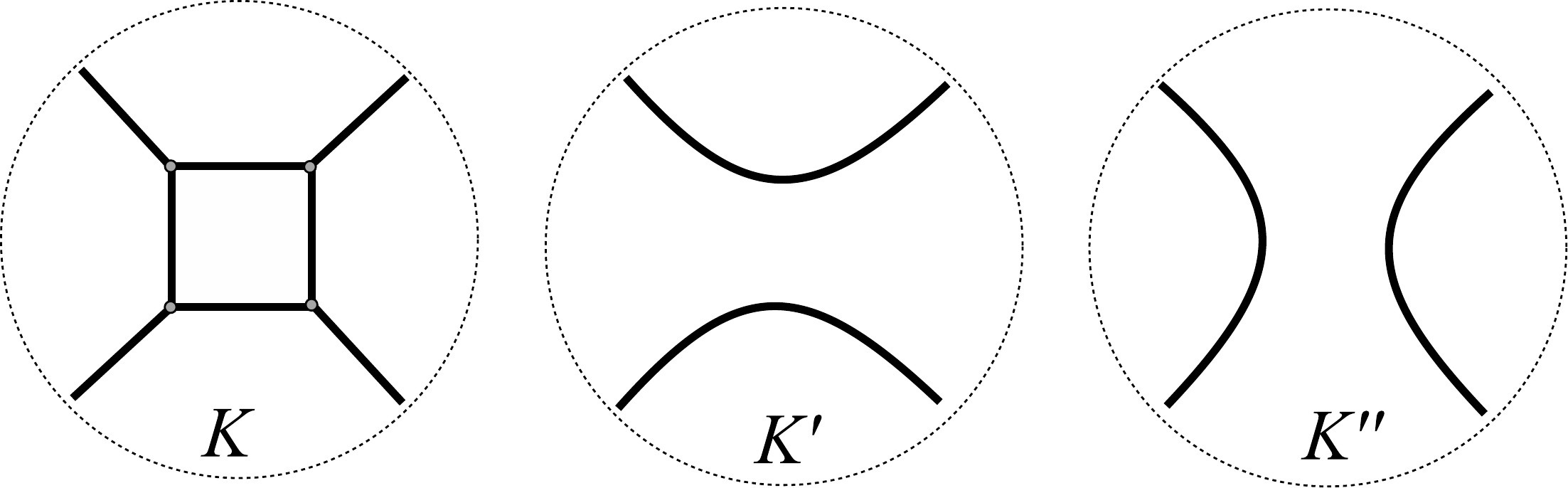}
    \end{center}
    \caption{\label{fig:square-relation}
   The webs $K$, $K'$ and $K''$ involved in the square relation:
   $\Jsharp(K)=\Jsharp(K')\oplus\Jsharp(K'')$.}
\end{figure}

\begin{proposition}\label{prop:square-relation}
    If the web $K$ contains a square and $K'$, $K''$ are as shown in
    Figure~\ref{fig:square-relation}, then
\[
                \Jsharp(K) = \Jsharp(K') \oplus \Jsharp(K'').
\]
\end{proposition}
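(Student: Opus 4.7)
Following the pattern established in the proofs of the bigon and triangle relations, the plan is to produce four standard foam cobordisms between $K$, $K'$ and $K''$ and use their composition relations to split $\Jsharp(K)$ as a direct sum. Near the square there are standard local foams
\[
A \colon K' \to K, \quad B \colon K'' \to K, \quad C \colon K \to K', \quad D \colon K \to K'',
\]
each interpolating between two of the three planar pictures of Figure~\ref{fig:square-relation} and each essentially a ``zip'' or ``unzip'' of the corresponding resolution. They induce linear maps $a,b,c,d$ on $\Jsharp$.

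The first step is to establish the four identities
\[
ca = \mathrm{id}_{\Jsharp(K')}, \qquad db = \mathrm{id}_{\Jsharp(K'')}, \qquad cb = 0, \qquad da = 0.
\]
By Corollary~\ref{cor:local-relations-webs}, each reduces to a local identity in a ball, which can be checked by evaluating closed foams. The composites $CA$ and $DB$ contain a local sphere that simplifies to the identity cobordism via bubble-bursting (Proposition~\ref{prop:bubble-bursting}). The mixed composites $CB$ and $DA$ reduce, via neck-cutting (Proposition~\ref{prop:neck-cutting}) and a sequence of simplifications, to closed foams that contain a sphere with fewer than two dots, and such foams evaluate to zero by Proposition~\ref{prop:sphere-with-dots}.

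These identities imply that $a \oplus b \colon \Jsharp(K') \oplus \Jsharp(K'') \to \Jsharp(K)$ is injective and that $ac + bd$ is a projection onto its image. It remains to show $ac + bd = \mathrm{id}_{\Jsharp(K)}$. By excision, this reduces to a single identity $\alpha + \beta = \epsilon$ in $\Jsharp(L_4)$, where $L_4$ is the square prism and $\alpha,\beta,\epsilon$ are explicit foam-fillings in $B^4$ with boundary $L_4$. To verify this identity in the special case $K=L_4$, I apply the square relation at one of its square faces. The resolutions $L_4'$ and $L_4''$ are then $4$-cycles each carrying two opposite doubled edges, so by two applications of the bigon relation combined with the unknot calculation of Proposition~\ref{prop:unknot-u-map}, each has dimension $4 \cdot 3 = 12$. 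Together with the upper bound $\dim \Jsharp(L_4) \le 24$ from Lemma~\ref{lem:upper-bounds}, the injection $a \oplus b$ must be an isomorphism when $K=L_4$, and consequently $ac+bd$ is the identity on $\Jsharp(L_4)$. Evaluating this endomorphism on $\epsilon$ and identifying $ac(\epsilon)=\alpha$ and $bd(\epsilon)=\beta$ by a straightforward isotopy yields the required local relation, which then propagates to arbitrary $K$ by excision.

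The main obstacle will be the local verification of $cb=0$ and $da=0$: because the square contains four trivalent vertices (rather than the two in the bigon case), the mixed composites pass through more complicated singular strata, and extracting their evaluations will require neck-cutting along one or more auxiliary disks and carefully identifying the resulting pieces as spheres, theta foams and standard tori whose evaluations are already known. I expect this to be routine but combinatorially delicate, as is the final isotopy identifying $ac(\epsilon)$ with $\alpha$.
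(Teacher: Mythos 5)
Your proof follows the paper's argument essentially step for step: the same four foam morphisms $A,B,C,D$, the same composite relations $ca = db = 1$ and $cb = da = 0$ established by neck-cutting and bubble-bursting, the reduction to showing $ac + bd = 1$ on $\Jsharp(L_4)$ using the upper bound of Lemma~\ref{lem:upper-bounds} together with $\dim\Jsharp(L_4') = \dim\Jsharp(L_4'') = 12$, and the propagation to general $K$ by excision and isotopy as in the bigon and triangle cases. (One small note on wording: where you say ``I apply the square relation at one of its square faces'' for $K=L_4$, you mean only that you form the resolutions $L_4'$, $L_4''$ and compute their dimensions via the already-established bigon relation --- phrased as written it momentarily sounds circular, but the underlying logic is not.)
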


\begin{proof}
    There are standard morphisms
\[
           \xymatrix{      K' \ar@<0.5ex>[r]^{A}  &
                           K \ar@<0.5ex>[l]^{C} \ar@<0.5ex>[r]^{D} & 
                           K'' \ar@<0.5ex>[l]^{B} }.
\]
A model for the foam $A$ is shown in Figure~\ref{fig:square-foam-A},
and the others are similar. Let $a$, $b$, $c$, $d$ be the
corresponding linear maps on $\Jsharp(K')$ etc.
\begin{figure}
    \begin{center}
        \includegraphics[scale=0.4]{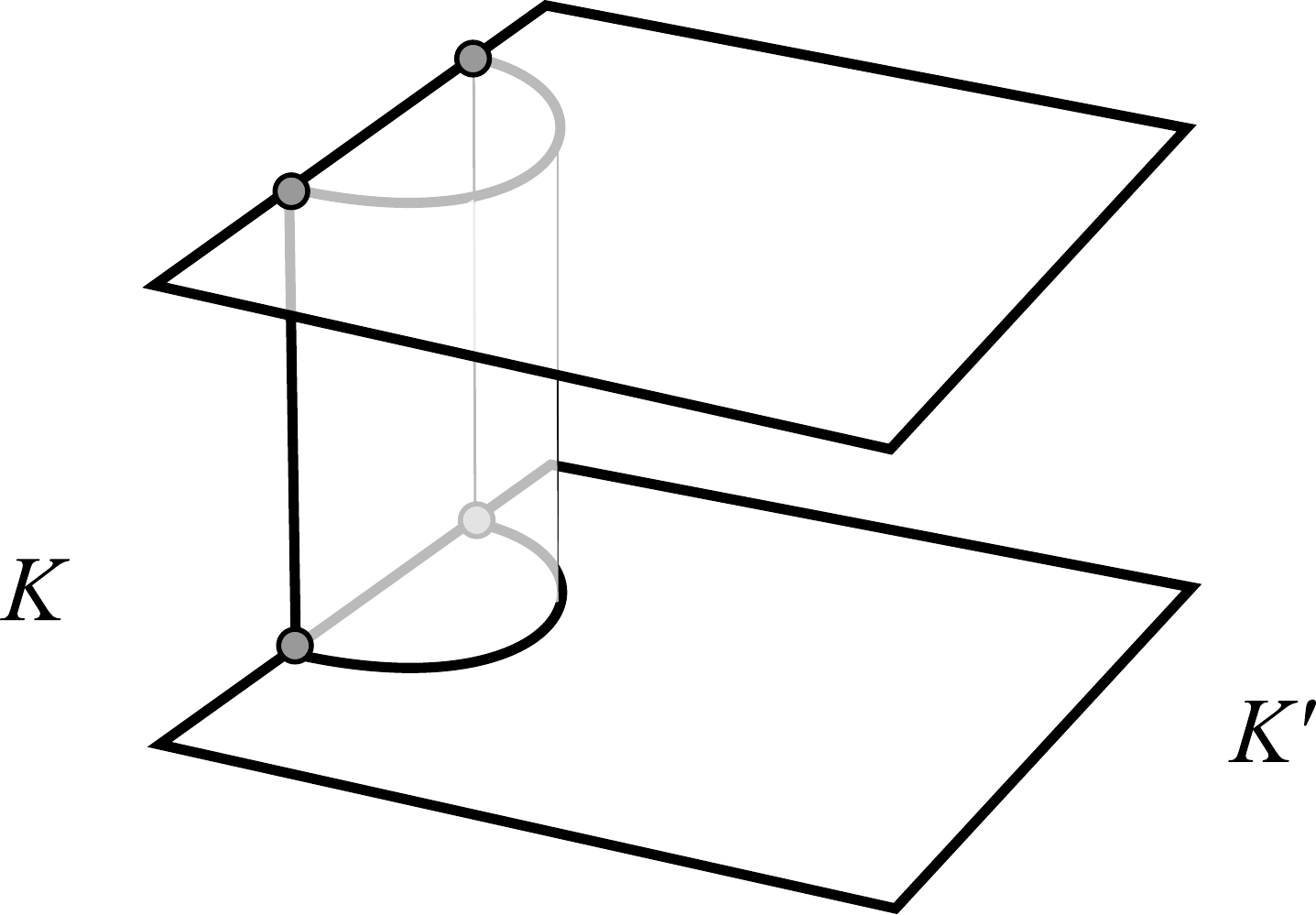}
    \end{center}
    \caption{\label{fig:square-foam-A}
   The standard cobordism from $K$ to $K'$ for the square relation.}
\end{figure}
 By an application of
neck-cutting and bubble-bursting, one sees that $ca=1$, and similarly $db=1$. By
an application of bubble-bursting, one sees that $da=0$, and similarly
$cb=0$. It follows $ac + bd$ is a projection onto the injective image
of $\Jsharp(K') \oplus \Jsharp(K'')$, and to complete the proof we
must show
\[
              ac + bd =1.
\]
In the case that $K$ is a cube, we know that $\Jsharp(K)$ has
dimension at most $24$ by Lemma~\ref{lem:upper-bounds}. The webs $K'$
and $K''$ in that case are the same, and can both be reduced to the
theta web by the collapsing of a bigon; so $\Jsharp(K')$ and
$\Jsharp(K'')$ both have rank $12$. So we learn that $ac + bd=1$ in
the case that $K$ is a cube. The general case is now proved by the
same strategy as in the bigon and triangle relations.
\end{proof}

\subsection{Simple graphs}
\label{sec:simple}

To the bigon, triangle and square relations above, we can add the
relations that hold for a `$0$-gon' and a `$1$-gon'. A $1$-gon is an
edge-loop which bounds a disk whose interior is disjoint from $K$. If
$K$ has a $1$-gon, then it has an embedded bridge, and
$\Jsharp(K)=0$. A $0$-gon is a vertexless circle bounding a disk whose
interior is disjoint from $K$. If $K'$ is obtained from $K$ by
deleting the circle, then $\dim \Jsharp(K) = 3 \dim \Jsharp(K')$, by
the multiplicative property,
Corollary~\ref{cor:tensor-product}, and
Proposition~\ref{prop:unknot-u-map}. If $\tau(K)$ denotes the number
of Tait colorings of $K$, then the relations that $\dim \Jsharp$
satisfies as a consequence of these relations for $n$-gons, $0\le n\le
4$, are satisfied also by $\tau(K)$ (as is well known and easy to
verify). It follows that if the square, triangle and bigon moves can
be used to reduce a web $K\subset \R^{3}$ to webs that are either
unlinks or contain embedded bridges, then $\dim \Jsharp(K) =
\tau(K)$. Conjecture~\ref{thm:Tait-count} is the statement that
$\dim\Jsharp(K)=\tau(K)$, and we see that a minimal counterexample
cannot have any $n$-gons with $n\le 4$.

\section{Proof of non-vanishing, Theorem~\ref{thm:non-vanishing}}
\label{sec:proof-nonvanish}

\subsection{Passing from $\Jsharp$ to $\Isharp$}
\label{subsec:J-to-I}

From a finite chain complex $(C,d)$ over $\F$ with a basis
$\Crit$, we can form a graph $\Gamma$ with vertices $\Crit$ and with edges
corresponding to non-zero matrix entries of $d$.
Let us say that a chain complex $(\tilde C, \tilde d)$ is a
\emph{double-cover} of $(C,d)$ if there is a basis $\Crit$ for $C$ and
basis $\tilde \Crit$ for $\tilde C$ such that the corresponding graph
$\tilde\Gamma$ is a double-cover of $\Gamma$.

\begin{lemma}
    If the homology of $(\tilde C,\tilde d)$ is non-zero, then so is
    the homology of $(C,d)$.
\end{lemma}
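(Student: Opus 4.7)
The plan is to promote the graph-theoretic double cover to a short exact sequence of chain complexes over $\F$ and apply the associated long exact sequence in homology. The deck transformation of $\tilde\Gamma \to \Gamma$ gives a free involution of $\tilde\Crit$, which extends $\F$-linearly to an involution $\iota$ of $\tilde C$. The definition of a graph double cover---that for each $\tilde\alpha$ over $\alpha$ and each $\beta$ with $n_{\alpha,\beta}=1$ there is a unique lift $\tilde\beta$ over $\beta$ with $\tilde n_{\tilde\alpha,\tilde\beta}=1$---says precisely that $\iota$ commutes with $\tilde d$, so $\iota$ is a chain involution.

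Next I would introduce the projection $\pi:\tilde C \to C$ sending each $\tilde\alpha$ to its image $\alpha$, and the orbit-sum map $s:C\to\tilde C$ defined by $s(\alpha)=\tilde\alpha + \iota(\tilde\alpha)$ for any lift $\tilde\alpha$ of $\alpha$. Both are chain maps: for $\pi$, summing $\tilde n_{\tilde\alpha,\tilde\gamma}$ over each fiber yields $n_{\alpha,\gamma}$; for $s$, one uses $\iota\tilde d = \tilde d\iota$ to see that $\tilde d\,s(\alpha) = \tilde d\tilde\alpha + \iota\tilde d\tilde\alpha = \sum_\gamma n_{\alpha,\gamma}(\tilde\gamma+\iota\tilde\gamma) = s(d\alpha)$, where the orbit sum $\tilde\gamma+\iota\tilde\gamma$ is manifestly independent of the chosen lift of $\gamma$.

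A dimension count---$\dim\ker\pi = |\tilde\Crit|-|\Crit| = |\Crit|$, while the image of $s$ has basis $\{\tilde\alpha + \iota(\tilde\alpha)\}$, dimension $|\Crit|$, and is contained in $\ker\pi$---produces the short exact sequence
\[
   0 \to C \xrightarrow{s} \tilde C \xrightarrow{\pi} C \to 0
\]
of chain complexes over $\F$. The resulting long exact sequence in homology is a three-term cyclic exact sequence
\[
   \cdots \to H(C) \xrightarrow{s_*} H(\tilde C) \xrightarrow{\pi_*} H(C) \xrightarrow{\partial} H(C) \to \cdots,
\]
and vanishing of $H(C)$ therefore forces vanishing of $H(\tilde C)$. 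The contrapositive is the statement of the lemma.

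The main obstacle, such as it is, is purely bookkeeping: one must translate the combinatorial graph-cover hypothesis into the commutation $\iota\tilde d = \tilde d\iota$ and into the well-definedness of $s$. Once that is verified, the argument is standard homological algebra, pleasantly simplified by characteristic two, where the fixed-point and orbit-sum subspaces of a free involution coincide.
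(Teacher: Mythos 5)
Your proposal is correct and matches the paper's proof exactly: both use the short exact sequence $0 \to C \xrightarrow{s} \tilde C \xrightarrow{\pi} C \to 0$ (orbit-sum followed by projection) and conclude via the resulting mod-$2$ Gysin long exact sequence. You supply more detail than the paper does in verifying that $s$ and $\pi$ are chain maps, but the argument is the same.
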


\begin{proof}
    This is a corollary of the standard mod $2$ Gysin sequence for the double
    cover, i.e. the long exact sequence corresponding to the short
    exact sequence of chain complexes
    \[
             0 \to C \to \tilde C \to C \to 0.
     \] 
    On the bases $\Crit$ and $\Crit$, the first map sends a basis
    element to the sum of its two preimages, and the second map is the
    projection.
\end{proof}

If we have a Morse function $f$ on a compact Riemannian manifold $B$,
satisfying the Morse-Smale condition, and if we pull it back to a
double-cover $\tilde B$, then the pull-back will still be
Morse-Smale. Furthermore, the associated Morse complex $(\tilde C,
\tilde d)$ will be a double-cover of $(C,d)$ in the above sense. The
same applies equally to a Morse complex of the perturbed Chern-Simons
functional on $\bonf_{l}(\check Y; \mu)$. This gives us the following
result.

\begin{lemma}
    Let $\check Y$ be a bifold with singular set $K$, 
      and suppose we have two
      different strong marking data, $\mu$ and $\mu'$, with $U_{\mu} \subset
     U_{\mu'}$ and $E_{\mu} = E_{\mu'}|_{U_{\mu}\sminus K}$. Suppose
     $J(\check Y ; \mu')$ is non-zero. Then $J(\check Y; \mu)$ is also non-zero.
\end{lemma}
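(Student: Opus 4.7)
The plan is to pull a good perturbation back from $\bonf_{l}(\check Y;\mu)$ to $\bonf_{l}(\check Y;\mu')$ along the forgetful covering map $r$ of Lemma~\ref{lem:covering}, and then to invoke the preceding double-cover lemma iteratively. Observe first that $J(\check Y;\mu)$ splits as a direct sum according to the connected components of $\bonf_{l}(\check Y;\mu)$, since perturbed gradient trajectories stay within a fixed component. By Lemma~\ref{lem:covering}, the image of $r$ is a union of such components; let $J_{r}(\check Y;\mu)$ denote the corresponding direct summand of $J(\check Y;\mu)$. It therefore suffices to prove $J_{r}(\check Y;\mu)\ne 0$.

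Next, fix a good perturbation $\pi$ on $\bonf_{l}(\check Y;\mu)$. The cylinder functions that build $f_{\pi}$ depend only on $\SO(3)$ holonomies along loops in $\check Y\sminus K$ together with, for loops in $U_{\mu}$, the enhanced $U(2)$ holonomies supplied by the $\mu$-marking. Because $U_{\mu}\subset U_{\mu'}$ and $E_{\mu}=E_{\mu'}|_{U_{\mu}\sminus K}$, those very same cylinder functions are well-defined on $\bonf_{l}(\check Y;\mu')$ and agree there, up to a constant, with $r^{*}f_{\pi}$. Since $r$ is a covering map of Banach manifolds and the orbifold Riemannian data pulls back, the perturbed Chern-Simons gradient flow upstairs is, in local charts, identified with the flow downstairs. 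Consequently the critical set of $\CS+r^{*}f_{\pi}$ is exactly $r^{-1}(\Crit_{\pi})$, each critical point is non-degenerate, and all moduli spaces of trajectories upstairs are regular, being local diffeomorphic copies of the regular moduli spaces downstairs. Writing $(\tilde C,\tilde d)$ for this Morse complex and $(C,d)$ for the Morse complex computing $J_{r}(\check Y;\mu)$, the deck transformation group $G\cong(\Z/2)^{k}$ from Lemma~\ref{lem:covering} acts freely on $r^{-1}(\Crit_{\pi})$ with quotient $\Crit_{\pi}$, and every non-zero matrix entry of $\tilde d$ covers a non-zero matrix entry of $d$; in other words $(\tilde C,\tilde d)$ is a $G$-cover of $(C,d)$ at the level of the associated graphs.

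Finally, fix a filtration $\{1\}=G_{0}\subsetneq G_{1}\subsetneq\cdots\subsetneq G_{k}=G$ with successive quotients of order $2$, and form the intermediate quotient complexes $\tilde C/G_{i}$, each equipped with the basis and boundary map descending from upstairs. At every stage, $\tilde C/G_{i-1}\to \tilde C/G_{i}$ is a $\Z/2$-cover in the sense of the first lemma of this subsection. Applying that lemma $k$ times, the non-vanishing of $H(\tilde C)=J(\check Y;\mu')$ propagates down to the non-vanishing of $H(C)=J_{r}(\check Y;\mu)$, and hence of $J(\check Y;\mu)$. The only technical point requiring care is the verification that $(\tilde C,\tilde d)$ genuinely is a $G$-cover of $(C,d)$ in the required sense, i.e.\ that $G$ acts freely on $r^{-1}(\Crit_{\pi})$ and that trajectory counts match up under $r$; both are automatic, the first because $r$ is a covering over the relevant components and the second because the perturbed gradient flow is $G$-equivariant. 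The substantive input is thus Lemma~\ref{lem:covering}, and the rest of the argument is a formal deduction from the first lemma.
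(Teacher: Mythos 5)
Your argument is correct and follows the same route as the paper: pull back a good perturbation along the forgetful covering $r$ of Lemma~\ref{lem:covering}, observe that the resulting chain complex for $\mu'$ is an iterated double-cover of the summand of the $\mu$-complex over the image components, and then apply the preceding Gysin lemma once for each step of a filtration of the $(\Z/2)^{k}$ deck group. You merely spell out explicitly two steps the paper leaves implicit — that the pulled-back perturbation remains good because non-degeneracy and regularity are local properties preserved under the covering, and the factorization of the $(\Z/2)^{k}$-cover into a tower of $\Z/2$-covers.
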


\begin{proof}
    According to Lemma~\ref{lem:covering}, the map $\bonf_{l}(\check
    Y; \mu') \to \bonf_{l}(\check Y ; \mu)$ is an iterated
    double-covering of some union of components of the latter. It
    follows that the corresponding chain complex $(C', d')$ is an
    iterated double-cover of some direct summand $(C_{0}, d_{0})$ of the complex
    $(C,d)$. The previous lemma tells us that $H_{*}(C_{0}, d_{0})$ is
    non-zero. Therefore $H_{*}(C,d)$ is non-zero too.
\end{proof}

\begin{corollary}\label{cor:J-to-I}
    If $K$ is a web in $\R^{3}$ and $\Isharp(K)$
    is non-zero, then $\Jsharp(K)$ is also non-zero.
\end{corollary}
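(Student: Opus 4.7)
The plan is to apply the preceding lemma directly, once the marking data defining $\Jsharp(K)$ and $\Isharp(K)$ are set side-by-side. Recall from section~\ref{subsec:def-Jsharp} that both invariants are instances of $J$ applied to the same bifold: if $\check Y$ is the closed bifold associated to $(S^{3}, K\cup H)$, where $H$ is a standard Hopf link placed at the basepoint disjoint from $K$, then
\[
  \Jsharp(K) = J(\check Y;\mu), \qquad \Isharp(K) = J(\check Y;\mu'),
\]
where $\mu=(U_{\mu},E_{\mu})$ has $U_{\mu}$ equal to a small ball $B$ containing $H$, and $\mu'=(U_{\mu'},E_{\mu'})$ has $U_{\mu'}=S^{3}$. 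In both cases $w_{2}$ of the marking bundle is Poincar\'e dual to the same arc $\alpha\subset B$ joining the two components of $H$.

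First I would choose $E_{\mu'}$ as the primary object (an $\SO(3)$ bundle on $S^{3}\sminus(K\cup H)$ with $w_{2}$ dual to $\alpha$) and then define $E_{\mu}$ to be its actual restriction to $U_{\mu}\sminus(K\cup H)$; this makes the identification $E_{\mu}=E_{\mu'}|_{U_{\mu}\sminus(K\cup H)}$ literal rather than merely up to isomorphism, as required by the hypotheses of the preceding lemma. The inclusion $U_{\mu}\subset U_{\mu'}$ holds by construction. Both $\mu$ and $\mu'$ are strong: this is Lemma~\ref{lem:marking}(\ref{it:Hopf-strong}), since in each case the marking domain contains the ball $B$ meeting the singular web in the Hopf link $H$ with $w_{2}$ dual to $\alpha$.

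With these verifications in hand, the preceding lemma applies verbatim to $(\check Y,\mu,\mu')$: non-vanishing of $J(\check Y;\mu')$ implies non-vanishing of $J(\check Y;\mu)$. Unpacking notation, this is exactly the statement that $\Isharp(K)\ne 0$ implies $\Jsharp(K)\ne 0$. There is no genuine obstacle here; the corollary is simply the specialization of the Morse-theoretic double-cover lemma to the pair of markings that define the two functors, with the only point requiring care being the bookkeeping that ensures $E_{\mu}$ is genuinely the restriction of $E_{\mu'}$ so that Lemma~\ref{lem:covering} produces the required covering map between the two configuration spaces.
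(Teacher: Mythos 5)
Your proposal is correct and takes essentially the same route as the paper: it identifies $\Jsharp(K)$ and $\Isharp(K)$ as $J$ applied to the common bifold $(S^3,K\cup H)$ with nested strong marking data $\mu\subset\mu'$, and then applies the preceding double-cover lemma. The paper's proof is the same argument stated more tersely; your additional care in making $E_{\mu}=E_{\mu'}|_{U_{\mu}\sminus K}$ literal and in citing Lemma~\ref{lem:marking}(\ref{it:Hopf-strong}) for strength is correct and fills in details the paper leaves implicit.
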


\begin{proof}
    This follows from the definition of $\Isharp$ and $\Jsharp$
    (Section~\ref{subsec:def-Jsharp}): they are defined using
    the same bifold, but $\Isharp$ is defined using a larger marking
    set.
\end{proof}

Using also the multiplicative property for split webs,
Corollary~\ref{cor:tensor-product}, we learn that, in order to prove
the non-vanishing theorem, Theorem~\ref{thm:non-vanishing}, it will
suffice to prove the following variant:

\begin{reduction}\label{prop:reduction-1}
    If $K\subset \R^{3}$ is a non-split web with no
    embedded bridge, then the
    $\F$ vector space
    $\Isharp(K)$ is non-zero.
\end{reduction}

At this point in the argument, we can dispense with $\Jsharp$.

\subsection{Removing vertices}

As mentioned in Section~\ref{subsec:def-Jsharp}, the construction
$\Isharp$ is an extension to webs of the invariant of
links (also called $\Isharp(K)$) which was defined in
\cite{KM-unknot}. We now adopt some of the notation and concepts from
\cite{KM-unknot} to continue the argument.

Let $(Y,K)$ be a $3$-manifold with an embedded web, and let
$\mu$ be marking data in which the subset $U_{\mu}$ is \emph{all} of
$Y$. In this case, let us represent $w_{2}(E_{\mu})$ as a the dual
class to a closed $1$-dimensional submanifold $w \subset Y\sminus K$,
consisting of a collection of circles and arcs joining points on edges
of $K$. Since $w$ determines the marking, we may write \[ I^{w}(Y,K)\]
for the invariant that we have previously called $J(Y,K ; \mu)$. This
is the notation used in \cite{KM-unknot}, though we are still using
$\F$ coefficients. The group is defined only if $(Y,K,w)$
satisfies a condition which is a counterpart of the strong marking condition:
a sufficient condition for this is
that there is a \emph{non-integral} surface $S$ in $Y$: i.e. a closed
orientable surface which either
\begin{enumerate}
\item has odd intersection with $K$, or
\item is disjoint from $K$ and has odd intersection with $w$.
\end{enumerate}
See
\cite[Definition~3.1]{KM-unknot}. The first condition holds, in particular, if $K$
has a vertex.
 In this notation, if $H\subset S^{3}$ is a Hopf
link near infinity in $S^{3}$ and $u$ is an arc joining its two
components, then for a web $K\subset \R^{3}$, we have
\begin{equation}\label{eq:unknot-notation}
           \Isharp(K) = I^{u}(S^{3}, K\cup H).
\end{equation}

There is another notation used in \cite[Section~5.2]{KM-unknot} that 
we also need
to adopt here. On $Y$, the obstruction to lifting an $\SO(3)$ gauge
transformation $u$ to the determinant-$1$ gauge group is an element $o(u)$ in 
$H^{1}(Y; \F)$. So the usual configuration space of connections
(the quotient by the determinant-$1$ gauge group) is acted on by
$H^{1}(Y; \F)$. Given a choice of subgroup $\phi\subset H^{1}(Y;
\F)$, we can form the quotient of the configuration space by
$\phi$ and compute the Morse theory of the perturbed Chern-Simons
functional on this quotient. The resulting group
\[
           I^{w}(Y,K)^{\phi}
\]
is defined whenever $\phi$ acts freely on the set of critical
points. A sufficient condition of this is that there be a non-integral
surface $S$ such that the restriction of $\phi$ to $S$ is zero. (See
\cite{KM-unknot} once more.) In our applications, this condition will
always be easy to verify, because there will be a $2$-sphere meeting $K$
in three points, on which $\phi$ will be zero.

We return to a general $(Y,K)$, with the marking data now represented
by $w$ as above. Let $v_{1}$, $v_{2}$ be two vertices of $K$, and let
$B_{1}$, $B_{2}$ be standard ball neighborhoods of these, each meeting
$K$ in three arcs joined at the vertex, disjoint from $w$. Let $Y^{+}$ 
be the oriented $3$-manifold obtained by removing the balls $B_{i}$ and
identifying the two $S^{2}$ boundary components. (Topologically this
is $Y\#(S^{1}\times S^{2})$.) We make the identification so that the
three points on $\partial B_{1}$ where $K$ meets $\partial (Y\sminus
B_{1})$ are identified (in any order) with the corresponding points on
$\partial B_{2}$. In this way we obtain a new pair $(Y^{+} K^{+})$, though
not uniquely. Because it is disjoint from the balls, we may regard $w$
as a submanifold also of $Y^{+}$.

\begin{proposition}\label{prop:excise-vertices}
    In the above situation, if $I^{w}(Y^{+}, K^{+})$ is non-zero, then so
    too is $I^{w}(Y, K)$.
\end{proposition}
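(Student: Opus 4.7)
The plan is to deduce the proposition from Floer's excision theorem for $3$-pointed spheres (Theorem~\ref{thm:excision-street}), applied to the two $3$-pointed spheres $S_1 = \partial B_1$ and $S_2 = \partial B_2$ in $Y$. The geometric content of the excision move is the following observation: cutting $(Y, K)$ along $S_1 \cup S_2$ produces three pieces, namely the vertex balls $B_1$ and $B_2$ (each a ball containing one trivalent vertex of $K$) and the outer region $Y^\circ := Y \setminus (B_1 \cup B_2)$ (a manifold with two $3$-pointed sphere boundary components). These three pieces admit two natural re-assemblies: the original reassembly recovers $(Y, K)$; alternatively, identifying $\partial B_1$ with $\partial B_2$ produces the standard pair $(S^3, \Theta)$, where $\Theta$ is a theta graph, while simultaneously identifying the two boundary spheres of $Y^\circ$ with one another produces exactly $(Y^+, K^+)$.

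Applied to this configuration, the excision principle should yield an identification of the form
\[
       I^w(Y, K) \;\cong\; I^w(Y^+, K^+) \otimes I(S^3, \Theta),
\]
where the marking $w$ (disjoint from the $B_i$ by hypothesis) restricts to $Y^+$, and $(S^3, \Theta)$ carries the trivial marking. The trivial marking on $(S^3, \Theta)$ is strong by Lemma~\ref{lem:marking}(\ref{it:vetex-strong}), since $\Theta$ has vertices. From the description of representation varieties in Section~\ref{subsec:RepVarieties}, the representation variety for the theta graph is a single $\SO(3)$-orbit, namely the flag manifold $\SO(3)/V_4$; in particular $I(S^3, \Theta)$ is non-zero, and the displayed isomorphism immediately gives the non-vanishing of $I^w(Y, K)$ from that of $I^w(Y^+, K^+)$.

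The main obstacle is that Theorem~\ref{thm:excision-street} as stated assumes that the two $3$-pointed spheres lie in distinct components of the bifold, whereas here both $S_1$ and $S_2$ lie in the single component $Y$, and the new re-assembly moves pieces across this component rather than swapping them between two. As flagged in the remark after Theorem~\ref{thm:excision-street}, this non-separating case requires careful handling of markings, because the configuration spaces involved differ by covers parametrized by subgroups of $H^1(\cdot; \F)$ as in Lemma~\ref{lem:covering}. With the correct choice of auxiliary markings, Floer's original gluing argument for the excision theorem applies without essential change; if the clean tensor-product identity above is not directly attainable on the nose, one may instead argue via the Gysin-type device of Section~\ref{subsec:J-to-I} to convert a weaker covering relation between the two sides into the desired non-vanishing implication, which is all that is needed for this proposition.
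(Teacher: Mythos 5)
Your proposal follows essentially the same route as the paper: excise along the two $3$-pointed spheres $\partial B_1$, $\partial B_2$ to split off a theta-graph factor $(S^3,\Theta)$, and handle the non-separating case (the paper actually gets $I^w(Y,K)\cong I^w(Y^+,K^+)^\psi\otimes I(S^3,\Theta)$ for a $2$-element subgroup $\psi$) by the Gysin/double-cover device from Section~\ref{subsec:J-to-I}, exactly as you suggest. One small inaccuracy: the flag manifold $\SO(3)/V_4$ is the representation variety in the $\Rep^\sharp$ (i.e.~$\Jsharp$) normalization, whereas here the relevant invariant is $I(S^3,\Theta)$ in the $I^w$ normalization, which the paper computes to be $\F$ — but since you only need non-vanishing, this does not affect the argument.
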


\begin{proof}
     This is an application of excision. Let $S_{1}$ and $S_{2}$ be
     the spheres with three marked points, $\partial B_{1}$ and
     $\partial B_{2}$. If we cut $(Y,K)$ along $S_{1}$ and $S_{2}$ and
     re-glue, then we obtain a disconnected manifold with two
     components:
     \[
                  (Y' , K') = (Y^{+}, K^{+}) \cup (S^{3}, \Theta).
      \] 
     Here $\Theta$ is a standard Theta graph in $S^{3}$.  Let
     $S\subset Y^{+}$ be the new $2$-sphere where the two surfaces are
     identified. 
      Combing the
     argument of \cite[Theorem~5.6]{KM-unknot} with
     Theorem~\ref{thm:excision-street}, 
     there is an excision
     isomorphism,
     \[
     \begin{aligned}
         I^{w}(Y,K) &\cong I^{w}(Y' , K')^{\psi} \\
                    & =  I^{w}(Y^{+} , K^{+})^{\psi} \otimes I(S^{3},\Theta),
     \end{aligned}
     \]
     where the subgroup $\psi\subset H^{1}(Y'; \F)$ is the
     $2$-element subgroup generated by the class dual to $S$.

      Since the $I(S^{3}, \Theta) = \F$, we see that non-vanishing of
      $I^{w}(Y,K)$ is equivalent to non-vanishing of $I^{w}(Y^{+},
      K^{+})^{\psi}$. By the double-cover argument of
      Section~\ref{subsec:J-to-I}, this is implied by the
      non-vanishing of $I^{w}(Y^{+}, K^{+})$.
\end{proof}

If $K\subset S^{3}$ is a web, let $2n$ be the number of
vertices (which is always even). We may pair these up arbitrarily, and
apply the above topological construction $n$ times to obtain a new pair $(Y^{+} ,
K^{+})$ with no vertices at all. The manifold $Y^{+}$ will be a
connected sum of $n$ copies of $S^{1}\times S^{2}$. We call this
process \emph{excising all the vertices}.

\subsection{A taut sutured manifold}

Let us turn again to the situation of
Proposition~\ref{prop:reduction-1}. In the notation
\eqref{eq:unknot-notation}, we are looking at $I^{u}(S^{3}, K\cup
H)$. Excising all the vertices and applying
Proposition~\ref{prop:excise-vertices}, we see that it suffices to
prove  the following variant:

\begin{reduction}\label{var:K+Hopf}
    Suppose $K \subset S^{3}$ is a non-split web    
    without an embedded bridge and let $H$ and $u$ be as before.
    Let $(Y^{+}, K^{+})$ be obtained from $(S^{3},K)$ be excising all the
    vertices. Then $I^{u}(Y^{+}, K^{+} \cup H)$ is  non-zero.
\end{reduction}

Our next step deals with the Hopf link $H$. We state a lemma which
involves summing a Hopf link onto one of the components of a link $K$.

\begin{lemma}
    Let $K\subset Y$ be a web and $w\subset Y\sminus K$ a
    union  of arcs representing a choice of $w_{2}$. Let $H$ and $u$
    be a Hopf link and arc as usual, contained in a ball disjoint from
    $K$. Let $m$ be a meridian of some edge $e$ of $K$, i.e.~the boundary
    of a disk transverse to $e$, and let $v$ be an arc joining
    $m$ to $e$ in the disk. Then (with coefficients still $\F)$),
    we have
    \[
               \dim I^{w+u}(Y, K\cup H) = 2 \dim I^{w+v}(Y, K\cup m).
    \]
\end{lemma}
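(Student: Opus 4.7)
The plan is to apply Floer's torus excision together with the covering-space argument of Section~\ref{subsec:J-to-I}, extracting the factor of $2$ from a chain-level splitting of a $\Z/2$-cover of Floer complexes.

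In the left-hand configuration $(Y, K\cup H, w+u)$ I would take $T_1$ to be the boundary of a small tubular neighborhood $V_1$ of one component $h_1$ of the Hopf link $H$, sitting inside the ball where $H$ lives. In the right-hand configuration $(Y, K\cup m, w+v)$ I would take $T_2$ to be the boundary of a small tubular neighborhood $V_2$ of the meridian $m$. Both tori lie in the smooth part, are separating, and are disjoint from their respective webs; and because the arcs $u$ and $v$ each cross their respective torus exactly once, both tori have non-trivial $w_2$-restriction, so Floer's excision applies. The two interior pieces $V_1$ and $V_2$ are topologically the same decorated solid torus, namely a core circle joined to the boundary by a marking arc, so the excision essentially repackages the exterior data. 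Applying Floer excision to the disjoint union along $T_1\sqcup T_2$ with a bundle-preserving identification $T_1\to T_2$ yields
\[
    \dim I^{w+u}(Y, K\cup H)\cdot\dim I^{w+v}(Y, K\cup m) \;=\; \dim J(Y_1')\cdot\dim J(Y_2'),
\]
where $Y_1', Y_2'$ are the two resulting bifolds.

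The key step is to arrange the identification $T_1\to T_2$ so that $Y_1'$ is isomorphic to $(Y, K\cup m, w+v)$ itself, while $Y_2'$ is the same underlying pair $(Y, K\cup m)$ but with \emph{enlarged} marking data: $U_\mu$ is extended over the former Hopf-link region, with the bundle on the enlargement matching the restriction of $E_{w+u}$. By Lemma~\ref{lem:covering}, such an enlargement of marking data produces a covering map of configuration spaces whose deck group is $H^1(\text{extra region};\F)$, which in our setting is $\F$. Consequently, the Floer chain complex of $Y_2'$ is a $\Z/2$-cover of that of $(Y, K\cup m, w+v)$. If this cover splits at the chain level---as it should, because the generator of the deck group is realized geometrically by an involution that exchanges the two $\SU(2)$-lifts of the extra marking bundle over the region where $h_2$ used to sit and is chain-homotopic to the identity on the Floer complex---then $\dim J(Y_2')=2\dim I^{w+v}(Y, K\cup m)$, and the excision identity above collapses to $\dim I^{w+u}(Y,K\cup H)=2\dim I^{w+v}(Y,K\cup m)$, as required.

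The main obstacle is the last splitting. The abstract covering lemma of Section~\ref{subsec:J-to-I} only yields the inequality $\dim H_\ast(\widetilde C)\le 2\dim H_\ast(C)$, not exact equality, so one must produce an explicit geometric symmetry of the Floer complex that realizes the nontrivial element of the deck group and is homotopic to the identity. This is where the bookkeeping with the $w_2$-class, the $\SU(2)$-lifts of the marking bundle, and the precise form of the identification $T_1\to T_2$ all have to come together; the most delicate aspect is checking that the two lifts over the former Hopf-link region contribute independently and equally to the Floer complex, so that the cover splits as $C\oplus C$ rather than as a nontrivial extension.
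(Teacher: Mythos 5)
There are two genuine gaps here, the first of which you do not flag and the second of which you do.

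First, the claimed outcome of the torus excision is not what the excision actually produces. Cutting $(Y,K\cup H,w+u)$ along $T_1=\partial V_1$ (a tubular neighborhood of $h_1$) and $(Y,K\cup m,w+v)$ along $T_2=\partial V_2$, and regluing, gives $Y_1'=(Y\sminus V_1)\cup V_2$ and $Y_2'=(Y\sminus V_2)\cup V_1$. But $Y\sminus V_1$ still contains the second Hopf component $h_2$, so $Y_1'$ carries the web $K\cup h_2\cup(\text{core of }V_2)$, not $K\cup m$; with the standard framing-matching gluing one simply recovers $(Y,K\cup H)$ and $(Y,K\cup m)$ up to isomorphism, learning nothing, and with any other gluing one gets a Dehn surgery whose effect is unexamined. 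Moreover, in the $I^w$ setting both objects have marking data with $U_\mu$ equal to the \emph{whole} manifold, so there is no room for the ``enlarged marking data'' step at all: after excision both $Y_1'$ and $Y_2'$ again carry full markings, and Lemma~\ref{lem:covering} does not apply in the way you need. You appear to be importing the $\Jsharp$-versus-$\Isharp$ ball/whole-manifold distinction (Section~\ref{subsec:J-to-I}) into a situation where both sides are already fully marked.

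Second, as you yourself acknowledge, even if the cover existed, the mod-2 Gysin sequence gives only $\dim H_*(\tilde C)\le 2\dim H_*(C)$; the exact factor of $2$ requires a real calculational input, and the proposed deck-group involution ``chain-homotopic to the identity'' is not justified and would in general be false. The paper's proof supplies precisely this missing input by a different route: it compares both sides to the intermediate group $I^{w+v+u}(Y,K\cup m\cup H)$, obtaining the factor of $2$ from the skein exact triangle of \cite{KM-unknot} (whose connecting map is shown to vanish over $\F$ by a model computation for the unknot, propagated by Corollary~\ref{cor:local-relations-webs}), and the factor of $4$ from a torus excision comparing $(Y,K\cup m)\amalg(S^3,H_1\cup H_2)$ with $(Y,K\cup m\cup H)\amalg(S^3,H)$. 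Your instinct that a double-cover phenomenon is responsible for the factor of $2$ is pointing in a reasonable direction, but without the skein triangle (or an equivalent computation) the argument does not close.
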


\begin{proof}
     We shall compare both the left- and right-hand sides of the
     equation to the dimension of the instanton homology when
     \emph{both} $m$ and $H$ are included,
    \[
                I^{w+v+u}(Y, K\cup m \cup H).
    \]

     The skein exact sequence of \cite{KM-unknot} gives an exact
    triangle
\[
               \dots  \stackrel{\partial}{\to} I^{w+u}(Y, K\cup H) \to      
             I^{w+v+u}(Y, K\cup m \cup H) \to I^{w+u}(Y,
               K\cup H) \stackrel{\partial}{\to} \dots.
\]
In the special case that
$K$ is the unknot in $S^{3}$ and $w$ is empty, we know that $ I^{w+v}(Y,
               K\cup H)$ has dimension $2$, and  $I^{w+v+u}(Y, K\cup \cup
               m\cup H)$ has dimension $4$; so in this case the connecting homomorphism
\[
               I^{w+u}(Y,
               K\cup H) \stackrel{\partial}{\to}  I^{w+u}(Y,
               K\cup H)
\]
is zero (with $\F$ coefficients). By an application of
Corollary~\ref{cor:local-relations-webs} it follows that $\partial=0$ for  \emph{all}
$K$ and $w$, and hence
    \[
               \dim I^{w+v+u}(Y, K \cup m \cup H) = 2 \dim I^{w+u}(Y, K\cup H).
    \] 

On the other hand we also have
\begin{equation}\label{eq:dim-times-4}
               \dim I^{w+v+u}(Y, K \cup m \cup H) = 4 \dim I^{w+u}(Y, K\cup m).
\end{equation}
To see this we can take the disjoint union of $Y$ with a $3$-sphere
containing two Hopf links,
\[
                         (Y, K \cup m) \amalg (S^{3}, H_{1} \cup H_{2}),
\]
and apply the excision principle, cutting and along tori to see that
the instanton Floer homology of this union is the same as that of
\[
            (Y, K \cup m \cup H) \amalg (S^{3}, H ).
\]
The formula \eqref{eq:dim-times-4} then follows from the fact that
\[
            \dim I^{u_{1}+u_{2}}(S^{3}, H_{1} \cup  \cup H_{2}) = 4 \dim I^{u}(S^{3}, H).
\]
\end{proof}

In the situation described in Variant~\ref{var:K+Hopf}, let $r$ be the
number of components of the link $K^{+}$. (This number will depend on
how we chose to make the identifications when we excised all the
vertices.) To prove non-vanishing of $I^{u}(Y^{+}, K^{+} \cup H)$, it
will suffice to prove non-vanishing of
\[
                 I^{u_{1}+\dots+ u_{r}}(Y^{+}, K^{+} \cup H_{1}\cup
                 \dots \cup H_{r})
\]
because of the tensor product rule for $\Isharp$ of disjoint
unions. By the Lemma above, the rank of this group is $2^{r}$ times
the rank of the group
\[
            I^{v}(Y^{+}, K^{+}\cup m)
\]
where $m=m_{1}\cup \dots \cup m_{r}$ is a collection of meridians, one
for each component of $K^{+}$, and $v=v_{1}+\dots+v_{r}$ is a
collection of standard arcs. 
We arrive at the
following variant which is sufficient to prove
Theorem~\ref{thm:non-vanishing}.

\begin{reduction}
    Let the web $K\subset S^{3}$ be
    non-split and without an embedded
    bridge, let $(Y^{+}, K^{+})$ be obtained by excising all  the vertices,
    let $m$ be a collection of meridians of $K^{+}$, one for each
    edge, and let $v$ be a collection of standard arcs joining each
    meridian to the corresponding component. Then
    \[
              I^{v}(Y^{+}, K^{+} \cup m )
    \]
    is non-zero.
\end{reduction}

If $K^{+}$ has only one component, then $I^{v}(Y^{+}, K^{+} \cup m)$
is exactly the group called $\Inat(Y^{+}, K^{+})$ in \cite{KM-unknot},
though over $\F$ here. It is in fact possible to choose the
identifications when excising the vertices so that $r=1$, but we will
continue without making any restriction on $r$.  

We now use excision, to remove all the components of $K^{+}$ and $m$,
in the same way that was done for the case $r=1$ in
\cite{KM-unknot}. (See in particular 
 \cite[Proposition~5.7]{KM-unknot} for essentially the same
argument that we present now.) For each component $K^{+}_{i} \subset
K^{+}$ and corresponding meridian $m_{i}$, let $T'_{i}$ and $T''_{i}$
be boundaries of disjoint tubular neighborhoods of $K^{+}_{i}$ and
$m_{i}$ respectively. Let $\bar Y^{+}$ be the $3$-manifold obtained from
$Y^{+}$ be removing all the tubular neighborhoods and gluing $T'_{i}$
to $T''_{i}$ for all $i$. The gluing is done in such a way that a
longitude of $K^{+}_{i}$ on $T'_{i}$ is glued to a meridian of $m_{i}$
on $T''_{i}$, and vice versa. In $\bar Y^{+}$, let $\bar T_{i}$ be the
torus obtained by the identification of $T'_{i}$ and $T''_{i}$. We
arrange the arc $v_{i}$, for each $i$, so that each hits both $T'_{i}$
and $T''_{i}$ in one point, so that $\bar Y^{+}$ contains closed
curves $\bar v_{i}$ transverse to each $\bar T_{i}$. Let $\bar\phi$ be the
subgroup of $H^{1}(\bar Y^{+} ; \F)$ generated by the classes dual
to the $r$ tori $\bar T_{i}$. By excision argument from the proof of  
\cite[Proposition~5.7]{KM-unknot}, we have an isomorphism
\[
        I^{v}(Y^{+}, K^{+} \cup m ) \cong I^{\bar v}(\bar Y^{+})^{\bar\phi}.
\]
(Here $\bar v$, of course, 
is the sum of the $\bar v_{i}$. Note also that there is
no link or web $K$ in $\bar Y^{+}$.) Applying the double-cover
argument again, we see that we will be done if we can show that
$I^{\bar v}(\bar Y^{+})$ is non-zero.

At this point, we appeal to the universal coefficient theorem, which
tells us that a finite complex over $\Z$ has non-trivial homology over
$\F$ if it has non-zero homology over $\Q$. So we will aim to
prove that $I^{\bar v}(\bar Y^{+} ; \Q)$ is non-zero. As in Section~5
of \cite{KM-unknot}, this last group is essentially the same as the
sutured Floer homology of an associated sutured manifold. To state it
precisely, let $M$ be complement in $Y^{+}$ of a tubular neighborhood
of $K^{+}$. Let $(M,\gamma)$ be the sutured manifold obtained by
putting two meridional sutures on each boundary component. Then we
have
\[
       \dim I^{\bar v}(\bar Y^{+} ; \Q) = 2 \dim \SHI (M,\gamma),
\]
where $\SHI$ is the sutured instanton Floer homology from
\cite{KM-sutures}. Our proof will be complete if we can show that
$\SHI(M,\gamma)$ is non-zero. According to
\cite[Proposition~7.12]{KM-sutures}, the sutured instanton homology is
non-zero provided that $(M,\gamma)$ is \emph{taut} in the sense of
\cite{Gabai}. (The proof of that Proposition rests on the existence of
sutured manifold hierarchies for taut sutured manifolds, proved in
\cite{Gabai}.) So the following variant will suffice:

\begin{reduction}
    Let the web $K\subset S^{3}$ be
    non-split and without an embedded
    bridge, let $(Y^{+}, K^{+})$ be obtained by excising all the
    vertices (so $Y^{+}$ is a connected sum of $n$ copies of
    $S^{1}\times S^{2}$), and let $(M,\gamma)$ be the sutured
    link complement. Then $(M,\gamma)$ is taut.
\end{reduction}

\subsection{Proof of tautness}

We will now show that $(M,\gamma)$ is taut. From the definition, this
means establishing:
\begin{enumerate}
\item \label{it:taut-sphere} every $2$-sphere in $M$ bounds a ball;
    and
 \item \label{it:taut-disk} no meridional curve on $\partial M$ bounds
     a disk.
\end{enumerate}
(The meridional curves are the curves parallel to the sutures.)

By its construction, $M$ contains $n$ pairs of pants $P_{1}, \dots,
P_{n}$, obtained from the standard $2$-spheres in $\# (S^{1}\times
S^{2})$ by removing the neighborhood of $K^{+}$. Write $P$ for their
union. If we cut $M$ along $P$, we obtain a $3$-manifold diffeomorphic
to the complement of a regular neighborhood of $K\subset
S^{3}$. The hypothesis that $K$ is non-split can therefore be restated
as the condition that every $2$-sphere in $M\sminus P$ bounds a ball
in $M\sminus P$. Similarly, the hypothesis that $K$ has no embedded
bridge can be stated as the condition that no meridional curve on
$(\partial M \sminus P)$ bounds a disk in $M\sminus P$.

So, for a proof by contradiction, 
it will suffice to establish the following: 

\begin{lemma}\label{lem:taut-to-taut}
If there is a sphere
$S\subset M$ contradicting \ref{it:taut-sphere}, or a disk
$D\subset M$ contradicting \ref{it:taut-disk}, then there is
another sphere $S'$ or disk $D'$ (not necessarily ``respectively'') 
contradicting \ref{it:taut-sphere} or
\ref{it:taut-disk} 
with the additional property that it is disjoint
from $P$.
\end{lemma}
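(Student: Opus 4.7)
The plan is to show that a counterexample to tautness can be arranged, after isotopy and surgery, to be disjoint from $P$. Choose a counterexample $C$---either a sphere $S$ violating \ref{it:taut-sphere} or a meridional disk $D$ violating \ref{it:taut-disk}---with $|C\cap P|$ minimal among all counterexamples. I claim this minimum is zero, which then gives the lemma. For the disk case, $\partial D$ is a meridional curve on some torus boundary component $T\subset\partial M$ and $\partial P\cap T$ is a disjoint union of meridians on $T$; since parallel meridians on a torus can be isotoped to be disjoint, we may arrange $\partial D\cap \partial P=\emptyset$, after which $C\cap P$ consists of circles only. For a sphere, $S\cap P$ is a union of circles automatically. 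Supposing $|C\cap P|>0$, pick a circle $c\subset C\cap P$ that is innermost on $C$, bounding a disk $E\subset C$ with $\mathrm{int}(E)\cap P=\emptyset$. The curve $c$ lies on some pair of pants $P_{i}$, and since every simple closed curve in a pair of pants is either inessential or parallel to a boundary circle, two cases arise.

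Case 1: $c$ bounds a disk $E'\subset P_{i}$. Choosing $E'$ innermost among such disks, we may take $\mathrm{int}(E')\cap C=\emptyset$. Then $E\cup E'$ is an embedded $2$-sphere in $M$; after a small push of $E'$ off $P$ to the side containing $\mathrm{int}(E)$, it lies in $M\sminus P$. Under the identification of $M\sminus P$ with the complement in $S^{3}$ of a regular neighborhood of $K$, the non-split hypothesis on $K$ gives that this sphere bounds a $3$-ball $B$. Using $B$ as a guide, isotope $C$ by pushing $E$ across $B$ to the opposite side of $P$. The result is a counterexample $C'$ isotopic to $C$ (still failing the same tautness condition) with $|C'\cap P|=|C\cap P|-1$, contradicting minimality. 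Hence Case 1 does not occur.

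Case 2: $c$ is parallel in $P_{i}$ to one of its boundary circles, a meridian $m'$ of $K^{+}$ lying on $\partial M$. Let $A\subset P_{i}$ be the annulus bounded by $c$ and $m'$. Then $E\cup A$ is an embedded disk in $M$ with boundary $m'$, meeting $P$ exactly in $A$. Using a bicollar of the two-sided surface $P_{i}$ in $M$, push $A$ slightly off $P_{i}$ to the side opposite $\mathrm{int}(E)$; the boundary $m'$ slides along a thin strip of $\partial M$ to a parallel meridian $m''$. The resulting disk $D'\subset M$ is disjoint from $P$ and has boundary the meridional curve $m''$. Thus $D'$ is a meridional disk disjoint from $P$---precisely the counterexample demanded by the lemma.

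The main technical point is Case 2: one has to arrange the push-off of $A$ so that its boundary stays on $\partial M$ as a meridional curve. The key geometric observation is that a bicollar of $P_{i}$ meets $\partial M$ in a collar of $\partial P_{i}$, so pushing $A$ to one side slides $m'\subset \partial M$ across a thin strip of $\partial M$ to a parallel meridian. With this understood, Case 1 is eliminated by minimality and Case 2 produces the required disk disjoint from $P$, so the lemma follows.
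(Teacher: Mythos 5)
Your broad strategy (minimize $|C\cap P|$ over counterexamples and run an innermost-disk argument) is in the same spirit as the paper's, but the execution has gaps. The paper handles disks and spheres by two separate arguments, and in both it chooses a curve of $C\cap P$ innermost \emph{in $P$} (so the disk or annulus it cobounds in $P$ has interior disjoint from $C$), never a curve innermost on $C$ itself.

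In your Case~1 you choose $c$ innermost on $C$ and then assert, by ``choosing $E'$ innermost among such disks,'' that $\mathrm{int}(E')\cap C=\emptyset$. This does not follow: once $c$ is fixed, the disk $E'\subset P_i$ is determined, and its interior may well contain other circles of $C\cap P$. If you instead choose $c$ so that $E'$ is innermost in $P$, then you lose the property $\mathrm{int}(E)\cap P=\emptyset$, so $E$ is no longer disjoint from $P$. The two innermost conditions cannot in general be arranged simultaneously for the same circle, and your argument needs both. Moreover, even granting them, the ball $B$ you obtain from non-splitness of $K$ is only known to lie in $M\sminus P$; nothing prevents it from meeting $C\sminus E$, so the isotopy ``push $E$ across $B$'' is not justified and may create new intersections with $P$. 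The paper circumvents this in the sphere case by surgering $S$ along the disk $\delta\subset P$ (which has interior disjoint from $S$, since it is innermost in $P$), invoking minimality to deduce that both pieces bound balls $B_1$, $B_2$, and then observing that the union $B_1\cup B_2\cup(\text{collar of }\delta)$ is an \emph{immersed} ball with boundary $S$, which still forces $S$ to bound an embedded ball. This sidesteps the disjointness issue entirely.

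In Case~2 the direction of the push is reversed: the annulus $A$ must be pushed off $P_i$ to the \emph{same} side as $\mathrm{int}(E)$ near $c$. Pushing to the opposite side produces a disk that still crosses $P_i$ in a circle near $c$. With the direction corrected, your Case~2 does directly produce a meridional disk disjoint from $P$, and in that respect is actually a little slicker than the paper's treatment of the corresponding subcase (the paper instead forms a possibly-immersed disk from $\alpha$ and one of the two disks of $S$, and appeals to Dehn's lemma to get an embedded one). But the gaps in Case~1 are genuine and the argument as written does not go through.
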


\begin{proof}
    Suppose first that $D$ is a disk contradicting
    \ref{it:taut-disk}. We may assume that the boundary of $D$ is
    disjoint from $P$ and that $D$ meets $P$ transversely in a collection of
    disjoint simple closed curves. Assume there is no disk $D'$ with
    meridional boundary
    disjoint from $P$. Then the number of curves in $D\cap P$ is
    always non-zero, and we can choose $D$ to minimize this
    number.

     Any curve on $P$ either bounds a disk or is parallel to a
    boundary component of $P$. Given any collection of curves $C$ on $P$,
    there is always at least one curve among them that either bounds a
    disk  $\delta$ in $P$
    disjoint from the other components of $C$, or cobounds with
    $\partial P$ an annulus $\alpha$ which is disjoint from the other
    curves in $C$. Applying this statement to $C=D\cap P$, we let
    $\gamma$ be a component of $D\cap P$ that either bounds a disk
    $\delta$, or cobounds an annulus $\alpha$, disjoint from the other
    curves of intersection. Suppose $\gamma$ bounds a disk
    $\delta$. The curve $\gamma$ also lies on the embedded disk
    $D$. Let $A\subset D$ be the annulus bounded by $\partial D$ and
    $\gamma$. The interior of $A$ may intersect $P$, but will do so
    in a collection of curves that is smaller than $D\cap P$. From the
    union of $A$ and a parallel push-off of the disk $\delta$ (pushed
    off to be disjoint from $P$, on the same side locally as $A$), we
    form a new disk $D^{*}$ with the same boundary as $D$ but with
    fewer curves of intersection with $P$.  This contradicts the
    choice of $D$. In the case that $\gamma$ cobounds an annulus
    $\alpha\subset P$, and the interior of $\alpha$ is disjoint from
    $D$, we define $A$ as before and consider the disk formed as the
    union of $\alpha$ and $D\sminus A$. Pushing $\alpha$ off $P$ as
    before, we obtain a disk $D^{*}$ with meridional boundary having
    fewer curves of intersection with $P$ than $D$ did: the same
    contradiction.

    Now suppose there is a sphere $S\subset M$    
    contradicting \ref{it:taut-sphere}. Suppose that every
    sphere $S'$ in $M\sminus P$ bounds a ball in $M\sminus P$. Then
    $S$ must intersect $P$.
    Arrange that $S\cap P$ is transverse and let $C$ be this
    collection of curves. If a component $\gamma\subset C$ is isotopic
    in $P$
    to a boundary curve, then let $\alpha\subset P$ be the annulus
    bounded by $\gamma$ and the boundary of $P$. The curve $\gamma$
    cuts $S$ into two disks. The union of one of these disks with
    $\alpha$ is an immersed disk in $M$ with meridional boundary. By
    Dehn's lemma, there is also an embedded disk $D$ in $M$, with the same
    boundary. This takes us back to case \ref{it:taut-disk}, which we
    have already dealt with.

    There remains the case that every component of $C=S\cap P$ bounds
    a disk in $P$. Let us choose $S$ so that the number of components
    of $C$ is as small as possible. Among the components of $C$, there
    is a curve $\gamma$ that bounds a disk $\delta\subset P$ which is
    disjoint from the other curves in $C$. If we surger $S$ along
    $\delta$, we obtain two spheres $S_{1}$ and $S_{2}$. Let $C_{1}$
    and $C_{2}$ be their curves of intersecton with $P$. We have
    $C_{1} \cup C_{2} = C\sminus\gamma$. So, by our minimality
    hypothesis on $S$, it follows that both $S_{1}$ and $S_{2}$ bound
    balls, $B_{1}$ and $B_{2}$ in $M$. The union of the balls,
    together with a two-sided collar of $\delta$, is an immersed ball
    $B$ in $M$ with boundary $S$. (The ball is only immersed because
    $B_{1}$ and $B_{2}$ may not be disjoint.) An embedded sphere which bounds an
    immersed ball also bounds an embedded ball. 
    So $S$ bounds a ball in $M$,
    contrary to hypothesis.
\end{proof}

The proof of this lemma completes the proof of
Theorem~\ref{thm:non-vanishing}. It is clear from the proof that a
more general result holds, which we state here:

\begin{theorem}
Let $Y$ be a closed, oriented $3$-manifold and $K\subset Y$ an embedded
web. Suppose that the complement $K\sminus Y$ is
irreducible (every $2$-sphere bounds a ball), and that $K$ has no
embedded bridge (no meridian of $K$ bounds a disk in the
complement). Then $\Jsharp(Y,K)$ is non-zero.    
\end{theorem}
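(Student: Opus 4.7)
The plan is to follow the proof of Theorem~\ref{thm:non-vanishing} essentially verbatim, with $\R^{3}$ replaced throughout by the general closed oriented $3$-manifold $Y$. The hypothesis that $Y\sminus K$ is irreducible is strictly stronger than non-splitness, so one can bypass the opening reduction via the split-union multiplicativity of Corollary~\ref{cor:tensor-product} and proceed directly to the main chain of reductions.

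Concretely, I would first apply the double-cover lemma of Section~\ref{subsec:J-to-I} to reduce the non-vanishing of $\Jsharp(Y,K)$ to that of $\Isharp(Y,K) = I^{u}(Y,K\cup H)$, where $H$ is a standardly placed Hopf link near the basepoint and $u$ an arc joining its components. Next apply Proposition~\ref{prop:excise-vertices} to all $2n$ vertices in pairs, producing a genuine link $K^{+}$ in $Y^{+} := Y \# n(S^{1}\times S^{2})$. The Hopf-link-to-meridian calculation of Section~\ref{subsec:J-to-I} then converts the problem to non-vanishing of $I^{v}(Y^{+}, K^{+}\cup m)$ for $m$ a collection of meridians, and a further torus excision following \cite[Proposition~5.7]{KM-unknot}, together with the double-cover lemma and the universal coefficient theorem, reduces matters to showing that the sutured instanton homology $\SHI(M,\gamma)$ is non-zero over $\Q$, where $M = Y^{+}\sminus\nu(K^{+})$ and $\gamma$ consists of two meridional sutures on each boundary torus.

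The only step that genuinely depends on the hypotheses on $(Y,K)$ is the verification of tautness of $(M,\gamma)$, and this is where I expect whatever (very mild) obstacle there is to lie. The key observation is that if $P$ denotes the union of the $n$ pairs of pants in $M$ coming from the boundaries of the excised vertex-balls, then cutting $M$ along $P$ undoes the excision and recovers $Y\sminus\nu(K)$. Under this identification, the irreducibility of $Y\sminus K$ translates to the statement that every $2$-sphere in $M\sminus P$ bounds a ball in $M\sminus P$, and the no-embedded-bridge hypothesis translates to the statement that no meridional curve on $\partial M\sminus P$ bounds a disk there. Lemma~\ref{lem:taut-to-taut} is proved by a purely innermost-curve argument on the pants $P$ and transfers verbatim to this setting, upgrading these local statements to the tautness of $(M,\gamma)$. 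Invoking \cite[Proposition~7.12]{KM-sutures} and Gabai's existence theorem for sutured hierarchies \cite{Gabai} then completes the proof. The only thing I would sanity-check is that none of the intermediate excision, skein, and double-cover identities used along the way secretly assumes $Y = \R^{3}$; since each is stated in the preceding sections at the level of general marked bifold cobordisms, this inspection should be routine.
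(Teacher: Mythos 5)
Your proposal matches the paper's own proof essentially verbatim: the paper likewise states that the entire chain of reductions for Theorem~\ref{thm:non-vanishing} carries over with $Y$ in place of $S^3$, leaving only the tautness of $(M,\gamma)$ to be checked, and that Lemma~\ref{lem:taut-to-taut} applies without change. Your added observation that irreducibility of $Y\sminus K$ subsumes the non-split hypothesis (so the split-union multiplicativity step can be skipped) is correct and a reasonable streamlining.
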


\begin{proof}
    From $K\subset Y$, we form $(Y^{+}, K^{+})$ as before by excising
    all vertices. So $Y^{+}$ is the connected sum of $Y$ with $n$
    copies of $S^{1}\times S^{2}$ and $K^{+}$ is an $r$-component link
    in $Y^{+}$. Let $(M,\gamma)$ be the sutured link
    complement, with $2$ meridional sutures on each component of the
    $r$ components of the boundary. The same sequence of variants
    applies, and we left to show again that $(M,\gamma)$ is
    taut. The proof of Lemma~\ref{lem:taut-to-taut} still applies,
    without change.
\end{proof}

\section{Tait-colorings, $O(2)$ representations and other topics}
\label{sec:further}

\subsection{$O(2)$ connections and Tait colorings of spatial and 
planar graphs}

Let $(Y,K)$ be a connected $3$-manifold containing a web $K$ with at
least one vertex. As discussed in Section~\ref{sec:examples-Rep}, an
$\SO(3)$ bifold connection $(E,A)$ has automorphism group
$\Gamma(E,A)$ which is either $V_{4}$, $\Z/2$ or trivial, according as
the corresponding connection is a $V_{4}$-connection, a fully
$O(2)$-connection, or a fully irreducible connection respectively.
We write $\Rep(Y,K)$ for the space of flat bifold
connections.

The following lemma is a straightforward observation. (See also
Lemma~\ref{lem:V4-is-Tait}.)

\begin{lemma}\label{lem:O2-implies-V}
    If $K$ is a planar graph and $\Rep(K)$ contains an $O(2)$
    connection, then $\Rep(K)$ also admits a $V_{4}$ connection, and
    $K$ therefore admits a Tait coloring.
\end{lemma}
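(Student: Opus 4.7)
The plan is to translate the $O(2)$-representation into a combinatorial structure on $K$ that is already visibly Tait-like, and then verify that a small parity constraint holds automatically. Write $O(2)\subset\SO(3)$ as the stabilizer of the $z$-axis. The elements of order $2$ in this $O(2)$ are of two mutually distinguishable kinds: the single element $\sigma=\mathrm{rot}_{z}$ (rotation by $\pi$ about the $z$-axis, the unique involution in the identity component), and the circle's worth of involutions $R_{\theta}$ given by rotation by $\pi$ about the axis $(\cos\theta,\sin\theta,0)$, parametrized by $\theta\in\mathbb{R}/\pi\mathbb{Z}$.

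First I would look at the three meridians $m_{1},m_{2},m_{3}$ at any vertex of $K$, which satisfy the relation $\rho(m_{1})\rho(m_{2})\rho(m_{3})=1$ in $O(2)$. A small case analysis, using the identities $\sigma^{2}=1$, $\sigma R_{\theta}=R_{\theta}\sigma=R_{\theta+\pi/2}$, and $R_{\alpha}R_{\beta}=\mathrm{rot}_{z}^{2(\alpha-\beta)/\pi}$, eliminates every possibility except the following: exactly one of the three meridians is sent to $\sigma$, and the other two are sent to $R_{\theta_{1}}$, $R_{\theta_{2}}$ with $\theta_{1}-\theta_{2}\equiv\pi/2\pmod{\pi}$. (Three $R$'s would give a reflection, not the identity; any configuration with two or three $\sigma$'s gives $\sigma$ or $R$, never $1$.)

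From this I would read off a combinatorial decomposition of the edge set. Let $E_{\sigma}=\{e:\rho(m_{e})=\sigma\}$ and $E_{R}=\{e:\rho(m_{e})\text{ is a reflection}\}$. By the previous paragraph, $E_{\sigma}$ is a perfect matching of $K$, and $E_{R}$ is a $2$-regular subgraph, hence a disjoint union of cycles. For each cycle $C=v_{1}e_{1}v_{2}e_{2}\cdots v_{n}e_{n}v_{1}$ in $E_{R}$, the reflection angles $\theta(e_{i})\in\mathbb{R}/\pi\mathbb{Z}$ of consecutive edges differ by $\pi/2$ (the vertex relation applied at $v_{i+1}$). Going once around the cycle, the total change is $n\cdot(\pi/2)\equiv 0\pmod{\pi}$, which forces $n$ to be even. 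This is the crucial parity step, and it is the only place the geometry of $O(2)$ really does work for us.

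Finally, I would assemble the Tait coloring: color every edge of $E_{\sigma}$ with color $3$, and $2$-color the edges of each even cycle in $E_{R}$ alternately with colors $1$ and $2$. The even length of each cycle makes this well-defined, and at every vertex the unique $E_{\sigma}$-edge gets color $3$ while the two $E_{R}$-edges meeting there are consecutive on their cycle and so carry the distinct colors $1$ and $2$. Invoking Lemma~\ref{lem:V4-is-Tait} then converts this Tait coloring back into a $V_{4}$-connection in $\Rep(K)$, finishing the proof. I expect the main obstacle to be stating the case analysis at a vertex cleanly, and the main subtlety is that planarity of $K$ is not actually used anywhere in this argument—the hypothesis is present, but the conclusion holds for any web admitting an $O(2)$-connection.
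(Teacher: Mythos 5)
Your overall strategy matches the paper's: sort edges into ``central'' ($\sigma$) and ``reflection'' types, observe the reflection edges form a $2$-factor, show each cycle in the $2$-factor has even length by tracking the reflection angle, and then read off a Tait coloring. The vertex-level case analysis is correct. But there is a genuine gap exactly where you assert planarity is unused, and in fact the paper explicitly states the opposite: ``For non-planar webs in $\R^{3}$, the existence of an $O(2)$ connection does not imply the existence of a Tait coloring.''

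The gap is in the step where you write down ``the reflection angle $\theta(e_{i})\in\R/\pi\Z$'' as though it were a well-defined invariant of the edge $e_{i}$. A representation $\rho$ assigns an element of $O(2)$ to each specific \emph{loop}, but ``the meridian of $e$'' is only a conjugacy class: different paths from the basepoint to a small circle around $e$ give loops differing by conjugation, hence values of $\rho$ differing by conjugation in $O(2)$, which translates the angle $\theta$. For the argument to work, one needs a single choice of meridian representative $m_{e}$ for every edge $e$ such that \emph{all} the vertex relations $\rho(m_{e_{1}})\rho(m_{e_{2}})\rho(m_{e_{3}})=1$ hold simultaneously with these fixed choices; only then does ``consecutive reflection angles differ by $\pi/2$'' become a globally coherent statement that can be summed around a cycle. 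You choose meridians locally at each vertex, but you never check that the meridian of $e$ chosen at one endpoint agrees with the one chosen at the other endpoint. The paper's proof supplies precisely this missing ingredient: for a planar web, one takes the basepoint above the plane and connects a standard loop down to each edge (Wirtinger-style), giving a canonical $\rho_{e}\in O(2)$ for which all the vertex relations hold at once. This is where planarity enters, and it cannot be dropped: for a spatial web the needed consistent choice of meridians need not exist, the angle $\theta(e)$ is not well-defined, and the parity argument around a cycle of the $2$-factor collapses.
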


\begin{proof}
    The are two different conjugacy classes of elements of order $2$
    in $O(2)$. There is the central element, $i$, and the conjugacy
    class of ``reflections'' that comprise the non-identity component
    of $O(2)$. If $K\subset \R^{3}$ is a (not necessarily planar) web
    and $\rho$ a fully $O(2)$ connection in $\Rep(K)$, then each edge
    of $K$ is labeled by an element that is either a reflection or
    central. Furthermore, two reflection edges meet at each trivalent
    vertex. The reflection edges therefore form  a \emph{2-factor} for
    $K$ (by definition, a set of edges with two edges incident at each
    vertex). The edges in the $2$-factor form a collection of closed
    curves.

    If the web is planar, then we can give a standard meridian
    generator for each edge, by connecting a standard loop to a point
    above the plane (as one does for the Wirtinger presentation of a
    knot group). For each edge $e$, we therefore have a well-defined
    element $\rho_{e}$ in $O(2)$. Along the loops formed by the edges
    of the $2$-factor, the elements $\rho_{e_{m}}$, $\rho_{e_{m+1}}$
    corresponding to consecutive edges differ by multiplication by
    $i$. There are therefore an even number of edges in each loop in
    the $2$-factor. There is therefore a Tait coloring of $K$, in which the
    edges in the $2$-factor are labeled alternately $j$ and $k$, and the
    remaining edges are labeled $i$.
\end{proof}

From these lemmas, we obtain an elementary reformulation of the
four-color theorem as the statement that, for any bridgeless planar web $K$, the
representation variety $\Rep(K)$ contains an $O(2)$ connection. 

For \emph{non-planar} webs in $\R^{3}$, the existence of an $O(2)$ connection does \emph{not} imply
the existence of a Tait coloring. Nor is it true that, for every web
$K$ in $\R^{3}$ without a spatial bridge, the representation variety
$\Rep(K)$ contains on $O(2)$ representation. (An example with no $O(2)$
representation can be constructed by starting with a non-split
2-component $L$ whose two components bound disjoint Seifert surfaces,
and taking $K$ to be the union of $L$ with an extra edge joining the
two components.) 

\subsection{Foams with non-zero evaluation, and $O(2)$ connections}

The remarks about $O(2)$ representations for webs in the previous
subsection are motivated in part by the following positive result
about $O(2)$ bifold connections for foams in $4$-manifolds.

\begin{proposition}\label{prop:O2-existence-foams}
    Let $\Sigma$ be a closed foam in a closed $4$-manifold $X$ (possibly
    with dots). If $\Jsharp(X,\Sigma)$ is
    non-zero, then for every Riemannian metric on the corresponding
    bifold $\check X$, there exists an anti-self-dual $O(2)$
    bifold connection.

     Furthermore, if $X$ admits an orientation-reversing
     diffeomorphism that fixes $\Sigma$ pointwise, then there exists a
     flat $O(2)$ bifold connection.
\end{proposition}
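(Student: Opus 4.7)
The plan is to prove the first statement by an Uhlenbeck compactness argument, and then to bootstrap to the second using the orientation-reversing involution together with orbifold Hodge theory.

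For the first statement, I fix a Riemannian metric $\check g$ on $\check X$ and aim to produce an ASD $O(2)$ bifold connection as an Uhlenbeck limit. By Definition~\ref{def:implied-ball}, the evaluation $\Jsharp(X,\Sigma)$ is a count mod $2$ of points in a zero-dimensional moduli space of perturbed ASD bifold connections on $\check X$, with the Hopf-link marking inserted along the arc joining the two removed balls and cut down by sections associated with any dots. The non-vanishing hypothesis says this count is $1$ for every good perturbation $\pi$, so each such moduli space is nonempty. I will choose good perturbations $\pi_n$ with $\|\pi_n\|_{\Pert} \to 0$ and select a solution $A_n$ in each corresponding moduli space. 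The actions $\kappa(A_n)$ are pinned down by the index formula of Proposition~\ref{prop:dimension}, so Uhlenbeck compactness in the bifold setting (Lemma~\ref{lem:Uhlenbeck}) produces, after passing to a subsequence, an honest unperturbed ASD bifold connection $A_\infty$ on $\check X$ together with a finite bubble set.

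The main step will be to argue that $A_\infty$ is $O(2)$-reducible. The strategy is to note that if $A_\infty$ were fully irreducible, its moduli space would be cut out transversally (for a generic nearby metric) with formal dimension given by \eqref{eq:dimension}; tracking the action via the monotonicity relation \eqref{eq:monotone} and accounting for energy loss at bubbles forces the stratum containing $A_\infty$ to have non-positive expected dimension, with a definite deficiency whenever bubbles occur. Combined with Sard--Smale transversality available on the fully irreducible locus, this rules out such a limit, leaving only the possibility that $A_\infty$ carries a non-trivial stabilizer---necessarily $V_4$ or $\Z/2$ in our setting---and in either case this stabilizer sits inside a copy of $O(2)$, producing the desired $O(2)$-reducible ASD connection. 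I expect this step to be the main technical obstacle, since it requires a careful bifold version of the standard irreducibility-versus-dimension dichotomy from Donaldson theory, including correct bookkeeping of the orbifold reducible locus and the contributions of tetrahedral-point and seam bubbles.

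For the second statement, I choose a $\tau$-invariant orbifold metric on $\check X$ and apply the first part to obtain an ASD $O(2)$ bifold connection $A$. Since $\tau$ reverses orientation and preserves $\Sigma$ pointwise, the pull-back $\tau^*A$ is a self-dual bifold connection carrying the same $O(2)$ reduction, up to gauge. The curvature forms $F_A$ and $F_{\tau^*A}$ are then harmonic $O(2)$-valued $2$-forms representing the same orbifold de Rham class, namely the (topological) first Chern class of the associated line bundle on the double cover of $\check X$ determined by the $O(2)$-reduction. Uniqueness of harmonic representatives in orbifold Hodge theory forces $F_A = F_{\tau^*A}$, and since this form is simultaneously ASD and SD it must vanish. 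Hence $A$ is flat, completing the argument.
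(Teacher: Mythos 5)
Your proposal departs from the paper's argument in both halves, and each departure leaves a real gap.

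\textbf{First statement.} Your Uhlenbeck-limit strategy misses a crucial geometric step. The moduli spaces that compute $\Jsharp(X,\Sigma)$ do not live on $\check X$: by Definition~\ref{def:implied-ball}, they live on the bifold obtained from $X$ by removing two balls, attaching cylindrical ends, and inserting the arc $[0,1]\times H$ with its marking --- in effect a connected sum $\check X \# \check Z$ where $Z=S^1\times S^3$ carries $S^1\times H$. Taking $\pi_n\to 0$ and passing to an Uhlenbeck limit yields a connection on that connected sum, not on $\check X$. The paper instead uses a neck-stretching (connected-sum) argument: for every metric on $\check X$, the nonvanishing produces instantons on $\check X\#\check Z$ in zero-dimensional moduli spaces, and stretching the neck forces either $\check X$ or $\check Z$ to carry a solution in a negative-dimensional moduli space; since the $\kappa=0$ moduli space on $\check Z$ has formal dimension $0$, the burden falls on $\check X$. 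You omit this transfer. The second gap is in your reducibility step: you invoke ``Sard--Smale transversality on the fully irreducible locus'' without supplying the mechanism. In the paper the mechanism is an adaptation of Donaldson's perturbation argument using the equivariant map $\psi:\SO(3)\to\mathfrak{so}(3)$, $A\mapsto A-A^T$, together with the observation that $\psi(H)$ spans $\mathfrak{so}(3)$ precisely when $H$ is not conjugate into $O(2)$. This is what lets one kill negative-dimensional moduli of fully irreducible solutions by a small holonomy perturbation, leaving only $O(2)$-reducibles; without it your ``definite deficiency'' argument doesn't close.

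\textbf{Second statement.} The key claim that $F_A$ and $\tau^*F_A$ ``represent the same orbifold de Rham class'' is not justified and is, in general, false. What you know is that $\tau^*F_A$ is harmonic (for a $\tau$-invariant metric) and self-dual, and that its class is $\tau^*[F_A]\in H^2(X\setminus\Sigma;\xi)$. There is no reason that $\tau^*[F_A]=\pm[F_A]$: even if the $O(2)$ reduction of $\tau^*E\cong E$ has the same underlying $\xi$, the cohomology class can land anywhere in the $\tau^*$-orbit, which is not generally $\{\pm[F_A]\}$. Hence uniqueness of harmonic representatives does not let you conclude $F_A=\tau^*F_A$, and the argument does not close. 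The paper's route avoids this entirely: it records the curvature of an ASD $O(2)$ connection as an element of $\mathcal{H}^-(\xi)$ lying in a fixed lattice in $\mathcal{H}^2(\xi)$, uses the standard genericity argument (for a generic metric, no nonzero lattice vector is ASD unless $b^+(\xi)=0$ and $b^-(\xi)\ne 0$), and then observes that an orientation-reversing diffeomorphism fixing $\Sigma$ forces $\dim\mathcal{H}^+(\xi)=\dim\mathcal{H}^-(\xi)$, which is incompatible with the exceptional case. If you want a Hodge-theoretic shortcut via $\tau$, you would need to establish at least that $\tau^*$ preserves the self-intersection form up to sign (which it does, but negates it) and then argue through the lattice/sign structure --- which brings you back to the paper's $b^+(\xi)=b^-(\xi)$ argument rather than to a harmonic-uniqueness identity.
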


\begin{proof}
    First of all, we can immediately reduce to the case of dotless
    foams by applying Proposition~\ref{prop:tori-as-dots}.

    Let $Z$ be the product $S^{1}\times S^{3}$ containing the foam
    $S^{1}\times H$, where $H$ is the Hopf link. Let $\check Z$ be the
    corresponding bifold. The non-vanishing of $\Jsharp(X,\Sigma)$
    implies the existence, for all metrics, of an anti-self-dual
    $\SO(3)$ bifold connection on the connected sum $\check X \# \check
    Z$. (The connected sum $X\# Z$ is the manifold we obtain when we
    regard $X$ as a cobordism from $S^{3}$ to $S^{3}$ by removing two
    balls, and then gluing the two boundary components together.)
    Furthermore, these anti-self-dual connections lie in
    $0$-dimensional moduli spaces. 

    By the usual connected sum argument, either $\check X$ or $\check
    Z$ carries a solution lying in a negative-dimensional moduli
    space. The $\kappa=0$ moduli space on $\check Z$ has formal
    dimension $0$, so in fact $\check X$ must carry an anti-self-dual
    connection in a negative-dimensional moduli spaces. Furthermore,
    this must remain true for any Riemannian metric on $\check X$ and
    any small $4$-dimensional holonomy perturbation on $\check X$.  The
    following lemma therefore completes the proof of the first
    assertion in the proposition.

    \begin{lemma}
        If a moduli space on $\check X$ has negative formal dimension
        and consists only of fully irreducible connections, then a
        small holonomy perturbation can be chosen so as to make the
        moduli space empty. 
    \end{lemma}
     
    \begin{proof}
        We adapt the argument in
    \cite{Donaldson-orientations} to our case of $\SO(3)$
    connections. The map $\psi :\SO(3) \to \mathfrak{so}(3)$ given by
    $A\mapsto A-A^{T}$ is equivariant for the adjoint action of
    $\SO(3)$, and has the property that if $H\subset \SO(3)$ is any
    subgroup that is not contained in a conjugate of $O(2)$ then
    $\psi(H)$ spans $\mathfrak{so}(3)$. Using $\psi$ to construct
    perturbations as in \cite[Section 2(b)]{Donaldson-orientations},
    one shows the existence of holonomy perturbation such that the
    moduli space is regular at all fully irreducible solutions.
    \end{proof}

To prove the last statement in
Proposition~\ref{prop:O2-existence-foams}, we note that any $O(2)$
connection determines a flat real line bundle on $\xi$ $X\setminus \Sigma$
that has non-trivial holonomy on the links of some $2$-dimensional surface
$S\subset \Sigma\subset X$ comprised of a some subset of the
facets. The curvature of an anti-self-dual $O(2)$ bifold connection
can be interpreted as an anti-self-dual harmonic $2$-form, $f\in
\mathcal{H}^{-}(\xi)$, with values in $\xi$ and lying in certain lattice in $\mathcal{H}^{2}(\xi)$.
An adaptation of the usual argument for reducible connections in
instanton moduli spaces shows that, for generic metric, any such
orbifold $2$-form is zero unless $\dim \mathcal{H}^{+}(\xi) = 0$ and
$\dim \mathcal{H}^{-}(\xi) \ne 0$. However, if $X$ has an
orientation-reversing diffeomorphism fixing $\Sigma$, then these two
vector spaces have the same dimension.
\end{proof}

\begin{corollary}
    Let $\Sigma$ be a closed foam in $\R^{3}\subset\R^{4}$. If
    $\Jsharp(\Sigma)$ is non-zero, then the foam $\Sigma$ admits a
    ``Tait coloring'': a coloring of the facets of $\Sigma$ by three
    colors so that all three colors appear on the facets incident to
    each seam. 
\end{corollary}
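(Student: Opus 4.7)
The plan is to derive this corollary from Proposition~\ref{prop:O2-existence-foams} by reproducing, for foams in $\R^{3}\subset\R^{4}$, the parity argument that underlies Lemma~\ref{lem:O2-implies-V} for planar webs. Reflection in the hyperplane $\R^{3}\subset\R^{4}$ extends to an orientation-reversing involution of $S^{4}$ which fixes $\Sigma$ pointwise, so under the hypothesis $\Jsharp(\Sigma)\ne 0$ the second half of Proposition~\ref{prop:O2-existence-foams} produces a flat $O(2)$ bifold connection on the bifold associated to $(S^{4},\Sigma)$, that is, a representation $\rho$ of its orbifold fundamental group into $O(2)$ sending each facet meridian to an order-$2$ element.

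Next I would use the normal direction to $\R^{3}$ to upgrade ``conjugacy class'' to ``group element.'' Choose a basepoint $y_{0}\in\R^{4}\setminus\R^{3}$ and, for each facet $F$, a standard meridian $m_{F}$ that drops from $y_{0}$ to $\Sigma$, encircles $F$, and returns along a fixed path; this yields a well-defined $\rho_{F}\in O(2)$ for every facet. Each $\rho_{F}$ is then either the central element $c=-I$ (call $F$ \emph{central}) or a reflection (call $F$ \emph{reflection}). At each seam, the relation $\rho_{F_{1}}\rho_{F_{2}}\rho_{F_{3}}=1$ combined with the sign homomorphism $O(2)\to\Z/2$ forces exactly one incident facet to be central and the other two reflection facets to satisfy $\rho_{F_{3}}=c\,\rho_{F_{2}}$, i.e.\ to have perpendicular axes.

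I would now assemble the Tait coloring. Give every central facet the color $1$. Give colors $2$ and $3$ to reflection facets by picking a base reflection facet in each component of the reflection subcomplex $\Sigma_{r}$ and propagating along $\Sigma_{r}$, switching color each time a seam is crossed. For well-definedness I need every combinatorial loop $F_{0}\to F_{1}\to\cdots\to F_{L}=F_{0}$ in $\Sigma_{r}$ to cross an even number of seams, but iterating $\rho_{F_{m+1}}=c\,\rho_{F_{m}}$ around the loop gives $\rho_{F_{0}}=c^{L}\rho_{F_{0}}$, forcing $c^{L}=1$ and hence $L$ even. The reasoning applies identically at ordinary seam points and at tetrahedral points, where a local $K_{4}$ computation shows that the two central facets form a perfect matching and the four reflection facets form a $4$-cycle on which the propagation is automatically consistent. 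At each seam the three incident facets then receive the three distinct colors, as required.

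The only real obstacle is the second step: attaching an honest element of $O(2)$ to each facet rather than just a conjugacy class. Without the normal direction supplied by $\R^{3}\subset\R^{4}$ there is no canonical choice of basepoints for the meridians, and the relation $\rho_{F_{m+1}}=c\,\rho_{F_{m}}$ ceases to make sense; this is why the corollary is sensitive to the embedding and has no obvious analogue for a general foam in $\R^{4}$. Once the framing of meridians is in place, the remainder of the proof is a nearly verbatim transcription of the proof of Lemma~\ref{lem:O2-implies-V}, with $2$-factors on a cubic planar graph replaced by the reflection subcomplex $\Sigma_{r}$ of the foam.
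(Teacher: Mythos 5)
Your proposal is correct and follows essentially the same route as the paper: invoke the orientation-reversing reflection in $\R^{3}\subset\R^{4}$ to get a flat $O(2)$ bifold connection from Proposition~\ref{prop:O2-existence-foams}, use the embedding in $\R^{3}$ to promote conjugacy classes of facet meridians to actual elements of $O(2)$, and then run the checkerboard/parity argument of Lemma~\ref{lem:O2-implies-V} on the resulting $2$-factor of reflection facets. Your additional remark on tetrahedral points and your explicit identification of the needed orientation-reversing involution are details the paper leaves implicit, but the argument is the same.
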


\begin{proof}
    By the previous proposition, there is a flat $O(2)$ bifold connection
    on $(\R^{4},\Sigma)$.  We shall imitate the argument from the
    $3$-dimensional case, Lemma~\ref{lem:O2-implies-V}, to see that the
    existence of a flat $O(2)$ connection implies the existence of
    flat $V_{4}$ connection when $\Sigma$ is contained in
    $\R^{3}$.

     Whether or not $\Sigma$ is contained in $\R^{3}$, given an $O(2)$
     bifold connection and corresponding homomorphism $\rho$ from
     $\pi_{1}(X\setminus \Sigma)$, a representative of  the conjugacy
     class corresponding to the link of each facet of $\Sigma$ in $X$
     is mapped by $\rho$ either to  the central element $i$  in $O(2)$
     or to a reflection. The reflection facets constitute what we can
     call a ``2-factor'' for the foam: a subset of the facets such
     that, along each seam, two of the three facets which meet locally
     there belong to this subset. The facets belonging to the $2$-factor
     form a collection of closed surfaces in $S\subset X$.

     When $\Sigma$ is contained in $\R^{3}$, then there is a standard
     representative in $\pi_{1}(X\setminus\Sigma)$  for the link of
     each facet, so $\rho$ gives a map from facets to elements of
     order $2$ in $O(2)$, much as in the case of a planar web. So for
     each facet $f$, we have an element $\rho_{f}$. For facets in the
     $2$-factor, $\rho_{f}$ is a reflection, and if two such facets
     $f_{1}$ and $f_{2}$ are separated by a seam then $\rho_{f_{1}} =
     i \rho_{f_{2}}$. On each component of the closed surface $S$, the
     facets are therefore labeled by only $2$ distinct reflections, in
     a checkerboard fashion. We can therefore give a Tait coloring of
     the facts by coloring the rotation facets with $i$ and the
     refection facets alternately by $j$ and $k$.
\end{proof}

\begin{corollary}
    If $\Sigma$ is a closed foam in $\R^{3}$ with $\Jsharp(\Sigma)\ne
    0$, then the seams of $\Sigma$ form a bipartite $4$-valent graph.
\end{corollary}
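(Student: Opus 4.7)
The plan is to use the preceding corollary, which gives, under the hypothesis $\Jsharp(\Sigma)\ne 0$ with $\Sigma\subset\R^3$, a Tait coloring of the facets of $\Sigma$ by three colors $\{i,j,k\}$. That the seam graph $G$ is $4$-valent is immediate from the local model at a tetrahedral point $x$: the link of $x$ in $\R^3$ is an oriented $2$-sphere $S^2_x$ meeting $\Sigma$ in the $1$-skeleton of a tetrahedron, whose four vertices record the four incident seams. My plan is then to produce a sign function $\sigma\colon\{\text{tetrahedral points}\}\to\{\pm1\}$ which takes opposite values at the endpoints of every seam, giving a proper $2$-coloring of $G$ and hence the bipartiteness.

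I will define $\sigma(x)$ via the cyclic color order on the link $K_4$. The Tait coloring restricts to a proper $3$-edge-coloring of this $K_4\subset S^2_x$, i.e.\ a $1$-factorization; at each vertex $v$ of the link $K_4$ the three incident edges carry distinct colors arranged in a cyclic order determined by the orientation of $S^2_x$, and this cyclic class is one of $\{(i,j,k),(i,k,j)\}$. The key combinatorial input I need is that this cyclic class is the \emph{same} at all four vertices of the link $K_4$: up to an orientation-preserving diffeomorphism of $S^2$, the embedding is the $1$-skeleton of a regular tetrahedron, whose orientation-preserving symmetry group $A_4$ acts transitively on vertices and permutes the three colors of the (essentially unique) $1$-factorization by even permutations, which preserve the cyclic class on three letters. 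I will then set $\sigma(x)=+1$ or $-1$ according to this common cyclic class.

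Finally, I must verify $\sigma(x)=-\sigma(y)$ whenever $x$ and $y$ are joined by a seam $s$. Along $s$ the three adjacent facets retain three fixed colors, and the cyclic order of these colors in the oriented normal plane of $s$ (oriented using the orientations of $\R^3$ and of $s$) is locally constant in the seam direction. Orienting $s$ outward from $x$ yields the cyclic class at the relevant vertex of the link $K_4$ at $x$, which by construction defines $\sigma(x)$; orienting $s$ outward from $y$ is the opposite orientation, so it reverses the cyclic class and hence records the class opposite to that defining $\sigma(y)$. Comparing the two computations gives $\sigma(x)=-\sigma(y)$, so $\sigma$ is a proper $2$-coloring of the vertices of $G$ and $G$ is bipartite. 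The main obstacle is the combinatorial consistency of the cyclic color order across the four link-vertices; all remaining steps are routine orientation bookkeeping.
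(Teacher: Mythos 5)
Your argument is correct, and it takes a genuinely different route from the paper's. The paper exploits the $O(2)$ structure behind the Tait coloring from the preceding corollary: it singles out the distinguished color $i$, views the $j$/$k$-facets as a closed surface $S\subset\R^3$ carrying the seam graph, uses the fact that each component of $S$ separates $\R^3$ to sort the boundary circles of the $i$-facets into two types, and then $2$-colors the vertices by which color ($j$ or $k$) lands in a distinguished quadrant of an oriented chart at each crossing. Your argument, by contrast, uses only the existence of the Tait coloring (not the finer $O(2)$ data) and is entirely local at the tetrahedral points: the key combinatorial input --- that a $1$-factorization of the tetrahedral $K_4$ on an oriented $S^2$ has the same cyclic color class at all four vertices, via the $A_4$-symmetry and the fact that $A_4$ acts on the three matchings by $A_3$ --- is correct, and the orientation bookkeeping along each seam does flip the cyclic class as you claim (reversing the orientation of the seam reverses the induced orientation of its normal disk, hence the cyclic order of the three facet colors). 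What your approach buys is independence from the $\R^3$ separation argument: it only uses an orientation of the ambient $3$-manifold, so it would apply verbatim to closed foams in any oriented $3$-manifold, whereas the paper's proof is tied to $\R^3$. As a side benefit, your argument automatically rules out seam loops (a seam with both endpoints at the same tetrahedral point), since it would force $\sigma(x)=-\sigma(x)$; this is implicit in bipartiteness but worth being aware of.
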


\begin{proof}
    The proof of the previous corollary shows that the seam $\Gamma$
    is a $4$-valent graph lying on a closed surface $S\subset
    \R^{3}$, and that the facets into which $S$ is divided admit a
    coloring with two colors called $j$ and $k$.
    For each connected component $S_{m}$ of $S$, the
    corresponding subgraph of $\Gamma$ is formed as the transverse intersection
    of a collection of circles $\gamma_{m,l}$ arising as boundary
    components of the facets labeled by the central element $i\in
    O(2)$ (i.e. the facets not contained in
    $S$). The surface $S_{m}$ separates $\R^{3}$ into two
    components, and we can therefore partition the circles
    $\gamma_{m,l}$ into two types, according to the component in which
    the corresponding facet of
    $\Sigma\setminus S$ lies. Circles $\gamma$ and $\gamma'$ belonging
    to the same type do not intersect. We can now distinguish two
    different types of intersection points, as follows. After choosing an orientation
    of $S_{m}$, we can choose an oriented chart at each intersection
    point in $S_{m}$ which maps the circle $\gamma$ of the first type
    to the $x$ axis and the circle of the second type to the $y$
    axis. The first quadrant is then colored by either $j$ or $k$. In
    this way we partition the vertices of $\Gamma$ into two sets, in a
    way that exhibits $\Gamma$ as bipartite.
\end{proof}

\begin{corollary}
     Let $\Sigma\subset X$ be a closed foam in a closed $4$-manifold
     $X$ with $\Jsharp(X,\Sigma)\ne 0$. 
     Let $K \subset Y$ be the transverse intersection of $\Sigma$
     with a closed $3$-manifold $Y\subset X$. Then $\Rep(Y,K)$
     contains a flat $O(2)$ connection. 

     Furthermore, if $K$ is planar, meaning that it is contained in a
     standard $2$-disk in $Y$. Then $K$ admits a Tait coloring.
\end{corollary}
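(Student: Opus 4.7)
The plan is to combine Proposition~\ref{prop:O2-existence-foams} with a neck-stretching argument along $Y$. For each $T\ge 0$, equip $\check X$ with an orbifold Riemannian metric $\check g_T$ that contains $[-T,T]\times Y$ as an isometric cylindrical region inside a tubular neighborhood of $Y$. Since $\Jsharp(X,\Sigma)\ne 0$, the first clause of Proposition~\ref{prop:O2-existence-foams} produces, for each $T$, an anti-self-dual $O(2)$ bifold connection $A_T$ on $(\check X,\check g_T)$.

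The next step is to pass to a limit as $T\to\infty$. The action $\kappa(A_T)$ is uniformly bounded: the proof of Proposition~\ref{prop:O2-existence-foams} places $A_T$ in a moduli space of negative formal dimension, and the dimension formula~\eqref{eq:dimension} constrains $\kappa$ to lie in a finite set depending only on the topology of $(X,\Sigma)$. After passing to a subsequence the topological type of $A_T$ is constant, so Uhlenbeck compactness in the bifold setting applies and, after a further subsequence, $A_T$ converges on compact subsets of $\check X\setminus Z$ to an anti-self-dual limit, where $Z$ is a finite bubble set. Because the total action is bounded while the neck $[-T,T]\times Y$ has length tending to infinity, a pigeonhole argument along the neck yields slices $\{t_T\}\times Y$ on which $\int_{\{t_T\}\times Y}|F_{A_T}|^{2}\to 0$ and which lie at bounded distance neither from the boundary of the neck nor from any bubble point. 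The restrictions of $A_T$ to these slices form a sequence of $O(2)$ bifold connections on $(Y,K)$ of vanishing $L^{2}$ curvature. Three-dimensional Uhlenbeck compactness extracts a further subsequence converging to a flat bifold connection on $(Y,K)$, and the $O(2)$ property survives in the limit because the existence of a nontrivial stabilizer in $\SO(3)$ is a closed condition. This produces the desired element of $\Rep(Y,K)$.

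For the second assertion, suppose $K$ lies in a standardly embedded $2$-disk $D\subset Y$. A tubular neighborhood of $D$ in $Y$ is diffeomorphic to a tubular neighborhood of $\R^{2}$ in $\R^{3}$, so $K$ is exhibited as a planar web and inherits the flat $O(2)$ bifold connection just constructed. The argument of Lemma~\ref{lem:O2-implies-V} then applies verbatim: the reflection-labeled edges form a $2$-factor of $K$, the cycles of the $2$-factor have even length when the meridians are taken in planar Wirtinger form, and we obtain a Tait coloring by assigning the color $i$ to the central-labeled edges and alternating $j$ and $k$ along each cycle. The main obstacle is the neck-stretching step: securing a uniform action bound on the ASD $O(2)$ connections and controlling where bubbles may form along the stretching neck. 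Both points are handled by the topological rigidity of $O(2)$ ASD connections, whose actions are constrained by discrete invariants, together with standard Uhlenbeck compactness in the bifold setting.
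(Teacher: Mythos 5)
Your argument coincides with the paper's: both deduce from Proposition~\ref{prop:O2-existence-foams} the existence, for every metric, of an anti-self-dual $O(2)$ bifold connection with a metric-independent action bound, then stretch a cylindrical neck on $\check Y$ and invoke compactness to extract a flat $O(2)$ connection on $\check Y$, finishing via Lemma~\ref{lem:O2-implies-V} in the planar case. You have simply unpacked the phrase ``standard applications of the compactness theorems'' (pigeonhole slice, 3-dimensional Uhlenbeck limit, closedness of the reducibility condition) that the paper leaves implicit.
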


\begin{proof}
    We know from Proposition~\ref{prop:O2-existence-foams} that, for
    every metric on the bifold $\check X$, there exists an
    anti-self-dual $O(2)$ connection on $\check X$. Furthermore, the
    action of this connection has an upper bound that is independent
    of the metric. If we choose a sequence of metrics on $\check X$
    that contain cylinders on $\check Y$ of increasing length, then
    standard applications of the compactness theorems will lead to a
    flat $O(2)$ connection on $\check Y$. If $K$ is planar, then the
    $O(2)$ connection leads to a Tait coloring as before.
\end{proof}

Because of the previous corollaries, an affirmative answer to 
the following question would provide a proof of the four-color
theorem.

\begin{question}
     If $K\subset\R^{3}$ is bridgeless and planar, does it arise as $\R^{3}\cap
     \Sigma$ for some foam $\Sigma \subset \R^{4}$ with
     $\Jsharp(\Sigma)\ne 0$?
\end{question}

In the situation considered by the question above, we can regard the
evaluation of $\Jsharp(\Sigma)$ as a pairing of an element of
$\Jsharp(K)$ with an element in the dual space
$\Jsharp(-K)$, where $-K$ is the mirror image. Because we
know that $\Jsharp(K)$ is non-trivial, an affirmative answer to
the next question would also suffice.

\begin{question}\label{question:Generation-of-Jsharp}
    Let $K\subset S^{3}$ be a planar web. Is it always true that
        $\Jsharp(K)$ is generated by the elements $\Jsharp(\Sigma)$
        for foams $\Sigma\subset (\R^{4})^{+}$ with boundary $K$?
\end{question}

\begin{remarks}
    If $K\subset\R^{3}$ is spatial web that is not planar, it may be that there are
    no $O(2)$ representations in $\Rep(K)$, as noted previously. For
    such a $K$, there cannot be foam $\Sigma$ with $\Jsharp(\Sigma)\ne
    0$ such $\Sigma\cap\R^{3}=K$. So one cannot expect an affirmative
    answer to either of the two questions above if one drops the
    planar hypothesis for $K$. 

    Whether it is reasonable to conjecture
    an affirmative answer to this question for planar graphs is not at
    all clear. For example, in the case of the dodecahedral graph $K$, an
    affirmative answer would probably require that the elements
    $\Jsharp(\Sigma)$ corresponding to foams $\Sigma$ with boundary
    $K$ span a space of dimension at least $60$ (since this is the
    number of Tait colorings). Calculations by the authors failed to
    show that the span had dimension any larger than $58$.
\end{remarks}

\subsection{A combinatorial counterpart}

\label{sec:jflat}

Motivated by the partial information that we have about the evaluation
of closed foams, we can try to define a purely combinatorial counterpart to $\Jsharp$ for
planar webs $K$, by closely imitating the construction of Khovanov's
$\sl_{3}$ homology from \cite{Khovanov-sl3}. We shall describe this
combinatorial version here, though we do run into a question of
well-definedness.

Let us consider closed \emph{pre-foams}, by which we mean abstract
2-dimensional cell complexes, with finitely many cells, with a local
structure modeled on a foam, in the sense defined in this
paper. Unlike the pre-foams in \cite{Khovanov-sl3}, no orientations
are required, and we allow pre-foams to have tetrahedral points. We
allow our pre-foams to have decoration by dots.

We can try and define an evaluation, $\Jflat(S) \in \F$, for such
prefoams $S$ by applying the following rules.

\begin{enumerate}
\item The seams of the pre-foam form a $4$-valent graph. If this graph
    is not bipartite, then we define $\Jflat(S)=0$. 

\item If the seams form a bipartite graph, we can pair up the
    tetrahedral points and cancel them in pairs (Figure~\ref{fig:AC-triangle-composite})
    to obtain a new pre-foam $S'$ with no tetrahedral
    points. We declare that $\Jflat(S)=\Jflat(S')$, and so reduce to
    the case that there are no tetrahedral points.

\item When there are no tetrahedral points, the seams form a union of
    circles where three facets meet locally. If the monodromy of the
    three sheets is a non-trivial permutation around any of these
    circles, then we define $\Jflat(S)=0$. In this way we reduce to
    the case that the model for the neighborhood of each component of
    the seams is a product $S^{1}\times Y$, where $Y$ is a standard
    $Y$-shape graph.  

\item For each component of the seam having a neighborhood of the form
    $S^{1}\times Y$, we apply (abstract) neck-cutting (i.e. surgery)
    on the three circles parallel to the seam in each of the three
    neighboring facets, as in \cite{Khovanov-sl3}. In this way we
    reduce to the case that $S$ is a disjoint union of theta-foams and
    closed surfaces.

\item The theta-foams are evaluated using the rule for $\Jsharp$
    (Proposition~\ref{prop:theta-evaluation}) to leave only closed
    surfaces. The evaluation $\Jflat(S)$ for a
    sphere with two dots is $1$, for a torus with no dots is $1$, and
    for everything else is zero (including non-orientable components).
\end{enumerate}

The issue of well-definedness arises here because of the choice of how
to pair up the tetrahedral points, but the following two conjectures
both seem plausible.

\begin{conjecture}\label{conj:well-defined}
    The above rules lead to a unique, well-defined evaluation
    $\Jflat(S) \in \F$ for any pre-foam $S$.
\end{conjecture}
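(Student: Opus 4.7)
The plan is to verify well-definedness of $\Jflat(S)$ by checking invariance under each choice made in the five-step procedure. Rules 1, 3 and 5 are intrinsic to the pre-foam presented to them, so the task reduces to showing that rule 2 (pairing and cancelling tetrahedral points) and rule 4 (placing the neck-cutting circles) produce outputs that do not depend on the choices made.

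Invariance under rule 4 would be proved along the lines of \cite{Khovanov-sl3}. Once we have a pre-foam with no tetrahedral points, bipartite seam graph and trivial monodromy along every seam circle, the candidate surgery circles in the three facets adjacent to a given seam lie in disjoint facets, and any two orderings of surgeries on different seams commute because the local operations have disjoint support. Independence from which of the three parallel circles is used first is a formal consequence of the neck-cutting relation (Proposition~\ref{prop:neck-cutting}), together with bubble-bursting (Proposition~\ref{prop:bubble-bursting}) and the evaluations of spheres with dots (Proposition~\ref{prop:sphere-with-dots}) and theta-foams (Proposition~\ref{prop:theta-evaluation}).

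The substantive part is invariance under rule 2. The plan is to show that any two pairings of the tetrahedral points of $S$ are connected by a sequence of two elementary moves: (a) swapping the order of cancellations applied to disjoint seam-edges, and (b) re-bracketing a pair of adjacent pairings sharing one tetrahedral point. Move (a) preserves the evaluation because the two local modifications act on disjoint balls and therefore commute. For move (b), the claim reduces to a finite local check inside a single ball containing four tetrahedral points joined by seam-edges: the two competing pairings produce two replacement foams $W'$ and $W''$ with the same boundary, and I would verify by a direct combinatorial computation, using the tetrahedral-cancellation identity of Proposition~\ref{prop:triangle-relation} together with the theta-foam evaluation, that any closure of $W'$ and the corresponding closure of $W''$ evaluate to the same element of $\F$.

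The main obstacle I expect is controlling the bipartiteness and monodromy data across the cancellation step, since the monodromy test of rule 3 is only applied after all tetrahedral points have been removed. One must show that whether the intermediate pre-foam $S'$ has bipartite seam graph with trivial monodromy is a choice-independent property of $S$ itself. My plan is to encode this as an intrinsic $V_4$-valued combinatorial labeling of the facets of $S$, constrained locally at each seam point and tetrahedral point, and to verify that the cancellation operation preserves the existence or non-existence of a compatible labeling; this would guarantee that if one pairing produces $\Jflat(S)=0$ by failing the test of rule 3, then every pairing does.
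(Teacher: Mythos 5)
This statement is stated in the paper as a \emph{conjecture}; the authors explicitly flag it as open, writing ``The issue of well-definedness arises here because of the choice of how to pair up the tetrahedral points, but the following two conjectures both seem plausible.'' There is no proof in the paper to compare your proposal against, so what you have written should be read as a proposed strategy, not a verified argument.

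Assessed on its own terms, your sketch has the right overall shape (isolate the two steps that involve choices, and reduce well-definedness to local commutation or exchange moves), but it replaces the genuine difficulties with assertions. Three gaps stand out. First, cancellation of tetrahedral points in the sense of Figure~\ref{fig:AC-triangle-composite} applies to a \emph{pair joined by a seam-edge}, so the relevant combinatorial object is a perfect matching in the $4$-valent seam graph rather than an arbitrary pairing of points; you need to argue that such a matching exists and that any two are connected by moves you can analyze, with care for multi-edges and loops at a single tetrahedral point. Your move (b), ``re-bracketing a pair of adjacent pairings sharing one tetrahedral point,'' is not a well-defined operation on perfect matchings as stated. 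Second, the ``finite local check'' for move (b) would have to be carried out; Proposition~\ref{prop:triangle-relation} and the theta-foam evaluation are proved for embedded foams via excision, and it is not automatic that they yield the identities you need for the purely combinatorial object $\Jflat$ without first establishing some form of Conjecture~11. Third, as you yourself flag in the final paragraph, the bipartiteness and monodromy tests of rules 1 and 3 are applied at different stages, and one must show that the outcome of those tests is an intrinsic feature of $S$ rather than of the chosen reduction; the $V_4$-labeling idea is plausible but is precisely the substantive content missing from the proposal. In short, what you have is a reasonable outline of where a proof would have to do work, not a proof, and the paper deliberately stops short of claiming one.
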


\begin{conjecture}
    For a closed foam $\Sigma$ in $\R^{3}$ with underlying pre-foam
    $S$, we have $\Jsharp(\Sigma)=\Jflat(S)$.
\end{conjecture}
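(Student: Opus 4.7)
The plan is to verify, one at a time, that $\Jsharp$ applied to a closed foam $\Sigma \subset \R^{3} \subset \R^{4}$ satisfies every reduction rule used in the definition of $\Jflat$. Since both invariants agree on the terminal objects (theta foams by Proposition~\ref{prop:theta-evaluation}, spheres with dots by Proposition~\ref{prop:sphere-with-dots}, and tori by the closed-surface evaluation), any sequence of $\Jflat$-reductions applied to the underlying pre-foam $S$ will compute $\Jsharp(\Sigma)$ simultaneously. Concretely, I would walk through the five rules of Section~\ref{sec:jflat} in order.

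The bipartiteness rule follows directly: by the corollary to Proposition~\ref{prop:O2-existence-foams}, $\Jsharp(\Sigma)\ne 0$ forces the seam graph to be bipartite, so a non-bipartite seam graph forces vanishing of $\Jsharp$. The tetrahedral-cancellation rule is precisely the content of the ``Canceling tetrahedral points'' proposition in Section~\ref{subsec:triangle-reln}. For the seam-monodromy rule I would exploit the hypothesis $\Sigma\subset\R^{3}$: the monodromy of the three local sheets around a seam circle must lie in the group of rotations of the normal plane, so it is either trivial or a cyclic $3$-cycle (transpositions would require an orientation-reversing reflection of $\R^{3}$). A $3$-cycle monodromy merges the three local sheets into branches of a single facet; no Tait coloring can then assign three distinct colors at the seam, which by the Tait-coloring corollary for foams in $\R^{3}$ gives $\Jsharp(\Sigma)=0$. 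Rule four is Proposition~\ref{prop:neck-cutting} applied iteratively to annular collars of each seam circle: using the assumed trivial monodromy one can cut three parallel circles on the three adjacent facets, reducing $\Sigma$ to a disjoint union of theta-foam summands and closed surfaces whose evaluations have already been identified.

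The hard part is two-fold. First, the whole statement rests on Conjecture~\ref{conj:well-defined}: the reduction rules make choices (which pairs of tetrahedral points to cancel, which order of neck-cutting, which surgery disks to use) and one must see that the final scalar does not depend on them. A bootstrap via $\Jsharp$ is attractive when each intermediate pre-foam can be realized as an embedded foam in some $\R^{4}$, but that realization is not always possible and one needs a genuinely combinatorial consistency argument. Second, although the closed components of $\Sigma\subset\R^{3}$ are themselves orientable, abstract neck-cutting can produce closed non-orientable components coming from non-orientable facets of $\Sigma$ (pieces of M\"obius bands capped off at the seams by neck-cut disks). The rule for $\Jflat$ assigns $0$ to such surfaces; matching this requires showing $\Jsharp$ vanishes on a closed non-orientable surface in $\R^{4}$, a calculation not carried out in the paper. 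I expect that this vanishing, and more generally the bookkeeping of how the non-orientable combinatorics of $S$ interact with gauge-theoretic sign conventions, will be the main technical obstacle; one plausible route is an index-parity argument along the lines used in the second half of the proof of Proposition~\ref{prop:O2-existence-foams}, combined with a symmetry coming from an orientation-reversing involution fixing $\Sigma$.
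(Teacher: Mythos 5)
The statement you are addressing is labeled as a \emph{conjecture} in the paper, and the paper supplies no proof of it; what follows the conjecture is a single sentence explaining why a proof is not available. So there is no paper proof to compare against, and your proposal should be read as a reconstruction of where the difficulty lies rather than as a completed argument. With that understood, your rule-by-rule verification that $\Jsharp$ obeys the $\Jflat$ reduction rules is essentially sound as far as the local checks go. In particular, your observation that a seam circle of a foam embedded in $\R^{3}$ has orientation-preserving normal monodromy, hence the permutation of the three branches is trivial or a $3$-cycle, and that the $3$-cycle case is disposed of by the Tait-coloring corollary to Proposition~\ref{prop:O2-existence-foams}, is a correct and useful refinement of the third rule in the $\R^{3}$ setting.

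Where you go slightly astray is in presenting two separate ``hard parts.'' The second one you raise — that abstract neck-cutting can produce closed non-orientable surface components whose $\Jsharp$-evaluation the paper never computes — is not independent of the first: if every intermediate pre-foam in the $\Jflat$-reduction could be realized as an embedded closed foam in $\R^{3}$, then its closed surface components would be closed surfaces in $\R^{3}$ and hence automatically orientable. Both of your concerns are thus instances of a single gap, which is exactly the one the paper names right after the conjecture: the abstract surgeries defining $\Jflat$ need not correspond to embedded surgeries on $\Sigma\subset\R^{3}\subset\R^{4}$, so the intermediate stages of the reduction need not exist as foams to which $\Jsharp$ applies, and the relations you want to invoke — Proposition~\ref{prop:neck-cutting}, the tetrahedral cancellation of Section~\ref{subsec:triangle-reln}, Corollary~\ref{cor:local-relations-webs} — all require an actual embedded disk or embedded local pair $(P_{i},V_{i})$ and so cannot be applied once the combinatorial reduction leaves the realm of embedded foams. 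You identify this honestly (``that realization is not always possible''), but you file it under the well-definedness issue of Conjecture~\ref{conj:well-defined} rather than flagging it as the principal missing step for the present statement. As it stands the proposal is an articulate account of why this remains a conjecture, not a proof of it.
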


The difficulty with establishing the second conjecture arises from the
fact that the abstract surgeries required to do neck-cutting on a
pre-foam $S$ may not always be achieved by embedded surgeries when the
pre-foam is
embedded as a foam in $\R^{3}$.

Assuming the first of the two conjectures, one can associate an
$\F$ vector space $\Jflat(K)$ to a
planar web $K$ by taking as generators the set of all foams
$\Sigma\subset (\R^{3})^{-}$ with boundary $K$ and relations
\[
           \sum [\Sigma_{i}] = 0
\]
whenever the collection of foams $\Sigma_{i}$ satisfies
\[
            \sum \Jflat (\Sigma_{i} \cup T) = 0
\]
for all foams $T\subset (\R^{3})^{+}$ with $\partial T = -K$.  This
definition imitates \cite{Khovanov-sl3}; and by extending the
arguments of that paper one can show that $\Jflat(K)$ satisfies the
bigon, triangle and square relations. In particular (still contingent
on Conjecture~\ref{conj:well-defined}), $\Jflat(K)$ is isomorphic to
$\Jsharp(K)$ at least for simple planar graphs in the sense of 
section~\ref{sec:simple}.

If both of the above conjectures are true, then $\Jflat(K)$ is a
subquotient of $\Jsharp(K)$: it is the subspace of $\Jsharp(K)$
generated by the foams with boundary $K$, divided by the annihilator
of the dual space generated by the foams with boundary $-K$.

\begin{remark}
    Assuming Conjecture~\ref{conj:well-defined}, the authors examined
    $\Jflat(K)$ in the case of the dodecahedral graph
    \cite{KM-dodecahedron-Tutte}, and showed that its dimension in
    this case is at least $58$. See also the remarks at the end of the
    previous subsection.
\end{remark}

\subsection{Gradings}

Although it plays no real role in the earlier parts of this paper, we
consider now whether there is a natural $\Z/d$ grading on the homology
group $\Jsharp(K)$, or more generally on $J(Y,K; \mu)$, for a web
$K\subset Y$ with strong marking data $\mu$. To formulate the question
precisely, we write $\check Y$ as usual for the corresponding
bifold, and $\bonf_{l}(\check Y;\mu)$ for the space of marked bifold
connections. For any closed loop $\zeta$ in $\bonf_{l}(\check Y;\mu)$
there is a corresponding spectral flow $\sflow_{\zeta}$; and if
$\sflow_{\zeta}\in d\Z$ for all loops $\zeta$, then we can regard
$J(Y,K; \mu)$ as $\Z/d$ graded, in the usual way. With this
understood, we have the following result.

\begin{proposition}
    If $K\subset Y$ has no vertices, or if the abstract graph
    underling $K$ is bipartite, then $J(Y,K;\mu)$ may be given a
    $\Z/2$ grading. If the graph is not bipartite, then in general
    there may exist closed loops with spectral flow $1$, and there is
    no non-trivial $\Z/d$ grading for any $d$. 
\end{proposition}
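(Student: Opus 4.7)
The plan is to identify the spectral flow around a loop with an index on the product $4$-bifold $\check X := S^{1}\times\check Y$, and then to reduce the $\Z/d$-grading question to the divisibility of the action $\kappa$. By equation \eqref{eq:monotone}, for any closed loop $\zeta$ in $\bonf_{l}(\check Y;\mu)$ the spectral flow $\sflow_{\zeta}(D_{A})$ equals the index $d(E_{\zeta},A_{\zeta})$ of the resulting bifold connection on $\check X$ with foam $\Sigma := S^{1}\times K$. Applying Proposition~\ref{prop:dimension}: by K\"unneth, $b^{1}(\check X)=1+b^{1}(Y)$ and $b^{+}(\check X)=b^{1}(Y)$, so $1-b^{1}+b^{+}=0$, while the product foam has $\chi(\Sigma)=0$, $|\tau|=0$, and $\Sigma\cdot\Sigma=0$. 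All other terms vanish, leaving $\sflow_{\zeta}=8\kappa(E_{\zeta},A_{\zeta})$, so a $\Z/2$-grading exists precisely when $\kappa\in\tfrac{1}{4}\Z$ for every such bifold connection, and in general no $\Z/d$-grading can exist with $d>1$ once one exhibits a loop with $\sflow_{\zeta}=1$.

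When $K$ has no vertices, there are no seams in $\Sigma$, only $\pm 1$-cone singularities along smooth facets. A Chern-Weil decomposition for singular $\SO(3)$ bifold connections of this type (as in earlier work on singular instantons along surfaces) splits $\kappa$ into a ``smooth'' piece congruent to $-\tfrac{1}{4}p_{1}$ on the punctured $4$-manifold, together with a facet contribution proportional to $\Sigma\cdot\Sigma$; both lie in $\tfrac{1}{4}\Z$, the latter because $\Sigma\cdot\Sigma=0$ in our product setting. Hence $8\kappa\in 2\Z$ and $J(Y,K;\mu)$ inherits a $\Z/2$-grading.

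When $K$ has vertices, additional fractional contributions to $\kappa$ arise from each seam $S^{1}\times\{v\}$. Along such a seam, the $V_{4}$-stabilizer splits the fiber of $E$ into three real line bundles $L_{1}^{v},L_{2}^{v},L_{3}^{v}$ on $S^{1}$, subject to the orientation constraint that $L_{1}^{v}\otimes L_{2}^{v}\otimes L_{3}^{v}$ is trivial. I would establish, by a local index/Chern-Weil computation near a seam that refines the excision argument from the proof of Proposition~\ref{prop:dimension} to track order-$\tfrac{1}{8}$ corrections, that the seam's contribution to $8\kappa\bmod 2$ is a symmetric mod-$2$ invariant $\epsilon_{v}\in\F$ of the triple $(L_{1}^{v},L_{2}^{v},L_{3}^{v})$. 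The line bundles $L_{i}^{v}$ extend across the three adjacent facets $S^{1}\times e_{i}$, and a cohomological packaging identifies $\sum_{v}\epsilon_{v}$ with the pairing $\langle\chi,\xi\rangle\in\F$, where $\chi\in C^{1}(K;\F)$ is the all-ones $1$-cochain on edges and $\xi\in H_{1}(K;\F)$ is the mod-$2$ $1$-cycle determined by the edges along which the gluing data are non-trivially twisted. Bipartiteness of $K$ is precisely the statement that $\chi$ is a coboundary, so $\langle\chi,\xi\rangle=0$ for every such $\xi$; hence $\sflow_{\zeta}$ is even and we obtain a $\Z/2$-grading. Conversely, for a non-bipartite $K$ (e.g.\ the theta web $\Theta$, whose bigons are odd cycles), $\chi$ is not a coboundary, so there is a cycle $\xi$ with $\langle\chi,\xi\rangle=1$; realizing this $\xi$ by an explicit bifold $\SO(3)$ connection on $S^{1}\times(S^{3},\Theta)$ whose $V_{4}$-data at the two seams is incompatibly twisted along $\xi$ yields $\kappa=\tfrac{1}{8}$ and hence a loop with $\sflow_{\zeta}=1$, obstructing any non-trivial $\Z/d$-grading.

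The main obstacle is the local seam computation that identifies the $\tfrac{1}{8}$-fractional part of $\kappa$ with the mod-$2$ $V_{4}$-equivariant invariant $\epsilon_{v}$, together with the global cohomological packaging of $\sum_{v}\epsilon_{v}$ as the evaluation of the all-ones $1$-cochain against a $1$-cycle on $K$. Once these identifications are in hand, the equivalence of bipartiteness with the all-ones cochain being a coboundary converts the $\Z/2$-grading question into the combinatorial condition stated in the Proposition, and the non-bipartite counterexample comes from explicitly realizing a cycle on which $\chi$ pairs non-trivially.
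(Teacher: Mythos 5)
Your high-level plan---reduce the spectral flow around a loop to the index $d = 8\kappa$ of the induced bifold connection on $S^1 \times \check Y$ via Proposition~\ref{prop:dimension}, and then convert the question of whether $8\kappa$ is always even into a combinatorial statement about cycles in the abstract graph---matches the paper's. The cohomological packaging you propose is also internally correct: the constraint at a seam forces either $0$ or $2$ of the three line bundles $L_i^v$ to be non-trivial, the edges with twisted data form a mod-$2$ $1$-cycle $\xi$ in $K$, and the all-ones $1$-cochain $\chi$ is a coboundary iff $K$ is bipartite, so $\langle\chi,\xi\rangle = 0$ for all $\xi$ exactly in the bipartite case. But there are two substantive problems.

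First, the central step---that each ``Type II'' seam contributes exactly $1/8 \pmod{1/4\,\Z}$ to $\kappa$---is asserted rather than proved, and you yourself flag the ``local index/Chern-Weil computation near a seam'' as the main obstacle. This is precisely what the paper supplies, by a more elementary and concrete route: it defines $\eta(e) = \langle w_2(E), [S^1\times m_e]\rangle$, observes the parity constraint $\eta(e_1)+\eta(e_2)+\eta(e_3)=0$ at vertices, and then proves the key lemma ($\kappa \equiv N/8 \pmod{(1/4)\Z}$ where $N$ is the number of Type II vertices) by surgering away pairs of same-type vertices to reduce to the vertexless case, plus a single explicit example with $\kappa=1/8$: the quotient of the standard $1$-instanton on $\R\times S^3$ by the order-$8$ reflection group in $\SO(4)$, whose underlying web is the tetrahedron. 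The vertexless case itself is handled in the paper not by a Chern-Weil decomposition but by choosing an almost-complex structure on $S^1\times Y$ making $S^1\times K$ almost complex, so that the index is automatically even; your Chern-Weil route is plausible but also left unverified.

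Second, a concrete error: the theta web is \emph{bipartite}. It has two vertices joined by three edges, and every cycle is a bigon of length $2$---an even cycle, not an odd one as you claim in the parenthetical. It therefore cannot serve as your non-bipartite counterexample. The paper's non-bipartite example is the tetrahedron web, which contains triangles (odd cycles) and realizes $\kappa = 1/8$ via the explicit orbifold $1$-instanton just described.
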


\begin{proof}
    The spectral flow $\sflow_{\zeta}$ is equal to the index $d(E,A)$
    for the corresponding $4$-dimensional bifold connection on
    $S^{1}\times \check Y$.
    By the formula \ref{prop:dimension}, 
    this in turn is equal to $8\kappa$, where $\kappa$ is the
    action of $(E,A)$. 

   If $K$ has no vertices, then the symbol of the linearized equations
   with gauge fixing on the bifold $S^{1}\times Y$ is homotopic to the
   symbol of a complex-linear operator, for we can choose an
   almost-complex structure on $S^{1}\times Y$ such that the
   submanifold $S^{1}\times K$ is almost complex: the appropriate
   complex-linear operator is then the operator $\bar\partial +
   bar\partial^{*}$ coupled to the complexification of $E$ on the
   complex orbifold. It follows that the real index $d(E,A)$ is even
   in this case.

   In the general case, let $e$ be an edge of $K$, let $m_{e}$ be a
   meridional circle linking $e$, and consider the torus $S^{1}\times
   m_{e}$ in $S^{1}\times (Y\setminus K)$. We can evaluate $w_{2}(E)$
   on $T_{e}$, and we call the result $\eta(e)$:
   \[
   \begin{aligned}
       \eta(e) &= \bigl\langle w_{2}(E) , [S^{1}\times m_{e}]
       \bigr\rangle \\
             &\in \{0,1\}.
   \end{aligned}
   \]
   If edges $e_{1}$,
   $e_{2}$, $e_{3}$ meet at a vertex $v$, then the sum of the
   meridians $[m_{e_{1}}] + [m_{e_{2}}]  + [m_{e_{3}}]$ is zero in
   homology, so
   \[
            \eta(e_{1}) + \eta(e_{2}) + \eta(e_{3}) = 0.
   \]
   It follows that, at each vertex $v$, the number edges
   which are locally incident at $v$ and have $\eta(e)=1$ is either
   $0$ or $2$. Let us say that a vertex $v$ of $K$ has \emph{Type I}
   if all incident edges have $\eta(e)=0$, and \emph{Type II} if two
   of the incident edges have $\eta(e)=1$.  

   \begin{lemma}
       If $N$ is the number of vertices of $K$ with Type II, then the
       action $\kappa$ is equal to $N/8$ modulo $(1/4)\Z$.
   \end{lemma}

   \begin{proof}[Proof of the lemma] Let $v$ be any vertex of $K$, let
       $B_{v}$ be a ball neighborhood of $v$, and consider the
       restricion of the bifold connection subset
       $S^{1}\times B_{v}$ in $S^{1}\times \check Y$. Using radial
       parallel dilation in the $B_{v}$ directions, we can alter
       $(E,A)$ so that the bifold connection is flat on $S^{1}\times
       B_{v}$. The holonomy group of $(E,A)$ on the slice $B_{v}$ is the Klein
       $4$-group is $V_{4}={1,a,b,c}$. On $S^{1}\times B_{v}$, the
       flat connection is determined by the holonomy around four
       loops: first the three loops which are the meridians of the
       three edges in $B_{v}$, and fourth the loops $S^{1}\times p$
       for some point $p$ in $B_{v}\setminus K$. After re-ordering the
       edges, the first three holonomies are $a$, $b$ and $c$, while
       the holonomy around $S^{1}\times p$ is either $1$, $a$, $b$ or
       $c$. If the holonomy around $S^{1}\times p$ is $1$, then $v$
       has Type I, otherwise it has Type II. 

       We see from this description that if $v_{1}$ and $v_{2}$ are
       vertices of the same type (either both Type I or both Type II),
       then, after modifying the bifold connection a little, we can
       find an identification of the neighborhoods \[ \tau : S^{1}\times
       B_{v_{1}} \to S^{2}\times B_{v_{2}} \] such that the bifold
       connections $(E,A)$ and $\tau^{*}(E,A)$ are isomorphic on
       $S^{1}\times B_{v_{1}}$. We can then surger $(Y,K)$ by removing
       the two balls and identifying the boundaries using $\tau$, and
       we obtain a new bifold connection $(E', A')$ for the new pair
       $(Y', K')$. The new bifold connection has the same action as
       the original. If the number of Type II vertices is even, then
       the number of Type I vertices is even also (because any web has
       an even number of vertices in all); so we can then pair up
       vertices of the same type, and apply the above surgery
       repeatedly to arrive at the case where there are no
       vertices. When there are no vertices, we have already seen that
       the index $d$ is even, which means that $\kappa$ belongs to
       $(1/4)\Z$.

       To complete the proof of the lemma, it is sufficient to exhibit
       a single example of a bifold connection $(E,A)$ on some $S^{1}\times
       \check Y$ whose action is $1/8$. So consider the standard
       $1$-instanton, as an anti-self-dual $\SO(3)$-connection on $\R\times
       S^{3}$, invariant the action of $SO(4)$ on $S^{3}$. Let
       $\Gamma\subset \SO(4)$ be the group of order $8$ generated by
       the reflections in the coordinate $2$-planes of $\R^{4}$. The
       standard $1$-instanton descends to an anti-self-dual bifold
       connection $(E,A)$ on $\R\times (S^{3}/\Gamma)$. The bifold
       $\check Y=S^{3}/\Gamma$ corresponds to the pair $(S^{3},K)$,
       where the web $K$ is the $1$-skeleton of a tetrahedron. If we
       modify the connection to be flat near $\pm \infty$, we can glue
       the two ends to obtain a bifold connection on
       $S^{1}\times\check Y$ with action $1/8$.       
   \end{proof}

   We return to the proof of the proposition. Since two edges are
   incident at each vertex of Type II in $K$, the edges $e$ with
   $\eta(e)=1$ form cycles. If the graph is bipartite, then every
   cycle contains an even number of vertices, and it follows that
   there are an even number of vertices of Type II. By the lemma, the
   action is therefore in $(1/4)\Z$ in the bipartite case, and so the
   index $d(E,A)$ is even. By contrast, in the non-bipartite case, the
   example of the tetrahedron web in the proof of the lemma above
   shows that the action may be $1/8$, and the spectral flow
   $\sflow_{\zeta}$ around a closed loop $\zeta$ may therefore be $1$.
\end{proof}

\subsection{The relation between $\Isharp$ and $\Jsharp$}

For a web $K$ in $\R^{3}$, we have already considered the relation
between $\Isharp(K)$ and $\Jsharp(K)$ in a very simple form in
Corollary~\ref{cor:J-to-I}. The essential point in the proof of that
corollary was the mod-$2$ Gysin sequence relating the homology of a
double-cover to the homology of the base, as a mapping-cone.  In the
proof of Corollary~\ref{cor:J-to-I}, passing from $\Jsharp(K)$ to
$\Isharp(K)$ involves taking iterated double-covers of the spaces of
marked bifold connections, so the complex that computes $\Isharp(K)$
can be constructed as an iterated mapping cone. In this section we
will describe the complex a little more fully, as a ``cube''. 

Consider first the situation of a finite simplicial complex $S$ with a
covering  $f: \tilde S \to S$ which is the quotient map for an action
of the group $G=(\Z/2)^{n}$. We use multiplicative notation for the
group. We write $\F[G]$ for the group ring of $G$ over the field $\F$
of $2$ elements. 
With coefficients $\F$, we
can describe the homology of $\tilde S$ almost tautologically as the
homology of the base $S$ with coefficients in a local system
$\Gamma$ whose stalk is the vector space $\F[G]$  at
each point. Following \cite{Papadima-Suciu} we can construct a
spectral sequence abutting to this homology group as follows.

The ring $\F[G]$ contains an ideal $\cI$, the \emph{augmentation
  ideal}
\[
      \cI  = \Bigl\{ \sum_{g\in G} \lambda_{g} g \Bigm |  \sum_{g\in G}
      \lambda_{g}=0 \,\Bigr \}.
\]
The powers of the augmentation ideal define a filtration $\{\cI^{m}\}$
of the group ring. We write $\Gr$ for the associated graded:
\[
\begin{aligned}
    \Gr &=  \bigoplus_{m} \Gr_{m}\\
        &=   \bigoplus_{m} \cI^{m} /\cI^{m+1} .
\end{aligned}
\]
The dimension of the vector space $\Gr_{m}$ is $\begin{pmatrix} n \\ m \end{pmatrix}$.

For any $g\in G$, the element $1+g$ belongs to $\cI$, so
multiplication by $1+g$ lowers the filtration level. So multiplication
by $g$ induces the identity operator on the associated graded
$\Gr$. The filtration of $\cI$ of $\F[G]$ gives rise to a filtration
of the local system $\Gamma$ on $S$, so there is also an associated graded object
$\Gr_{*}(\Gamma)$. The local system $\Gr_{*}(\Gamma)$ on $S$ is trivial,
because the monodromy elements $g\in G$ act trivially on $\Gr$.

Consider now the cohomology of $S$ with coefficients in $\Gamma$. (We
choose cohomology instead of homology for a while, for the sake of the
exposition below.) The
cochain complex $C^{*}(S;\Gamma)$ has a filtration by subcomplexes
$C^{*}(S;\cI^{m})$ and there is an associated spectral sequence. The
$E_{1}$ page of the spectral sequence consists of the homology groups
\[
              H^{*}(S; \Gr_{*}(\Gamma));
\]
and since the local system $\Gr_{*}(\Gamma)$ is trivial, this is
simply
\[
       H^{*}(S; \F) \otimes \Gr_{*} \cong H^{*}(S;\F) \otimes \F^{2^{n}}.
\]
The individual summands here are
\[
  H_{*}(S; \F) \otimes \Gr_{m} \cong H_{*}(S;\F) \otimes
  \F^{\left( {n\atop m} \right)}.
\]
We therefore have:

\begin{proposition}[\cite{Papadima-Suciu}]
    There is a spectral sequence whose $E_{1}$ page is $H^{*}(S; \F)
    \otimes \Gr$ and which abuts to $H^{*}(\tilde S; \F)$.
\end{proposition}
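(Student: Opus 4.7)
The plan is to realize the spectral sequence as the one associated with the natural filtration on $C^{*}(S;\Gamma)$ coming from the powers of the augmentation ideal. The skeleton of the argument is entirely standard homological algebra, and most of the pieces have already been set up in the excerpt; the task is just to assemble them.

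First I would establish the identification $H^{*}(\tilde S;\F)\cong H^{*}(S;\Gamma)$. Viewing $f:\tilde S\to S$ as a regular cover with deck group $G$, the push-forward of the constant sheaf $\F$ is precisely the local system $\Gamma$ with stalk $\F[G]$; since $f$ has finite fibres, the Leray spectral sequence degenerates at $E_{2}$ and yields the claimed identification.

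Next I would filter the coefficient system. The ideal filtration $\F[G]\supset \cI\supset \cI^{2}\supset\cdots$ is preserved by the action of $G$ on $\F[G]$, so it descends to a filtration of $\Gamma$ by sub-local-systems $\Gamma^{(m)}$ with stalks $\cI^{m}$, and hence to a filtration of the cochain complex $C^{*}(S;\Gamma)$ by subcomplexes $C^{*}(S;\Gamma^{(m)})$. The associated spectral sequence of a filtered complex then has $E_{0}$ page $C^{*}(S;\Gr(\Gamma))$ and therefore $E_{1}$ page $H^{*}(S;\Gr(\Gamma))$. As noted already in the excerpt, $G$ acts by the identity on $\Gr$, because $g=1+(1+g)$ and $(1+g)\in \cI$ lowers filtration; so $\Gr(\Gamma)$ is a \emph{trivial} local system with fibre $\Gr$, and the $E_{1}$ page is
\[
H^{*}(S;\Gr(\Gamma))\cong H^{*}(S;\F)\otimes \Gr,
\]
as required.

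Finally I would verify convergence. Because $G=(\Z/2)^{n}$ acts by involutions and we are in characteristic $2$, the elements $x_{i}=1+g_{i}$ satisfy $x_{i}^{2}=0$, so $\F[G]\cong \F[x_{1},\dots,x_{n}]/(x_{1}^{2},\dots,x_{n}^{2})$ and $\cI^{n+1}=0$. The filtration on $C^{*}(S;\Gamma)$ is therefore bounded, and the associated spectral sequence converges strongly to $H^{*}(S;\Gamma)=H^{*}(\tilde S;\F)$. The only step that requires any real thought is this finiteness of the filtration, which is precisely where the hypothesis $G=(\Z/2)^{n}$ over $\F=\F_{2}$ is essential; every other step is formal bookkeeping.
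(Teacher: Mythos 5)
Your proposal follows the same route as the paper: filter $C^{*}(S;\Gamma)$ by the powers $\cI^{m}$ of the augmentation ideal, observe that $G$ acts trivially on the associated graded so that $\Gr(\Gamma)$ is a trivial local system, and read off the $E_{1}$ page as $H^{*}(S;\F)\otimes\Gr$. The one thing you supply that the paper leaves implicit is the convergence argument via $\cI^{n+1}=0$ (a consequence of $x_{i}^{2}=0$ in characteristic $2$), which is a correct and worthwhile observation but not a different approach.
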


After selecting a generating set $x_{1}, \dots, x_{n}$ for $G$, we can
unravel the picture a little further. 
For any subset $A\subset \{1,\dots,n\}$, let $G_{A}\le G$ be the
subgroup generated by the elements $x_{i}$ with $i\in A$, and let
\[
\begin{aligned}
    \xi_{A} &= \sum_{g\in G_{A}} g \in \F[G].
\end{aligned}
\]
As $A$ runs through all subsets, the elements $\{\xi_{A}\}_{A}$ form a
basis for $\F[G]$. In this basis, the multiplication by $x_{i}$ has
the form
\[
           x_{i} \xi_{A} =
           \begin{cases}
               \xi_{A}, & i\in A ,\\
               \xi_{A} + \xi_{A\cup\{i\}}, & i \not\in A.
           \end{cases} 
\]
For a general element of $G$ of the form $x_{I} =
x_{i_{1}}x_{i_{2}}\dots x_{i_{k}}$ (the product of $k$ distinct
generators), we similarly have
\begin{equation}\label{eq:xIxiA}
         x_{I} \xi_{A} = \sum_{ A \subset A' \subset A \cup I} \xi_{A'}.
\end{equation} 
In terms of the basis $\xi_{A}$, we can describe the augmentation
ideal and its powers by
\[
    \cF^{m} = \text{span}\{ \, \xi_{A} \mid |A| \ge m \,\}.
\] 
The formula \eqref{eq:xIxiA} shows that, when $|A|=m$,
\begin{equation}\label{eq:d-filtered}
            x_{I} \xi_{A} =  \xi_{A} \pmod{ \cF^{m+1}}.
\end{equation}

Consider again the local system $\Gamma$ on $S$, and regard $\Gamma$
as having trivialized stalks:
so at each vertex of $S$ the stalk is $\F[G]$, and to each $1$-simplex
$\sigma$ is a assigned multiplication by $x_{I}$ for some
$I=I(\sigma)$. From the filtration $\{\cI^{m}\}$ of $\F[G]$, we obtain
a filtration of the cochain complex $C^{*}(S; \Gamma)$, as discussed
earlier. 
If we write
$d_{\Gamma}$ for the coboundary operator, we can compare $d_{\Gamma}$
to the ordinary coboundary operator $d$ with trivial constant
coefficients $\F[G]$. We use the basis $\xi_{A}$ to write
\[
            C^{*}(S; \Gamma) = \bigoplus_{A} C^{*}(S; \F)
\]
so that
\[
              d_{\Gamma} = \sum_{B\supset A} d_{A,B},
\]
where $d_{A,B}$ is  the component from the summand indexed by $A$ to
the summand indexed by $B$. For each $1$-simplex $e$ and each $i\in
\{1, \dots, n\}$ let
\[
         u_{i}(e)=
         \begin{cases}
             1 ,& i\in I(e) \\
             0 ,& i\not\in I(e).
         \end{cases}
\]
These $1$-cochains $u_{i}$ on $S$ are cocycles representing the  cohomology classes in $H^{1}(S;\F)$
which are the dual basis to the elements $x_{i}\in G \cong H_{1}(S;\F)$.
 For $I=\{i_{1},\dots i_{k}\}$, define a cochain $U_{I}$ by
\[
         U_{I}(e) = u_{i_{1}}(e) u_{i_{2}}(e) \dots u_{i_{k}}(e).
\]
(This could equivalently be written as 
\[ U_{I} =
u_{i_{1}}\cupprod_{1} u_{i_{2}} \cupprod_{1} \dots \cupprod_{1} u_{i_{k}}
\]
where $\cupprod_{1}$ is the reduced product of \cite{Steenrod}.) From
\eqref{eq:xIxiA}, we obtain the formulae
\[
       d_{A,A} = d,
\]
and for $B \supsetneq A$,
\[
         d_{A,B} \alpha = U_{B\setminus A} \cupprod \alpha.
\] 

If we think of the subsets $A$ as indexing the vertices of a cube,
then we have a complex computing $H^{*}(\tilde S;\F)$ which consists
of a copy of $C^{*}(S;\F)$ at each vertex. In the spectral sequence
corresponding to the filtration $\cF^{m}$, the $E_{0}$ page is a
direct sum of copies of the complex $(C^{*}(S), d)$, one at each
vertex of the cube. The $E_{1}$ page has a copy of $H^{*}(S;\F)$ at each
vertex of the cube and the differential on $E_{1}$ is the sum of maps
corresponding to edges of the cube: each such map is multiplication by
a class $[u_{i}]$ in $H^{1}(S;\F)$.

To describe essentially the same situation in using Morse homology in
the context of $\Jsharp$ and $\Isharp$, let $K\subset \R^{3}$ be a
web, let $K^{\sharp}=K\cup H$ be the web in $S^{3}$ obtained by adding
a Hopf link $H$ near infinity. Recall that the groups $\Jsharp(K)$
and $\Isharp(K)$ are defined using two different marking data $\mu$
and $\mu'$ for bifold connections on $(S^{3}, K\cup H)$. If $\bonf$
and $\bonf'$ denote the corresponding spaces of bifold connections,
then there is a map (see Lemma~\ref{lem:covering})
\[
           f: \bonf' \to \bonf
\]
which is a covering of a connected component of $\bonf$. The covering
group is the group \[ G=H^{1}(\R^{3}\sminus K;\F), \] and the image of $f$ consists of
bifold connections $(E,A)$ such that $w_{2}(E)$ is zero on
$\R^{3}\setminus K$ (regarded as a subset of $S^{3}\setminus
K^{\sharp})$). Because we wish to write the group $G$
multiplicatively and $H^{1}(\R^{3}\sminus K;\F)$ additively, we write
$x^{h}\in G$ for the element corresponding to $h\in
H^{1}(\R^{3}\sminus K ; \F)$. We also write $x^{h}$ for the
corresponding linear operator on $\F[G]$, given by translation. 
The marking data $\mu$ ensures in any case that
$w_{2}(E)$ is zero on the sphere are infinity in $\R^{3}$; and if $K$
is connected, this means that $f$ is surjective. For connected webs,
we are therefore in the situation that $\bonf$ is the quotient of
$\bonf'$ by the action of $G$.

At this point, for \emph{connected} webs $K$, we can regard $\Isharp(K)$ as
the Floer homology of the perturbed Chern-Simons functional on $\bonf$
with coefficients in a local system with fiber $\F[G]$. Using the
filtration of $\F[G]$ by the powers $\cI^{m}$ of the augmentation
ideal as above, we obtain a filtration of the complex for
 $\Isharp(K)$, and hence a spectral sequence:

 \begin{proposition}
  For a connected web in $\R^{3}$, there is a spectral sequence whose  
$E_{1}$ page is the vector space
\[
      E_{1} = \Jsharp(K) \otimes \Gr,
\]
graded by the grading of $\Gr$, and which abuts to $\Isharp(K)$.
 \end{proposition}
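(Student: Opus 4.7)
The plan is to realize the Floer complex that computes $\Isharp(K)$ as the Floer complex that computes $\Jsharp(K)$ but with coefficients in the local system $\Gamma$ on $\bonf$ whose fiber is $\F[G]$, and then to filter it by the powers of the augmentation ideal $\cI \subset \F[G]$.

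First, I would choose a good perturbation $\pi$ of the Chern-Simons functional on $\bonf$ and pull it back through $f : \bonf' \to \bonf$ to obtain a $G$-invariant perturbation $\tilde\pi$ on $\bonf'$. Since $K$ is connected, $f$ is surjective and $\Crit_{\tilde\pi}$ is in $G$-equivariant bijection with $G \times \Crit_{\pi}$. Given a trajectory $\gamma$ from $\alpha$ to $\beta$ in $\bonf$ and a choice of lift $\tilde\alpha$ of $\alpha$, the lifted trajectory ends at $x^{h(\gamma)}\cdot\tilde\beta$ for a unique deck transformation $h(\gamma) \in G$. In these terms the Floer complex $(C_{\Isharp}, d_{\Isharp})$ on $\bonf'$ is canonically $\F[G] \otimes_{\F} C_{\Jsharp}$ as an $\F[G]$-module, with
\[
    d_{\Isharp}(\alpha) \;=\; \sum_{\beta}\sum_{\gamma \in M'_{1}(\alpha,\beta)} x^{h(\gamma)}\cdot \beta,
\]
which is precisely the twisted Floer complex for the local system $\Gamma$ with fiber $\F[G]$ and monodromy $x^{h}$ along a loop in the class $h \in H^{1}(\R^{3}\sminus K;\F)$.

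I would then filter $C_{\Isharp}$ by the subcomplexes $\cI^{m}\otimes_{\F} C_{\Jsharp}$. Because $x^{h}-1 \in \cI$ for every $h$, each translation operator $x^{h}$ preserves this filtration and acts as the identity on the associated graded; hence $d_{\Isharp}$ preserves the filtration and induces $1\otimes d_{\Jsharp}$ on each $\Gr_{m}\otimes C_{\Jsharp}$. The spectral sequence of the filtration therefore has
\[
     E_{0} \;=\; \Gr \otimes C_{\Jsharp}, \qquad E_{1} \;=\; \Gr \otimes \Jsharp(K),
\]
graded by the $\Gr_{m}$-grading. Since $G$ is elementary abelian of rank $n = \dim_{\F}H^{1}(\R^{3}\sminus K;\F)$ and $\cI^{n+1}=0$, the filtration is finite, so the spectral sequence converges to $H_{*}(C_{\Isharp},d_{\Isharp}) = \Isharp(K)$.

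The step requiring care is the $G$-equivariant perturbation theory. The pull-back $\tilde\pi$ need not be generic on $\bonf'$; however, since $f$ is a covering, the moduli spaces upstairs are disjoint unions of lifts of those downstairs, and are automatically $2$-good whenever $\pi$ is. The proof that $d_{\Isharp}^{2}=0$ then reduces to the proof that $d_{\Jsharp}^{2}=0$ via Proposition~\ref{prop:cone-on-V}: codimension-two bubbling occurs along $\R\times V$, where $V$ is the vertex set of $K\cup H$, which lies in $U_{\mu}\subset U_{\mu'}$, and so the finite-cylinder-function approximation argument used earlier applies without change in the equivariant setting.
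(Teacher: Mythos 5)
Your proposal is correct and follows essentially the same route as the paper: both interpret $\Isharp(K)$ as the Floer homology of the perturbed Chern–Simons functional on $\bonf$ with coefficients in the local system $\Gamma$ with fiber $\F[G]$, filter the complex by powers of the augmentation ideal $\cI$, and observe that the $G$-action is trivial on the associated graded so the $E_{1}$ page becomes $\Jsharp(K)\otimes\Gr$. The one place you go beyond the paper's exposition is in spelling out that a good perturbation downstairs pulls back to a good $G$-invariant perturbation on the cover and in locating the codimension-two bubbling argument for $d_{\Isharp}^{2}=0$; these details are consistent with the earlier lemma on double covers of Morse complexes in the passage from $\Jsharp$ to $\Isharp$ and are a welcome supplement rather than a different approach.
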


After choosing a basis for $H^{1}(\R^{3}\sminus K;\F)$ (i.e.~an
$n$-element generating set for $G$), we can describe the $E_{1}$ page
as a direct sum of $2^{n}$ copies of $\Jsharp(K)$, indexed by the
vertices $A$ of a cube of dimension $n$.

In the Morse theory picture that underlies $\Isharp$, we can describe
the total differential on the $E_{0}$ page. Let $(C^{\sharp}(K), d)$
be the instanton Morse complex which computes $\Jsharp(K)$, whose
basis is the set of critical points of the perturbed Chern-Simons
functional on the corresponding space of bifold connections
$\bonf^{\sharp}$. Fix a basis for $H^{1}(\R^{3}\sminus K ; \F)$ (or
equivalently a dual basis $\{\gamma_{i}\}$  in $H_{1}(\R^{3}\sminus K; \F)$),
so
that the $E_{0}$ page of the spectral sequence can be written as a
direct sum of $2^{n}$ copies of the complex $C^{\sharp}(K)$ indexed by
the vertices $A$ of the cube:
\[
 E_{0} = \sum_{A\subset \{1,\dots,n\}} C^{\sharp}(K)_{A}.
\]

For each basis element $\gamma_{i}$, there is a corresponding chain
map
\[
    u_{i} :  C^{\sharp}(K) \to  C^{\sharp}(K).
\]
Its matrix entries can be defined by first choosing a suitable
codimension-$1$ submanifold $V_{i} \subset \bonf^{\sharp}$ representing
the corresponding element of $H^{1}(\bonf^{\sharp})$ and then defining
the matrix entry from $\alpha$ to $\beta$ to be count of index-$1$
instanton trajectories from $\alpha$ to $\beta$ whose intersection
with $V_{i}$ is odd. More generally, for each subset $I\subset
\{1,\dots,n\}$, there is a map (not a chain map in general),
\[
         U_{I} :  C^{\sharp}(K) \to  C^{\sharp}(K)
\]
obtained by counting trajectories having odd intersection with
$V_{i}$ for every $i\in I$. (Thus $U_{I}=d$ in the case that $I$ is empty.) 
The complex that computes $\Isharp(K)$ is then
$(E_{0}, D)$ where $D$ has components as follows. For every pair of
subsets $A\subset B$, there is  a component
\[
       D^{A,B} : C^{\sharp}(K)_{B} \to C^{\sharp}(K)_{A}
\] 
given $D^{A,B}=U_{B\sminus A}$. 

\subsection{Replacing $\SO(3)$ by $\SU(3)$}

One can construct a variant of $\Jsharp(K)$ using the group $\SU(3)$
in place of $\SO(3)$. Given a web $K$ in a $3$-manifold $Y$, form the
disjoint union $K \cup H$, where $H$ is a Hopf link contained in a
ball $B\subset Y$ Consider $(Y, K\cup H)$ as defining a bifold, just as in the
$\SO(3)$ case. Fix a $U(3)$ bifold bundle with a reduction to $\SU(3)$
outside the ball $B$, whose determinant is non-trivial on the
periphal torus of the components of $H$. (This non-triviality of the
bundle plays the same role as the strong marking data in the $\SO(3)$
case.)
The projectively flat connections with
fixed determinant on such a bifold bundle correspond to
representations 
\[
         \rho : \pi_{1}(Y\sminus K, y_{0}) \to \SU(3)
\]
satisfying the constraint that, for each edge $e$ of $K$ and any
representative $m_{e}$ for the conjugacy class of the meridian of $e$,
the element $\rho(m_{e})$ has order $2$ in $\SU(3)$. (Compare
Definition~\ref{def:Rsharp}.) The $\SO(3)$ representation variety
$\Rep^{\sharp}(Y,K)$ sits inside the larger $\SU(3)$ representation variety as
the fixed-point set of complex conjugation acting on the matrix
entries. 

One can go on to construct the corresponding instanton homology
groups, which we temporarily denote by  $L^{\sharp}(K)$. Unlike the $\SO(3)$ case, this can now be done with integer
coefficients. For webs in $\R^{3}$, with
$\F$ coefficients, it turns out that the dimension $L^{\sharp}(K; \F)$ is equal to the number of Tait
colorings of $K$. In particular, it is independent of the spatial
embedding of the web. A proof in the $\F$ case can be given using
the ideas of \cite{KM-jsharp-triangles}. The same result holds with
rational coefficients, but with a different proof. One can imagine
that a Smith-theory argument \cite{Smith} might establish an
inequality \[ \dim
L^{\sharp}(K; \F)\ge \dim \Jsharp(K) \] for planar webs $K$, in
which case the four-color theorem would follow from
Theorem~\ref{thm:non-vanishing}. However, there is not a version of
Smith theory that applies to these variants of Floer homology, and one
must use the hypothesis of planarity.

\bibliographystyle{abbrv}
\bibliography{fcs}

\end{document}